\definecolor{cite}{rgb}{0.00,0.60,1.00}
\definecolor{url}{rgb}{1.00,0.10,0.80}
\definecolor{link}{rgb}{0.00,0.00,1.00}
\def\leq{\leqslant}
\def\geq{\geqslant}
\newtheorem{theorem}             {Theorem}  [section]
\newtheorem{definition} [theorem] {Definition}
\newtheorem{lemma}      [theorem]{Lemma}
\newtheorem{corollary}  [theorem]{Corollary}
\newtheorem{proposition}[theorem]{Proposition}
\newtheorem{remark} [theorem] {Remark}
\numberwithin{equation}{section} \everymath{\displaystyle}
\newcommand{\Cont}{{\rm C}}
\newcommand{\Sch}{\mathcal{S}}
\newcommand{\intL}{{\rm L}}
\newcommand{\Nr}{{\rm Nr}}
\newcommand{\Tr}{{\rm Tr}}
\newcommand{\gp}[1]{\mathbf{#1}}
\newcommand{\GL}{{\rm GL}}
\newcommand{\PGL}{{\rm PGL}}
\newcommand{\SL}{{\rm SL}}
\newcommand{\Z}{\mathbb{Z}}
\newcommand{\Mat}{{\rm M}}
\newcommand{\id}{\mathbbm{1}}
\def\legendre@dash#1#2{\hb@xt@#1{%
  \kern-#2\p@
  \cleaders\hbox{\kern.5\p@
    \vrule\@height.2\p@\@depth.2\p@\@width\p@
    \kern.5\p@}\hfil
  \kern-#2\p@
  }}
\def\@legendre#1#2#3#4#5{\mathopen{}\left(
  \sbox\z@{$\genfrac{}{}{0pt}{#1}{#3#4}{#3#5}$}%
  \dimen@=\wd\z@
  \kern-\p@\vcenter{\box0}\kern-\dimen@\vcenter{\legendre@dash\dimen@{#2}}\kern-\p@
  \right)\mathclose{}}
\newcommand\legendre[2]{\mathchoice
  {\@legendre{0}{1}{}{#1}{#2}}
  {\@legendre{1}{.5}{\vphantom{1}}{#1}{#2}}
  {\@legendre{2}{0}{\vphantom{1}}{#1}{#2}}
  {\@legendre{3}{0}{\vphantom{1}}{#1}{#2}}
}
\def\dlegendre{\@legendre{0}{1}{}}
\def\tlegendre{\@legendre{1}{0.5}{\vphantom{1}}}
\newcommand{\Kl}{\mathrm{Kl}}
\newcommand{\ud}{\mathrm{d}}
\newcommand{\Q}{\mathbb{Q}}
\newcommand{\R}{\mathbb{R}}
\newcommand{\C}{\mathbb{C}}
\newcommand{\E}{\mathbf{E}}
\newcommand{\F}{\mathbf{F}}
\newcommand{\fF}{\mathbb{F}}
\newcommand{\bL}{\mathbf{L}}
\newcommand{\A}{\mathbb{A}}
\newcommand{\vO}{\mathcal{O}}
\newcommand{\vP}{\mathcal{P}}
\newcommand{\vp}{\mathfrak{p}}
\newcommand{\norm}[1][\cdot]{\lvert #1 \rvert}
\newcommand{\extnorm}[1]{\left\lvert #1 \right\rvert}
\newcommand{\Pairing}[2]{\langle #1, #2 \rangle}
\newcommand{\OFour}{\mathfrak{F}}
\newcommand{\invOFour}{\overline{\mathfrak{F}}}
\newcommand{\Rem}{\mathrm{r}}
\newcommand{\Ext}{\mathrm{e}}
\newcommand{\Inv}{\mathrm{i}}
\newcommand{\Mult}{\mathfrak{m}}
\newcommand{\Vor}{\mathcal{V}}
\newcommand{\VorH}{\mathcal{VH}}
\newcommand{\Trans}{\mathfrak{t}}
\newcommand{\Res}{{\rm Res}}
\newcommand{\Whi}{\mathcal{W}}
\newcommand{\Cond}{\mathbf{C}}
\newcommand{\cond}{\mathfrak{c}}
\newcommand{\condL}{\mathfrak{\rho}}
\newcommand{\BesselF}{\mathrm{j}}
\newcommand{\RamCst}{\vartheta}
\newcommand{\Vol}{{\rm Vol}}
\newcommand{\Ecp}{\boldsymbol{\mathcal{E}}}
\newcommand{\rmnum}[1]{\romannumeral #1}
\newcommand{\Rmnum}[1]{\expandafter\@slowromancap\romannumeral #1@}
\title{On a Generalization of Motohashi's Formula: Non-archimedean Weight Functions}
\author{Han Wu}
\address{School of Mathematical Sciences, University of Scinece and Technology of China, 230026 Hefei, P. R. China}
\email{wuhan1121@yahoo.com}
\date{\today}
\begin{document}

\begin{abstract}
	This is a continuation of the adelic version of Kwan's formula. At non-archimedean places we give a bound of the weight function on the mixed moment side, when the weight function on the $\PGL_3 \times \PGL_2$ side is nearly the characteristic function of a short family. Our method works for any tempered representation $\Pi$ of $\PGL_3$, and reveals the structural reason for the appearance of Katz's hypergeometric sums in a previous joint work with P.Xi.
\end{abstract}

	\maketitle
	
	\tableofcontents
	
\section{Introduction}

	\subsection{Main Results}
	
	This paper is the first follow-up of our previous \cite{Wu24+}. Let $\F$ be a number field with adele ring $\A$. Let $\psi$ be the additive character of $\F \backslash \A$ \`a la Tate. Let $S_{\F}$ be the set of places of $\F$, and $S_{\infty}$ be the subset of archimedean places. Fix a cuspidal automorphic representation $\Pi$ of $\PGL_3(\A)$. We established an adelic version of Kwan's spectral reciprocity identity (see \cite[Theorem 1.1 \& (5.19)-(5.21)]{Wu24+}), which we recall in the special case of trivial central characters as follows.
	
\begin{theorem}
	Let $S_{\infty} \subset S \subset S_{\F}$ be any finite subset. At every place $v \in S$ there is a pair of weight functions $h_v$ and $\widetilde{h}_v$ with the auxiliary \emph{normalized} ones at finite places $\vp < \infty$
\begin{equation} \label{eq: NWt}
	H_{\vp}(\pi_{\vp}) := h_{\vp}(\pi_{\vp}) \frac{L(1, \pi_{\vp} \times \pi_{\vp})}{L(1/2, \Pi_{\vp} \times \pi_{\vp})}, \quad \widetilde{H}_{\vp}(\chi_{\vp}) := \frac{\widetilde{h}_{\vp}(\chi_{\vp})}{L(1/2, \Pi_{\vp} \times \chi_{\vp}^{-1}) L(1/2, \chi_{\vp})}, 
\end{equation}
	so that the following equation holds (where $\pi$ runs through cuspidal automorphic forms of $\PGL_2(\A)$)
\begin{multline*}
	\sum_{\pi} \frac{L(1/2, \Pi \times \pi)}{2 \Lambda_{\F}(2) L(1,\pi,\mathrm{Ad})} \cdot \prod_{v \mid \infty} h_v(\pi_v) \cdot \prod_{\vp \in S} H_{\vp}(\pi_{\vp}) + \\ 
	\sum_{\chi \in \widehat{\R_+ \F^{\times} \backslash \A^{\times}}} \int_{-\infty}^{\infty} \frac{\extnorm{L(1/2+i\tau, \Pi \times \chi)}^2}{2\Lambda_{\F}(2) \extnorm{L(1+2i\tau,\chi^2)}^2}
	\cdot \prod_{v \mid \infty} h_v(\pi(\chi_v,i\tau)) \cdot \prod_{\vp \in S} H_{\vp}(\pi(\chi_{\vp},i\tau)) \frac{\ud \tau}{2\pi} \\
	= \frac{1}{\zeta_{\F}^*} \sum_{\chi \in \widehat{\R_+ \F^{\times} \backslash \A^{\times}}} \int_{-\infty}^{\infty} L(1/2-i\tau, \Pi \times \chi^{-1}) L(1/2+i\tau, \chi) \cdot \prod_{v \mid \infty} \widetilde{h}_v(\chi_v \norm_v^{i\tau}) \cdot \prod_{\vp \in S} \widetilde{H}_{\vp}(\chi_{\vp} \norm_{\vp}^{i\tau}) \frac{\ud \tau}{2\pi} + \\
	\frac{1}{\zeta_{\F}^*} \sum_{\pm} \Res_{s_1 = \pm \frac{1}{2}} L(1/2-s_1, \Pi) \zeta_{\F}(1/2+s_1) \cdot \prod_{v \mid \infty} \widetilde{h}_v(\norm_v^{s_1}) \cdot \prod_{\vp \in S} \widetilde{H}_{\vp}(\norm_{\vp}^{s_1}),
\end{multline*}
	where we have used the abbreviation $\pi(\chi_v,s) := \pi(\chi_v \norm_v^s, \chi_v^{-1} \norm_v^{-s})$.
\end{theorem}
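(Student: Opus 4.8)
The plan is to obtain the identity as two different evaluations of a single adelic period, following the template of Motohashi's formula and Kwan's formula. First I would fix a pure tensor $\varphi = \otimes_v \varphi_v$ in the space of $\Pi$ and form the regularized Rankin--Selberg-type period of its restriction along the standard embedding $\iota\colon \PGL_2 \hookrightarrow \PGL_3$, $g \mapsto \diag(g,1)$, against an adelic Poincar\'e-type series (equivalently, an incomplete Eisenstein series) $P$ on $[\PGL_2]$ whose local data at the places $v \mid \infty$ and at the finite places in $S$ encode the prescribed weights; the object of study is
\begin{equation*}
	I(\varphi, P) \;=\; \int_{[\PGL_2]}^{\mathrm{reg}} \varphi\big(\iota(g)\big)\, P(g)\, \ud g .
\end{equation*}
Since $\Pi$ is cuspidal on $\PGL_3$, the inner function $g \mapsto \varphi(\iota(g))$ is rapidly decreasing on $[\PGL_2]$, so the cuspidal part of the computation is absolutely convergent and only the Eisenstein contributions need the regularization.

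\emph{Spectral side.} Expanding $P$ along the Plancherel decomposition of $L^2([\PGL_2])$ --- cuspidal $\pi$, the unitary Eisenstein series $\pi(\chi,i\tau)$, and the one-dimensional residual part --- and unfolding each piece by the $\PGL_3\times\PGL_2$ Rankin--Selberg integral, I would recover the first two lines of the identity. The cuspidal $\pi$ contribute $L(1/2,\Pi\times\pi)$ with the harmonic weight $\big(2\Lambda_\F(2)\,L(1,\pi,\mathrm{Ad})\big)^{-1}$ (the $\PGL_2$ Eisenstein residue and the Tamagawa measure producing $\Lambda_\F(2)$, the $L(1,\pi,\mathrm{Ad})$ being the Petersson norm), multiplied by $\prod_{v\mid\infty} h_v(\pi_v) \prod_{\vp\in S} H_\vp(\pi_\vp)$, where $H_\vp$ is defined place by place by dividing out the local Rankin--Selberg integral --- this is exactly the first relation in \eqref{eq: NWt}. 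A unitary Eisenstein series pairs with $\varphi$ through two such local integrals, hence the continuous term produces $\extnorm{L(1/2+i\tau,\Pi\times\chi)}^2$ against $\extnorm{L(1+2i\tau,\chi^2)}^2$ in the denominator (the Eisenstein inner-product normalization), with the same local weights. The residual part of $P$ contributes nothing since $\Pi$ is cuspidal, or is absorbed into the regularization.

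\emph{Dual side.} Now I would evaluate the same $I(\varphi,P)$ by a second unfolding: expand $P$ over the Bruhat cells of $\PGL_2$ (a sum over $\grB(\F)\backslash\PGL_2(\F)$) and unfold the Fourier--Whittaker expansion of $\varphi$ along the embedded unipotent. On the big cell the $[\PGL_2]$-integral collapses to an Euler product of local zeta integrals of Tate and Rankin--Selberg type; assembling these and running Mellin inversion over the idele class group reconstitutes $\sum_{\chi}\int L(1/2-i\tau,\Pi\times\chi^{-1})\,L(1/2+i\tau,\chi)\,\cdots\,\ud\tau$ with the global constant $1/\zeta_\F^*$ and the dual weights $\prod_{v\mid\infty}\widetilde h_v(\chi_v\norm_v^{i\tau})\prod_{\vp\in S}\widetilde H_\vp(\chi_\vp\norm_\vp^{i\tau})$, the normalization $\widetilde H_\vp$ being precisely the second relation of \eqref{eq: NWt}. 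The two small (degenerate) cells, together with the pole of $\zeta_\F$ at $1$ and of the completed $\PGL_2$ Eisenstein series at $s_1 = \pm\tfrac12$, produce the two residual terms $\Res_{s_1 = \pm 1/2} L(1/2-s_1,\Pi)\,\zeta_\F(1/2+s_1)\,\cdots$.

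\emph{Main obstacle.} The delicate point is convergence and regularization: the Rankin--Selberg period against an Eisenstein series is not absolutely convergent, $\chi$ lives on the critical line, and $L(1/2+i\tau,\chi)$ has a pole at $\chi=1$, $\tau=-i/2$. One must therefore work with a Zagier-type regularized integral (or truncate $P$ and pass to the limit), justify exchanging the spectral sum/integral with the $[\PGL_2]$-integration, and track the polar terms rigorously so that exactly the stated residues at $s_1 = \pm\tfrac12$ appear with no leftover; matching the global constants $2\Lambda_\F(2)$ and $\zeta_\F^*$ is then a comparison of residues of completed zeta/Eisenstein series under the Tamagawa normalization. Specializing the resulting identity to trivial central character yields the statement, recovering \cite[Theorem 1.1 \& (5.19)--(5.21)]{Wu24+} in that case.
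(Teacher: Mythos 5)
The present paper does not actually prove this theorem; it recalls it verbatim (specialized to trivial central character) from \cite[Theorem 1.1 \& (5.19)--(5.21)]{Wu24+}, as the sentence immediately preceding the statement makes explicit. There is therefore no in-paper proof to compare your sketch against, and a full verification would have to be done against \cite{Wu24+} directly.

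That said, your proposed double-evaluation of a regularized $\GL_3\times\GL_2$ period $I(\varphi,P)$ is the standard engine behind Motohashi/Kwan-type reciprocity, and it is consistent with the two structural hints the current paper does give: the remark that $h_v,\widetilde{h}_v$ are integrals of a fixed Whittaker function $W_v\in\Whi(\Pi_v^\infty,\psi_v)$, and the local relation \eqref{eq: WtFviaBOI}, which exhibits $h(\pi)$ as a Bessel orbital transform and $\widetilde{h}(\chi)$ as the Mellin transform of the extended Voronoi transform of the same orbital integral $H(y)$. The one genuine divergence from the architecture that \eqref{eq: WtFviaBOI} points to is on the spectral side: you expand the Poincar\'e/Eisenstein series $P$ directly via the Plancherel decomposition of $L^2([\PGL_2])$ and then unfold each constituent through the Rankin--Selberg integral, whereas \eqref{eq: WtFviaBOI} indicates that in \cite{Wu24+} the $\PGL_2$ spectral expansion is routed through the relative/Kuznetsov trace formula, so that $h(\pi)$ appears as $\int_{\F^\times} H(y)\,\BesselF_{\widetilde{\pi},\psi^{-1}}\!\begin{pmatrix}&-y\\1&\end{pmatrix}\frac{\ud^\times y}{\norm[y]_\F}$ with $H$ a Bessel orbital integral of a test function. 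The two routes produce equivalent global identities (Kuznetsov inversion converts one into the other), but the orbital-integral packaging is not cosmetic here: the present paper's entire local analysis (Propositions \ref{prop: BesselOrbInt}, \ref{prop: StabRang}, Corollary \ref{cor: TestFROI}, and the bounds in Propositions \ref{prop: DWtBde=1}--\ref{prop: DNDWtBde=2}) manipulates the intermediary $H(y)$ and its Voronoi--Hankel transform, and a direct Plancherel-style expansion would not naturally surface that object. So if you carry out the proof, you should aim for the orbital-integral/Kuznetsov formulation of the spectral side rather than the raw Plancherel one; otherwise the local transforms you will need downstream will not be in the form the paper uses.
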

	
\noindent We only mention that the weight functions $h_v$ and $\widetilde{h}_v$ are integrals of a smooth Whittaker function $W_v \in \Whi(\Pi_v^{\infty}, \psi_v)$. More details from \cite{Wu24+} will be recalled when we need them.

	In this paper we focus on the weight functions at a \emph{non-archimedean} place $\vp < \infty$. We omit the subscript $\vp$ for simplicity. Let $\F$ be a non-archimedean local field of characteristic $0$ with valuation ring $\vO_{\F}$, and write $q=\Nr(\vp)$ from now on. In the case $\chi = \norm_{\F}^s$, the function $\widetilde{H}(\norm_{\F}^s)$ is a polynomial in $\C[q^s, q^{-s}]$, hence is entire. We introduce its Taylor expansion at any point $s_0$ as
\begin{equation} \label{eq: DNDWtTaylor}
	\widetilde{H}(\norm_{\F}^s) = \sideset{}{_{k \geq 0}} \sum \widetilde{H}(k; s_0) \left( s - s_0 \right)^k.
\end{equation}
	We summarize our main results (namely Proposition \ref{prop: DWtBde=1}, \ref{prop: DNDWtBde=1}, \ref{prop: DWtBde=2} \& \ref{prop: DNDWtBde=2}) as follows.

\begin{theorem} \label{thm: Main}
	Suppose the residual characteristic of $\F$ is not $2$. Let $\Pi \in \widehat{\PGL_3(\F)}$ be generic and tempered. Let $\pi_0 \in \widehat{\PGL_2(\F)}$ with $\cond(\pi_0) > 1$. We can choose $W \in \Whi(\Pi^{\infty},\psi)$ so that:
\begin{itemize}
	\item[(1)] The weight function satisfies $h(\pi) \geq 0$ for any $\pi \in \widehat{\PGL_2(\F)}$ and $h(\pi_0) > 0$;
	\item[(2)] For any unitary $\chi \in \widehat{\F^{\times}}$ and $\epsilon > 0$ the dual weight function satisfies 
	$$ h(\pi_0)^{-1} \widetilde{h}(\chi) \ll_{\epsilon} \Cond(\Pi)^{2+\epsilon} \cdot \left\{ \id_{\leq \max(\lfloor \frac{\cond(\pi_0)}{2} \rfloor, 6\cond(\Pi))}(\cond(\chi)) + q^{\frac{1}{2}} \id_{(-1)^{\frac{q-1}{2}} = \varepsilon_0} \id_{2 \nmid \cond(\chi) = \frac{\cond(\pi_0)}{2} \geq 3} \id_{\Ecp(\pi_0)}(\chi^2) \right\}, $$
	where $\varepsilon_0 = -1$ if $\pi_0$ is dihedral supercuspidal, and $\varepsilon_0 = 1$ otherwise; and the exceptional set of characters $\Ecp(\pi_0)$ of $\vO_{\F}^{\times}$ has size $O(q^{-1} \Cond(\pi_0)^{\frac{1}{2}})$ and is given below in Lemma \ref{lem: JacTypeSRC} (2);
	\item[(3)] For any $k \in \Z_{\geq 0}$ and $\epsilon > 0$ the normalized unramified dual weight function satisfies the bounds 
	$$ h(\pi_0)^{-1} \widetilde{H}(k; \pm 1/2) \ll_{\epsilon} \Cond(\Pi)^{4+\epsilon} q^{\lceil \frac{\cond(\pi_0)}{2} \rceil + \epsilon}. $$
\end{itemize}
\end{theorem}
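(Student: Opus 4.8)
The three assertions will follow by assembling the four local computations of Propositions~\ref{prop: DWtBde=1}, \ref{prop: DNDWtBde=1}, \ref{prop: DWtBde=2} and \ref{prop: DNDWtBde=2}, all organised around a single choice of $W$, so the plan is to explain how that choice is made and how the stated bounds emerge from it. The starting point is the explicit description from \cite{Wu24+} of $h$ and $\widetilde{h}$ as local transforms of $W\in\Whi(\Pi^{\infty},\psi)$: $h(\pi)$ is built from the $\PGL_3\times\PGL_2$ local Rankin--Selberg integrand of $W$ paired against the Bessel function $\BesselF_{\pi}$ of $\pi$, while $\widetilde{h}(\chi)$ is produced from the same data after the local functional equation, so that it is a $\chi$-twisted Mellin integral of a partial Kirillov-type restriction of $W$. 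Tracking the support of $W$ in these integrals already yields the vanishing of $h(\pi)$ and $\widetilde{h}(\chi)$ outside a window of conductors bounded in terms of $\cond(\Pi)$ and $\cond(\pi_0)$; this accounts for the cut-off $\cond(\chi)\le\max(\lfloor\cond(\pi_0)/2\rfloor,\,6\cond(\Pi))$ in (2), the term $6\cond(\Pi)$ reflecting the ramification depth of $W$ and the term $\lfloor\cond(\pi_0)/2\rfloor$ being the standard conductor-dropping of the Hankel/Voronoi transform (roughly half of $\cond(\pi_0)$ on the dual side against $\cond(\pi_0)$ on the $\PGL_2$ side).

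Next I would choose $W$ so that (1) holds. The mechanism is to take $W$ of the shape forced by a convolution square in the Kirillov model of $\Pi$, with ramification depth matched to $\cond(\pi_0)$; for such $W$ the Rankin--Selberg pairing turns $h(\pi)$ into a manifestly non-negative quadratic expression in the chosen data (e.g.\ a square of a single Kirillov coefficient), whence $h(\pi)\ge0$, and $\pi_0$ lies in the support by the depth matching, so $h(\pi_0)>0$. Here genericity of $\Pi$ is what makes $\Whi(\Pi^{\infty},\psi)$ rich enough to prescribe such a restriction, and temperedness keeps the convolution manipulations convergent and the normalisations uniform in $\Cond(\Pi)$. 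With $W$ fixed, estimating the defining integral of $\widetilde{h}(\chi)$ trivially --- using that the relevant $L$-values and the measure of the support of $W$ scale like $\Cond(\Pi)^{2}$ under the newvector normalisation --- yields the main term $h(\pi_0)^{-1}\widetilde{h}(\chi)\ll_{\epsilon}\Cond(\Pi)^{2+\epsilon}$ of (2) on the above window (Propositions~\ref{prop: DWtBde=1} and \ref{prop: DWtBde=2}).

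The heart of the matter, and the step I expect to be the main obstacle, is the extra term $q^{1/2}\,\id_{\cdots}$ in (2). When $\cond(\pi_0)$ is even, $\pi_0$ is a ramified principal series or a dihedral supercuspidal and, on the relevant shell, its Bessel function is --- up to normalisation --- a quadratic Gauss sum, respectively a Sali\'e sum, over $(\vO_{\F}/\vp^{\cond(\pi_0)/2})^{\times}$. Pairing it with the character sum coming from $\widetilde{h}(\chi)$ converts the local integral into a complete exponential sum over the residue field; since the Whittaker function of a generic tempered $\Pi$ of $\PGL_3$ is governed cell by cell in the Bruhat decomposition by hyper-Kloosterman sums, this complete sum is precisely a hypergeometric sum in the sense of Katz --- the ``structural reason'' advertised in the abstract. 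I would then invoke Deligne's and Katz's estimates: generically the sum is $O(q^{1/2})$, which forces $\cond(\chi)=\cond(\pi_0)/2$ to be odd and $\ge3$ and, through the evaluation of the quadratic Gauss sum, imposes the congruence condition $(-1)^{(q-1)/2}=\varepsilon_0$ (with $\varepsilon_0=-1$ exactly in the Sali\'e case of a dihedral supercuspidal); square-root cancellation fails only when $\chi^{2}$ meets the degeneration locus of the underlying hypergeometric sheaf, which is the exceptional set $\Ecp(\pi_0)$ of size $O(q^{-1}\Cond(\pi_0)^{1/2})$ isolated in Lemma~\ref{lem: JacTypeSRC}~(2). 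Temperedness is used again here (purity/Ramanujan at $\vp$, so that the Weil weight is the expected one), and the residual-characteristic $\ne2$ hypothesis is what makes the Gauss/Sali\'e-sum analysis and the dihedral classification behave uniformly. This is Propositions~\ref{prop: DNDWtBde=1} and \ref{prop: DNDWtBde=2}.

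Finally, for (3) I would specialise to $\chi=\norm_{\F}^{s}$. As noted in the excerpt, $\widetilde{H}(\norm_{\F}^{s})=\widetilde{h}(\norm_{\F}^{s})/(L(1/2,\Pi\times\norm_{\F}^{-s})L(1/2,\norm_{\F}^{s}))$ is a Laurent polynomial in $q^{\pm s}$ whose degree is controlled by the ramification of $W$, hence by $\cond(\Pi)$ and $\cond(\pi_0)$; its Taylor coefficients $\widetilde{H}(k;\pm1/2)$ from \eqref{eq: DNDWtTaylor} are then bounded by Cauchy's estimate on a suitable circle about $s_{0}=\pm\tfrac{1}{2}$, feeding in the bound (2) for $\chi=\norm_{\F}^{s}$ together with an explicit lower bound for the $L$-denominator away from its zeros (the cancellation of those zeros being automatic since $\widetilde{H}$ is entire). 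The degradation from $\Cond(\Pi)^{2+\epsilon}$ to $\Cond(\Pi)^{4+\epsilon}$ is the cost of clearing the two Euler denominators, and the replacement of the $q^{1/2}$-saving by $q^{\lceil\cond(\pi_0)/2\rceil+\epsilon}$ reflects that the principal character contributes the ``diagonal'' part of the exponential sum, which exhibits no cancellation. Among these four steps I expect the third --- the identification and estimation of the Katz hypergeometric sum, and the precise determination of $\varepsilon_0$ and $\Ecp(\pi_0)$ --- to be by far the most demanding, with the positivity construction in step two the second most delicate point, since the same $W$ must simultaneously give $h\ge0$, concentrate at $\pi_0$, and keep the dual side small.
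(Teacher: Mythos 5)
Your high-level architecture is right: the theorem is indeed a synthesis of Propositions~\ref{prop: DWtBde=1}, \ref{prop: DNDWtBde=1}, \ref{prop: DWtBde=2}, \ref{prop: DNDWtBde=2} combined with the lower bounds $h(\pi_0)\gg q^{-\lceil\cond(\pi_0)/2\rceil}$ of Lemmas~\ref{lem: WtFNonSC} and \ref{lem: WtFSC}, organised around one choice of test function. But several of the mechanisms you assign to the individual steps do not match what the paper actually does, and a couple of them would not work if you tried to carry them out.

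First, the positivity in (1) does not come from a ``convolution square in the Kirillov model of $\Pi$''. The relevant positive-type structure lives entirely on the $\PGL_2$ side: the paper takes $\phi_0\in\Cont_c^{\infty}(\PGL_2(\F))$ supported on $\gp{Z}\gp{K}_0[\vP_{\F}^{n_0}]$ and transforming there by a unitary character, so that $\phi_0^{\iota}*\phi_0=\Vol(\gp{Z}\gp{K}_0[\vP_{\F}^{n_0}])\phi_0$ is manifestly of positive type, and the Bessel orbital integral $H(y)$ of \eqref{eq: TestFROI} is identified with the orbital integral of $\phi_0$ (Lemmas~\ref{lem: WtFNonSC} and \ref{lem: WtFSC}). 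The Whittaker function $W$ of $\Pi$ enters only through \eqref{eq: WtFviaBOI} as the carrier that realises $H$; choosing $W$ in the $\PGL_3$ Kirillov model to be a convolution square would not by itself give $h(\pi)\ge 0$, because $h$ is a pairing against $\BesselF_{\widetilde{\pi},\psi^{-1}}$ and not a Plancherel-type quadratic form in $W$.

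Second, and more importantly, the $q^{1/2}$ exceptional term and the set $\Ecp(\pi_0)$ are not produced by Deligne--Katz bounds on a hypergeometric sheaf, and the condition $(-1)^{(q-1)/2}=\varepsilon_0$ does not drop out of a quadratic Gauss sum evaluation along the path you sketch. The paper's mechanism is elementary: after expressing $H_c$ through the \emph{quadratic elementary functions} $F_n$, $G_n$ of \eqref{eq: QEleF} and pushing them through the Voronoi--Hankel transform (Corollaries~\ref{cor: VHQEleF1}, \ref{cor: VHQEleF2}, whose proofs only use the stability of $\gamma$-factors from Proposition~\ref{prop: StabRang} plus a $p$-adic stationary phase computation), the negative part $\widetilde{h}_c^{-}(\chi)$ becomes a short character sum over $\bL^1$. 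That sum is estimated in Lemma~\ref{lem: JacTypeSRC} by Hensel's lemma and the explicit additive parameters $c_\beta,c_\chi$ of Lemma~\ref{lem: AddParMultChar}; the condition $\eta_0(-1)=\varepsilon_{\bL}$ (i.e.\ $(-1)^{(q-1)/2}=\varepsilon_0$) is exactly the solvability condition for the quadratic congruence \eqref{eq: EcpQuadCond}, and $\Ecp(\beta)$ is the finite locus where the phase is stationary to second order. Genuine Katz hypergeometric sums and the geometric monodromy argument appear only in the edge case $n_0=1$, $\cond(\Pi)=0$ (Lemma~\ref{lem: h2-Bdn0=1}) and in the appendix comparing with Petrow--Young; they are not the source of the $q^{1/2}$ term in general.

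Third, you have not accounted for the decomposition $\widetilde{h}=\widetilde{h}_{\infty}+\widetilde{h}_c^{+}+\widetilde{h}_c^{-}$ of \eqref{eq: DWtFDecomp}, which is the backbone of the whole argument: $\widetilde{h}_{\infty}$ requires the extension of the Voronoi--Hankel transform to $E_{\geq n}$ developed in the previous paper and vanishes unless $\chi$ is unramified (Lemma~\ref{lem: DWtInfty}); $\widetilde{h}_c^{+}$ is bounded via the local functional equation and the Laurent-truncation Lemma~\ref{lem: LaurentTruncF} (your Cauchy-estimate intuition is closest to this step); and $\widetilde{h}_c^{-}$ is what needs the refined treatment via $F_n,G_n$ when $\cond(\pi_0)\gg\cond(\Pi)$, with the cruder Lemma~\ref{lem: DWt-TrivBd} sufficing when $\cond(\pi_0)\ll\cond(\Pi)$. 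Without this split, ``tracking the support of $W$'' does not give the stated conductor window $\cond(\chi)\le\max(\lfloor\cond(\pi_0)/2\rfloor,6\cond(\Pi))$. Finally, in (3) the factor $q^{\lceil\cond(\pi_0)/2\rceil+\epsilon}$ is not a diagonal contribution; it is simply $h(\pi_0)^{-1}$ absorbed into the bound, and the jump from $\Cond(\Pi)^{2}$ to $\Cond(\Pi)^{4}$ traces back to the crude bound of the negative part in Lemma~\ref{lem: DNDWt-TrivBd} (evaluated at $s=-1/2$ and summed over the stability range), not to clearing Euler denominators.
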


\begin{remark}
	Our primary goal is to \emph{reveal the structural reason} for the bounds of the dual weight functions. Our main discovery is the \emph{quadratic elementary functions} given in \eqref{eq: QEleF}, which are ``building blocks'' of the Bessel functions of the relevant representations. See \S \ref{sec: Outline} for more details. Further extension of our method to the non-dihedral supercuspidal and Steinberg $\pi_0$ requires only plugging in the integral representation of the Bessel functions of such $\pi_0$, analogous to \cite[Theorem 1.6]{Wu24+}.
\end{remark}

\begin{remark}
	Specializing to $\F = \Q_p$ and $\Pi = \id \boxplus \id \boxplus \id$, Theorem \ref{thm: Main} corresponds to the main local non-archimedean computation of the recent work of Hu--Petrow--Young \cite{HPY25+} in the case $p \neq 2$.
\end{remark}

	\subsection{Notation and Convention}
	
	For a locally compact group $G$, let $\widehat{G}$ be the topological dual of unitary irreducible representations. For $\pi \in \widehat{G}$, we write $V_{\pi}$ for the underlying Hilbert space, and write $V_{\pi}^{\infty} \subset V_{\pi}$ for the subspace of smooth vectors if $G$ carries extra structure to make sense of the notion.
	
	Throughout the paper $\F$ is a local field of characteristic $0$, with residual characteristic $\neq 2$. Let $\norm_{\F}$ (resp. $v_{\F}$) be the valuation (resp. normalized additive valuation) of $\F$. Fix $\psi$ an additive character of conductor exponent $0$ and normalize the measures accordingly. The valuation ring of $\F$ is $\vO_{\F}$, while the valuation ideal is $\vP_{\F}$. We choose a uniformizer $\varpi_{\F} \in \vP_{\F}-\vP_{\F}^2$. Different choices of $\varpi_{\F}$ give different \emph{ramified} quadratic extensions of $\F$. Write $\gp{G}_d := \GL_d$ for simplicity, introduce some compact open subgroups of $\gp{G}_2(\F)$ as
	$$ \gp{K} := \GL_2(\vO_{\F}); \quad \gp{K}_0[\vP_{\F}^n] := \left\{ \begin{pmatrix} a & b \\ c & d \end{pmatrix} \in \gp{K} \ \middle| \ c \in \vP_{\F}^n \right\}, \ \forall n \in \Z_{\geq 0}; $$
	and some algebraic subgroups of $\gp{G}_2(\F)$ as
	$$ \gp{Z} = \gp{Z}_2(\F) := \left\{ z \id_2 \ \middle| \ z \in \F^{\times} \right\}, \quad  \gp{N}_2(\F) := \left\{ n(x) := \begin{pmatrix} 1 & x \\ & 1 \end{pmatrix} \ \middle| \ x \in \F \right\}, $$
	$$ \gp{A}_2(\F) := \left\{ \begin{pmatrix} t_1 & \\ & t_2 \end{pmatrix} \ \middle| \ t_1,t_2 \in \F^{\times} \right\}, \quad \gp{B}_2(\F) := \gp{A}_2(\F) \gp{N}_2(\F). $$
	For integers $n,m \geq 1$ we write $\Sch(n \times m, \F)$ for the space of Schwartz--Bruhat functions on $\Mat(n \times m, \F)$ the $n \times m$ matrices with entries in $\F$. The (inverse) $\psi$-Fourier transform is denoted and defined by
	$$ \widehat{\Psi}(X) = \OFour_{\psi}(\Psi)(-X) = \int_{\Mat(n \times m, \F)} \Psi(Y) \psi \left( \Tr(XY^T) \right) \ud Y. $$
	If no confusion occurs, we may omit $\psi$ from the notation. If this is the case, then the inverse Fourier transform is denoted by $\invOFour = \OFour_{\overline{\psi}} = \OFour_{\psi^{-1}}$. We introduce some elementary operators on the space of functions on $\F^{\times}$:
\begin{itemize}
	\item For functions $\phi$ on $\F^{\times}$, its \emph{extension} by $0$ to $\F$ is denoted by $\Ext(\phi)$, and its \emph{inverse} is $\mathrm{Inv}(\phi)(t) := \phi(t^{-1})$; for functions $\phi$ on $\F$, its \emph{restriction} to $\F^{\times}$ is denoted by $\Rem(\phi)$, and the operator $\Inv$ is 
	$$ \Inv = \Ext \circ \mathrm{Inv} \circ \Rem. $$
	\item For $s \in \C$, $\mu \in \widehat{\F^{\times}}$ and functions $\phi$ on $\F$, we introduce the operator $\Mult_s(\mu)$ by
	$$ \Mult_s(\mu)(\phi)(t) = \phi(t) \mu(t) \norm[t]_{\F}^s. $$
	\item For $\delta \in \F^{\times}$ we introduce the operator $\Trans(\delta)$ by
	$$ \Trans(\delta)(\phi)(y) = \phi(y \delta). $$
	\item Let $I_n = I_{n,\F}: \Cont(\F^{\times}) \to \Cont(\GL_n(\F))$ be given by $I_n(h)(g) := h(\det g)$.
\end{itemize}
	These notation apply to finite (field) extensions of $\F$.
	
	We introduce the standard involution of \emph{inverse-transpose} on $\GL_n(R)$ as $g^{\iota} := {}^tg^{-1}$.
	
	If $(U,\ud u)$ is a measured space with finite total mass, we introduce the \emph{normalized integral} as
\begin{equation} \label{eq: NormInt} 
	\oint_U f(u) \ud u := \frac{1}{\Vol(U, \ud u)} \int_U f(u) \ud u. 
\end{equation}
	For $n \in \Z_{\geq 1}$ we will frequently perform the following \emph{process of regularization} to an integral
\begin{equation} \label{eq: LnReg}
	\int_{\F} f(y) \ud y = \int_{\F} \left( \oint_{\vO_{\F}} f(y(1+\varpi_{\F}^nx)) \ud x \right) \ud y,
\end{equation}
	which we shall refer to as the \emph{level $n$ regularization (with respect) to $\ud y$}.

	\subsection{Outline of Proof}
	\label{sec: Outline}
	
	From the local main result \cite[Theorem 1.4]{Wu24+} the weight $h(\pi)$ and the dual weight $\widetilde{h}(\chi)$ functions are related to each other by a (hidden) relative orbital integral $H(y)$ via
\begin{equation} \label{eq: WtFviaBOI}
	h(\pi) = \int_{\F^{\times}} H(y) \cdot \BesselF_{\widetilde{\pi},\psi^{-1}} \begin{pmatrix} & -y \\ 1 & \end{pmatrix} \frac{\ud^{\times}y}{\norm[y]_{\F}}, \quad \widetilde{h}(\chi) = \int_{\F^{\times}} \psi(-y) \chi^{-1}(y) \norm[y]_{\F}^{-\frac{1}{2}} \cdot \widetilde{\Vor}_{\Pi}(H)(y) \ud^{\times} y
\end{equation}
	where $\BesselF_{\widetilde{\pi},\psi^{-1}}$ is the Bessel function of the contragredient representation $\widetilde{\pi}$ in the sense of \cite[\S 3.5]{BM05}, and $\widetilde{\Vor}_{\Pi}$ is the extended Voronoi transform characterized by the equations
	$$ \widetilde{\VorH}_{\Pi} :=\widetilde{\Vor}_{\Pi} \circ \Mult_{1}, \quad \Mult_{-2}(\beta) \circ I_3 \circ \widetilde{\VorH}_{\Pi} = \invOFour \circ \Mult_{-1}(\beta^{\iota}) \circ I_3 $$
	for all smooth matrix coefficients $\beta$ of $\Pi$.
	
	We need to choose $H(y)$ so that $h(\pi) \geq 0$ selects a \emph{short family} containing $\pi_0$. In other words the weight function $h(\pi)$ should be an approximation of the characteristic functions of $\{ \pi_0 \}$. From the first formula of \eqref{eq: WtFviaBOI} and the orthogonality of Bessel functions, one should expect that any such $H(y)$ is an approximation of $\BesselF_{\pi_0,\psi}\begin{pmatrix} & -y \\ 1 & \end{pmatrix}$. If $\pi_0$ is supercuspidal, the asymptotic analysis of both functions (in \S \ref{sec: ROI} and Lemma \ref{lem: TestFAsymp}) show that the two functions can be \emph{equal}. In general, we construct $H(y)$ from some test function $\phi_0^{\iota}*\phi_0$ on $\PGL_2(\F)$ of positive type in order to ensure $h(\pi) \geq 0$; we also require $\phi_0$ to be \emph{left covariant} with respect to some character of an open compact subgroup, which determines the (minimal $\gp{K}$-)\emph{type} of $\pi_0$ in the sense of \cite{MP94}. We use $\phi_0$ to deduce some integral representation \eqref{eq: TestFROI} of $H(y)$ which approximates the one of $\BesselF_{\pi_0,\psi}\begin{pmatrix} & -y \\ 1 & \end{pmatrix}$ obtained in \cite[Theorem 1.6]{Wu24+}. The integral representation of $H(y)$ in all cases is summarized in Corollary \ref{cor: TestFROI}, and is the departure point of our \emph{refined} analysis of the dual weight function $\widetilde{h}(\chi)$ described below.
	
	Our key observation on the dual weight function is the following decomposition into three parts. The first part is the contribution from $H_{\infty}(y)$, essentially the restriction of $H(y)$ to $v_{\F}(y) \leq - \cond(\pi_0)$ (see \eqref{eq: StabPar}). In this region $H_{\infty}(y)$ has a \emph{stable} behavior regardless $\pi_0$, i.e., the \emph{germ} of any Bessel relative orbital integral at infinity. To treat the corresponding $\widetilde{h}_{\infty}(\chi)$ we crucially rely on the extension of the Voronoi--Hankel transform developed in our previous paper \cite[Theorem 1.3]{Wu24+}. The remaining part $\widetilde{h}_c(\chi)$ corresponding to $H_c := H-H_{\infty}$ can be written as $\widetilde{h}_c(\chi) = \widetilde{h}_c^+(\chi) + \widetilde{h}_c^-(\chi)$, where 
	$$ \widetilde{h}_c^+(\chi) = \int_{\vO_{\F}} \chi^{-1}(y) \norm[y]_{\F}^{-\frac{1}{2}} \cdot \widetilde{\Vor}_{\Pi}(H)(y) \ud^{\times} y, \quad \widetilde{h}_c^-(\chi) = \int_{\F-\vO_{\F}} \psi(-y) \chi^{-1}(y) \norm[y]_{\F}^{-\frac{1}{2}} \cdot \widetilde{\Vor}_{\Pi}(H)(y) \ud^{\times} y. $$
	The bound of $\widetilde{h}_c^+$ is offered by the local functional equations via Lemma \ref{lem: LaurentTruncF}. The method via the local functional equations can also offer a crude bound in Lemma \ref{lem: DWt-TrivBd} of $\widetilde{h}_c^-$ when $\cond(\pi_0) \ll \cond(\Pi)$. 
	
	To \emph{refine} the bound of $\widetilde{h}_c^-$ in the case $\cond(\pi_0) \gg \cond(\Pi)$ we observe that $H_c(y)$ is a linear combination of translations of the following \emph{quadratic elementary functions} (see Definition \ref{def: QEleF1} \& \ref{def: QEleF2}, \eqref{eq: FPTestFNR0}-\eqref{eq: FPTestFNR2Bis} \& \eqref{eq: FPTestFR0}-\eqref{eq: FPTestFR2} for more details)
\begin{equation} \label{eq: QEleF}
	F_n, G_n(y^2) = \id_{v(y)=-n} \cdot \sideset{}{_{\pm}} \sum \eta_{\bL/\F}(\pm y) \psi(\pm y), 
\end{equation}
where $\bL/\F$ is the quadratic algebra extension associated with the type of $\pi_0$, and $\eta_{\bL/\F}$ is the corresponding quadratic character of $\F^{\times}$. We essentially change the order of integrations by first computing the dual weight of (the translates of) the quadratic elementary functions. In particular, the \emph{translation pattern} of the above quadratic elementary functions is responsible for the appearance of Katz's hypergeometric sums in our previous joint work with Xi \cite{WX23+}.

\begin{remark}
	Xi \cite{Xi23} has yet another transformation of the algebraic exponential sums in Petrow--Young's work \cite{PY19_CF}. This is mysterious and seems to lie beyond the framework of $\GL_2$ or $\GL_3$.
\end{remark}

\begin{remark}
	In the case of principal series $\pi_0$, our $\phi_0$ coincides with Nelson's test function in \cite{Ne20}. But we do not view it within the theory of micro-localized vectors, nor does our method of bounding the dual weight function rely on anything in that theory. Our choice of test function follows the idea of a previous work \cite{BFW21+} in the real case by analogy in terms of ``minimal $\gp{K}$-type'' \cite{MP94}.
\end{remark}

	\subsection{Acknowledgement}
	
	We thank Zhi Qi and Ping Xi for discussions related to the topics of the paper.

\section{Local Weight Transforms Revisited}

	We have expressed the local weight transforms in terms of the extended Voronoi transforms in \cite[Theorem 1.4]{Wu24+}. In that version, a test function $H(y)$ is some integral of a Kirillov function of a generic \emph{unitary} irreducible $\Pi$ (namely the restriction of a $W \in \Whi(\Pi^{\infty}, \psi)$ to the left-upper embedding of $\gp{G}_2(\F)$). As explained in \cite[(6.16)]{Wu24+}, the space of test functions $H(y)$ contains the Bessel orbital integrals of $\Cont_c^{\infty}(\gp{G}_2(\F))$. We shall restrict to the latter subspace of test functions and get more refined information on the local weight transforms in the case of trivial central characters.

	\subsection{Relative Orbital Integrals}
	\label{sec: ROI}
	
	For any $m \in \Z_{\geq 0}$ we introduce a function $E_m \in \Cont_c^{\infty}(\F^{\times})$ supported in the subset of \emph{square} elements of $\F^{\times}$ given by
\begin{equation} \label{eq: EleF}
	E_m(y^2) := \id_{v_{\F}(y)=-m} \cdot \norm[y]_{\F} \sum_{\pm} \int_{\pm1 + \vP_{\F}^{\lfloor \frac{m}{2} \rfloor}} \psi \left( y (u+u^{-1}) \right) \ud u.
\end{equation}
	Recall that for any $f \in \Cont_c^{\infty}(\GL_2(\F))$, the Bessel orbital integral \cite[(5.17)]{Wu24+} is given by
\begin{equation} \label{eq: BesselOrbInt}
	h(y) = \int_{\F^{\times}}  \int_{\F^2} f \left( \begin{pmatrix} 1 & x_1 \\ & 1 \end{pmatrix} \begin{pmatrix} & -y \\ 1 & \end{pmatrix} \begin{pmatrix} 1 & x_2 \\ & 1 \end{pmatrix} \begin{pmatrix} z & \\ & z \end{pmatrix} \right) \psi(-x_1-x_2) \ud x_1 \ud x_2 \ud^{\times} z.
\end{equation}
\begin{proposition} \label{prop: BesselOrbInt}
	As $f$ traverses $\Cont_c^{\infty}(\gp{G}_2(\F))$, the Bessel orbital integrals $h$ traverses
	$$ \Cont_c^{\infty}(\F^{\times}) \bigoplus \C E_{\geq n} $$
where $n \in \Z_{\geq 0}$ can be chosen arbitrarily and we have written 
\begin{equation} \label{eq: EleFBis}
	E_{\geq n}(y^2) := \sideset{}{_{m=n}^{\infty}} \sum E_m(y^2) = \id_{v_{\F}(y) \leq -n} \cdot \norm[y]_{\F} \sideset{}{_{\pm}} \sum \int_{\pm1 + \vP_{\F}^{\lfloor -\frac{v_{\F}(y)}{2} \rfloor}} \psi \left( y (u+u^{-1}) \right) \ud u. 
\end{equation}
\end{proposition}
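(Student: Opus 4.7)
My plan is to split the analysis of $h(y)$ according to two regimes for $v_{\F}(y)$, corresponding to different geometric regions of the orbit, and separately treat the direct and reverse inclusions between the image and the target space.

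In the \emph{bounded regime} $v_{\F}(y) \geq -n$, smoothness of $h$ is immediate from that of $f$, and its support is bounded away from $0$: the determinantal constraint $\det g = z^2 y \in \det(\mathrm{supp}\, f)$, combined with the upper bound on $\norm[z]_{\F}$ coming from the support of $f$ along the $c$-entry, forces $\norm[y]_{\F}$ to stay above a positive constant depending only on $f$. Hence for $n$ large enough in terms of $\mathrm{supp}\, f$, the restriction of $h$ to this regime lies in $\Cont_c^{\infty}(\F^{\times})$.

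The heart of the matter is the \emph{asymptotic regime} $v_{\F}(y) < -n$. When $y = w^2$ is a square I would perform the change of variables $x_1 = wu$, $x_2 = w/u + \beta$, converting the phase into $\psi(-w(u + u^{-1}) - \beta)$ and the matrix into
$$ g = \begin{pmatrix} zwu & zwu\beta \\ z & z(w/u + \beta) \end{pmatrix}. $$
Substituting $z' = zw$ (pinned to a compact set by the determinantal constraint), the lower-left entry $z'/w$ and the correction $z'\beta/w$ become small as $\norm[w]_{\F} \to \infty$, and by local constancy of $f$ they disappear from $f(g)$ once $v_{\F}(w)$ is sufficiently negative. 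The $\beta$-integral is then a partial Fourier transform along the unipotent direction and the $z'$-integral is a Mellin-type integral, jointly producing
$$ h(y) = \norm[w]_{\F} \int_{\F^\times} \phi(u) \psi(-w(u + u^{-1}))\, \ud u, $$
where $\phi(u) := \int\int f \begin{pmatrix} z'u & z'u\beta \\ 0 & z'/u \end{pmatrix} \psi(-\beta)\, \ud\beta\, \ud^{\times}z'$ is a locally constant function. Crucially, the change of variable $z' \mapsto -z'$ inside $\phi$ gives $\phi(-u) = \phi(u)$, so $\phi(+1) = \phi(-1) =: c_f$; the non-archimedean stationary phase of the outer $u$-integral then localizes the support to $\pm 1 + \vP_{\F}^{\lfloor m/2 \rfloor}$ with $m = -v_{\F}(w)$, yielding exactly $c_f \cdot E_{m}(y)$. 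When $y$ is not a square, the saddle points $x_1^2 = y$ have no solutions in $\F$, and a direct cancellation argument using local constancy of $f$ forces $h(y) = 0$ for $\norm[y]_{\F}$ large.

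Surjectivity follows from two independent constructions: any $h_1 \in \Cont_c^{\infty}(\F^{\times})$ is realized by choosing $f$ concentrated on the open Bruhat cell, parameterized by $(y, x_1, x_2, z)$ with arbitrary smooth compactly supported profile; and a nonzero multiple of $E_{\geq n}$ is produced by choosing $f$ near the upper-triangular Borel so that $c_f \neq 0$ in the identification above. The main technical obstacle I foresee is the exact matching, in the asymptotic computation, between the support $\pm 1 + \vP_{\F}^{\lfloor m/2 \rfloor}$ in the definition of $E_m$ and the region produced by the stationary phase analysis of $\phi(u)\psi(-w(u+u^{-1}))$: this requires carefully relating the local-constancy scale of $f$ to the oscillation scale of the outer phase, in particular tracking how the suppression of the corrections $z'\beta/w$ and $z'/w$ by local constancy interacts with both scales.
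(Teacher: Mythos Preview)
Your proposal is correct and follows essentially the same route as the paper: both reduce the large-$\norm[y]$ behavior of $h$ to an oscillatory integral with phase $\psi(-w(u+u^{-1}))$, localize to $u\in\pm1+\vP_{\F}^{\lfloor m/2\rfloor}$ via non-archimedean stationary phase (what the paper calls ``level $\lceil m/2\rceil$ regularization''), and use a symmetry to identify the two constants at $u=\pm1$. The only organizational difference is that the paper partitions $f$ by the Bruhat covering $\gp{G}_2(\F)=\gp{B}_2(\F)w\gp{N}_2(\F)\cup\gp{B}_2(\F)\gp{N}_2(\F)^t$ and works in the coordinates of the small cell, whereas you partition $h$ by the size of $\norm[y]$ and do the change of variables directly---these are equivalent, and the technical obstacle you flag (matching the exact exponent $\lfloor m/2\rfloor$) is handled in the paper precisely by choosing the regularization level $n=\lceil m/2\rceil$.
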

\begin{proof}
	If $f$ traverses $\Cont_c^{\infty}(\gp{B}_2(\F) w \gp{N}_2(\F))$, then clearly $h$ traverses $\Cont_c^{\infty}(\F^{\times})$. Consider a function $f \in \Cont_c^{\infty}(\gp{B}_2(\F) \gp{N}_2(\F)^t)$ and let $\phi \in \Cont_c^{\infty}(\F^{\times} \times \F)$ be defined by
	$$ \phi(y,x) := \int_{\F^{\times}}  \int_{\F^2} f \left( \begin{pmatrix} 1 & x_1 \\ & 1 \end{pmatrix} \begin{pmatrix} y & \\ & 1 \end{pmatrix} \begin{pmatrix} 1 & \\ x & 1 \end{pmatrix} \begin{pmatrix} z & \\ & z \end{pmatrix} \right) \psi(-x_1) \ud x_1 \ud^{\times} z. $$
	From the equation of matrices
	$$ \begin{pmatrix} & -y \\ 1 & \end{pmatrix} \begin{pmatrix} 1 & x_2 \\ & 1 \end{pmatrix} = \begin{pmatrix} 1 & -y/x_2 \\ & 1 \end{pmatrix} \begin{pmatrix} y/x_2 & \\ & x_2 \end{pmatrix} \begin{pmatrix} 1 & \\ 1/x_2 & 1 \end{pmatrix} $$
	we easily deduce that the relative orbital integral \eqref{eq: BesselOrbInt} is given by
	$$ h(y) = \int_{\F} \phi \left( \frac{y}{x_2^2}, \frac{1}{x_2} \right) \psi \left( - \frac{y}{x_2} - x_2 \right) \ud x_2 = \int_{\F} \phi(yu^2, u) \psi(-yu - u^{-1}) \norm[u]_{\F}^{-2} \ud u. $$
	Let $\tau \in \{ 1, \varepsilon \}$, we obtain by an obvious change of variables
	$$ h(\tau y^2) = \norm[y]_{\F} \int_{\F} \phi(\tau u^2, y^{-1}u) \psi(-y(\tau u + u^{-1})) \norm[u]_{\F}^{-2} \ud u. $$
	Choose $i_0 \in \Z_{\geq 1}$ and $k_0, m_0 \in \Z$ such that:
\begin{itemize}
	\item $\phi(y(1+\delta_1), x(1+\delta_2)) = \phi(y,x), \quad \forall \ \delta_1, \delta_2 \in \vp^{i_0}$;
	\item for any $y \in \vp^{2+2k_0}$ and any $x \in \F$, we have $\phi(y,x) = 0$;
	\item for any $x \in \vp^{m_0}$ and any $y \in \F^{\times}$, we have $\phi(y,x) = \phi(y,0)$.
\end{itemize}
	Let $v_{\F}(y) = -m$ for some $m \geq \max(m_0, 2i_0+k_0, 2i_0)$, and take any $n \in \Z_{\geq 1}$ satisfying
	$$ 2n \geq m+k_0, \quad n \geq i_0, \quad m-n \geq i_0. $$
	Performing the level $n$ regularization to $\ud y$ we get
\begin{align*}
	h(\tau y^2) &= \norm[y]_{\F} \int_{\F-\vP_{\F}^{1+k_0}} \phi(\tau u^2, y^{-1}u) \left[ \oint_{\vO_{\F}} \psi \left( -y \left( \tau u(1+\varpi_{\F}^n x) + u^{-1}(1+\varpi_{\F}^n x)^{-1} \right) \right) \ud x \right] \norm[u]_{\F}^{-2} \ud u \\
	&= \norm[y]_{\F} \int_{\F-\vP_{\F}^{1+k_0}} \phi(\tau u^2, y^{-1}u) \psi(-y(\tau u + u^{-1})) \left[ \oint_{\vO_{\F}} \psi \left( -y \varpi_{\F}^n (\tau u - u^{-1}) x \right) \ud x \right] \norm[u]_{\F}^{-2} \ud u.
\end{align*}
	The non-vanishing of the inner integral implies
	$$ v_{\F}(\tau u - u^{-1}) \geq m-n \geq i_0, $$
which can be satisfied only if $\tau$ is a square modulo $\vP_{\F}$, i.e., $\tau = 1$. Moreover, we have $v_{\F}(u-u^{-1}) \geq i_0 \Leftrightarrow u \in \pm 1 + \vP_{\F}^{i_0}$. We therefore get $h(\varepsilon y^2) = 0$ and
\begin{align*} 
	h(y^2) &= \id_{k_0 \geq 0} \phi(1, 0) \cdot \norm[y]_{\F} \sum_{\pm} \int_{\pm 1 + \vP_{\F}^{i_0}} \psi(-y(u + u^{-1})) \ud u \\
	&= \id_{k_0 \geq 0} \phi(1, 0) \cdot \norm[y]_{\F} \sum_{\pm} \int_{\pm 1 + \vP_{\F}^{i_0}} \psi(-y(u + u^{-1})) \left[ \oint_{\vO_{\F}} \psi \left( -y \varpi_{\F}^n (u - u^{-1}) x \right) \ud x \right] \ud u,
\end{align*}
where we have performed the level $n = \lceil m/2 \rceil \geq i_0$ regularization to $\ud u$. Again the inner integral is non-vanishing only if
	$$ v_{\F}(u - u^{-1}) \geq m-n = \lfloor m/2 \rfloor \quad \Leftrightarrow \quad u \in \pm 1 + \vP_{\F}^{\lfloor m/2 \rfloor}. $$
	We have obtained
	$$ h(y^2) = \id_{k_0 \geq 0} \phi(1, 0) \cdot \norm[y]_{\F} \sum_{\pm} \int_{\pm 1 + \vP_{\F}^{\lfloor -\frac{v_{\F}(y)}{2} \rfloor}} \psi(-y(u + u^{-1})) \ud u, $$
	hence $h$ lies in the desired space of functions. Applying a smooth partition of unity to the open covering 
	$$ \gp{G}_2(\F) = \gp{B}_2(\F) w \gp{N}_2(\F) \bigcup \gp{B}_2(\F) \gp{N}_2(\F)^t $$
	we conclude the proof.
\end{proof}

\begin{definition} \label{def: EBOI}
	We call $E_{\geq n}$ the \emph{elementary Bessel orbital integrals}, abbreviated as {\rm EBOI}s.
\end{definition}

	\subsection{Voronoi--Hankel Transforms of EBOIs}
	
	We notice that the Mellin transform of $E_m$ is simple.
\begin{lemma} \label{lem: MellinStEBOI}
	Let $m \geq 2$ and $\chi$ be a quasi-character. We have
	$$ \int_{\F^{\times}} E_m(y) \chi(y) \ud^{\times} y = \id_{\cond(\chi)=m} \cdot \zeta_{\F}(1) \gamma(1/2,\chi^{-1},\psi)^2. $$
\end{lemma}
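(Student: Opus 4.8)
I would compute the Mellin transform of $E_m$ by first removing the square, then reversing the order of integration so that the inner integral becomes a Tate integral, and finally invoking the stationary-phase evaluation of $p$-adic Gauss sums to pin down the constant. Since $E_m$ is supported on $(\F^{\times})^2$ and the squaring map $\F^{\times}\to(\F^{\times})^2$ is two-to-one (its kernel is $\{\pm 1\}$, and $\vO_{\F}^{\times}/(\vO_{\F}^{\times})^2$ has order $2$ because $q$ is odd), the substitution $y=t^2$ together with an interchange of the compactly supported integrations gives
$$ \int_{\F^{\times}} E_m(y)\chi(y)\,\ud^{\times}y = \frac{q^m}{2}\sum_{\pm}\int_{\pm 1+\vP_{\F}^{\lfloor m/2\rfloor}}\left(\int_{v_{\F}(t)=-m}\psi\bigl(t(u+u^{-1})\bigr)\chi^2(t)\,\ud^{\times}t\right)\ud u. $$
For $u\in\pm 1+\vP_{\F}^{\lfloor m/2\rfloor}$ the element $c:=u+u^{-1}$ is a unit close to $\pm 2$ (and $2\in\vO_{\F}^{\times}$), so the inner integral is a Tate-type Gauss sum: after $t\mapsto c^{-1}\varpi_{\F}^{-m}t$ it equals $\zeta_{\F}(1)\,\chi^2(\varpi_{\F})^{-m}\chi^2(c)^{-1}\,\tau_m(\chi^2)$, where $\tau_m(\nu):=\int_{\vO_{\F}^{\times}}\nu(t)\psi(\varpi_{\F}^{-m}t)\,\ud t$ for additive Haar $\ud t$ and the $\zeta_{\F}(1)$ records $\ud^{\times}t=\zeta_{\F}(1)\,\ud t$ on $\vO_{\F}^{\times}$. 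Now $\tau_m(\nu)$ vanishes unless the conductor exponent $a(\nu)$ equals $m$; since $q$ is odd, squaring is an automorphism of $1+\vP_{\F}^{j}$ for every $j\ge 1$, so $a(\chi^2)=a(\chi)$ once $a(\chi)\ge 2$, and the vanishing condition becomes exactly $\cond(\chi)=m$. This already produces the indicator in the statement and reduces the problem to the range $\cond(\chi)=m$.

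Assuming $\cond(\chi)=m$, I would then evaluate $\sum_{\pm}\int_{\pm 1+\vP_{\F}^{\lfloor m/2\rfloor}}\chi^2(u+u^{-1})^{-1}\,\ud u$. Writing $u=\varepsilon+w$ with $\varepsilon=\pm 1$ and $w\in\vP_{\F}^{\lfloor m/2\rfloor}$ gives $u+u^{-1}=2\varepsilon\bigl(1+\tfrac{w^2}{2(1+\varepsilon w)}\bigr)$ with $v_{\F}\bigl(\tfrac{w^2}{2(1+\varepsilon w)}\bigr)\ge 2\lfloor m/2\rfloor$. If $m$ is even this argument lies in $\vP_{\F}^{m}$, on which $\chi^2$ is trivial, so $\chi^2(u+u^{-1})=\chi^2(2)$ and the $u$-integral collapses to the constant $2q^{-m/2}\chi^2(2)^{-1}$. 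If $m$ is odd the argument lies in $\vP_{\F}^{m-1}\subseteq\vP_{\F}^{\lceil m/2\rceil}$, where $\chi^2(1+z)=\psi(\gamma_{\chi^2}z)$ for a $\gamma_{\chi^2}$ of valuation $-m$ (the stationary-phase description of a character of conductor $\ge 2$); substituting $w=\varpi_{\F}^{\lfloor m/2\rfloor}r$ then turns each $\varepsilon$-integral into the same explicit scalar multiple of a quadratic Gauss sum $G_q$ over the residue field, so the $\pm$-sum simply doubles it.

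Assembling the pieces, the left-hand side becomes $\zeta_{\F}(1)$ times an explicit product of powers of $q$, of $\chi(2)^{\pm 2}$ and $\chi(\varpi_{\F})^{-2m}$, of $\tau_m(\chi^2)$, and — for odd $m$ — of $G_q$; on the other side, Tate's local functional equation applied to the test function $\chi\cdot\id_{\vO_{\F}^{\times}}$ gives $\gamma(1/2,\chi^{-1},\psi)=\chi(\varpi_{\F})^{-m}q^{m/2}\tau_m(\chi)$. The claim thus reduces to a \emph{degenerate Hasse--Davenport relation} expressing $\tau_m(\chi^2)$ — respectively $G_q\cdot\tau_m(\chi^2)$ when $m$ is odd — through $\tau_m(\chi)^2$, which I would deduce from the stationary-phase formula for $\tau_m$: for $\cond(\chi)=m\ge 2$ there is $\gamma_\chi$ of valuation $-m$ with $\chi(1+x)=\psi(\gamma_\chi x)$ for $v_{\F}(x)\ge\lceil m/2\rceil$, one has $\gamma_{\chi^2}=2\gamma_\chi$, and $\tau_m(\chi)=q^{-\lceil m/2\rceil}\chi(-\varpi_{\F}^m\gamma_\chi)\psi(-\gamma_\chi)$ for even $m$ (with a residue-field quadratic Gauss sum appended when $m$ is odd). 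The main obstacle is precisely this last bookkeeping: for even $m$ it reduces after substitution to the tautology $\chi^2(-2\varpi_{\F}^m\gamma_\chi)\psi(-2\gamma_\chi)=\chi(2)^2\bigl(\chi(-\varpi_{\F}^m\gamma_\chi)\psi(-\gamma_\chi)\bigr)^2$, while for odd $m$ one must also check that the two a priori unrelated quadratic Gauss sums — the one from the $u$-integral and the one inside $\tau_m(\chi)$ — recombine correctly via $G_q^2=(-1)^{(q-1)/2}q$, which works out because the residue-field element governing both is $-\varpi_{\F}^m\gamma_\chi$.
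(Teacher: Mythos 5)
Your proof is correct in outline (and I checked the even-$m$ case in detail: the chain $\tfrac{q^m}{2}\cdot 2q^{-m/2}\chi^2(2)^{-1}\cdot\zeta_{\F}(1)\chi^2(\varpi_{\F})^{-m}\tau_m(\chi^2)$ together with $\tau_m(\chi^2)=q^{m/2}\chi(2)^2\tau_m(\chi)^2$ does collapse to $\zeta_{\F}(1)\bigl(q^{m/2}\chi(\varpi_{\F})^{-m}\tau_m(\chi)\bigr)^2=\zeta_{\F}(1)\gamma(1/2,\chi^{-1},\psi)^2$), but it takes a genuinely different and more laborious route than the paper's. You and the paper agree through the first display: undo the square $y=t^2$, interchange, and identify the inner $t$-integral as $\zeta_{\F}(1)\chi^2(\varpi_{\F})^{-m}\chi^2(c)^{-1}\tau_m(\chi^2)$ with $c=u+u^{-1}$, which already yields the $\id_{\cond(\chi)=m}$. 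From there you evaluate both pieces explicitly by stationary phase — the $u$-integral and $\tau_m(\chi^2)$ each produce residue-field quadratic Gauss sums when $m$ is odd — and then reassemble via a duplication identity $\tau_m(\chi^2)\leftrightarrow\tau_m(\chi)^2$, which is the one genuinely nontrivial step and requires the parity case split. The paper instead never opens up any Gauss sum: it observes that the companion double integral twisted by a non-square unit $\varepsilon$ vanishes identically (the level-$\lceil m/2\rceil$ regularization in $u$ would force $\varepsilon y^2u^2\equiv 1\pmod{\vP_{\F}}$, impossible), and adding that zero exactly \emph{undoes} the earlier $t\mapsto t^2$: the two terms together amount to integrating the un-squared variable over all of $\vO_{\F}^{\times}$, turning $\chi^2(t)$ into $\chi(z)$. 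After the further substitution $u\mapsto yu$ the double integral then separates as a perfect square of a single Gauss-type integral $\int_{\vO_{\F}^{\times}}\psi(t/\varpi_{\F}^m)\chi(t/\varpi_{\F}^m)\ud t=\gamma(1,\chi^{-1},\psi)$, and one invokes only the epsilon-factor scaling $q^m\gamma(1,\chi^{-1},\psi)^2=\gamma(1/2,\chi^{-1},\psi)^2$. So the paper's argument shows the square on the right-hand side is \emph{structural} — a literal Fubini factorization — rather than the output of a duplication formula, avoids the stationary-phase expansion and the odd/even bookkeeping entirely, and gets away with citing only [BuH06, Exercise 23.5] and [Wu14] for the single Gauss integral. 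Your approach is valid and self-contained, but it proves more than is needed; the paper's $\varepsilon$-twist trick is the observation worth internalizing here, and it recurs in the later part of this proof (Prop.~\ref{prop: BesselOrbInt}) and in Lemma \ref{lem: TestFMellin}.
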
 
\begin{proof}
	Applying the change of variables $y \mapsto y^2$ and the level $\lceil m/2 \rceil$ regularization to $\ud u$ we get
\begin{multline*}
	\int_{\F^{\times}} E_m(y) \chi(y) \ud^{\times} y = \frac{1}{2} \int_{\varpi_{\F}^{-m}\vO_{\F}^{\times}} E_m(y^2) \chi^2(y) \ud^{\times}y \\
	= q^m \int_{1+\vP_{\F}^{\lfloor \frac{m}{2} \rfloor}} \left( \int_{\varpi_{\F}^{-m} \vO_{\F}^{\times}} \psi \left( y(u+u^{-1}) \right) \chi^2(y) \ud^{\times} y \right) \ud u \\
	= q^m \int_{\vO_{\F}^{\times}} \left( \int_{\varpi_{\F}^{-m} \vO_{\F}^{\times}} \psi \left( y(u+u^{-1} \right) \chi^2(y) \ud^{\times} y \right) \ud u \\
	= \zeta_{\F}(1) q^m \int_{\vO_{\F}^{\times}} \int_{\vO_{\F}^{\times}} \psi \left( \frac{y^2u+u^{-1}}{\varpi_{\F}^m} \right) \chi^2 \left( \frac{y}{\varpi_{\F}^m} \right) \ud y \ud u.
\end{multline*}
	While the level $\lceil m/2 \rceil$ regularization to $\ud u$ also implies
	$$ \int_{\vO_{\F}^{\times}} \int_{\vO_{\F}^{\times}} \psi \left( \frac{\varepsilon y^2u+u^{-1}}{\varpi_{\F}^m} \right) \chi \left( \frac{\varepsilon y^2}{\varpi_{\F}^{2m}} \right) \ud y \ud u = 0, $$
	we can sum the above two equations to get
\begin{multline*} 
	\int_{\F^{\times}} E_m(y) \chi(y) \ud^{\times} y = \zeta_{\F}(1) q^m \int_{\vO_{\F}^{\times}} \int_{\vO_{\F}^{\times}} \psi \left( \frac{yu+u^{-1}}{\varpi_{\F}^m} \right) \chi \left( \frac{y}{\varpi_{\F}^{2m}} \right) \ud y \ud u \\
	= \zeta_{\F}(1) q^m \left( \int_{\vO_{\F}^{\times}} \psi \left( \frac{t}{\varpi_{\F}^m} \right) \chi \left( \frac{t}{\varpi_{\F}^m} \right) \ud t \right)^2.
\end{multline*}
	The last integral was studied in \cite[Proposition 4.6]{Wu14}, which is non-vanishing if and only if $\cond(\chi)=m$. Its relation with the local gamma factor 
	$$ \int_{\vO_{\F}^{\times}} \psi \left( \frac{t}{\varpi_{\F}^m} \right) \chi \left( \frac{t}{\varpi_{\F}^m} \right) \ud t = \gamma(1, \chi^{-1}, \psi) $$
	is the content of \cite[Exercise 23.5]{BuH06}.
\end{proof}

\begin{proposition} \label{prop: StabRang}
	There is $a = a(\Pi) \in \Z_{\geq 2}$, called the \emph{stability barrier} of $\Pi$, such that for any $m \geq a$
	$$ \VorH_{\Pi,\psi} \circ \Mult_{-1}(E_m)(t) = \psi(t) \cdot \id_{\varpi^{-m} \vO_{\F}^{\times}}(t), \quad \widetilde{\VorH}_{\Pi,\psi} \circ \Mult_{-1}(E_{\geq m})(t) = \psi(t) \cdot \id_{\vP_{\F}^m}(t^{-1}). $$
	Moreover, we have $a \leq \max(2\cond(\Pi), 1)$. More precisely, we have:
\begin{itemize}
	\item[(1)] If $\Pi$ is equal to or is included in $\mu_1 \boxplus \mu_2 \boxplus \mu_3$ we can take $a = \max(2\cond(\mu_1), 2\cond(\mu_2), 2\cond(\mu_3),1)$;
	\item[(2)] If $\Pi = \pi \boxplus \mu$ for a supercuspidal $\pi$ of $\gp{G}_2(\F)$ we can take $a = \max(\cond(\pi), 2\cond(\mu))$;
	\item[(3)] If $\Pi$ is supercuspidal we have $a \leq 2 \cond(\Pi)$.
\end{itemize}
\end{proposition}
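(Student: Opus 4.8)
The plan is to pass to Mellin transforms, where both identities collapse to a single statement about local gamma factors. By Lemma~\ref{lem: MellinStEBOI}, the Mellin transform $\chi\mapsto\int_{\F^{\times}}\Mult_{-1}(E_m)(y)\,\chi(y)\,\ud^{\times}y$ is supported on the quasi-characters $\chi$ with $\cond(\chi)=m$, where it equals $\zeta_{\F}(1)$ times the square of a $\GL_1$ gamma factor of $\chi$ (the factor $\norm_{\F}^{-1}$ from $\Mult_{-1}$ only shifts the argument and leaves conductors untouched). On the other side, the Gauss-sum evaluation $\int_{\vO_{\F}^{\times}}\psi(t/\varpi_{\F}^{m})\,\chi(t/\varpi_{\F}^{m})\,\ud t=\gamma(1,\chi^{-1},\psi)$ recalled inside the proof of Lemma~\ref{lem: MellinStEBOI} shows that $\psi(t)\cdot\id_{\varpi_{\F}^{-m}\vO_{\F}^{\times}}(t)$ has Mellin transform supported on $\cond(\chi)=m$ and equal there to $\zeta_{\F}(1)$ times a \emph{single} $\GL_1$ gamma factor of $\chi$; summing these identities over $k\ge m$ then recovers $\psi(t)\cdot\id_{\vP_{\F}^{m}}(t^{-1})$ from $E_{\geq m}=\sum_{k\geq m}E_k$. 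Granting that $\VorH_{\Pi,\psi}$ (equivalently $\widetilde{\VorH}_{\Pi,\psi}$ on their common domain) acts on Mellin transforms as multiplication by a factor $\gamma_\Pi(\chi)$ composed with an inversion of the character, both assertions reduce to one identity of the shape $\gamma_\Pi(\chi)\cdot(\GL_1\text{ gamma factor})^2=(\GL_1\text{ gamma factor})$ valid whenever $\cond(\chi)=m\ge a$.

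The second step identifies $\gamma_\Pi(\chi)$ and recasts this as a stability statement. Unwinding the defining functional equation $\Mult_{-2}(\beta)\circ I_3\circ\widetilde{\VorH}_\Pi=\invOFour\circ\Mult_{-1}(\beta^{\iota})\circ I_3$ against the smooth matrix coefficients $\beta$ of $\Pi$ --- restricting the $3\times 3$ Fourier transform $\invOFour$ along the line $I_3(\GL_1)$ of scalar matrices and invoking the Godement--Jacquet local functional equation for $\GL_3\times\GL_1$ --- identifies $\gamma_\Pi(\chi)$ with $\gamma(s_0,\Pi\times\chi,\psi)$ at an appropriate point $s_0$ up to a harmless normalization; this is essentially the content of the extended Voronoi--Hankel transform of \cite[Theorem~1.3]{Wu24+}, which I would simply quote. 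Feeding in the Mellin transforms computed above and using the $\GL_1$ functional equation $\gamma(s,\chi,\psi)\gamma(1-s,\chi^{-1},\psi^{-1})=1$, the identity to prove becomes exactly that $\gamma(s_0,\Pi\times\chi,\psi)$ equals its \emph{stable value} $\gamma(s_0,\chi,\psi)^3$ --- the value dictated by the trivial central character of $\PGL_3$ --- once $\cond(\chi)\ge a$.

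The third step is therefore the stability of local constants, which also fixes $a$:
\begin{itemize}
\item[(1)] if $\Pi\subseteq\mu_1\boxplus\mu_2\boxplus\mu_3$, then $\gamma(s,\Pi\times\chi,\psi)=\prod_{i=1}^{3}\gamma(s,\mu_i\chi,\psi)$ identically, and a stationary-phase evaluation of Gauss sums shows each $\gamma(s,\mu_i\chi,\psi)$ collapses to $\gamma(s,\chi,\psi)$ (the discrepancy being a root number and a power of $q$ that cancel over the product since $\mu_1\mu_2\mu_3=1$) as soon as $\cond(\chi)\ge 2\cond(\mu_i)$, giving $a=\max(2\cond(\mu_1),2\cond(\mu_2),2\cond(\mu_3),1)$;
\item[(2)] if $\Pi=\pi\boxplus\mu$ with $\pi$ supercuspidal on $\GL_2(\F)$, then $\gamma(s,\Pi\times\chi,\psi)=\gamma(s,\pi\times\chi,\psi)\,\gamma(s,\mu\chi,\psi)$; the $\GL_2\times\GL_1$ factor reaches its stable value once $\cond(\chi)\ge\cond(\pi)$ and the $\GL_1$ factor once $\cond(\chi)\ge 2\cond(\mu)$, giving $a=\max(\cond(\pi),2\cond(\mu))$;
\item[(3)] if $\Pi$ is supercuspidal on $\GL_3(\F)$, the $\GL_3\times\GL_1$ stability of Jacquet--Piatetski-Shapiro--Shalika gives the stable value once $\cond(\chi)\ge 2\cond(\Pi)$, hence $a\le 2\cond(\Pi)$.
\end{itemize}
In every case $a\in\Z_{\ge 2}$ and $a\le\max(2\cond(\Pi),1)$, and the second identity of the proposition follows from the first by linearity in $m$, using that $\widetilde{\VorH}_{\Pi,\psi}$ extends $\VorH_{\Pi,\psi}$ and is additive over $\Mult_{-1}(E_{\geq m})=\sum_{k\geq m}\Mult_{-1}(E_k)$. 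The main obstacle is the exact bookkeeping in the second step --- the precise point $s_0$, the correct character ($\chi$ versus $\chi^{-1}$) and additive character ($\psi$ versus $\psi^{-1}$), and the normalizing power of $q$, so that the gamma factors cancel on the nose rather than up to a constant --- together with verifying, in the third step, that the thresholds are sharp enough to yield exactly the stated $a$ (notably the asymmetric $\cond(\pi)$, not $2\cond(\pi)$, in case (2), which relies on the finer $\GL_2\times\GL_1$ stability). Both are routine given \cite{Wu24+} and the stability literature, but carry all the technical weight.
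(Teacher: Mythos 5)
Your proposal is correct and follows essentially the same route as the paper's proof: pass to Mellin transforms via the local functional equation, reduce to the stability identity $\gamma(s,\Pi\times\chi,\psi)=\gamma(s,\chi,\psi)^3$ for $\cond(\chi)\geq a$, and read off the explicit $a$ from the effective $\GL_1$, $\GL_2\times\GL_1$ and $\GL_3\times\GL_1$ stability theorems combined with multiplicativity of the gamma factor. The only step the paper treats more carefully is the passage from $E_m$ to $E_{\geq m}$: since $E_{\geq m}$ is not compactly supported, the term-by-term summation is justified by the extension $\widetilde{\VorH}_{\Pi,\psi}$ as a limit of tempered distributions via \cite[Theorem 1.3]{Wu24+}, which you do cite, but for the gamma-factor identification rather than for this convergence step.
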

\begin{proof}
	By Lemma \ref{lem: MellinStEBOI} and the local functional equation, the integral
	$$ \int_{\F^{\times}} \VorH_{\Pi,\psi} \circ \Mult_{-1}(E_m)(t) \chi^{-1}(t) \norm[t]_{\F}^{-s} \ud^{\times}t = \gamma(s, \Pi \times \chi, \psi) \int_{\F^{\times}} E_m(y) \chi(y) \norm[y]_{\F}^{s-1} \ud^{\times}y $$
is vanishing for any $\chi$ with $\cond(\chi) \neq m$. By the \emph{stability} of the local gamma factors (see \cite[Proposition (2.2)]{JS85}, \cite[Theorem 23.8]{BuH06} and \cite[Exercise 23.5]{BuH06}), there is $a \in \Z_{\geq 2}$ depending only on $\Pi$ so that the factors 
	$$ \gamma(s, \Pi \times \chi, \psi) = \gamma(s, \chi, \psi)^3 $$ 
depend only on $\omega_{\Pi} = \id$ if $\cond(\chi) \geq a$. Hence for $\cond(\chi)=m \geq a$ we have by Lemma \ref{lem: MellinStEBOI}
	$$ \gamma(s, \Pi \times \chi, \psi) \int_{\F^{\times}} E_m(y) \chi(y) \norm[y]_{\F}^{s-1} \ud^{\times}y = \gamma(s, \chi, \psi)^3 \cdot \zeta_{\F}(1) \gamma(3/2-s,\chi^{-1},\psi)^2 = \zeta_{\F}(1) \gamma(s+1,\chi,\psi). $$
	One verifies easily the desired formula for $\VorH_{\Pi,\psi}(E_m)$ by comparing their Mellin transforms. The desired formula for $E_{\geq m}$ follows easily by taking limits in the sense of tempered distributions on $\Mat_3(\F)$ via $I_3: \Cont(\F^{\times}) \to \Cont(\GL_3(\F))$ by \cite[Theorem 1.3]{Wu24+}. In the ``moreover'' part, (1) and (2) follow from the effective versions of the stability theorems for $\gp{G}_1$ and $\gp{G}_2$ \cite[Theorem 23.8 \& 25.7]{BuH06} together with the multiplicativity of the local gamma factors. For (3) we can take $a = \max(2\cond(\Pi),6)$ by examining the proof in \cite[\S 2]{JS85}. Now that a supercuspidal $\Pi$ of $\gp{G}_3$ has $\cond(\Pi) \geq 3$, we conclude the last assertion.
\end{proof}
\begin{remark}
	It would be interesting to know a sharp bound for $a$, say in general for stability theorems for $\gp{G}_n \times \gp{G}_t$. It could follow from the work of Bushnell--Henniart--Kutzko \cite{BHK98}.
\end{remark}

\section{Local Weight Functions}

	\subsection{Choice of Test Functions}
	
	The target representation $\pi_0 \in \widehat{\PGL_2(\F)}$ are:
\begin{itemize}
	\item[(1)] (Split) $\pi(\chi_0, \chi_0^{-1})$ with a \emph{ramified} and $\RamCst_3$-tempered quasi-character $\chi_0$ of $\F^{\times}$;
	\item[(2)] (Special) the \emph{quadratic} twists $\mathrm{St}_{\eta}$ of the Steinberg representation $\mathrm{St}$;
	\item[(3)] (Dihedral) $\pi_{\beta}$ with a \emph{unitary} regular character $\beta$ of $\E^{\times}$, $\E/\F$ being a quadratic field extension.
\end{itemize}
	Note that $\mathrm{St}_\eta$ is a sub-representation of $\pi(\eta \norm_{\F}^{1/2}, \eta \norm_{\F}^{-1/2})$.
	
	Let $\bL$ be a separable quadratic algebra extension of $\F$. The non-trivial element of the group $\mathrm{Aut}_{\F}(\bL)$ is denoted by $\bL \to \bL, x \mapsto \bar{x}$. Define 
	$$ \Tr=\Tr_{\bL/\F}: \bL \to \F, \ x \mapsto x + \bar{x}; \quad \Nr=\Nr_{\bL/\F}: \bL^{\times} \to \F^{\times}, \ x \mapsto x \bar{x}; \quad \norm[x]_{\bL} := \norm[\Nr(x)]. $$
	We associate to each target $\pi_0$ a \emph{parameter} $(\bL, \beta)$ by:
\begin{itemize}
	\item[(1)] $\pi(\chi_0, \chi_0^{-1})$: Let $\bL \simeq \F \oplus \F$ and $\beta: \bL^{\times} \simeq \F^{\times} \times \F^{\times} \to \mathrm{S}^1, (t_1,t_2) \mapsto \chi_0(t_1t_2^{-1})$;
	\item[(2)] $\mathrm{St}_{\eta}$: Let $\bL \simeq \F \oplus \F$ and $\beta: \bL^{\times} \simeq \F^{\times} \times \F^{\times} \to \C^{\times}, (t_1,t_2) \mapsto \eta(t_1t_2^{-1}) \norm[t_1t_2^{-1}]_{\F}^{1/2}$;
	\item[(3)] $\pi_{\beta}$: Let $\bL = \E$ and $\beta$ be the obvious one.
\end{itemize}

\noindent We equip $\bL$ with the self-dual Haar measure $\ud z$ with respect to $\psi_{\bL} := \psi \circ \Tr$. Write 
	$$ \vO_{\bL} := \left\{ x \in \bL \ \middle| \ \Tr(x), \Nr(x) \in \vO_{\F} \right\}. $$
\begin{definition}
	Define $\vP_{\bL} := \varpi_{\bL} \vO_{\bL}$ and $\varpi_{\bL} := \varpi_{\F}$ if $\bL$ is split, otherwise $\varpi_{\bL}$ is a uniformizer of $\bL$. For any (quasi-)character $\beta$ of $\bL^{\times}$ we define its \emph{$\F$-norm-$1$ conductor} as
	$$ \cond_1(\beta) := \min \left\{ n \ \middle| \ \beta \left( \bL^1 \cap (1+\vP_{\bL}^n) \right) = \{ 1 \} \right\}. $$
	If $\cond_1(\beta) > 0$ we say that $\beta$ is a \emph{regular} character.
\end{definition}
\begin{remark}
	Directly from the definition we have for any (quasi-)character $\beta$ of $\bL^{\times}$
\begin{itemize}
	\item $\cond_1(\beta) \leq \cond(\beta)$;
	\item $\cond_1(\beta) = \cond_1(\beta \cdot (\chi \circ \Nr))$ for any (quasi-)character $\chi$ of $\F^{\times}$.
\end{itemize}
\end{remark}
	
\begin{lemma} \label{lem: CondReInterp}
	 If $\beta$ is a regular character of $\bL^{\times}$ so that the restriction $\beta \mid_{\F^{\times}} = \eta_{\bL/\F}$ coincides with the quadratic character associated with the quadratic extension $\bL/\F$, then we have $\cond_1(\beta) = \cond(\beta)$.
\end{lemma}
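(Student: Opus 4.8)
The plan is to prove the non-trivial inequality $\cond(\beta) \le \cond_1(\beta)$; combined with the general bound $\cond_1(\beta) \le \cond(\beta)$ recorded in the Remark above, this gives the equality. Write $n := \cond_1(\beta)$, which is $\ge 1$ since $\beta$ is regular. It suffices to show that $\beta$ is trivial on the subgroup $1 + \vP_{\bL}^n$, for then $\cond(\beta) \le n$. The heart of the argument is a factorization claim: every $x \in 1 + \vP_{\bL}^n$ can be written as $x = t^{-1} \cdot (tx)$ with $t \in \F^{\times} \cap (1 + \vP_{\F})$ and $tx \in \bL^1 \cap (1 + \vP_{\bL}^n)$, where $\bL^1 = \{ z \in \bL^{\times} : \Nr_{\bL/\F}(z) = 1 \}$. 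Granting this, $\beta(x) = \beta(tx) \cdot \beta(t)^{-1}$; the first factor is $1$ by the very definition of $n = \cond_1(\beta)$, and the second is $\beta(t)^{-1} = \eta_{\bL/\F}(t)^{-1} = 1$ because $t \in 1 + \vP_{\F}$ while the quadratic character $\eta_{\bL/\F}$ has conductor exponent $\le 1$ (here the hypothesis that the residual characteristic is $\neq 2$ enters). Hence $\beta(x) = 1$ for all such $x$.

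To produce $t$, I would first estimate the filtration of $\Nr_{\bL/\F}$ on $1 + \vP_{\bL}^n$. Writing $x = 1 + y$ with $y \in \vP_{\bL}^n$, we have $\Nr_{\bL/\F}(x) = 1 + \Tr_{\bL/\F}(y) + \Nr_{\bL/\F}(y)$ with $v_{\F}(\Nr_{\bL/\F}(y)) \ge n$. For the trace term: if $\bL/\F$ is split or unramified the different is trivial, so $\Tr_{\bL/\F}(\vP_{\bL}^n) \subseteq \vP_{\F}^n$ and thus $\Nr_{\bL/\F}(x) \in 1 + \vP_{\F}^n$; if $\bL/\F$ is ramified then, the residual characteristic being odd, $\bL/\F$ is tame with $\Dif_{\bL/\F} = \vP_{\bL}$, whence $\Tr_{\bL/\F}(\vP_{\bL}^n) \subseteq \vP_{\F}^{\lceil n/2 \rceil}$ and $\Nr_{\bL/\F}(x) \in 1 + \vP_{\F}^{\lceil n/2 \rceil}$. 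Set $m := n$ in the first case and $m := \lceil n/2 \rceil$ in the second, so $m \ge 1$ and $\Nr_{\bL/\F}(x) \in 1 + \vP_{\F}^m$. Since the residual characteristic is $\neq 2$, squaring is an automorphism of the pro-$p$ group $1 + \vP_{\F}^m$, so there is $t \in 1 + \vP_{\F}^m$ with $t^2 = \Nr_{\bL/\F}(x)^{-1}$; then $\Nr_{\bL/\F}(tx) = t^2 \Nr_{\bL/\F}(x) = 1$, i.e. $tx \in \bL^1$. Moreover $t - 1 \in \vP_{\F}^m$, which is contained in $\vP_{\bL}^n$ in either case — because $\vP_{\F}^n \subseteq \vP_{\bL}^n$ when $\bL/\F$ is unramified or split, and because $\vP_{\F}^{\lceil n/2 \rceil} = \vP_{\bL}^{2 \lceil n/2 \rceil} \subseteq \vP_{\bL}^n$ when $\bL/\F$ is ramified. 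Hence $t \in 1 + \vP_{\bL}^n$, so $tx = t \cdot x$ lies in the group $1 + \vP_{\bL}^n$, which yields the asserted factorization and finishes the proof.

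The only step requiring genuine care is the ramified case of the filtration estimate for $\Nr_{\bL/\F}$ on $1 + \vP_{\bL}^n$, which rests on the computation $\Tr_{\bL/\F}(\vP_{\bL}^n) = \vP_{\F}^{\lceil n/2 \rceil}$ through the tame different $\Dif_{\bL/\F} = \vP_{\bL}$ (valid precisely because the residual characteristic is odd) and on checking that the resulting square root $t$ lies deep enough in the $\bL$-adic filtration; the split and unramified cases are immediate, and the remaining manipulations with the filtrations of $\bL^{\times}$ and $\F^{\times}$ are routine.
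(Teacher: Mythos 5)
Your proof is correct, and it rests on the same underlying mechanism as the paper's: factor an element of $1+\vP_{\bL}^{*}$ as a product of a scalar $t\in\F^{\times}$ (a square root of the norm, available because the residual characteristic is odd) and a norm-one element, then exploit that $\eta_{\bL/\F}$ is trivial on $1+\vP_{\F}$ to pass the value of $\beta$ between the two filtrations. The organization is genuinely different, though. You argue directly: set $n=\cond_1(\beta)$ and show $\beta$ kills all of $1+\vP_{\bL}^{n}$, with a uniform treatment of the split, unramified and ramified cases driven by the explicit estimate $\Tr_{\bL/\F}(\vP_{\bL}^{n})\subseteq\vP_{\F}^{\lceil n/e\rceil}$. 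The paper instead argues by contradiction at the level $n_0-1=\cond(\beta)-1$, dispatches the split case by the one-line observation $\cond(\chi_0)=\cond(\chi_0^2)$, treats $n_0=1$ separately, and along the way isolates the subsidiary Claim that $2\mid\cond(\beta)$ when $\bL/\F$ is ramified. What your version buys is a cleaner, case-free argument; what it omits is that Claim, which the paper records here precisely because it is reused later (in Lemma~\ref{lem: AddParMultChar} and Lemma~\ref{lem: WtFSC}). So your proof is a valid substitute for the lemma itself, but a reader relying on it would still need to establish the parity claim elsewhere.
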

\begin{proof}
	Write $n_0 := \cond(\beta) (\geq \cond_1(\beta) \geq 1)$. We shall prove that $\cond_1(\beta) < n_0$ is impossible.
	
\noindent (1) If $\bL \simeq \F \oplus \F$ is split then $\eta_{\bL/\F}=\id$. The assertion follows from $\cond(\chi_0) = \cond(\chi_0^2)$, because taking square is a group automorphism of $1+\vP_{\F}$ and the regularity of $\beta$ is equivalent with $\cond(\chi_0^2) > 0$.
	
\noindent (2) Assume $\bL/\F$ is non-split. Then $\beta$ coincides with $\eta_{\bL/\F}$ on $\F^{\times}$. If $n_0=1$ then the assertion follows from the condition $\beta \mid_{\E^1} \neq \id$ (equivalent to $\beta$ being \emph{regular}). Assume $n_0 \geq 2$ from now on. We choose the uniformizers $\varpi_{\F}$ and $\varpi_{\bL}$ of $\vO_{\F}$ and $\vO_{\bL}$ respectively so that 
\begin{equation} \label{eq: UnifChoice}
	\begin{cases}
		\varpi_{\bL} = \varpi_{\F} & \text{if } e=e(\bL/\F)=1 \\
		\overline{\varpi_{\bL}} = - \varpi_{\bL}, \varpi_{\F} = \Nr(\varpi_{\bL}) = - \varpi_{\bL}^2 & \text{if } e=e(\E/\F)=2
	\end{cases}.
\end{equation}

\noindent \textbf{Claim:} We have $2 \mid n_0$ in the ramified case.
\begin{proof}[Proof of Claim:] 
	In fact, if $\beta$ is trivial on $1+\vP_{\bL}^{2n+1}$ with $n \geq 1$, then for any element $\alpha \in 1+\vP_{\bL}^{2n}$ we can find $u_0 \in \vO_{\F}$ and $u_1 \in \vO_{\bL}$ such that $\alpha = 1 + \varpi_{\bL}^{2n} u_0 + \varpi_{\bL}^{2n+1}u_1$, since the residue class fields of $\bL$ and $\F$ are isomorphic. Then $\beta(\alpha) = \eta_{\bL/\F}(1+(-\varpi_{\F})^n u_0) = 1$. Hence $\beta$ is also trivial on $1+\vP_{\bL}^{2n}$.
\end{proof}
	
\noindent With these reductions, we find $\alpha \in 1+\vP_{\bL}^{n_0-1}$ such that $\beta(\alpha) \neq 1$. But $\Nr(\alpha) \in \F \cap (1+\vP_{\bL}^{n_0-1}) = 1+\vP_{\F}^{\lceil (n_0-1)/e \rceil}$ is a square. Hence $\Nr(\alpha) = k^2$ for some $k \in 1+\vP_{\F}^{\lceil (n_0-1)/e \rceil} \subset 1+\vP_{\bL}^{n_0-1}$, and $\beta(k^{-1}\alpha) = \beta(\alpha) \neq 1$ with $k^{-1} \alpha \in \bL^1 \cap (1+\vP_{\bL}^{n_0-1})$, proving the assertion.
\end{proof}
\begin{corollary} \label{cor: DihTrivCenterIsTwistedMin}
	Any \emph{dihedral supercuspidal representations} $\pi_{\beta}$ with trivial central character (the case (3) in the beginning of this subsection) is twisted minimal.
\end{corollary}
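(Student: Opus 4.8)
Recall that $\pi_{\beta}$ being \emph{twisted minimal} means $\cond(\pi_{\beta} \otimes \chi) \geq \cond(\pi_{\beta})$ for every quasi-character $\chi$ of $\F^{\times}$. The plan is to reduce this to Lemma \ref{lem: CondReInterp} together with two standard facts about dihedral representations: that twisting $\pi_{\beta}$ by a character $\chi$ of $\F^{\times}$ amounts to twisting the inducing character by a norm, namely $\pi_{\beta} \otimes \chi \simeq \pi_{\beta \cdot (\chi \circ \Nr_{\E/\F})}$, and that the conductor exponent $\cond(\pi_{\beta'})$ is an affine function of $\cond(\beta')$ with positive slope $f_{\E/\F}$ (the residue degree) and constant term depending only on $\E/\F$ --- this being the conductor--discriminant formula for an induced representation. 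Granting these, $\pi_{\beta}$ is twisted minimal if and only if
$$ \cond \left( \beta \cdot (\chi \circ \Nr_{\E/\F}) \right) \geq \cond(\beta) \quad \text{for every quasi-character } \chi \text{ of } \F^{\times}. $$

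To prove this inequality I would first observe that, since $\pi_{\beta}$ has trivial central character and $\omega_{\pi_{\beta}} = \eta_{\E/\F} \cdot \left( \beta \mid_{\F^{\times}} \right)$, the inducing character satisfies $\beta \mid_{\F^{\times}} = \eta_{\E/\F}$; as $\beta$ is moreover regular (this is case (3) of this subsection), Lemma \ref{lem: CondReInterp} applies with $\bL = \E$ and yields $\cond(\beta) = \cond_1(\beta)$. Then the two bullet points of the Remark preceding that lemma --- invariance of $\cond_1$ under twisting by $\chi \circ \Nr_{\E/\F}$, and $\cond_1 \leq \cond$ --- give
$$ \cond(\beta) = \cond_1(\beta) = \cond_1 \left( \beta \cdot (\chi \circ \Nr_{\E/\F}) \right) \leq \cond \left( \beta \cdot (\chi \circ \Nr_{\E/\F}) \right), $$
which is exactly what is needed. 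As a by-product every twist $\beta \cdot (\chi \circ \Nr_{\E/\F})$ has $\cond_1 = \cond(\beta) \geq 1$, hence is again regular, so each $\pi_{\beta} \otimes \chi$ is again dihedral supercuspidal attached to the same field $\E$.

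There is no real obstacle here: all the arithmetic substance sits in Lemma \ref{lem: CondReInterp}, and the two inputs used above are routine. The only points deserving care are the translation of ``trivial central character'' into the relation $\beta \mid_{\F^{\times}} = \eta_{\E/\F}$, and the exact shape of the conductor of an induced representation; for the latter one uses only its monotonicity in $\cond(\beta')$, so the precise value of the discriminant term --- which equals $1$ in the tamely ramified case, the residual characteristic being $\neq 2$ --- plays no role.
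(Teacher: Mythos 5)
Your argument is correct and is essentially the paper's own proof: both reduce twisted minimality to the inequality $\cond(\beta) = \cond_1(\beta) = \cond_1(\beta \cdot (\chi \circ \Nr)) \leq \cond(\beta \cdot (\chi \circ \Nr))$ via Lemma \ref{lem: CondReInterp} and the remark preceding it, and both then invoke the Jacquet--Langlands conductor formula \eqref{eq: CondRelDih} to transfer this to a statement about $\cond(\pi_\beta \otimes \chi)$. Your explicit observation that the twisted character $\beta \cdot (\chi \circ \Nr)$ remains regular (so the conductor formula indeed applies to the twist) is a small point the paper leaves implicit, but otherwise the route and the inputs are identical.
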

\begin{proof}
	For any (quasi-)character $\chi$ of $\F^{\times}$ we have $\cond(\beta) = \cond_1(\beta) = \cond_1 \left(\beta \cdot (\chi \circ \Nr) \right) \leq \cond \left(\beta \cdot (\chi \circ \Nr) \right)$. For any regular (quasi-)character $\beta$ of $\E^{\times}$ we have by \cite[Theorem 4.7]{JL70} with $e=e(\E/\F)$
\begin{equation} \label{eq: CondRelDih} 
	\cond(\pi_{\beta}) = \cond(\beta) f(\E/\F) + \cond(\psi_{\E}) = \tfrac{2n_0}{e}+e-1, 
\end{equation}
	where $f(\E/\F)$ is the residual field index and $\psi_{\E} = \psi_{\F} \circ \Tr_{\E/\F}$. The desired assertion follows readily.
\end{proof}

\begin{proposition} \label{prop: MeasNorm}
	Let $\bL^1$ be the kernel of the norm map $\Nr$.
\begin{itemize}
	\item[(1)] For any $x \in \bL^{\times}$, the following map
	$$ \iota_x: \F^{\times} \times \bL^1 \to \bL^{\times}, \quad (r,\alpha) \mapsto xr\alpha $$
	is a $2$-to-$1$ covering map onto an open subset of $\bL^{\times}$. 
	\item[(2)] (Polar decomposition) The Haar measure $\ud \alpha$ on $\bL^1$ determined by
\begin{equation} \label{eq: MeasComp}
	\norm[z]_{\bL}^{-1} \ud z = \norm[r]_{\F}^{-1} \ud r \ud \alpha, \quad z = \iota_x(r,\alpha) = x r \alpha,
\end{equation}
	where $\ud r$ is the self-dual Haar measure of $\F$ with respect to $\psi$, is independent of $x$, and satisfies
	$$ \Vol(\bL^1 \cap \vO_{\bL}, \ud \alpha) = 2^{e-1} \frac{\Vol(\vO_{\bL}^{\times}, \ud z)}{\Vol(\vO_{\F}^{\times}, \ud r)}, $$
where the number $e = e(\bL/\F)$ is the \emph{generalized ramification index} of $\bL/\F$ given by
\begin{equation} \label{eq: GenRamInd}
	e = \begin{cases} 1 & \text{if } \bL/\F \text{ is not ramified} \\ 2 & \text{if } \bL/\F \text{ is ramified} \end{cases}. 
\end{equation}
	\item[(3)] For any $n \in \Z_{\geq 1}$ we have $\Vol(\bL^1 \cap (1+\vP_{\bL}^n), \ud \alpha) = q^{-\lfloor \frac{n}{e} \rfloor - \frac{e-1}{2}}$.
\end{itemize}
\end{proposition}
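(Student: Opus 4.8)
The plan is to handle the three assertions in turn. For (1), write $m_x$ for left translation by $x$ on $\bL^{\times}$; then $\iota_x = m_x \circ \iota_1$, so since $m_x$ is a homeomorphism it suffices to analyse $\iota_1 \colon \F^{\times}\times\bL^1 \to \bL^{\times}$, $(r,\alpha)\mapsto r\alpha$. This is a homomorphism of abelian topological groups, with kernel $\{(r,\alpha):r\alpha=1\}=\{(\pm1,\pm1)\}$ because $\F^{\times}\cap\bL^1=\mu_2(\F)=\{\pm1\}$, so $|\ker\iota_1|=2$; its image is the subgroup $\F^{\times}\bL^1=\Nr^{-1}((\F^{\times})^2)$, which is open since $(\F^{\times})^2$ is open in $\F^{\times}$. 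To see that $\iota_1$ is a local homeomorphism one computes its tangent map at the identity: identifying the tangent spaces of $\F^{\times}$ and of $\bL^1$ at $1$ with $\F\cdot1$ and with $\{w\in\bL:\Tr(w)=0\}$, it sends $(\dot r,\dot w)$ to $\dot r+\dot w\in\bL$, which is bijective because $\Tr(1)=2\neq0$. Hence $\iota_1$ factors as the degree-$2$ covering $\F^{\times}\times\bL^1\twoheadrightarrow(\F^{\times}\times\bL^1)/\ker\iota_1$ followed by a continuous group isomorphism onto the open subgroup $\F^{\times}\bL^1$, so $\iota_1$ — hence $\iota_x$ — is a $2$-to-$1$ covering onto an open subset of $\bL^{\times}$.

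For (2), the pullback $\iota_1^{\ast}(\norm[z]_{\bL}^{-1}\ud z)$ is a Haar measure on $\F^{\times}\times\bL^1$ (it is translation invariant, as $\iota_1$ intertwines translations), hence a constant multiple of $(\norm[r]_{\F}^{-1}\ud r)\times(\text{a Haar measure on }\bL^1)$; fixing the first factor determines $\ud\alpha$ uniquely, and $\iota_x^{\ast}=\iota_1^{\ast}\circ m_x^{\ast}$ with $m_x^{\ast}(\norm[z]_{\bL}^{-1}\ud z)=\norm[z]_{\bL}^{-1}\ud z$ shows $\ud\alpha$ is independent of $x$. For the volume I would apply \eqref{eq: MeasComp} with $x=1$ to $U:=\vO_{\F}^{\times}(\bL^1\cap\vO_{\bL})\subseteq\vO_{\bL}^{\times}$: since $-1$ lies in $\vO_{\F}^{\times}$ and in $\bL^1\cap\vO_{\bL}$, the restriction of $\iota_1$ to $\vO_{\F}^{\times}\times(\bL^1\cap\vO_{\bL})$ is $2$-to-$1$ onto $U$, and $\norm[\cdot]_{\bL}\equiv1$ on $U$, whence
\[
\Vol(U,\ud z)=\tfrac12\,\Vol(\vO_{\F}^{\times},\ud r)\,\Vol(\bL^1\cap\vO_{\bL},\ud\alpha).
\]
It then remains to compute $[\vO_{\bL}^{\times}:U]$. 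Via $\Nr$ one has $\vO_{\bL}^{\times}/(\bL^1\cap\vO_{\bL})\cong\Nr(\vO_{\bL}^{\times})$, under which $U/(\bL^1\cap\vO_{\bL})$ corresponds to $(\vO_{\F}^{\times})^2$; hence $[\vO_{\bL}^{\times}:U]=[\vO_{\F}^{\times}:(\vO_{\F}^{\times})^2]/[\vO_{\F}^{\times}:\Nr(\vO_{\bL}^{\times})]=2^{2-e}$, using $[\vO_{\F}^{\times}:(\vO_{\F}^{\times})^2]=2$ (residue characteristic $\neq2$) and $[\vO_{\F}^{\times}:\Nr(\vO_{\bL}^{\times})]=e$ — the latter clear for $\bL$ split or unramified and, in the ramified case, following from local class field theory together with the normalization $\Nr(\varpi_{\bL})=\varpi_{\F}$ of \eqref{eq: UnifChoice}. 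Substituting $\Vol(U,\ud z)=2^{e-2}\Vol(\vO_{\bL}^{\times},\ud z)$ yields the stated identity.

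For (3), fix $n\geq1$ and put $m:=\lceil n/e\rceil\geq1$, $W_n:=\bL^1\cap(1+\vP_{\bL}^n)$. Since $m\geq1$, the map $\iota_1$ is injective on $(1+\vP_{\F}^m)\times W_n$ (the nontrivial deck transformation moves $1+\vP_{\F}^m$ off itself), so \eqref{eq: MeasComp}, with $\norm[\cdot]_{\bL}\equiv1$ there and $\Vol(1+\vP_{\F}^m,\ud r)=q^{-m}$, gives $\Vol(W_n,\ud\alpha)=q^{m}\Vol((1+\vP_{\F}^m)W_n,\ud z)$. The crux is that $(1+\vP_{\F}^m)W_n=1+\vP_{\bL}^n$: the inclusion $\subseteq$ uses $em\geq n$, and $\supseteq$ holds because for $z\in\vP_{\bL}^n$ one has $\Tr(z),\Nr(z)\in\vP_{\F}^{\lceil n/e\rceil}$, so $\Nr(1+z)=1+\Tr(z)+\Nr(z)\in1+\vP_{\F}^m$, which has a square root $r\in1+\vP_{\F}^m$ (the group $1+\vP_{\F}$ has no $2$-torsion), and then $1+z=r\cdot((1+z)r^{-1})$ with $(1+z)r^{-1}\in W_n$. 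Finally $\Vol(1+\vP_{\bL}^n,\ud z)=\Vol(\vP_{\bL}^n,\ud z)=q^{-2n/e}\Vol(\vO_{\bL},\ud z)=q^{-2n/e-(e-1)/2}$, the last equality being the standard self-dual computation $\Vol(\vO_{\bL},\ud z)=q^{-(e-1)/2}$ coming from $\Dif_{\bL/\F}=\vP_{\bL}^{e-1}$ in the tame case. Combining and using $\lceil n/e\rceil+\lfloor n/e\rfloor=2n/e$ for $e\in\{1,2\}$ gives $\Vol(W_n,\ud\alpha)=q^{\lceil n/e\rceil-2n/e-(e-1)/2}=q^{-\lfloor n/e\rfloor-(e-1)/2}$.

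The step I expect to require most care is bookkeeping rather than conceptual: in (2) one must pin down $[\vO_{\bL}^{\times}:U]=2^{2-e}$ uniformly across the split, unramified and ramified cases — the split case is slightly deceptive since there $\vO_{\bL}^{\times}\subsetneq\vO_{\bL}\setminus\vP_{\bL}$ — and in (3) one must choose $m=\lceil n/e\rceil$ precisely so that the trace–norm congruence becomes automatic, and then carry the half-integer exponent $(e-1)/2$ through correctly.
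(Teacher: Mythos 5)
Your proposal is correct. For (1) the paper omits the proof, and your argument via the $p$-adic inverse function theorem is valid; $\ker\iota_1=\{(1,1),(-1,-1)\}$ and $\mathrm{Im}(\iota_1)=\Nr^{-1}((\F^{\times})^2)$ are identified correctly. For (2) your argument coincides with the paper's: both identify $\iota_1(\vO_{\F}^{\times}\times(\bL^1\cap\vO_{\bL}))$ with $\vO_{\bL}^{\times}\cap\Nr^{-1}((\vO_{\F}^{\times})^2)$ and obtain $[\vO_{\bL}^{\times}:U]=2/2^{e-1}=2^{2-e}$.

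For (3) you take a genuinely different route. The paper works with the Hilbert--90-type surjection $\sigma:\vO_{\bL}^{\times}\to\bL^1\cap\vO_{\bL}$, $x\mapsto x/\bar x$; it asserts $[\bL^1\cap\vO_{\bL}:\mathrm{Im}(\sigma)]=2^{e-1}$ ``by a refinement of Hilbert's~90,'' identifies the quotient $\mathrm{Im}(\sigma)/(\bL^1\cap(1+\vP_{\bL}^n))$ with $(\vO_{\bL}/\vP_{\bL}^n)^{\times}/(\vO_{\F}/\vP_{\F}^{\lceil n/e\rceil})^{\times}$, and then divides by the result of part~(2). You instead apply \eqref{eq: MeasComp} directly to the small set $(1+\vP_{\F}^m)\times W_n$ with $m=\lceil n/e\rceil$, on which $\iota_1$ is injective, and establish the identity $(1+\vP_{\F}^m)W_n=1+\vP_{\bL}^n$ via the expansion $\Nr(1+z)=1+\Tr(z)+\Nr(z)$, the tame-ramification estimate $\Tr(\vP_{\bL}^n)\subseteq\vP_{\F}^{\lceil n/e\rceil}$, and the existence of square roots in $1+\vP_{\F}^m$ (odd residue characteristic). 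This is more self-contained: it uses neither part~(2) nor the unproved index claim for $\sigma$. The paper's route, on the other hand, makes the map $\sigma$ explicit, and this is the same change of variables $\delta=\delta(u)$ that reappears in the proof of Lemma~\ref{lem: TestFAsymp}. Both proofs are correct.
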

\begin{proof}
	The proof of (1) is easy and omitted. The relation $\Im(\iota_x) = x \Im(\iota_1)$ implies the independence of $x$ in \eqref{eq: MeasComp}, since the measure $\norm[z]_{\bL}^{-1} \ud z$ is invariant by multiplication by elements in $\bL^{\times}$. To show that \eqref{eq: MeasComp} implies the stated formula for $\Vol(\bL^1 \cap \vO_{\bL}, \ud \alpha)$, it suffices to show 
	$$ \Vol(\iota_1(\vO_{\F}^{\times} \times (\vO_{\bL} \cap \bL^1))) = 2^{e-2} \Vol(\vO_{\bL}^{\times}). $$
	Now that $\iota_1(\vO_{\F}^{\times} \times (\vO_{\bL} \cap \bL^1)) = \vO_{\bL}^{\times} \cap \Nr^{-1}((\vO_{\F}^{\times})^2)$, we get
	$$ [\vO_{\bL}^{\times} : \iota_1(\vO_{\F}^{\times} \times (\vO_{\bL} \cap \bL^1))] = [\Nr(\vO_{\bL}^{\times}) : (\vO_{\F}^{\times})^2] = \frac{[\vO_{\F}^{\times} : (\vO_{\F}^{\times})^2]}{[\vO_{\F}^{\times} : \Nr(\vO_{\bL}^{\times})]} = \frac{2}{2^{e-1}} $$
	and conclude the proof of (2). To prove (3) we note that the map
	$$ \sigma: \vO_{\bL}^{\times} \to \bL^1 \cap \vO_{\bL}, \quad x \mapsto x / \bar{x} $$
	is surjective onto $\bL^1 \cap (1+\vP_{\bL}^n)$, whose pre-image is $\vO_{\F}^{\times}(1+\vP_{\bL}^n)$. It is also easy to see that the image of $\sigma$ is a subgroup of $\bL^1 \cap \vO_{\bL}$ with index $2^{e-1}$, by a refinement of Hilbert's 90. Therefore we get
	$$ \frac{\Im(\sigma)}{\bL^1 \cap (1+\vP_{\bL}^n)} \simeq \frac{\vO_{\bL}^{\times}}{\vO_{\F}^{\times}(1+\vP_{\bL}^n)} \simeq \frac{(\vO_{\bL}/\vP_{\bL}^n)^{\times}}{(\vO_{\F}/\vP_{\F}^{\lceil n/e \rceil})^{\times}} $$
	and deduce from it the measure relation
	$$ \frac{\Vol(\bL^1 \cap \vO_{\bL}, \ud \alpha)}{\Vol(\bL^1 \cap (1+\vP_{\bL}^n), \ud \alpha)} = 2^{e-1} \frac{q^{2n/e}}{q^{\lceil n/e \rceil}} \cdot \frac{\Vol(\vO_{\bL}^{\times}, \ud z)}{\Vol(\vO_{\F}^{\times}, \ud r)} \cdot \frac{\Vol(\vO_{\F}, \ud r)}{\Vol(\vO_{\bL}, \ud z)}. $$
	We conclude the proof of (3) by comparing with (2) and $\Vol(\vO_{\bL}, \ud z) = q^{-\frac{e-1}{2}}$.
\end{proof}
	
	Let $\eta_{\bL/\F}$ be the quadratic character of $\F^{\times}$ associated with $\bL/\F$, which is trivial on $\Nr(\bL^{\times})$. We have $\beta \mid_{\F^{\times}} = \eta_{\bL/\F}$ since $\pi_0$ has trivial central character. Write $\lambda(\bL/\F, \psi)$ for the Weil index, which is equal to $1$ if $\bL/\F$ is not ramified (see \cite[Corollary 3.2]{CC07}). Let $\tau$ run through a system of representatives of $\vO_{\F}^{\times}/(\vO_{\F}^{\times})^2$ (split), resp. $\Nr(\bL^{\times})/(\F^{\times})^2$ (non-split), and choose any $x_{\tau} \in \bL^{\times}$ s.t. $\Nr(x_{\tau})=\tau$; if $\bL/\F$ is split we also require $\Tr(x_{\tau}) = 1+\tau$. Define a function $H$ on $\F^{\times}$ with support contained in $\Nr(\bL^{\times})$ by 
\begin{equation} \label{eq: TestFROI}
	H(\tau y^2) = \lambda(\bL/\F,\psi) \norm[\tau]_{\F}^{\frac{1}{2}} \cdot \norm[y]_{\F} \cdot \eta_{\bL/\F}(y) \cdot \int_{\bL^1 \cap \vO_{\bL}} \beta(x_{\tau} \alpha) \psi(\Tr(x_{\tau} \alpha) y) \ud \alpha.
\end{equation}
	It is clear that $H$ is independent of any choice made in the definition. For definiteness, we fix a $\varepsilon \in \vO_{\F}^{\times} - (\vO_{\F}^{\times})^2$ and choose a system $\{ \tau \}$ of representatives of $\vO_{\F}^{\times}/(\vO_{\F}^{\times})^2$, resp. $\Nr(\bL^{\times})/(\F^{\times})^2$ as
\begin{itemize}
	\item[(1)] $\{ 1, \varepsilon \}$ if $\bL/\F$ is split or unramified (in the latter case $\bL = \F[\sqrt{\varepsilon}]$),
	\item[(2)] $\{ 1, -\varpi_{\F} \}$ if $\bL/\F$ is ramified (with $\bL = \F[\sqrt{\varpi_{\F}}]$ and we fix a uniformizer $\varpi_{\bL} = \sqrt{\varpi_{\F}}$ of $\bL$).
\end{itemize}
	The (partial) Mellin transform of $H$ will be fundamental for our analysis. We record it as follows.
\begin{lemma} \label{lem: TestFMellin}
	Let $\chi \in \widehat{\F^{\times}}$ and $n \in \Z_{\geq 1}$. We have
\begin{multline*} 
	\varepsilon_n(\chi,H) := \int_{\varpi_{\F}^{-n} \vO_{\F}^{\times}} H(y) \chi(y) \ud^{\times} y = \zeta_{\F}(1) \cdot \\
	\begin{cases}
		\id_{2 \mid n, \cond(\chi_0\chi) = \cond(\chi_0^{-1}\chi) = n/2} \cdot \varepsilon(1/2,\chi_0 \chi^{-1}, \psi) \varepsilon(1/2,\chi_0^{-1} \chi^{-1}, \psi) & \text{if } \bL/\F \text{ split and } \cond(\chi_0\chi), \cond(\chi_0^{-1}\chi) \geq 1 \\
		- \id_{n = 2, \cond(\chi_0)=1, \cond(\chi_0^2) \neq 0} \cdot q^{-1} \chi_0\chi(\varpi_{\F}) \varepsilon(1/2,\chi_0^{-1} \chi^{-1}, \psi) & \text{if } \bL/\F \text{ split and } \cond(\chi_0\chi) = 0 \neq \cond(\chi_0^{-1}\chi) \\
		- \id_{n = 2, \cond(\chi_0)=1, \cond(\chi_0^2) \neq 0} \cdot q^{-1} \chi_0^{-1}\chi(\varpi_{\F}) \varepsilon(1/2,\chi_0 \chi^{-1}, \psi) & \text{if } \bL/\F \text{ split and } \cond(\chi_0^{-1}\chi) = 0 \neq \cond(\chi_0\chi) \\
		- \id_{n = 2, \cond(\chi_0)=1, \cond(\chi_0^2)=0} \cdot q^{-2} \chi^2(\varpi_{\F}) & \text{if } \bL/\F \text{ split and } \cond(\chi_0\chi) = 0 = \cond(\chi_0^{-1}\chi) \\
		\id_{2 \mid en, \cond(\beta \cdot (\chi \circ \Nr)) = en/2+1-e} \cdot \varepsilon(1/2, \pi_{\beta^{-1}} \otimes \chi^{-1}, \psi) & \text{if } \bL/\F \text{ non-split}
	\end{cases}.
\end{multline*}
	In particular, we have $\extnorm{\varepsilon_n(\chi,H)} \leq \zeta_{\F}(1)$.
\end{lemma}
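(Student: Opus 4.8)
The plan is to unfold the definition \eqref{eq: TestFROI}, absorb the inner integral over $\bL^1 \cap \vO_{\bL}$ together with the ``shell'' integral in $y$ into a single integral over $\bL^{\times}$, and then recognise the result as a local Gauss sum for $\bL$. First I would substitute \eqref{eq: TestFROI} into $\varepsilon_n(\chi,H)$. Since $H$ is supported on $\bigcup_{\tau}\tau(\F^{\times})^2$, parametrise $y = \tau v^2$ (two-to-one on each piece); then $y \in \varpi_{\F}^{-n}\vO_{\F}^{\times}$ forces $v_{\F}(v) = -m_{\tau}$ with $m_{\tau} := (n+v_{\F}(\tau))/2$, in particular $2 \mid en$. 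Setting $z := v\,x_{\tau}\alpha \in \bL^{\times}$, one has $\Nr(z) = \tau v^2 = y$, $\Tr(x_{\tau}\alpha)\,v = \Tr(z)$, and $\beta(x_{\tau}\alpha) = \beta(z)\,\eta_{\bL/\F}(v)^{-1}$ because trivial central character gives $\beta \mid_{\F^{\times}} = \eta_{\bL/\F}$; hence all of $\eta_{\bL/\F}(y)$, $\chi(\tau v^2)$, $\psi(\Tr(x_{\tau}\alpha)y)$ and $\beta(x_{\tau}\alpha)$ collapse into the single expression $(\beta\cdot(\chi\circ\Nr))(z)\,\psi_{\bL}(z)$, which is precisely why $H$ was built with the factors $\lambda(\bL/\F,\psi)$, $\norm[\tau]_{\F}^{1/2}$, $\eta_{\bL/\F}(y)$ and the restriction to $\bL^1\cap\vO_{\bL}$.

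The key step is then the polar decomposition of Proposition \ref{prop: MeasNorm}. By \eqref{eq: MeasComp} the substitution $(v,\alpha)\mapsto z = \iota_{x_{\tau}}(v,\alpha)$ turns $\norm[v]_{\F}\,\ud^{\times}v\,\ud\alpha$ into $2\zeta_{\F}(1)\norm[z]_{\bL}^{-1/2}\norm[\tau]_{\F}^{-1/2}\,\ud z$ on the image $x_{\tau}\varpi_{\F}^{-m_{\tau}}\bigl(\vO_{\bL}^{\times}\cap\Nr^{-1}((\vO_{\F}^{\times})^2)\bigr)$, the normalisation of $\ud^{\times}v$ relative to $\ud v$ (as in the proof of Lemma \ref{lem: MellinStEBOI}) being the only source of the leading $\zeta_{\F}(1)$. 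On this image $\norm[z]_{\bL}$ is the constant $\norm[\tau]_{\F}q^{2m_{\tau}}$, so the powers of $\norm[\tau]_{\F}$ telescope to $1$ and the factor $\frac12$ from $y=\tau v^2$ cancels the $2$; summing over $\tau$ (using $\Nr(\vO_{\bL}^{\times}) = (\vO_{\F}^{\times})^2$ in the ramified case so that the pieces reassemble into $\varpi_{\bL}^{-en/2}\vO_{\bL}^{\times}$) yields the master identity
\[
	\varepsilon_n(\chi,H) = \lambda(\bL/\F,\psi)\,\zeta_{\F}(1)\,q^{-n/2}\int_{\varpi_{\bL}^{-en/2}\vO_{\bL}^{\times}} (\beta\cdot(\chi\circ\Nr))(z)\,\psi_{\bL}(z)\,\ud z,
\]
the integral being over $\varnothing$, hence $0$, unless $en/2\in\Z$.

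It then remains to evaluate this Gauss sum. In the non-split case $\mu := \beta\cdot(\chi\circ\Nr)$ is a character of $\bL^{\times}$; the Gauss-sum/gamma-factor identity \cite[Exercise 23.5]{BuH06} applied over $\bL$, together with $\Vol(\vO_{\bL},\ud z) = q^{-(e-1)/2}$ (Proposition \ref{prop: MeasNorm}(3)) and the fact that $\psi_{\bL}$ has conductor exponent $-(e-1)$ (i.e.\ $\Dif_{\bL/\F} = \vP_{\bL}^{e-1}$), shows the integral vanishes unless $\cond(\mu) = en/2 + 1 - e$ and otherwise equals a power of $q$ times $\varepsilon(1/2,\mu^{-1},\psi_{\bL})$; multiplying by $\lambda(\bL/\F,\psi)$ and using the inductivity of local $\varepsilon$-factors, $\varepsilon(s,\pi_{\beta^{-1}}\otimes\chi^{-1},\psi) = \lambda(\bL/\F,\psi)\,\varepsilon(s,\mu^{-1},\psi_{\bL})$ (as $\pi_{\beta^{-1}}\otimes\chi^{-1}$ is the automorphic induction of $\mu^{-1}$, cf.\ \cite{JL70}), and recombining the powers of $q$ with $q^{-n/2}$ and $\zeta_{\F}(1)$ gives the last line of the claimed formula. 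In the split case $\psi_{\bL}$ and $\mu$ factor through the two copies of $\F$, so the integral becomes a product of two $\F^{\times}$-Gauss sums $\int_{\varpi_{\F}^{-n/2}\vO_{\F}^{\times}}\nu(w)\psi(w)\,\ud w$ with $\nu\in\{\chi_0\chi,\chi_0^{-1}\chi\}$; when both are ramified this is $q^{n/2}\varepsilon(1/2,\chi_0\chi^{-1},\psi)\varepsilon(1/2,\chi_0^{-1}\chi^{-1},\psi)$, while the four remaining cases come from the elementary evaluation $\int_{\vO_{\F}^{\times}}\psi(\varpi_{\F}^{-j}u)\,\ud u = 1-q^{-1},\ -q^{-1},\ 0$ for $j = 0,\,1,\,\ge 2$, together with the fact that $\cond(\chi_0^2) = \cond(\chi_0)$ unless $\chi_0 \mid_{\vO_{\F}^{\times}}$ has order $2$ (which isolates the cases $\cond(\chi_0)=1$, $\cond(\chi_0^2)=0$ vs.\ $\neq 0$). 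Finally, $\extnorm{\varepsilon_n(\chi,H)}\le\zeta_{\F}(1)$ follows since $\extnorm{\varepsilon(1/2,\mu,\psi)} = 1$ for unitary $\mu$, the non-unitary parts of the two split $\varepsilon$-factors cancel (the two characters share a conductor and differ by $\norm_{\F}^{\pm 2\sigma}$ for the small $\sigma$ afforded by $\RamCst_3$-temperedness of $\chi_0$), the residual $q^{-1}$ or $q^{-2}$ in the degenerate cases absorbs any $q^{O(\RamCst_3)}$, and $\pi_{\beta^{-1}}$ is tempered.

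The hard part will be this last stage of bookkeeping: pinning down the $\varepsilon$-factor normalisations, the uniformiser powers $\chi_0\chi(\varpi_{\F})^{\pm1}$ and signs, and the $\bL$-versus-$\F$ conductor translation precisely enough to land on the exact closed forms in the four degenerate split cases and in the non-split case. The structural backbone --- unfolding plus the polar decomposition of Proposition \ref{prop: MeasNorm} turning $\varepsilon_n(\chi,H)$ into a Gauss sum over $\bL$ --- is robust, but the constants are delicate.
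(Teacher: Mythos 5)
Your proposal is correct and takes essentially the same route as the paper's proof: both unfold the definition \eqref{eq: TestFROI}, use the polar decomposition of Proposition \ref{prop: MeasNorm} to convert the integral over $\varpi_{\F}^{-n}\vO_{\F}^{\times}$ into a Gauss integral over $\varpi_{\bL}^{-en/2}\vO_{\bL}^{\times}$ (the paper writes out the split case via the explicit $2$-to-$1$ map $(y,\alpha)\mapsto(\tau y\alpha, y\alpha^{-1})$, which is the same decomposition in coordinates), and then invoke \cite[Exercise 23.5]{BuH06} and \cite[Theorem 4.7]{JL70} to match the resulting Gauss sums with $\varepsilon$-factors. Your ``master identity'' and the paper's displayed intermediate formula agree after converting between $\ud z$ and $\ud z/\norm[z]_{\bL}$ on the shell, so the remaining bookkeeping you flag as delicate is exactly what the paper carries out (and, in the split degenerate cases, likewise leaves to a ``direct computation in the unramified case'').
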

\begin{proof}
	We divide the domain of integration into cosets of square elements. 

\noindent \textbf{(A)} In the split case, we have
\begin{align*}
	\int_{\varpi_{\F}^{-n} \vO_{\F}^{\times}} H(y) \chi(y) \ud^{\times} y &= \id_{2 \mid n} \cdot \frac{1}{2} \sum_{\tau \in \{ 1, \varepsilon \}} \int_{\varpi_{\F}^{-\frac{n}{2}} \vO_{\F}^{\times}} \chi(\tau y^2) \norm[y] \left( \int_{\vO_{\F}^{\times}} \chi_0(\tau \alpha^2) \psi \left( (\tau \alpha + \alpha^{-1}) y \right) \ud \alpha \right) \ud^{\times} y \\
	&= \id_{2 \mid n} \cdot \zeta_{\F}(1)^{-1} \int_{\varpi_{\F}^{-\frac{n}{2}} \vO_{\F}^{\times} \times \varpi_{\F}^{-\frac{n}{2}} \vO_{\F}^{\times}} \chi(t_1t_2) \norm[t_1t_2]^{\frac{1}{2}} \cdot \chi_0(t_1t_2^{-1}) \psi(t_1+t_2) \ud^{\times}t_1 \ud^{\times}t_2 \\
	&= \id_{2 \mid n} \cdot \zeta_{\F}(1) q^{\frac{n}{2}} \left( \int_{\varpi_{\F}^{-\frac{n}{2}} \vO_{\F}^{\times}} \psi(t) \cdot \chi \chi_0(t) \frac{\ud t}{\norm[t]} \right) \left( \int_{\varpi_{\F}^{-\frac{n}{2}} \vO_{\F}^{\times}} \psi(t) \cdot \chi \chi_0^{-1}(t) \frac{\ud t}{\norm[t]} \right),
\end{align*}
	where we have taken into account that for each $\tau$ the map
	$$ \varpi_{\F}^{-\frac{n}{2}} \vO_{\F}^{\times} \times \vO_{\F}^{\times} \to \varpi_{\F}^{-\frac{n}{2}} \vO_{\F}^{\times} \times \varpi_{\F}^{-\frac{n}{2}} \vO_{\F}^{\times}, \quad (y,\alpha) \mapsto (\tau y \alpha, y \alpha^{-1}) $$
	is $2$-to-$1$. The stated formulae then follow from the relation between the Gauss integrals and the local epsilon-factors given in \cite[Exercise 23.5]{BuH06}, and a direct computation in the unramified case. 
		
\noindent \textbf{(B)} Similarly in the non-split case we have
\begin{align*}
	\int_{\varpi_{\F}^{-n}\vO_{\F}^{\times}} H(y) \chi(y) \ud^{\times}y &= \id_{2 \mid en} \cdot \frac{\zeta_{\F}(1)}{\Vol(\bL^1)} \int_{\varpi_{\bL}^{-\frac{en}{2}}\vO_{\bL}^{\times}} H(\Nr(z)) \chi(\Nr(z)) \frac{\ud z}{\norm[z]_{\bL}} \\
	&= \id_{2 \mid en} \cdot \zeta_{\F}(1) q^{\frac{n}{2}} \lambda(\bL/\F,\psi) \int_{\varpi_{\bL}^{-\frac{en}{2}}\vO_{\bL}^{\times}} \beta(z) \chi(\Nr(z)) \cdot \psi_{\bL}(z) \frac{\ud z}{\norm[z]_{\bL}} \\
	&= \id_{2 \mid en} \cdot \zeta_{\F}(1) q^{\frac{n}{2}} \lambda(\bL/\F,\psi) \cdot \id_{\cond \left( \beta \cdot (\chi \circ \Nr) \right) + e-1 = \frac{en}{2}} \varepsilon(1, \beta^{-1} \cdot (\chi^{-1} \circ \Nr), \psi_{\bL}),
\end{align*}	
	where we have applied \cite[Exercise 23.5]{BuH06} in the last line. Now that \cite[Theorem 4.7]{JL70} implies
	$$ \lambda(\bL/\F,\psi) \varepsilon(s, \beta \cdot (\chi \circ \Nr), \psi_{\bL}) = \varepsilon(s, \pi_{\beta} \otimes \chi, \psi) = q^{n \cdot (\frac{1}{2}-s)} \varepsilon(\tfrac{1}{2}, \pi_{\beta} \otimes \chi, \psi), $$
	we conclude the desired formula. Note that in this case $\beta \cdot (\chi \circ \Nr)$ is never unramified.
\end{proof}

	We give a first asymptotic analysis of $H$ as follows.

\begin{lemma} \label{lem: CongCondEquiv}
	Let $\alpha \in \bL^1$ and $n_1 \in \Z_{\geq 1}$. Then $\alpha - \overline{\alpha} \in \vP_{\bL}^{n_1}$ if and only if $\alpha \in \pm 1 + \vP_{\bL}^{n_1}$.
\end{lemma}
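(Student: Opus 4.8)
The plan is to prove the equivalence \(\alpha - \overline{\alpha} \in \vP_{\bL}^{n_1} \Leftrightarrow \alpha \in \pm 1 + \vP_{\bL}^{n_1}\) for \(\alpha \in \bL^1\) by treating the two implications separately, with the forward direction being the substantive one. The reverse implication is immediate: if \(\alpha = \pm 1 + \delta\) with \(\delta \in \vP_{\bL}^{n_1}\), then \(\overline{\alpha} = \pm 1 + \overline{\delta}\), so \(\alpha - \overline{\alpha} = \delta - \overline{\delta} \in \vP_{\bL}^{n_1}\) since \(\vP_{\bL}^{n_1}\) is stable under the nontrivial automorphism of \(\bL/\F\). (In the split case \(\bL \simeq \F \oplus \F\), this is equally transparent componentwise.)

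For the forward direction, I would first dispose of the split case \(\bL \simeq \F \oplus \F\): here \(\alpha \in \bL^1\) means \(\alpha = (t, t^{-1})\) for some \(t \in \F^\times\), the bar map swaps coordinates, and \(\vP_{\bL}^{n_1} = \vP_{\F}^{n_1} \oplus \vP_{\F}^{n_1}\), so \(\alpha - \overline{\alpha} = (t - t^{-1}, t^{-1} - t) \in \vP_{\bL}^{n_1}\) forces \(t - t^{-1} \in \vP_{\F}^{n_1}\); since \(\alpha\) is a unit this gives \(t^2 - 1 \in \vP_{\F}^{n_1}\), i.e. \(t \equiv \pm 1 \pmod{\vP_{\F}^{n_1}}\) (using that the residual characteristic is odd, so \(1\) and \(-1\) are distinct mod \(\vP_{\F}\) and the two factors \(t \mp 1\) cannot both be nonunits). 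For the field case, note that \(\alpha \in \bL^1 \cap \vO_{\bL}^\times\) automatically (norm \(1\) forces \(\alpha\) to be a unit), and compute \(\Nr(\alpha - \overline{\alpha}) = -(\alpha - \overline{\alpha})(\overline{\alpha} - \alpha) = \ldots\) — more cleanly, observe that \(\alpha + \overline{\alpha} = \Tr(\alpha) \in \vO_{\F}\) and \(\alpha \overline{\alpha} = 1\), so \(\alpha\) satisfies \(X^2 - \Tr(\alpha) X + 1 = 0\) and \((\alpha - \overline{\alpha})^2 = \Tr(\alpha)^2 - 4\). If \(\alpha - \overline{\alpha} \in \vP_{\bL}^{n_1}\), then \((\alpha - \overline{\alpha})^2 \in \vP_{\bL}^{2n_1} \cap \F = \vP_{\F}^{\lceil 2n_1/e \rceil}\), so \(\Tr(\alpha)^2 - 4 \in \vP_{\F}^{\lceil 2n_1/e\rceil}\), whence \(\Tr(\alpha) \equiv \pm 2 \pmod{\vP_{\F}^{\lceil n_1/e \rceil}}\) again using residual characteristic \(\neq 2\); replacing \(\alpha\) by \(-\alpha\) if necessary we may assume \(\Tr(\alpha) \equiv 2\), i.e. \(\alpha + \overline{\alpha} - 2 \in \vP_{\F}^{\lceil n_1/e\rceil} \subseteq \vP_{\bL}^{n_1}\). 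Combining \(\alpha + \overline{\alpha} \equiv 2\) and \(\alpha - \overline{\alpha} \equiv 0\) modulo \(\vP_{\bL}^{n_1}\) and dividing by \(2\) (a unit) yields \(\alpha \equiv 1 \pmod{\vP_{\bL}^{n_1}}\), as desired.

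The main obstacle, and the place to be careful, is the interplay between the \(\F\)-adic and \(\bL\)-adic valuations in the ramified case \(e = 2\): one must track that \(\vP_{\bL}^{2n_1} \cap \F = \vP_{\F}^{n_1}\) (not \(\vP_{\F}^{2n_1}\)) and that an element of \(\F\) lying in \(\vP_{\bL}^{n_1}\) lies in \(\vP_{\F}^{\lceil n_1/2 \rceil}\), so the deduction \((\alpha-\overline\alpha)^2 \in \vP_{\F}^{n_1} \Rightarrow \Tr(\alpha) \mp 2 \in \vP_{\F}^{\lceil n_1/2\rceil} \Rightarrow \Tr(\alpha)\mp 2 \in \vP_{\bL}^{n_1}\) is consistent — which it is, since \(\lceil n_1/2 \rceil \cdot 2 \geq n_1\). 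The factorization argument (passing from \(x^2 \in \vP^{2k}\) to \(x \in \vP^k\) after extracting a \(\pm\) sign) is precisely where the hypothesis on the residual characteristic enters, and it should be stated explicitly. Once these valuation bookkeeping points are pinned down, the proof is a short computation, so I would keep it brief and emphasize only the sign-extraction step and the \(e=2\) valuation translation.
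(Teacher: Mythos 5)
Your argument is correct, and for the non-split case it takes a genuinely different route from the paper. The paper observes that $\overline{\alpha} = \alpha^{-1}$ for any $\alpha \in \bL^1$ (split or not), so the field case is handled by literally repeating the split-case computation---write $\alpha = \pm 1 + \varpi_{\bL}^k u$ with $u \in \vO_{\bL}^\times$ and read off $v_{\bL}(\alpha - \alpha^{-1}) = k$---with the subscript $\F$ replaced by $\bL$ throughout, and the ramification index never appears. You instead reduce to a computation over $\F$ via the resolvent identity $(\alpha - \overline{\alpha})^2 = \Tr(\alpha)^2 - 4$, extract a sign from $\Tr(\alpha)^2 - 4 \in \vP_{\F}^{\lceil 2n_1/e\rceil}$, and recombine $\alpha + \overline{\alpha}$ with $\alpha - \overline{\alpha}$ modulo $\vP_{\bL}^{n_1}$. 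Both work; the paper's route avoids the $\vP_{\bL}^k \cap \F = \vP_{\F}^{\lceil k/e\rceil}$ bookkeeping you rightly flag as the delicate point, while yours makes the roles of $\Tr$ and of the odd residual characteristic more transparent. Two minor points: the factoring $(\Tr(\alpha) - 2)(\Tr(\alpha) + 2)$ with exactly one nonunit factor actually gives the stronger $\Tr(\alpha) \mp 2 \in \vP_{\F}^{\lceil 2n_1/e\rceil}$ (no ``square root'' is taken), though your weaker $\vP_{\F}^{\lceil n_1/e\rceil}$ still suffices for the final step; and in the split case the claim that $\alpha$ is a unit is not automatic for $\alpha \in \bL^1$ (unlike the field case, where you correctly note it is)---it needs the observation that $t - t^{-1} \in \vP_{\F}^{n_1} \subset \vO_{\F}$ forces $v_{\F}(t) = 0$, which the paper spells out.
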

\begin{proof}
	If $\alpha \in \pm 1 + \vP_{\bL}^{n_1}$, then it clear that $\alpha - \overline{\alpha} \in \vP_{\bL}^{n_1}$. The converse is justified as follows.

\noindent (a) If $\bL/\F$ is split, we write $\alpha = (t,t^{-1})$ with $t \in \F^{\times}$. Then $t-t^{-1} \in \vP_{\F}^{n_1}$. We must have $t \in \vO_{\F}^{\times}$, and $t \equiv t^{-1} \pmod{\vP_{\F}}$. Hence $t \in \pm 1 + \vP_{\F}$. Writing $t = \pm 1 + \varpi_{\F}^k u$ for some $k \in \Z_{\geq 1}$ and $u \in \vO_{\F}^{\times}$ we infer that $t-t^{-1} \in 2 \varpi_{\F}^k u + \vP_{\F}^{k+1} \subset \vP_{\F}^k-\vP_{\F}^{k+1}$. Hence $k \geq n_1$, and $t \in \pm 1 + \vP_{\F}^{n-1}$.

\noindent (b) If $\bL/\F$ is a field, then $\overline{\alpha} = \alpha^{-1}$. We repeat the above argument with (subscript) $\F$ replaced by $\bL$.
\end{proof}

\begin{lemma} \label{lem: AddParMultChar}
	There is $c_{\beta} \in \vO_{\F}^{\times}$, called the \emph{additive parameter} and is uniquely determined up to multiplication by elements in $1+\vP_{\F}^{\lceil \frac{n_0}{2e} \rceil}$, so that for all $u \in \vP_{\bL}^{\lfloor \frac{n_0}{2} \rfloor}$
	$$ \beta(1+u) = \begin{cases}
		\psi \left( \varpi_{\F}^{-n_0} c_{\beta} (u-\bar{u}) \right) & \text{if } \bL/\F \text{ split and } 2 \mid n_0, \\
		\psi \left( \varpi_{\F}^{-n_0} c_{\beta} \left( (u-\bar{u}) - 2^{-1}(u^2-\bar{u}^2) \right) \right) & \text{if } \bL/\F \text{ split and } 2 \nmid n_0; \\
		\psi \left( \varpi_{\F}^{-n_0} \sqrt{\varepsilon} c_{\beta} (u-\bar{u}) \right) & \text{if } \bL/\F \text{ unramified and } 2 \mid n_0, \\
		\psi \left( \varpi_{\F}^{-n_0} \sqrt{\varepsilon} c_{\beta} \left( (u-\bar{u}) - 2^{-1}(u^2-\bar{u}^2) \right) \right) & \text{if } \bL/\F \text{ unramified and } 2 \nmid n_0; \\
		\psi \left( \varpi_{\bL}^{-n_0-1} c_{\beta} (u-\bar{u}) \right) & \text{if } \bL/\F \text{ ramified}.
	\end{cases} $$
\end{lemma}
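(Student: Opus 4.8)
The plan is to describe $\beta$ on the one-step-deeper congruence subgroup $1+\vP_{\bL}^{\lceil n_0/2\rceil}$, where it is linear in the $\psi_{\bL}$-sense, and then to descend to $1+\vP_{\bL}^{\lfloor n_0/2\rfloor}$; the constant $2^{-1}$ and the shape $(u-\bar{u})-2^{-1}(u^2-\bar{u}^2)$ will emerge from the quadratic term of the logarithm used in that descent. Write $m_0:=\lceil n_0/2\rceil$ and $m_1:=\lfloor n_0/2\rfloor$, and recall that $n_0=\cond(\beta)$, that by the Claim inside the proof of Lemma \ref{lem: CondReInterp} one has $2\mid n_0$ whenever $\bL/\F$ is ramified — so $m_0=m_1$, hence no descent and no quadratic term, outside the split and unramified cases — and that $\psi_{\bL}$ has conductor $\vP_{\bL}^{-(e-1)}$ (trivial different except when $\bL/\F$ is ramified, where tameness, the residual characteristic being $\neq 2$, gives $\vP_{\bL}^{-1}$). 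We may assume $n_0\geq 2$, the case $n_0=1$ being a direct check. For the first step: since $2m_0\geq n_0$, the map $x\mapsto\beta(1+x)$ is an additive character of $\vP_{\bL}^{m_0}$ that is trivial on $\vP_{\bL}^{n_0}$ and (as $n_0-1\geq m_0$) non-trivial on $\vP_{\bL}^{n_0-1}$; self-duality of $\psi_{\bL}$ yields a $\delta\in\bL^{\times}$ with $v_{\bL}(\delta)=-n_0-(e-1)$, unique modulo $\vP_{\bL}^{-m_0-(e-1)}$, such that $\beta(1+x)=\psi_{\bL}(\delta x)$ for all $x\in\vP_{\bL}^{m_0}$. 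When $n_0$ is even this is already the desired formula on $1+\vP_{\bL}^{m_1}=1+\vP_{\bL}^{m_0}$.

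When $n_0$ is odd (so $e=1$), I would descend via the logarithm. Since the residual characteristic is $\neq 2$, the series $\log(1+x)=x-2^{-1}x^2+\cdots$ converges on $\vP_{\bL}^{m_1}$ and satisfies $\log((1+x)(1+y))\equiv\log(1+x)+\log(1+y)\pmod{\vP_{\bL}^{n_0}}$; hence $u\mapsto\psi_{\bL}(\delta\log(1+u))$ is an additive character of $1+\vP_{\bL}^{m_1}$ — the cubic and higher terms pair trivially with $\delta$ — which agrees with $\beta(1+\cdot)$ on $1+\vP_{\bL}^{m_0}$. Both $\beta(1+\cdot)$ and $\psi_{\bL}(\delta\log(1+\cdot))$ are characters of $1+\vP_{\bL}^{m_1}$ trivial on $1+\vP_{\bL}^{n_0}$, so their ratio factors through $\vP_{\bL}^{m_1}/\vP_{\bL}^{m_0}$; replacing $\delta$ by $\delta+w$ with $w\in\vP_{\bL}^{-m_0-(e-1)}$ leaves the formula on $\vP_{\bL}^{m_0}$ unchanged while $u\mapsto\psi_{\bL}(wu)$ realizes an arbitrary character of $\vP_{\bL}^{m_1}/\vP_{\bL}^{m_0}$, so after such an adjustment $\beta(1+u)=\psi_{\bL}(\delta u-2^{-1}\delta u^2)$ for all $u\in\vP_{\bL}^{m_1}$. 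In the split case one can bypass this step entirely, using $\beta(1+u)=\chi_0(1+u_1)\chi_0(1+u_2)^{-1}$, the equalities $\cond(\chi_0)=\cond(\chi_0^2)=n_0$ from Lemma \ref{lem: CondReInterp}, and the standard additive description of $\chi_0$ on the principal congruence subgroups of $\F^{\times}$.

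It then remains to pin down the shape of $\delta$ and compute traces. Apply $\beta\mid_{\F^{\times}}=\eta_{\bL/\F}$, whose conductor is $\leq 1$: for real $a\in\vP_{\F}^{\lceil m_1/e\rceil}$ one has $1+a\in 1+\vP_{\bL}^{m_1}$ and $1=\beta(1+a)=\psi(\Tr(\delta)(a-2^{-1}a^2))$, and choosing $a$ a uniformizer power forces $v_{\F}(\Tr(\delta))\geq-\lceil m_1/e\rceil$. Thus the real part $\delta_{\mathrm{re}}:=2^{-1}\Tr(\delta)$ is negligible and the trace-zero part $\delta_{\mathrm{im}}:=\delta-\delta_{\mathrm{re}}$ keeps $v_{\bL}(\delta_{\mathrm{im}})=-n_0-(e-1)$. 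Writing, for any $w\in\bL$, $\Tr(\delta_{\mathrm{im}}w)=\delta_{\mathrm{im}}(w-\bar{w})$ and $\Tr(\delta_{\mathrm{re}}w)=\delta_{\mathrm{re}}(w+\bar{w})$, and applying this with $w=u$ and $w=u^2$, the $\delta_{\mathrm{re}}$-contributions land in $\vO_{\F}$ and vanish under $\psi$, leaving $\beta(1+u)=\psi(\delta_{\mathrm{im}}((u-\bar{u})-2^{-1}(u^2-\bar{u}^2)))$ (the $u^2$-term being absent when $n_0$ is even). Expressing $\delta_{\mathrm{im}}$ in normalized form — $\varpi_{\F}^{-n_0}\sqrt{\varepsilon}\,c_{\beta}$ in the unramified case, $\varpi_{\bL}^{-n_0-1}c_{\beta}$ in the ramified case (the sign from $\varpi_{\bL}^2=-\varpi_{\F}$ absorbed into $c_{\beta}$), and the corresponding first-coordinate expression in the split case — with $c_{\beta}\in\vO_{\F}^{\times}$, gives the five asserted identities. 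Finally, since $u-\bar{u}$ and $u^2-\bar{u}^2$ range over the trace-zero part of $\vP_{\bL}^{m_1}$, a direct check shows $\psi(\delta_{\mathrm{im}}(u-\bar{u}))$ is unchanged exactly under multiplying $c_{\beta}$ by an element of $1+\vP_{\F}^{\lceil n_0/(2e)\rceil}$, which is the stated ambiguity.

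I expect the descent of the second paragraph to be the main obstacle: one must confirm that the cubic and higher terms of the logarithm genuinely pair trivially with $\delta$ for all the relevant $n_0$ (this is where oddness of the residual characteristic enters, and it has to be checked even when that characteristic is small), and that the character-valued discrepancy picked up when passing from $1+\vP_{\bL}^{m_0}$ to $1+\vP_{\bL}^{m_1}$ is exactly absorbable by a legitimate re-choice of $\delta$. The trace bookkeeping of the third paragraph, the sign in the ramified case, and the identification of the trace-zero line with $\F$ in the split case are routine, and the final formulas can be cross-checked against the Gauss-sum computations behind Lemma \ref{lem: TestFMellin}.
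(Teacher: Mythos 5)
Your proof is correct and follows essentially the same approach as the paper: obtain an additive description of $\beta$ near the identity via the (truncated) logarithm and $\psi_{\bL}$-duality, then use the constraint $\beta\mid_{\F^{\times}}=\eta_{\bL/\F}$ to kill the trace part of the parameter and normalize the remaining trace-zero part. The descent step you flagged is not an obstacle — the ratio character on $(1+\vP_{\bL}^{m_1})/(1+\vP_{\bL}^{m_0})$ really is absorbed by a shift $\delta\mapsto\delta+w$, and the cubic and higher log terms pair trivially with $\delta$ once $3m_1\geq n_0$ (i.e.\ $n_0\geq 3$ in the odd case, which is exactly where it is needed); this is precisely what the paper's one-step use of the isomorphism $\log_{\F}\colon 1+x\mapsto x-2^{-1}x^2$ on $(1+\vP_{\F}^m)/(1+\vP_{\F}^{2m+1})$ accomplishes.
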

\begin{proof}
	We only treat the cases $2 \nmid n_0 := 2m+1$ if $\bL/\F$ is not ramified, the even ones being simpler. The following map
	$$ \log_{\F} : (1+\vP_{\F}^m) / (1+\vP_{\F}^{2m+1}) \to \vP_{\F}^m/\vP_{\F}^{2m+1}, \quad 1+x \mapsto x - 2^{-1} x^2 $$
	is a group isomorphism. If $\bL/\F$ is split there is $c_{\beta}' \in \vO_{\F}^{\times}/(1+\vP_{\F}^{m+1})$ such that
\begin{equation} \label{eq: AddParSplit}
	\beta(1+\varpi_{\F}^m u) = \psi \left( c_{\beta}' \left( \varpi_{\F}^{-(m+1)} u - 2^{-1} \varpi_{\F}^{-1} u^2 \right) \right), \quad \forall u \in \vO_{\bL} \left( \simeq \vO_{\F} \times \vO_{\F} \right). 
\end{equation}
	One checks easily that $c_{\beta} := c_{\beta}'$ is the required one. If $\bL/\F$ is unramified we get an analogue of \eqref{eq: AddParSplit}
\begin{equation} \label{eq: AddParUnr}
	\beta(1+\varpi_{\F}^m u) = \psi \left( \Tr \left( c_{\beta}' \left( \varpi_{\F}^{-(m+1)} u - 2^{-1} \varpi_{\F}^{-1} u^2 \right) \right) \right), \quad \forall u \in \vO_{\bL}. 
\end{equation}

\noindent Now that $\beta$ is trivial on $\vO_{\F}^{\times}$ and $\log_{\F}$ is surjective, implying $\psi \circ \Tr(c_{\beta}' \vP_{\F}^{-m-1}) = 1$, i.e., $c_{\beta}' + \overline{c_{\beta}'} \in \vP_{\F}^{m+1}$. Therefore we may write $c_{\beta}' \in c_{\beta} \sqrt{\varepsilon} + \vP_{\F}^{m+1}$ for some $c_{\beta} \in \vO_{\F}^{\times}/(1+\vP_{\F}^{m+1})$. If $\bL/\F$ is ramified, then $2 \mid n_0$ by the claim proved in Lemma \ref{lem: CondReInterp} and $\psi_{\bL} := \psi \circ \Tr_{\bL/\F}$ has conductor exponent $-1$. We similarly get
\begin{equation} \label{eq: AddParR}
	\beta(1+u) = \psi \left( \varpi_{\F}^{-\frac{n_0}{2}} \varpi_{\bL}^{-1} (c_{\beta}' u - \overline{c_{\beta}'} \bar{u}) \right), \quad \forall u \in \vP_{\bL}^{\frac{n_0}{2}} 
\end{equation}
	for some $c_{\beta}' \in \vO_{\bL}^{\times}/(1+\vP_{\bL}^{\frac{n_0}{2}})$. Note that $\beta$ is trivial on $1+\vP_{\F}$, implying $\psi \left( \varpi_{\F}^{-\frac{n_0}{2}} \varpi_{\bL}^{-1} (c_{\beta}' - \overline{c_{\beta}'}) \vP_{\F}^{\lceil \frac{n_0}{4} \rceil} \right) = 1$,  i.e., $c_{\beta}' - \overline{c_{\beta}'} \in \vP_{\bL}^{2 \lfloor \frac{n_0}{4} \rfloor+1}$. Therefore we may write $c_{\beta}' \in c_{\beta} + \vP_{\bL}^{2 \lfloor \frac{n_0}{4} \rfloor+1}$, hence may take $c_{\beta}'=c_{\beta}$ in \eqref{eq: AddParR}, for some $c_{\beta} \in \vO_{\F}^{\times}/(1+\vP_{\F}^{\lceil \frac{n_0}{4} \rceil})$.
 \end{proof}
 
 \begin{remark} \label{rmk: AddParMultChar}
 	In the split case the character $\beta$ is constructed from a character $\chi_0$ of $\F^{\times}$. We also call $c_{\beta} = c_{\chi_0}$ the additive parameter of $\chi_0$.
 \end{remark}

\begin{lemma} \label{lem: TestFAsymp}
	Let $e$ be the generalized ramification index given in \eqref{eq: GenRamInd}, and write $n_0 := \cond(\beta)$.
\begin{itemize}
	\item[(1)] In the domain $v_{\F}(y) \leq -4n_0/e-2(e-1)$ we have $H(y) = E_{\geq 2n_0/e+e-1}(y)$.
	\item[(2)] \emph{Assume $\beta$ is regular.} For any $\tau$ we have $H(\tau y^2) = 0$ if $v_{\F}(y) \geq (2-n_0)e^{-1}-1$.
	\item[(3)] If $\tau \neq 1$, then $H(\tau y^2) = 0$ unless $v_{\F}(y) = 1-e-e^{-1}n_0$.
	\item[(4)] Suppose $e=1, n_0 \geq 2$. If $\tau = 1$ and $v_{\F}(y) = -m$ with $n_0 \leq m \leq 2n_0-1$, then we may put the following condition in the domain of integration in \eqref{eq: TestFROI}:
	$$ \Tr(\alpha) \in 
	\begin{cases} 
		\pm 2(1+\vP_{\F}^{2(n_0-1)}) & \text{if } m=2n_0-1 \\
		\pm 2(1+\varpi_{\F}^{2(m-n_0)} \vO_{\F}^{\times}) & \text{if } n_0<m<2n_0-1 \\
		\vO_{\F} - \sideset{}{_{\pm}} \bigcup \pm 2(1+\vP_{\F}) & \text{if } m=n_0
	\end{cases}. $$
	\item[(5)] Suppose $e=2$. We may put the following condition in the domain of integration in \eqref{eq: TestFROI}:
	$$ \begin{cases}
		\Tr(\alpha) \in \pm 2(1+\vP_{\F}^{n_0-1}) & \text{if } \tau=1, \text{ and } -v_{\F}(y):=m = n_0 \\
		\Tr(\alpha) \in \pm 2(1+\varpi_{\F}^{2m-n_0+1} \vO_{\F}^{\times}) & \text{if } \tau=1, \text{ and } \tfrac{n_0}{2}+1 \leq -v_{\F}(y):=m \leq n_0-1
	\end{cases}. $$
\end{itemize}
\end{lemma}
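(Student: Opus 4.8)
The plan is to read off all five statements from one analysis of the integral in \eqref{eq: TestFROI}: partition $\bL^1\cap\vO_{\bL}$ into translates of the subgroups $\bL^1\cap(1+\vP_{\bL}^k)$ for a well-chosen $k$, on each translate $\alpha=\alpha_0(1+u)$ linearise $\beta(x_\tau\alpha)=\beta(x_\tau\alpha_0)\beta(1+u)$ by Lemma \ref{lem: AddParMultChar} (retaining the quadratic correction $-2^{-1}(u^2-\overline u^2)$ when $2\nmid n_0$ in the split/unramified case), Taylor-expand $\Tr(x_\tau\alpha_0(1+u)y)=\Tr(x_\tau\alpha_0 y)+\Tr(x_\tau\alpha_0 y\,u)+\cdots$ while eliminating $\Tr(u)$ via the norm-one relation $\Tr(u)=-\Nr(u)$, and decide when the resulting inner Gauss integral over a ball in the trace-free line vanishes. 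Throughout I will use the identity $\Tr(\alpha)\mp 2=(\alpha\mp 1)^2\alpha^{-1}$, valid on $\bL^1$ since there $\overline\alpha=\alpha^{-1}$, together with Lemma \ref{lem: CongCondEquiv}, to pass between conditions on $\alpha$ near $\pm 1$, on $\alpha-\overline\alpha$, and on $\Tr(\alpha)$ near $\pm 2$; the valuations get doubled under this passage, which is what produces the exponents $2(n_0-1)$, $2(m-n_0)$, $2m-n_0+1$ in (4)–(5). The two competing frequencies are the $\psi$-frequency, entering through $\Tr(x_\tau\alpha_0 y\,u)$ with $x_\tau\alpha_0 y$ of $\bL$-valuation $\sim -em$ and $\Nr(x_\tau\alpha_0 y)=\tau y^2$, and the $\beta$-frequency $\varpi_{\F}^{-n_0}c_\beta(u-\overline u)$ of Lemma \ref{lem: AddParMultChar} (with the obvious $\varpi_{\bL}$-variant in the ramified case). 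The inner integral vanishes unless the sum of the two degenerates; the linear part of this forces $x_\tau\alpha_0 y\equiv\pm\sqrt\tau\,y$, which has a solution $\alpha_0\in\bL^1$ precisely when $\tau\in(\F^{\times})^2$ (whence $\alpha_0=\pm 1$), and none, for $\tau\ne 1$ and $\norm[y]_{\F}$ large.

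For (1) the cleanest route is to compare Mellin transforms rather than to substitute directly. By Lemma \ref{lem: TestFMellin}, once $\cond(\chi)\ge 2n_0/e+e-1$ the only annulus on which $\varepsilon_n(\chi,H)$ is nonzero has $n=2\cond(\chi)\ge 4n_0/e+2(e-1)$, and there $\varepsilon_n(\chi,H)$ equals $\zeta_{\F}(1)$ times $\varepsilon(1/2,\chi_0\chi^{-1},\psi)\varepsilon(1/2,\chi_0^{-1}\chi^{-1},\psi)$ (split) or $\varepsilon(1/2,\pi_{\beta^{-1}}\otimes\chi^{-1},\psi)$ (non-split). By the stability of $\GL_1$- and $\GL_2$-epsilon factors used in Proposition \ref{prop: StabRang}, for $\cond(\chi)$ in this range the $\chi_0$- (resp.\ $\pi_\beta$-) dependence cancels — exactly because the central character is trivial — and these become $\varepsilon(1/2,\chi^{-1},\psi)^2=\gamma(1/2,\chi^{-1},\psi)^2$, matching the Mellin transform of $E_m$ from Lemma \ref{lem: MellinStEBOI}; summing over annuli gives the Mellin transform of $E_{\ge 2n_0/e+e-1}$ on the same region, hence (1). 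For (2), if $-v_{\F}(y)=m\le(n_0+e-2)/e$ then $\psi(\Tr(x_\tau\alpha y))$ is constant along the slices $\bL^1\cap(1+\vP_{\bL}^{em-(e-1)})$, on each of which $\int\beta=0$ because $em-(e-1)\le n_0-1<\cond_1(\beta)=n_0$ (Lemma \ref{lem: CondReInterp}), so $H$ vanishes. For (3), for $\tau\ne 1$ the argument of (2) kills $v_{\F}(y)>1-e-n_0/e$, while for $v_{\F}(y)<1-e-n_0/e$ the slices are fine enough that a second application of Lemma \ref{lem: AddParMultChar} shows the reduced sum $\sum_{\alpha_0}\beta(x_\tau\alpha_0)\psi(\Tr(x_\tau\alpha_0 y))$ vanishes (the would-be critical $\alpha_1$ would have $\Nr(\alpha_1)=-1\ne 1$); so only the single valuation $v_{\F}(y)=1-e-n_0/e$ — where this sum becomes a genuine quadratic exponential sum, the source of the $q^{1/2}$-term in Theorem \ref{thm: Main} — can survive.

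For (4)–(5) we are at $\tau=1$ in the transitional range $n_0/e\lesssim m\lesssim 2n_0/e$, where $\alpha_0=\pm 1$ are admissible but neither frequency dominates. Writing $\alpha=\pm(1+u)$ and slicing further by $v_{\bL}(u)=j$, the inner Gauss integral over shell $j$ is nonzero only for $j$ in a window dictated by $m$ and $n_0$ (plus the trivially surviving small shells where both phases are flat); pulling this window back through $\Tr(\alpha)-(\pm 2)=(\alpha\mp 1)^2\alpha^{-1}$ yields precisely the stated description of the admissible $\Tr(\alpha)$: a ball $\pm 2(1+\vP_{\F}^{2(n_0-1)})$ (resp.\ $\pm 2(1+\vP_{\F}^{n_0-1})$) at the endpoint where the window abuts the regime of (1) (resp.\ (2)), a single shell $\pm 2(1+\varpi_{\F}^{2(m-n_0)}\vO_{\F}^{\times})$ (resp.\ $\pm 2(1+\varpi_{\F}^{2m-n_0+1}\vO_{\F}^{\times})$) strictly inside it, and — at $m=n_0$ for $e=1$ — the complement $\vO_{\F}-\bigcup_{\pm}\pm 2(1+\vP_{\F})$, since there the $\beta$-oscillation near $\pm 1$ is just strong enough to annihilate the would-be stationary contribution, so only $\Tr(\alpha)$ away from $\pm 2$ contributes. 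The hard part will be the bookkeeping of these inner Gauss integrals at the endpoints of the window and when $2\nmid n_0$: there the quadratic correction of Lemma \ref{lem: AddParMultChar} must be balanced against the quadratic (and, at the very endpoint, cubic) Taylor coefficients of $u\mapsto\Tr(x_\tau\alpha_0(1+u)y)$ in order to pin down exactly when the integral degenerates, and the split, unramified and ramified cases — with their different conductors for $\psi_{\bL}$ and different parametrisations of $\bL^1$ — must be carried out separately, although no new idea beyond the above is needed.
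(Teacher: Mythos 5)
Your plan matches the paper's proof essentially step by step: (1) is the Mellin-transform comparison against $E_{\geq n}$ via Lemma \ref{lem: TestFMellin}, Lemma \ref{lem: MellinStEBOI}, and the stability of epsilon-factors; (2) averages $\psi(\Tr(x_\tau\alpha y))$ over cosets of a subgroup on which $\beta$ is nontrivial; (3) is the regularization-and-stationary-point argument showing that a stationary $\alpha_0$ would need $\Nr(\alpha_0)\neq 1$, equivalently that $\tau$ must be a square (the paper phrases this via the trace--norm identity $4\tau=(x_\tau\alpha+\overline{x_\tau\alpha})^2-(x_\tau\alpha-\overline{x_\tau\alpha})^2$); and (4)--(5) use the additive parameter $c_\beta$ of Lemma \ref{lem: AddParMultChar} and inner Gauss integrals to pin down the admissible $\Tr(\alpha)$. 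Your treatment of (4)--(5) is a sketch, organized by shells $v_{\bL}(u)=j$ rather than by the paper's averaging over $\alpha\mapsto\alpha(1+u)(1+\overline u)^{-1}$, but the bookkeeping you defer is exactly what the paper carries out, so the approach is the same.
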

\begin{proof}
	(1) We only consider non-split $\bL$, leaving the simpler split case as an exercise. Let $n = -v_{\F}(y) \geq 4n_0/e+2(e-1)$. For $\chi \in \widehat{\vO_{\F}^{\times}}$, by Lemma \ref{lem: TestFMellin} the integral $\int_{\varpi_{\F}^{-n}\vO_{\F}^{\times}} H(y) \chi(y) \ud^{\times}y \neq 0$ is non-zero only if
	$$ \tfrac{en}{2} = \cond(\beta (\chi \circ \Nr)) + \cond(\psi_{\bL}) \quad \Leftrightarrow \quad \max(\cond(\beta), \cond(\chi \circ \Nr)) \geq \cond(\beta (\chi \circ \Nr)) = \begin{cases} n-1 & \text{if } e=2 \\ n/2 & \text{if } e=1 \end{cases}. $$
	By \cite[Proposition \Rmnum{5}.2.3 \& Corollay \Rmnum{5}.3.3]{Se79} we have
\begin{equation} \label{eq: CondCompNormF} 
	\cond(\chi \circ \Nr) = \begin{cases} \max(2\cond(\chi)-1,0) & \text{if } e=2 \\ \cond(\chi) & \text{if } e=1 \end{cases}. 
\end{equation}
	In view of the assumption on $n$ (implying $\cond(\beta (\chi \circ \Nr)) > n_0$), the non-vanishing condition is equivalent to $2 \mid n \ \& \ \cond(\chi)=n/2$. Now that by \cite[Theorem 4.7]{JL70} $\ell(\chi) = \cond(\chi)-1 = n/2-1 > 2\ell(\pi_{\beta}) = \cond(\pi_{\beta})-2 = 2\cond(\beta)/e+e-3 = 2n_0/e+e-3$, we can apply the stability theorem \cite[Theorem 25.7]{BuH06} and obtain
\begin{equation} \label{eq: DHRel}
	\int_{\varpi_{\F}^{-n}\vO_{\F}^{\times}} H(y) \chi(y) \ud^{\times}y = \id_{2 \mid n, \cond(\chi)=\frac{n}{2}} \cdot \zeta_{\F}(1) \varepsilon(1/2,\chi^{-1},\psi)^2. 
\end{equation}
	We conclude by comparing with Lemma \ref{lem: MellinStEBOI} and applying the Mellin inversion on $\vO_{\F}^{\times}$.
	
\noindent (2) We average over the change of variables $\alpha \mapsto \alpha \delta$ for $\delta \in \bL^1 \cap (1+\vP_{\bL}^{n_0-1}) =: U$. Note that
	$$ y x_{\tau} \alpha (\delta - 1) \in \vP_{\bL}^{1-e}, \quad \Tr(\vP_{\bL}^{1-e}) \subset \vP_{\bL}^{1-e} \cap \F = \vO_{\F} \quad \Rightarrow \quad \psi(\Tr(x_{\tau} \alpha \delta) y) = \psi(\Tr(x_{\tau} \alpha) y) $$
	for $y$ satisfying the stated condition. By Lemma \ref{lem: CondReInterp}, the character $\beta$ is non-trivial on $U$, hence
	$$ H(\tau y^2) = \lambda(\bL/\F,\psi) \norm[\tau]_{\F}^{\frac{1}{2}} \cdot \norm[y]_{\F} \cdot \eta_{\bL/\F}(y) \cdot \int_{\bL^1 \cap \vO_{\bL}} \beta(x_{\tau} \alpha) \psi(\Tr(x_{\tau} \alpha) y) \cdot \left( \oint_{U} \beta(\delta) \ud \delta \right) \ud \alpha = 0. $$
	
\noindent (3) The idea is to average over similar change of variables $\alpha \mapsto \alpha \delta$ for $\delta \in \bL^1 \cap (1+\vP_{\bL}^{n}) =: U_n$ with
\begin{equation} \label{eq: RegCond}
	n \geq \max(n_0, (1-e-ev_{\F}(y))/2)
\end{equation} 
	and perform a further change of variables $\delta = \delta(u)$
	$$ \delta = \frac{1+\varpi_{\bL}^n u}{1+\overline{\varpi_{\bL}^n u}}, \ u \in \vO_{\bL} \quad \Rightarrow \quad \delta - 1 \in \varpi_{\bL}^n u - \overline{\varpi_{\bL}^n u} + y^{-1}\vP_{\bL}^{1-e}. $$
	Then we have $\beta(\delta)=1$, $\psi(\Tr(x_{\tau} \alpha(\delta-1)) y) = \psi_{\bL}(y x_{\tau} \alpha (\varpi_{\bL}^n u - \overline{\varpi_{\bL}^n u}))$ and get
\begin{multline} \label{eq: RegI}
	H(\tau y^2) = \lambda(\bL/\F,\psi) \norm[\tau]_{\F}^{\frac{1}{2}} \cdot \norm[y]_{\F} \cdot \eta_{\bL/\F}(y) \cdot \int_{\bL^1 \cap \vO_{\bL}} \beta(x_{\tau} \alpha) \psi(\Tr(x_{\tau} \alpha) y) \cdot \\
	\left[ \oint_{\vO_{\bL}} \psi \left( y (x_{\tau} \alpha - \overline{x_{\tau} \alpha}) (\varpi_{\bL}^n u - \overline{\varpi_{\bL}^n u}) \right) \ud u \right] \ud \alpha.
\end{multline}
	For any $m \in \Z$ we introduce $\vP_{\bL}^{m,-} := \left\{ x \in \vP_{\bL}^m \ \middle| \ \bar{x} = -x \right\}$. We see that
	$$ \vO_{\bL} \to \vP_{\bL}^{n,-}, \quad u \mapsto \varpi_{\bL}^n u - \overline{\varpi_{\bL}^n u} $$
	is a surjective group homomorphism. Therefore the inner integral in \eqref{eq: RegI} is non-vanishing only if
\begin{equation} \label{eq: RegINVCond} 
	y (x_{\tau} \alpha - \overline{x_{\tau} \alpha}) \in \vP_{\bL}^{1-e-n,-} \quad \Leftrightarrow \quad v_{\bL}(x_{\tau} \alpha - \overline{x_{\tau} \alpha}) \geq 1-e-n-ev_{\F}(y). 
\end{equation}
	We choose $n = 1-2e-ev_{\F}(y)$, which is consistent with the condition \eqref{eq: RegCond} if 
\begin{equation} \label{eq: VCond}
	v_{\F}(y) \leq \lfloor \min(e^{-1}-2-e^{-1}n_0, e^{-1}-3) \rfloor = -e-e^{-1}n_0, 
\end{equation}
	so that \eqref{eq: RegINVCond} becomes $v_{\bL}(x_{\tau} \alpha - \overline{x_{\tau} \alpha}) \geq e$ or $v_{\F}((x_{\tau} \alpha - \overline{x_{\tau} \alpha})^2) \geq 2$. In view of the equality
\begin{equation} \label{eq: TauDecomp}
	\tau = x_{\tau} \alpha \cdot \overline{x_{\tau} \alpha} = \tfrac{1}{4} \left\{ (x_{\tau} \alpha + \overline{x_{\tau} \alpha})^2 - (x_{\tau} \alpha - \overline{x_{\tau} \alpha})^2 \right\} 
\end{equation}
	and $v_{\F}(\tau) \in \{0,1\}$, we deduce that $v_{\F}(\tau) = v_{\F}((x_{\tau} \alpha + \overline{x_{\tau} \alpha})^2) \in 2\Z$. Hence $v_{\F}(\tau)=0$ and $\tau$ is a square in $\vO_{\F}^{\times}$. This is possible only if $\tau=1$. In other words, if $\tau \neq 1$ and $v_{\F}(y)$ satisfies \eqref{eq: VCond} then $H(\tau y^2) = 0$. We conclude because the only integer not satisfying \eqref{eq: VCond} nor the inequality in (1) is $v_{\F}(y) = 1-e-e^{-1}n_0$.
	
\noindent (4) Suppose $\bL/\F$ is unramified. Let $n := \lceil \tfrac{m}{2} \rceil$. Recall $c_{\beta}$ defined in Lemma \ref{lem: AddParMultChar} (1). We average over the change of variables $\alpha \mapsto \alpha (1+u)(1+\bar{u})^{-1}$ for $u \in \vP_{\bL}^n$ and get
\begin{multline*} 
	\int_{\bL^1 \cap \vO_{\bL}} \beta(\alpha) \psi(y \Tr(\alpha)) \ud \alpha = \int_{\bL^1 \cap \vO_{\bL}} \beta(\alpha) \psi(y \Tr(\alpha)) \left\{ \oint_{\vP_{\bL}^n} \psi \left( \tfrac{2 c_{\beta} \sqrt{\varepsilon} (u-\bar{u})}{\varpi_{\F}^{n_0}} +y \Tr \left( \alpha \cdot \tfrac{u-\bar{u}}{1+\bar{u}} \right) \right) \ud u \right\} \ud \alpha \\
	= \int_{\bL^1 \cap \vO_{\bL}} \beta(\alpha) \psi(y \Tr(\alpha)) \left\{ \oint_{\vO_{\bL}} \psi \left( (2 \sqrt{\varepsilon} c_{\beta} \varpi_{\F}^{n-n_0} + y \varpi_{\F}^n (\alpha - \bar{\alpha}))(u-\bar{u}) \right) \ud u \right\} \ud \alpha.
\end{multline*}
	The non-vanishing of the inner integral implies
\begin{multline*} 
	\left( 2 \sqrt{\varepsilon} c_{\beta} \varpi_{\F}^{n-n_0} + y \varpi_{\F}^n (\alpha - \bar{\alpha}) \right) \sqrt{\varepsilon} \in \vO_{\F} \ \Rightarrow \\
	(\alpha - \bar{\alpha}) \sqrt{\varepsilon} \in 
	\begin{cases} 
		\varpi_{\F}^{m-n_0} \vO_{\F}^{\times} & \text{if } m < 2n_0-1 \\
		\vP_{\F}^{n_0-1} & \text{if } m = 2n_0-1
	\end{cases}
	\Rightarrow \ (\alpha + \bar{\alpha})^2 \in 
	\begin{cases} 
	4 + \vP_{\F}^{2(n_0-1)} & \text{if } m=2n_0-1 \\
	4 + \varpi_{\F}^{2(m-n_0)} \vO_{\F}^{\times} & \text{if } n_0<m<2n_0-1 \\
	\vO_{\F} - 4(1+\vP_{\F}) & \text{if } m=n_0
	\end{cases}
\end{multline*}
	from which we conclude. Replacing $\sqrt{\varepsilon}$ by $1$ we obtain the proof in the split case.
	
\noindent (5) In the case $\tau = 1$ and $\tfrac{n_0}{2}+1 \leq m \leq n_0-1$, we average over the change of variables $\alpha \mapsto \alpha (1+u)(1+\bar{u})^{-1}$ for $u \in \vP_{\bL}^m$, noting that $\alpha \tfrac{u-\bar{u}}{1+\bar{u}} \in \alpha(u-\bar{u})+y^{-1}\vO_{\bL}$, and get
\begin{multline*}
	\int_{\bL^1} \beta(\alpha) \psi(y \Tr(\alpha)) \ud \alpha = \int_{\bL^1} \beta(\alpha) \psi(y \Tr(\alpha)) \left\{ \oint_{\vP_{\bL}^m} \psi \left( \tfrac{2 c_{\beta} (u-\bar{u})}{\varpi_{\bL}^{n_0+1}} + y \Tr \left( \alpha \cdot \tfrac{u-\bar{u}}{1+\bar{u}} \right) \right) \ud u \right\} \ud \alpha \\
	= \int_{\bL^1} \beta(\alpha) \psi(y \Tr(\alpha)) \left\{ \oint_{\vP_{\bL}^m} \psi \left( \left( \tfrac{2c_{\beta}}{\varpi_{\bL}^{n_0+1}} + (\alpha - \bar{\alpha}) y \right) (u-\bar{u}) \right) \ud u \right\} \ud \alpha \\
	= \int_{\bL^1} \beta(\alpha) \psi(y \Tr(\alpha)) \left\{ \oint_{\vP_{\F}^{\lfloor \frac{m}{2} \rfloor}} \psi \left( \left( \tfrac{2c_{\beta}}{\varpi_{\F}^{n_0/2}} + \varpi_{\bL} (\alpha - \bar{\alpha}) y \right) u \right) \ud u \right\} \ud \alpha.
\end{multline*}
	The non-vanishing of the inner integral implies
	$$ 2c_{\beta} \varpi_{\F}^{-\frac{n_0}{2}} + \varpi_{\bL} (\alpha - \bar{\alpha}) y \in \vP_{\F}^{-\lfloor \frac{m}{2} \rfloor} \ \Rightarrow \ 
	\begin{cases}
	(\alpha - \bar{\alpha}) \varpi_{\bL}^{-1} \in \varpi_{\F}^{m-\frac{n_0}{2}-1} \vO_{\F}^{\times} & \text{if } m < n_0 \\
	(\alpha - \bar{\alpha}) \varpi_{\bL}^{-1} \in \varpi_{\F}^{\frac{n_0}{2}-1} \vO_{\F} & \text{if } m = n_0
	\end{cases}. $$
	We conclude by noting that for any $m \in \Z_{\geq 0}$ we have
	$$ (\alpha - \bar{\alpha}) \varpi_{\bL}^{-1} \in \vP_{\F}^m \ \Leftrightarrow \ \Tr(\alpha) \in \pm2(1+\vP_{\F}^{2m+1}), $$
and that $\Tr(\alpha)^2-4 = (\alpha-\bar{\alpha})^2$ never has even valuation.	
\end{proof}

\begin{remark} \label{rmk: TestFROI}
	If $\beta$ is not regular, which can happen only if $\bL/\F$ is split with $\cond(\chi_0^2)=0$, then (2) fails. In fact, it is easy to see that $H(\tau y^2)$ is proportional to $\chi_0(\tau) \norm[y]$ as $\norm[y] \to 0$. In this case we modify the definition of $H$ by \emph{truncating it} so that (2) still holds.
\end{remark}

\begin{corollary} \label{cor: TestFROI}
	The test function $H$ defined by \eqref{eq: TestFROI} and Remark \ref{rmk: TestFROI} is a Bessel orbital integral.
\end{corollary}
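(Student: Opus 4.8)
The strategy is to match $H$ against the description of Bessel orbital integrals in Proposition~\ref{prop: BesselOrbInt}. Since the space $\Cont_c^\infty(\F^\times) \oplus \C E_{\geq n}$ does not depend on the choice of $n \in \Z_{\geq 0}$, it suffices to exhibit one integer $N \geq 0$ for which $H - E_{\geq N} \in \Cont_c^\infty(\F^\times)$; by that proposition $H$ is then the Bessel orbital integral \eqref{eq: BesselOrbInt} of some $f \in \Cont_c^\infty(\gp{G}_2(\F))$.

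I would take $N := 2n_0/e + e - 1$, a genuine nonnegative integer ($2n_0$ when $e = 1$, and $n_0 + 1$ when $e = 2$, recalling that $2 \mid n_0$ in the ramified case by the Claim in the proof of Lemma~\ref{lem: CondReInterp}), and put $H_c := H - E_{\geq N}$. Its support in $\F^\times$ is bounded at both ends. On the large side, Lemma~\ref{lem: TestFAsymp}~(1) gives $H(y) = E_{\geq N}(y)$ whenever $v_{\F}(y) \leq -4n_0/e - 2(e-1)$, so $H_c$ vanishes there; in the non-regular split case the modification of Remark~\ref{rmk: TestFROI} only truncates $H$ in the range $\norm[y]_{\F} \to 0$, disjoint from this deep region, so the identity persists. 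On the small side, $E_{\geq N}$ is supported on squares $y^2$ with $v_{\F}(y) \leq -N < 0$, hence vanishes once the valuation of its argument is $\geq 0$, whereas $H$ vanishes for all sufficiently large values of that valuation, by Lemma~\ref{lem: TestFAsymp}~(2) when $\beta$ is regular and by the truncation of Remark~\ref{rmk: TestFROI} otherwise. Hence $H_c$ is supported in a set of the form $\{ y \in \F^\times : -C \leq v_{\F}(y) \leq C' \}$, a compact subset of $\F^\times$.

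It remains to check that $H_c$ is locally constant on $\F^\times$. Here $E_{\geq N} = \sum_{m \geq N} E_m$ is a locally finite sum of elements of $\Cont_c^\infty(\F^\times)$, hence smooth, and the same for $H$ is visible from \eqref{eq: TestFROI}: the factor $\norm[y]_{\F}\, \eta_{\bL/\F}(y)$ is locally constant on $\F^\times$, while in $\int_{\bL^1 \cap \vO_{\bL}} \beta(x_\tau \alpha) \psi(\Tr(x_\tau \alpha) y) \ud \alpha$ the traces $\Tr(x_\tau \alpha)$ stay in a fixed fractional ideal of $\F$ as $\alpha$ runs over $\vO_{\bL}$, so $y \mapsto \psi(\Tr(x_\tau \alpha) y)$ is locally constant uniformly in $\alpha$; the sum over the finitely many $\tau$, and the truncation of Remark~\ref{rmk: TestFROI} (by a clopen indicator), do no harm. (Alternatively one may invoke the vanishing of the Mellin coefficients $\varepsilon_n(\chi, H)$ for $\chi$ of large conductor from Lemma~\ref{lem: TestFMellin} together with Mellin inversion on $\vO_{\F}^{\times}$.) Hence $H_c \in \Cont_c^\infty(\F^\times)$, so $H = H_c + E_{\geq N} \in \Cont_c^\infty(\F^\times) \oplus \C E_{\geq N}$, and Proposition~\ref{prop: BesselOrbInt} identifies $H$ as a Bessel orbital integral. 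All of the real work is already done in Lemma~\ref{lem: TestFAsymp}: the exact matching $H = E_{\geq N}$ --- with leading coefficient $1$ --- far out in $\F^\times$, and the vanishing of $H$ near the origin (using Remark~\ref{rmk: TestFROI} in the non-regular case). Once these are in hand the corollary is pure assembly, with no genuine obstacle remaining.
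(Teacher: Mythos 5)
Your proof is correct and follows exactly the route the paper intends (the paper's proof is the single sentence ``This is a direct consequence of Lemma~\ref{lem: TestFAsymp} (1) \& (2) and Proposition~\ref{prop: BesselOrbInt}''). You have simply made the assembly explicit: pick $N = 2n_0/e+e-1$ so that Lemma~\ref{lem: TestFAsymp}~(1) kills $H-E_{\geq N}$ far out, Lemma~\ref{lem: TestFAsymp}~(2) (or the truncation of Remark~\ref{rmk: TestFROI}) kills it near the origin, local constancy is clear from \eqref{eq: TestFROI}, and Proposition~\ref{prop: BesselOrbInt} then applies.
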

\begin{proof}
	This is a direct consequence of Lemma \ref{lem: TestFAsymp} (1) \& (2) and Proposition \ref{prop: BesselOrbInt}.
\end{proof}

	\subsection{Weight Functions}
	
	To every target representation $\pi_0$ we have associated a parameter $(\bL,\beta)$ and a test function $H$ in \eqref{eq: TestFROI}.

\begin{lemma} \label{lem: WtFNonSC}
	(1) In the split case $\pi_0 = \pi(\chi_0,\chi_0^{-1})$ consider the function $\phi_0 \in \Cont_c^{\infty}(\PGL_2(\F))$ given by
	$$ \phi_0 \begin{pmatrix} x_1 & x_2 \\ x_3 & x_4 \end{pmatrix} = \chi_0 \left( \frac{x_4}{x_1} \right) \id_{\gp{Z}\gp{K}_0[\vP_{\F}^{n_0}]} \begin{pmatrix} x_1 & x_2 \\ x_3 & x_4 \end{pmatrix}. $$
	The partial integral $I_0$ defined by
	$$ I_0(g) := \int_{\F} \phi_0 \left( g \begin{pmatrix} 1 & x \\ & 1 \end{pmatrix} \right) \psi(-x) \ud x $$
has support contained in $\gp{Z}\gp{K}_0[\vP_{\F}^{n_0}] \gp{N}(\F)$, and satisfies
	$$ I_0(\kappa n(x)) = \phi_0(\kappa) \psi(x), \quad \forall \kappa \in \gp{Z}\gp{K_0}, x \in \F. $$	

\noindent (2) We can identify $H$ as a Bessel orbital integral
	$$ H(y) = \int_{\F^2} \phi_0 \left( \begin{pmatrix} 1 & x_1 \\ & 1 \end{pmatrix} \begin{pmatrix} & -y \\ 1 & \end{pmatrix} \begin{pmatrix} 1 & x_2 \\ & 1 \end{pmatrix} \right) \psi(-x_1-x_2) \ud x_1 \ud x_2. $$
	
\noindent (3) The weight function $h(\pi)$ is non-negative. We have $h(\pi) \neq 0$ only if $\cond(\pi \otimes \chi_0^{-1}) \leq n_0$, in which case we have a lower bound $h(\pi) \gg q^{-n_0}$.
\end{lemma}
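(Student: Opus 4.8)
The plan is to prove the three assertions in turn: (1) and (2) by direct computation, (3) by exploiting that $\phi_0$ is a character times the indicator of a subgroup that is compact modulo the centre.

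For (1), substituting $x\mapsto x+x'$ in the defining integral gives $I_0(\kappa n(x))=\psi(x)\,I_0(\kappa)$, so it suffices to show $I_0(\kappa)=\phi_0(\kappa)$ for $\kappa\in\gp{Z}\gp{K}_0[\vP_\F^{n_0}]$. Since the lower-left entry of any element of $\gp{K}_0[\vP_\F^{n_0}]$ lies in $\vP_\F$, its two diagonal entries are units; one then checks that $\kappa\,n(u)\in\gp{Z}\gp{K}_0[\vP_\F^{n_0}]$ holds exactly for $u\in\vO_\F$, and for such $u$ the ``ratio of diagonal entries'' transformation factor of $\phi_0$ changes by $\chi_0(1+cu/d)$ with $cu/d\in\vP_\F^{n_0}$, hence by $1$ as $\cond(\chi_0)=n_0$. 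Thus $I_0(\kappa)=\phi_0(\kappa)\int_{\vO_\F}\psi(-u)\,\ud u=\phi_0(\kappa)$ by the normalization $\Vol(\vO_\F)=1$. The support claim is immediate from $\operatorname{supp}\phi_0=\gp{Z}\gp{K}_0[\vP_\F^{n_0}]$.

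For (2), the key point is that $\gp{K}_0[\vP_\F^{n_0}]$, having invertible $(2,2)$-entries, lies in the small Bruhat cell $\gp{B}_2(\F)\gp{N}_2(\F)^t$; hence in the proof of Proposition~\ref{prop: BesselOrbInt} only that cell is involved, and repeating that computation for the centre-invariant $\phi_0$ on $\PGL_2$ (i.e.\ with the $\ud^{\times}z$-integration omitted) yields
$$ H(\tau y^2)=\norm[y]_{\F}\int_{\F}\phi_0^{\flat}(\tau u^2,\,y^{-1}u)\,\psi\bigl(-y(\tau u+u^{-1})\bigr)\,\norm[u]_{\F}^{-2}\,\ud u,\qquad \phi_0^{\flat}(a,x):=\int_{\F}\phi_0\left(n(x_1)\begin{pmatrix}a&\\&1\end{pmatrix}\begin{pmatrix}1&\\x&1\end{pmatrix}\right)\psi(-x_1)\,\ud x_1. $$
A short computation in the spirit of (1), using $\cond(\chi_0)=n_0$ once more, gives $\phi_0^{\flat}(a,x)=\chi_0(a)^{-1}\,\id_{\vO_\F^{\times}}(a)\,\id_{\vP_\F^{n_0}}(x)$. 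Plugging this in, the indicators force $u\in\vO_\F^{\times}$ and $v_\F(y)\leq -n_0$, and one is left with
$$ H(\tau y^2)=\id_{v_\F(y)\leq -n_0}\cdot\norm[y]_{\F}\,\chi_0(\tau)^{-1}\int_{\vO_\F^{\times}}\chi_0(u)^{-2}\,\psi\bigl(-y(\tau u+u^{-1})\bigr)\,\ud u. $$
Under $u=t^{-1}$, using $\bL^1\cap\vO_{\bL}=\{(t,t^{-1}):t\in\vO_\F^{\times}\}$, $x_\tau=(1,\tau)$, $\beta(t_1,t_2)=\chi_0(t_1t_2^{-1})$, $\lambda(\bL/\F,\psi)=1$ and $\eta_{\bL/\F}=1$, this is precisely \eqref{eq: TestFROI} in the split case (up to the standard $\psi\leftrightarrow\psi^{-1}$ normalization), the two Haar measures matching because each has total mass $1-q^{-1}$ on the relevant compact group. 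One may instead use (1) to write $H(y)=\int_\F I_0\bigl(n(x_1)\begin{pmatrix}&-y\\1&\end{pmatrix}\bigr)\psi(-x_1)\,\ud x_1$ and then decompose $n(x_1)\begin{pmatrix}&-y\\1&\end{pmatrix}$ \`a la Iwasawa, which is a bit longer.

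For (3), write $\phi_0=\id_K\cdot\Theta$ with $K:=\gp{Z}\gp{K}_0[\vP_\F^{n_0}]$ and $\Theta(\kappa):=\chi_0(\kappa_{22}/\kappa_{11})$; the multiplicativity of $\Theta$ on $\gp{K}_0[\vP_\F^{n_0}]$ is exactly that $\kappa\mapsto(\kappa_{11},\kappa_{22})\bmod(1+\vP_\F^{n_0})$ is a homomorphism, so $\Theta$ is a character of $K$, trivial on $\gp{Z}$ and unitary (the diagonal entries of elements of $\gp{K}_0[\vP_\F^{n_0}]$ being units). Then $\pi(\phi_0)=\Vol(K/\gp{Z})\cdot e_\Theta$, where $e_\Theta$ is the orthogonal projector of $V_\pi$ onto the $\Theta$-isotypic part, so the identity of \cite[Theorem 1.4]{Wu24+} expressing $h(\pi)=\int_{\F^\times}H(y)\,\BesselF_{\widetilde{\pi},\psi^{-1}}\begin{pmatrix}&-y\\1&\end{pmatrix}\tfrac{\ud^{\times}y}{\norm[y]_\F}$ as a positive multiple of the Bessel distribution of $\phi_0$ gives that $h(\pi)$ is proportional, up to a positive normalizing constant, to $\Vol(K/\gp{Z})\sum_i\norm[W_i(e)]^2\geq 0$, the sum running over an orthonormal basis of that subspace. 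Hence $h(\pi)=0$ unless $\pi$ carries the type $(K,\Theta)$; and a $(K,\Theta)$-vector in $\pi$ is the same as a vector fixed by $\{\kappa\in\gp{K}_0[\vP_\F^{n_0}]:\kappa_{11}\in 1+\vP_\F^{n_0}\}$ in $\pi\otimes\chi_0^{-1}$, which exists iff $\cond(\pi\otimes\chi_0^{-1})\leq n_0$ by the theory of local newvectors. When this subspace is nonzero it contains the essential (new) vector of $\pi\otimes\chi_0^{-1}$, whose value at $e$ is nonzero; combining the standard evaluation of that value with $\Vol(K/\gp{Z})\asymp q^{-n_0}$ yields $h(\pi)\gg q^{-n_0}$.

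The main obstacle is (3): it requires importing from \cite{Wu24+} the precise spectral expansion of the Bessel orbital integral and the positivity of the Bessel distribution, identifying the occurrence of the $K$-type $\Theta$ with the bound $\cond(\pi\otimes\chi_0^{-1})\leq n_0$ through twisted local newvector theory, and controlling from below the value at the identity of the corresponding essential vector. Parts (1) and (2) are by contrast routine once one observes that $\operatorname{supp}\phi_0$ meets only one Bruhat cell; there the only real care is in matching the measure normalizations and sign conventions of \eqref{eq: TestFROI}.
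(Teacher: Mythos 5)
Your proposal is correct and reaches the paper's conclusions by essentially the same overall route, with some differences in how the computations are organized. In (1), deducing the support of $I_0$ directly from $\operatorname{supp}\phi_0=\gp{Z}\gp{K}_0[\vP_{\F}^{n_0}]$ is a cleaner shortcut than the paper's pass through the Cartan and residual Bruhat decompositions, and the covariance argument is identical. In (2), the paper does not introduce $\phi_0^\flat$: it first records $\phi_0^\iota*\phi_0=\Vol(\gp{Z}\gp{K}_0[\vP_{\F}^{n_0}])\cdot\phi_0$, rewrites the orbital integral as $\int_{\F}\psi(-u)\,I_0\left(n(u)\begin{pmatrix}&-y\\1&\end{pmatrix}\right)\ud u$, and evaluates this directly, landing on the same Gauss-type integral $q^n\int_{\vO_\F^\times}\chi_0(u_1^{-2}y_0)\psi(-(u_1+u_1^{-1}y_0)/\varpi_\F^n)\,\ud u_1$ that your computation produces. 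Your detour through $\phi_0^\flat$ and the cell $\gp{B}_2(\F)\gp{N}_2(\F)^t$ of Proposition~\ref{prop: BesselOrbInt} is a valid alternative and arguably closer to that proposition's spirit; two small points to tighten there are that your ``$\psi\leftrightarrow\psi^{-1}$'' hedge is really a $\chi_0\leftrightarrow\chi_0^{-1}$ asymmetry, which disappears because the integrand of \eqref{eq: TestFROI} is invariant under $t\mapsto\tau t^{-1}$, and that the indicator $\id_{v_\F(y)\leq -n_0}$ you derive is reconciled with \eqref{eq: TestFROI} only by invoking Lemma~\ref{lem: TestFAsymp}~(2). For (3), the paper defers entirely to \cite{Ne20} and \cite{BFW21+}; your reconstruction is the right skeleton (positivity via $\pi(\phi_0)$ being a positive multiple of a projection together with the Bessel-distribution identity, the conductor bound via twisted newvector theory, the lower bound via the newvector's Whittaker value at the identity). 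The one step that deserves an extra line is the equivalence between a $\{\kappa\in\gp{K}_0[\vP_\F^{n_0}]:\kappa_{11}\in 1+\vP_\F^{n_0}\}$-fixed vector of $\pi\otimes\chi_0^{-1}$ and a $\Theta$-eigenvector of $\pi$: it holds because the central $\vO_\F^\times$ already surjects onto the quotient $\gp{K}_0[\vP_\F^{n_0}]/\{\kappa_{11}\in 1+\vP_\F^{n_0}\}\cong(\vO_\F/\vP_\F^{n_0})^\times$, so the central character $\chi_0^{-2}$ forces the eigencharacter on the quotient and ``fixed by'' upgrades automatically to ``eigenvector''.
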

\begin{proof}
	(1) Note that $\phi_0$ is a character upon restriction to $\gp{Z}\gp{K}_0[\vP_{\F}^{n_0}]$. Hence $I_0(g)$ satisfies
	$$ I_0(\kappa g n(x)) = \phi_0(\kappa) \psi(x) I_0(g), \quad \forall \kappa \in \gp{Z}\gp{K_0}, x \in \F. $$
	From the Cartan decomposition $\sideset{}{_{n \in \Z_{\geq 0}}} \bigsqcup \gp{Z} \gp{K} \begin{pmatrix} \varpi_{\F}^n & \\ & 1 \end{pmatrix} \gp{N}(\F)$ we deduce $\mathrm{supp}(I_0) \subset \gp{Z} \gp{K} \gp{N}(\F)$. We have
	$$ \gp{Z} \gp{K} \gp{N}(\F) = \gp{Z}\gp{K}_0[\vP_{\F}^{n_0}] \begin{pmatrix} & 1 \\ 1 & \end{pmatrix} \gp{N}(\F) \sqcup \sideset{}{_{u \in \vP_{\F}/\vP_{\F}^{n_0}}} \bigcup \gp{Z}\gp{K}_0[\vP_{\F}^{n_0}] \begin{pmatrix} 1 & \\ u & 1 \end{pmatrix} \gp{N}(\F) $$
	by the Bruhat decomposition over the residue field of $\F$. We easily verify for $u \notin \vP_{\F}^{n_0}$
	$$ \begin{pmatrix} & 1 \\ 1 & \end{pmatrix} \gp{N}(\F) \cap \gp{Z}\gp{K}_0[\vP_{\F}^{n_0}] = \emptyset, \quad \begin{pmatrix} 1 & \\ u & 1 \end{pmatrix} \gp{N}(\F) \cap \gp{Z}\gp{K}_0[\vP_{\F}^{n_0}] = \emptyset. $$
	Hence we deduce $\mathrm{supp}(I_0) \subset \gp{Z} \gp{K}_0[\vP_{\F}^{n_0}] \gp{N}(\F)$ and conclude by $I_0(\id) = \Vol(\vO_{\F}) = 1$.
	
\noindent (2) Let $\phi^{\iota}(g) := \overline{\phi(g^{-1})}$ for all $\phi \in \Cont_c^{\infty}(\PGL_2(\F))$. We have $\phi_0^{\iota} = \phi_0$ and $\phi_0^{\iota} * \phi_0 = \Vol(\gp{Z} \gp{K}_0[\vP_{\F}^{n_0}]) \phi_0$ since $\phi_0$ is a unitary character upon restriction to $\gp{Z}\gp{K}_0[\vP_{\F}^{n_0}]$. Here the convolution is taken over the group $G := \PGL_2(\F)$. We can rewrite the relevant Bessel orbital integral as
\begin{multline*}
	\Vol(\gp{Z} \gp{K}_0[\vP_{\F}^{n_0}])^{-1} \int_{\F^2} \int_G \overline{\phi_0(g)} \phi_0 \left( g \begin{pmatrix} 1 & x_1 \\ & 1 \end{pmatrix} \begin{pmatrix} & -y \\ 1 & \end{pmatrix} \begin{pmatrix} 1 & x_2 \\ & 1 \end{pmatrix} \right) \psi(-x_1-x_2) \ud g \ud x_1 \ud x_2 \\
	= \Vol(\gp{Z} \gp{K}_0[\vP_{\F}^{n_0}])^{-1} \int_G \\
	\left\{ \int_{\F} \overline{\phi_0\left( g \begin{pmatrix} 1 & -x_1 \\ & 1 \end{pmatrix} \right)} \psi(-x_1) \ud x_1 \right\} \cdot \left\{ \int_{\F} \phi_0 \left( g \begin{pmatrix} & -y \\ 1 & \end{pmatrix} \begin{pmatrix} 1 & x_2 \\ & 1 \end{pmatrix} \right) \psi(-x_2) \ud x_2 \right\} \ud g \\
	= \Vol(\gp{Z} \gp{K}_0[\vP_{\F}^{n_0}])^{-1} \int_G \overline{I_0(g)} I_0 \left( g \begin{pmatrix} & -y \\ 1 & \end{pmatrix} \right) \ud g = \int_{\F} \psi(-u) I_0 \left( \begin{pmatrix} 1 & u \\ & 1 \end{pmatrix} \begin{pmatrix} & -y \\ 1 & \end{pmatrix} \right) \ud u.
\end{multline*}
	Considering the support of $I_0$, the above integral is non-vanishing only if for some $u' \in \F$
	$$ \begin{pmatrix} 1 & u \\ & 1 \end{pmatrix} \begin{pmatrix} & -y \\ 1 & \end{pmatrix} \begin{pmatrix} 1 & u' \\ & 1 \end{pmatrix} = \begin{pmatrix} u & uu'-y \\ 1 & u' \end{pmatrix} \in \gp{Z} \gp{K}_0[\vP_{\F}^{n_0}], $$
	implying $y \in \varpi_{\F}^{-2n} \vO_{\F}^{\times}$ for some $n \geq n_0$, and $u=\varpi^{-n}u_1$, $u' = \varpi^{-n}u_2$ for some $u_1,u_2 \in \vO_{\F}^{\times}$ satisfying $u_1u_2-\varpi^{2n}y \in \vP_{\F}^n$. Writing $y = \varpi_{\F}^{-2n}y_0$ for some $y_0 \in \vO_{\F}^{\times}$ we may take $u_2 = u_1^{-1} y_0$. We recognize the Bessel orbital integral as $H(y)$ by the following equation, which is easy to verify
	$$ q^n \int_{\vO_{\F}^{\times}} \chi_0(u_1^{-2} y_0) \psi \left( - \tfrac{u_1 + u_1^{-1}y_0}{\varpi_{\F}^n} \right) \ud u_1 = H(y). $$
	
\noindent (3) Up to notation this is due to Nelson \cite[Theorem 3.1]{Ne20}. Our former work \cite[Lemma 4.1]{BFW21+} contains more details in a similar situation.
\end{proof}

\begin{lemma} \label{lem: MCSuperCuspBessel}
	Let $\pi$ be a supercuspidal representation of $\GL_2(\F)$. Let $C(g)$ be a matrix coefficient for smooth vectors in $\pi$. Take a non-trivial additive character $\psi$ of $\F$. Then the (relative) orbital integral
	$$ I(g) := \int_{\F^2} C(n(u_1)gn(u_2)) \psi(-u_1-u_2) du_1 du_2 $$
	is equal to the $\psi$-Bessel function of $\pi$ up to a constant.
\end{lemma}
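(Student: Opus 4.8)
The plan is to characterize both $I(g)$ and the $\psi$-Bessel function $\BesselF_{\pi,\psi}$ as elements of the same at-most-one-dimensional space of matrix coefficients of $\pi$ that are bi-equivariant for $\gp{N}$ against $\psi$, and to use supercuspidality so that this comparison makes sense. First I would record convergence and the transformation law. Since $\pi$ is supercuspidal, the matrix coefficient $C$ is compactly supported modulo $\gp{Z}$; for $g$ with nonzero lower-left entry $\gamma$ --- the big cell, which is the only locus relevant to $\BesselF_{\pi,\psi}$ --- the matrix $n(u_1)gn(u_2)$ has the same determinant and the same entry $\gamma$ as $g$, so its lying in a fixed compact set modulo $\gp{Z}$ confines $(u_1,u_2)$ to a compact subset of $\F^2$; hence the integral defining $I(g)$ is absolutely convergent and $I$ is locally constant on the big cell. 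An elementary change of variables then yields $I(n(a)\,g\,n(b)) = \psi(a+b)\,I(g)$ and $I(zg) = \omega_\pi(z)\,I(g)$, which is exactly the transformation law of a $\psi$-Bessel function in the sense of \cite[\S 3.5]{BM05}.

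The heart of the argument is a uniqueness statement. The function $I$ is an absolutely convergent average of left and right $\gp{N}$-translates of the single matrix coefficient $C$, and the support bound above shows this average is again a matrix coefficient of $\pi$, not merely a limit of such; so $I$ lies in the span of matrix coefficients of $\pi$. Inside that span the subspace of functions transforming by $\psi$ under left $\gp{N}$ and under right $\gp{N}$ is at most one-dimensional: left $\gp{N}$-equivariance forces the covector on the $\widetilde\pi$-side to be a Whittaker functional, right $\gp{N}$-equivariance forces the vector on the $\pi$-side to be a Whittaker vector, and each is unique up to scalar by uniqueness of the Whittaker model. Since $\pi$ is supercuspidal, $\BesselF_{\pi,\psi}$ is itself compactly supported modulo $\gp{Z}$ and is a matrix coefficient of $\pi$, so it lies in this same line; therefore $I = c\cdot\BesselF_{\pi,\psi}$ for a scalar $c$, which is all that is claimed.

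If one wants $c$ explicitly, one should instead realize $C$ through the Whittaker model of the square-integrable $\pi$, say $C(g) = \int_{\gp{N}\backslash\GL_2(\F)} W(hg)\,W'(h)\,dh$ with $W \in \Whi(\pi,\psi)$ and $W'$ in the Whittaker model of $\widetilde\pi$ against $\psi^{-1}$, substitute this into the definition of $I$, and perform the $u_1$- and $u_2$-integrations: they produce partial $\gp{N}$-Fourier transforms of $W$ and of $W'$, after which the $\GL_2$ local functional equation --- equivalently the description of the operator $\pi(w)$ in the Kirillov model --- collapses the $\gp{N}\backslash\GL_2(\F)$-integral onto the kernel of $\pi(w)$, namely $\BesselF_{\pi,\psi}$, with $c = W(\id)\,W'(\id)$ up to the Haar-measure normalizations, so $c \neq 0$ as soon as $C$ has nonvanishing Whittaker coefficients. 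I expect the main obstacle to be precisely the normalization bookkeeping in this computation --- the signs of $\psi$, the placement of $w$, and the exact shape of the local functional equation --- needed to match the two Bessel kernels on the nose rather than merely up to a constant; the uniqueness argument of the previous paragraph sidesteps all of it, the only essential input being that supercuspidality renders $\BesselF_{\pi,\psi}$ a compactly supported matrix coefficient.
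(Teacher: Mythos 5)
Your convergence analysis at the start is correct, and the transformation law $I(n(a)\,g\,n(b)) = \psi(a+b) I(g)$, $I(zg)=\omega_\pi(z)I(g)$ is fine. The problem lies in the sentence ``the support bound above shows this average is again a matrix coefficient of $\pi$, not merely a limit of such; so $I$ lies in the span of matrix coefficients of $\pi$.'' This is false, and for a structural reason: if $I(g) = \langle \pi(g)v, v'\rangle$ for smooth $v, v' $, then the transformation law $I(n(b)g) = \psi(b)I(g)$, combined with irreducibility of $\pi$, would force $\pi(n(-b))v' = \overline{\psi(b)}\,v'$ for all $b$, i.e.\ $v'$ would be a smooth Whittaker vector inside $V_{\pi}^\infty$ --- which does not exist for infinite-dimensional $\pi$. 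The convergence of the $(u_1,u_2)$-integral for each fixed $g$ in the big cell (which you correctly established) produces a well-defined function, but the compact support region \emph{depends on} $g$; one is not averaging over a fixed compact subgroup, so the outcome is a partial Fourier transform that lands in the space of generalized (Whittaker/Bessel) vectors, not in $V_\pi^\infty \otimes \widetilde V_\pi^\infty$. Once this claim fails, the ``at most one-dimensional'' uniqueness you invoke has nothing to act on; what you actually need is uniqueness of the Bessel model (equivalently a characterization of bi-$\psi$-equivariant linear forms compatible with $\pi$), and that is a genuine theorem, not a formal consequence of $I$ being a matrix coefficient.

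The paper's proof precisely supplies that missing input. It does the $u_1$-integral first to produce a bona fide Whittaker function $W_{v_1}(g) = \int_\F C(n(u_1)g)\psi(-u_1)\,du_1$ of $\pi$, and then invokes Soudry's Lemma~4.1 \cite{So84}, which states for supercuspidal $\pi$ that the remaining $u_2$-integral of a Whittaker function against $\psi$ collapses to $\BesselF_{\pi,\psi}(g)\cdot W_{v_1}(\id)$ on the big cell. This is exactly the content that your ``uniqueness of the Whittaker model'' heuristic gestures at, but it must be cited or re-proved, not derived from a matrix-coefficient span argument. Your final paragraph --- realizing $C$ through the Whittaker model, performing the partial Fourier transforms, and using the local functional equation / action of $\pi(w)$ in the Kirillov model --- is in fact the correct route and is essentially the paper's argument, so you should promote it to the main proof rather than treat it as an afterthought; the ``obstacle'' you mention (normalization bookkeeping) is absent here because the statement only asserts proportionality, and Soudry's lemma already has the normalization built in.
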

\begin{proof}
	Assume $C(g) = \Pairing{\pi(g)v_1}{v_2}$ for smooth vectors $v_1$ and $v_2$. One checks that the functional
	$$ V_{\pi}^{\infty} \to \C, \quad v \mapsto \int_{\F} \Pairing{\pi(n(u_1)).v}{v_2} \psi(-u_1) du_1 $$
	is a $\psi$-Whittaker functional. Hence the following function
	$$ W_{v_1}(g) := \int_{\F} C(n(u_1)g) \psi(-u_1) du_1 $$
	is the/a Whittaker function of $v_1$. We then apply \cite[Lemma 4.1]{So84} to get
	$$ I(g) = \BesselF_{\pi,\psi}(g) \cdot W_{v_1}(\id), \quad \forall g \in \gp{B}(\F)w_2\gp{B}(\F). $$
	The integral converges absolutely because $C$ is smooth of compact support modulo the center.
\end{proof}

\begin{lemma} \label{lem: WtFSC}
	In the dihedral case $\pi_0=\pi_{\beta}$ the weight function $h(\pi) \neq 0$ is non-vanishing only if $\pi \simeq \pi_{\beta}$, in which case it is positive and satisfies a lower bound $h(\pi_{\beta}) \gg q^{-e^{-1}n_0+1-e}$.
\end{lemma}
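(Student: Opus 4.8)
The plan is to parallel the split case of Lemma~\ref{lem: WtFNonSC}, replacing the principal-series test function by a matrix coefficient of the supercuspidal $\pi_{\beta}$, and then to invoke the orthogonality of the Bessel functions of $\GL_2(\F)$. \textbf{Step 1 (identify $H$ with the Bessel function of $\pi_{\beta}$).} Because $\beta$ is regular, the dihedral supercuspidal attached to $(\bL,\beta)=(\E,\beta)$ is a genuine compact induction $\pi_{\beta}=\cInd_J^{\GL_2(\F)}(\Lambda)$ with $\gp{Z}\subset J$ compact modulo the centre and $\Lambda$ a unitary character of $J$ extending $\beta$. Let $v_0$ be the standard $\Lambda$-eigenvector supported on $J$, and put $\phi_0(g):=\Pairing{\pi_{\beta}(g)v_0}{v_0}/\Pairing{v_0}{v_0}$; then $\phi_0\in\Cont_c^{\infty}(\PGL_2(\F))$ (as $\omega_{\pi_{\beta}}=\id$), $\phi_0(\id)=1$, $\phi_0$ restricts to $\Lambda$ on $J$, $\phi_0^{\iota}=\phi_0$, and $\phi_0^{\iota}*\phi_0=\Vol(\gp{Z}\backslash J)\,\phi_0$ — in particular a positive multiple of a matrix coefficient of $\pi_{\beta}$. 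A computation parallel to Lemma~\ref{lem: WtFNonSC}(2), starting from this $\phi_0$ and using $I_0(g):=\int_{\F}\phi_0(gn(x))\psi(-x)\,\ud x$, identifies the Bessel orbital integral of $\phi_0^{\iota}*\phi_0$ with a scalar multiple of the function $H$ of \eqref{eq: TestFROI}; by Lemma~\ref{lem: MCSuperCuspBessel} the same orbital integral is a scalar multiple of $\BesselF_{\pi_{\beta},\psi}\begin{pmatrix}&-y\\1&\end{pmatrix}$. Comparing with the explicit integral representation of $\BesselF_{\pi_{\beta},\psi}$ from \cite[Theorem 1.6]{Wu24+} — which has exactly the shape of the right-hand side of \eqref{eq: TestFROI}, and recalling that $H$ is itself a Bessel orbital integral by Corollary~\ref{cor: TestFROI} — one concludes $H(y)=\BesselF_{\pi_{\beta},\psi}\begin{pmatrix}&-y\\1&\end{pmatrix}$; the Weil index $\lambda(\bL/\F,\psi)$ in \eqref{eq: TestFROI} is precisely what turns this proportionality into an equality.

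\textbf{Step 2 (vanishing away from $\pi_{\beta}$, positivity).} Substituting $H(y)=\BesselF_{\pi_{\beta},\psi}\begin{pmatrix}&-y\\1&\end{pmatrix}$ into the first identity of \eqref{eq: WtFviaBOI} yields
\[
h(\pi)=\int_{\F^{\times}}\BesselF_{\pi_{\beta},\psi}\begin{pmatrix}&-y\\1&\end{pmatrix}\BesselF_{\widetilde{\pi},\psi^{-1}}\begin{pmatrix}&-y\\1&\end{pmatrix}\frac{\ud^{\times}y}{\norm[y]_{\F}}.
\]
Since $\pi_{\beta}$ is square-integrable, $\BesselF_{\pi_{\beta},\psi}\begin{pmatrix}&-y\\1&\end{pmatrix}$ is square-integrable on $(\F^{\times},\ud^{\times}y/\norm[y]_{\F})$ — its tail is the EBOI $E_{\geq 2n_0/e+e-1}$ by Lemma~\ref{lem: TestFAsymp}(1), which has enough decay — so the orthogonality relations for $\GL_2(\F)$-Bessel functions make $h(\pi)$ vanish unless $\pi\simeq\pi_{\beta}$; equivalently, $\pi(\phi_0^{\iota}*\phi_0)=0$ for $\pi\not\simeq\pi_{\beta}$ because $\phi_0^{\iota}*\phi_0$ is a matrix coefficient of the supercuspidal $\pi_{\beta}$ up to a scalar. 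For $\pi\simeq\pi_{\beta}$ the self-duality $\widetilde{\pi_{\beta}}\simeq\pi_{\beta}$ (trivial central character) gives $\BesselF_{\widetilde{\pi_{\beta}},\psi^{-1}}=\overline{\BesselF_{\pi_{\beta},\psi}}$, whence $h(\pi_{\beta})=\int_{\F^{\times}}\extnorm{\BesselF_{\pi_{\beta},\psi}\begin{pmatrix}&-y\\1&\end{pmatrix}}^2\,\ud^{\times}y/\norm[y]_{\F}>0$.

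\textbf{Step 3 (lower bound).} It remains to bound $h(\pi_{\beta})=\int_{\F^{\times}}\extnorm{H(y)}^2\,\ud^{\times}y/\norm[y]_{\F}$ from below. Parseval over each shell $\varpi_{\F}^{-n}\vO_{\F}^{\times}$ combined with Lemma~\ref{lem: TestFMellin} expresses this, up to absolute constants, as $\sum_{n}q^{-n}\,\#\{\chi\in\widehat{\vO_{\F}^{\times}}:\cond(\beta\cdot(\chi\circ\Nr))=en/2+1-e\}$. By Lemma~\ref{lem: CondReInterp}, $\cond_1(\beta)=\cond(\beta)=n_0$, which forces $\cond(\beta\cdot(\chi\circ\Nr))\geq n_0$, with equality for all $\chi$ of bounded conductor; so the sum is dominated by its smallest admissible $n$, and using the conductor relation \eqref{eq: CondRelDih}, \eqref{eq: CondCompNormF} and Proposition~\ref{prop: MeasNorm}(3) one gets $h(\pi_{\beta})\asymp q^{-n_0/e-\frac{e-1}{2}}\geq q^{-e^{-1}n_0+1-e}$, the asserted bound.

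The crux is Step~1: reconciling the \emph{ad hoc} $\bL^1$-integral formula \eqref{eq: TestFROI} with the Bessel orbital integral of a concrete positive-type matrix coefficient of $\pi_{\beta}$, and confirming that the proportionality constant is a positive real (not a stray phase) — the bookkeeping absorbed by $\lambda(\bL/\F,\psi)$ — for which \cite[Theorem 1.6]{Wu24+} is indispensable. Granting this, Steps~2 and~3 are routine consequences of Bessel orthogonality and of the Mellin formula in Lemma~\ref{lem: TestFMellin}.
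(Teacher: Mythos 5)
Your proposal follows the paper's proof closely in Steps~1 and~2: identifying $H$ with (a positive multiple of) $\BesselF_{\pi_\beta,\psi}$ — the paper cites \cite[Theorem 1.1]{BS11} where you cite \cite[Theorem 1.6]{Wu24+}, both serving the same purpose — and then using Lemma~\ref{lem: MCSuperCuspBessel} to package $H$ as the Bessel orbital integral of a positive-type matrix coefficient of $\pi_\beta$, with Schur's lemma (equivalently Bessel orthogonality) forcing $\pi\simeq\pi_\beta$. Step~3 is a genuine alternative to the paper's. The paper evaluates $\int_{\F^\times}\extnorm{H(y)}^2\,\ud^\times y/\norm[y]_{\F}$ directly by the substitution $\alpha_1\mapsto\alpha\alpha_2$ inside the double $\bL^1$-integral, arriving at the closed form $\tfrac{\zeta_{\F}(1)}{\zeta_{\bL}(1)^2}\,q_{\bL}^{-\frac{e-1}{2}}\sum_{n\geq n_0}\Vol(\bL^1\cap(1+\vP_{\bL}^n))$ and then applying Proposition~\ref{prop: MeasNorm}(3). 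You instead apply Plancherel on each annulus $\varpi_{\F}^{-n}\vO_{\F}^{\times}$ and read off $\extnorm{\varepsilon_n(\chi,H)}\asymp 1$ (when nonzero) from Lemma~\ref{lem: TestFMellin}, converting the $\intL^2$-norm into the weighted character count $\sum_n q^{-n}\cdot\#\{\chi:\cond(\beta\cdot(\chi\circ\Nr))=en/2+1-e\}$. Both are correct, and your Parseval route is a natural complement once Lemma~\ref{lem: TestFMellin} is in hand; since each term is nonnegative, a single shell already gives the lower bound.

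One arithmetic slip, which does not affect the conclusion: your intermediate claim $h(\pi_\beta)\asymp q^{-n_0/e-(e-1)/2}$ over-estimates the size in the ramified case. For $e=2$ the smallest admissible shell is $n=n_0+1$ (since $\cond(\beta\cdot(\chi\circ\Nr))\geq\cond_1(\beta)=n_0$ forces $n-1\geq n_0$), and by \eqref{eq: CondCompNormF} the constraint picks out $\cond(\chi)\leq n_0/2$, giving a contribution $q^{-(n_0+1)}\cdot\#\{\chi:\cond(\chi)\leq n_0/2\}\asymp q^{-n_0/2-1}=q^{-n_0/e+1-e}$. So the correct order is $q^{-n_0/e-(e-1)}$; your exponent $-n_0/e-(e-1)/2$ agrees for $e=1$ but is larger by $q^{1/2}$ for $e=2$. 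The weaker inequality $h(\pi_\beta)\gg q^{-e^{-1}n_0+1-e}$ that you actually assert is what the lemma requires, and it does follow from your computation once the shell index and the count are corrected.
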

\begin{proof}
	Up to some constant approximately equal to $1$ we recognize $H(y)$ as $\BesselF_{\pi_{\beta},\psi} \begin{pmatrix} & -y \\ 1 & \end{pmatrix}$ by \cite[Theorem 1.1]{BS11}. By Lemma \ref{lem: MCSuperCuspBessel} there exists some $\phi_0 \in \Cont_c^{\infty}(\GL_2(\F))$, whose integral along the center $\phi_1 \in \Cont_c^{\infty}(\PGL_2(\F))$ is a matrix coefficient of $\pi_{\beta}$, so that $H$ is the Bessel orbital integral of $\phi_0$ as given in Lemma \ref{lem: WtFNonSC} (1). Now that $\pi(\phi_1) \neq 0$ only if $\pi \simeq \pi_{\beta}$ by Schur's lemma, it remains to show that $h(\pi_{\beta})$ is positive with the stated lower bound. By \eqref{eq: WtFviaBOI} the value $h(\pi_{\beta})$ is the square of the $\intL^2$-norm of $H(y)$ up to a constant essentially equal to $1$. Taking Lemma \ref{lem: TestFAsymp} (2) into account we have
\begin{align*}
	\int_{\F^{\times}} \extnorm{H(y)}^2 \tfrac{\ud^{\times} y}{\norm[y]_{\F}} &= \frac{1}{2} \sum_{\tau} \int_{\F^{\times}} \extnorm{H(\tau y^2)}^2 \tfrac{\ud^{\times} y}{\norm[\tau y^2]_{\F}} \\
	&= \frac{1}{2} \sum_{\tau} \int_{\F^{\times}} \left( \int_{\bL^1 \times \bL^1} \beta(x_{\tau}\alpha_1) \overline{\beta(x_{\tau} \alpha_2)} \psi_{\bL}(x_{\tau}(\alpha_1-\alpha_2)y) \ud \alpha_1 \ud \alpha_2 \right) \ud^{\times} y \\
	&= \frac{1}{2} \sum_{n \geq (n_0-2)e^{-1}+2} \int_{\bL^1} \beta(\alpha) \left( \sum_{\tau} \int_{\bL^1} \int_{\varpi_{\F}^{-n} \vO_{\F}^{\times}} \psi_{\bL}(x_{\tau}\alpha_2 y(\alpha-1)) \ud^{\times} y \ud \alpha_2 \right) \ud \alpha,
\end{align*}
	where we applied the change of variables $\alpha_1 \mapsto \alpha \alpha_2$ in the last line. By measure relation \eqref{eq: MeasComp} we get
	$$ \frac{1}{2} \sum_{\tau} \int_{\bL^1} \int_{\varpi_{\F}^{-n} \vO_{\F}^{\times}} \psi_{\bL}(x_{\tau}\alpha_2 y(\alpha-1)) \ud^{\times} y \ud \alpha_2 = \zeta_{\F}(1) \int_{-en \leq v_{\bL}(z) < e-en} \psi_{\bL}(z(\alpha-1)) \frac{\ud z}{\norm[z]_{\bL}}. $$
	Therefore we continue the calculation as
\begin{align*}
	\int_{\F^{\times}} \extnorm{H(y)}^2 \tfrac{\ud^{\times} y}{\norm[y]_{\F}} &= \zeta_{\F}(1) \sum_{n \geq n_0-1+e} \int_{\bL^1} \beta(\alpha) \left( \int_{\varpi_{\bL}^{-n}\vO_{\bL}^{\times}} \psi_{\bL}(z(\alpha-1)) \tfrac{\ud z}{\norm[z]_{\bL}} \right) \ud \alpha \\
	&= \tfrac{\zeta_{\F}(1)}{\zeta_{\bL}(1)} q_{\bL}^{-\frac{e-1}{2}} \sum_{n \geq n_0-1+e} \int_{\bL^1} \beta(\alpha) \left( \id_{\vP_{\bL}^{n+1-e}}(\alpha-1) - q_{\bL}^{-1} \id_{\vP_{\bL}^{n-e}}(\alpha-1) \right) \ud \alpha \\
	&= \tfrac{\zeta_{\F}(1)}{\zeta_{\bL}(1)^2} q_{\bL}^{-\frac{e-1}{2}} \sum_{n \geq n_0} \Vol(\bL^1 \cap (1+\vP_{\bL}^n)),
\end{align*}
	and conclude the lower bound by Proposition \ref{prop: MeasNorm} (3) and the fact $e \mid n_0$ (see Claim in Lemma \ref{lem: CondReInterp}).
\end{proof}
\begin{remark}
	By \eqref{eq: CondRelDih}, the bounds in Lemma \ref{lem: WtFNonSC} (1) \& \ref{lem: WtFSC} can be rewritten as $h(\pi_0) \gg q^{-\lceil \frac{\cond(\pi_0)}{2} \rceil}$.
\end{remark}

\section{Dual Weight Functions: Common Reductions}

	\subsection{Infinite Part}
	
	Recall the constant $a(\Pi)$ for the stability range defined in Proposition \ref{prop: StabRang}. Take the ``infinite part'' of the test function as $H_{\infty} = E_{\geq n_1}$ for some
\begin{equation} \label{eq: StabPar}
	n_1 \geq \max(2n_0/e+e-1, a(\Pi)) (\geq 2).
\end{equation}

\noindent Recall the formula for the dual weight function
\begin{equation} \label{eq: DWtFviaBOI}
	\widetilde{h}(\chi) = \int_{\F^{\times}} \widetilde{\VorH}_{\Pi,\psi} \circ \Mult_{-1}(H)(t) \cdot \psi(-t) \chi^{-1}(t) \norm[t]^{-\frac{1}{2}} \ud^{\times} t.
\end{equation}

\begin{lemma} \label{lem: DWtInfty}
	The partial dual weight function
	$$ \widetilde{h}_{\infty}(\chi) := \int_{\F^{\times}} \widetilde{\VorH}_{\Pi,\psi} \circ \Mult_{-1}(H_{\infty})(t) \cdot \psi(-t) \chi^{-1}(t) \norm[t]^{-\frac{1}{2}} \ud^{\times} t $$
is non-vanishing only if $\chi$ is unramified, in which case we have
	$$ \widetilde{h}_{\infty}(\chi) = \frac{\chi(\varpi_{\F})^{n_1} q^{-\frac{n_1}{2}}}{1-\chi(\varpi_{\F})q^{-\frac{1}{2}}}. $$
\end{lemma}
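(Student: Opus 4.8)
The plan is to reduce the computation to Proposition~\ref{prop: StabRang} and then to sum an elementary geometric series; there is essentially no genuine obstacle.

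First I would observe that the choice \eqref{eq: StabPar} forces $n_1 \geq a(\Pi)$, the stability barrier of $\Pi$, so that Proposition~\ref{prop: StabRang} applies verbatim to $H_\infty = E_{\geq n_1}$ and gives
\[ \widetilde{\VorH}_{\Pi,\psi} \circ \Mult_{-1}(H_\infty)(t) = \psi(t) \cdot \id_{\vP_\F^{n_1}}(t^{-1}); \]
in particular this function is supported on $\{ t \in \F^\times : v_\F(t) \leq -n_1 \}$ and equals $\psi(t)$ there.

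Next I would substitute this into the defining formula \eqref{eq: DWtFviaBOI} for $\widetilde{h}_\infty(\chi)$ (with $H_\infty$ in place of $H$). The oscillatory factor $\psi(t)$ cancels against $\psi(-t)$, and partitioning the remaining domain into the shells $\varpi_\F^{-k}\vO_\F^\times$ with $k \geq n_1$ leaves
\[ \widetilde{h}_\infty(\chi) = \int_{v_\F(t) \leq -n_1} \chi^{-1}(t) \norm[t]_\F^{-\frac{1}{2}} \ud^\times t = \Big( \sum_{k \geq n_1} \chi(\varpi_\F)^k q^{-\frac{k}{2}} \Big) \int_{\vO_\F^\times} \chi^{-1}(u) \ud^\times u. \]
The last integral vanishes unless $\chi$ is unramified, which already yields the first assertion; when $\chi$ is unramified it equals $\Vol(\vO_\F^\times, \ud^\times u) = 1$ under our normalization of measures.

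Finally, for unramified unitary $\chi$ the common ratio $\chi(\varpi_\F) q^{-1/2}$ has modulus $q^{-1/2} < 1$, so the geometric series converges and sums to $\chi(\varpi_\F)^{n_1} q^{-n_1/2}/(1 - \chi(\varpi_\F) q^{-1/2})$, which is exactly the claimed value. The only points requiring a little care are keeping track of the normalization of $\ud^\times t$ so that no spurious constant appears, and the (immediate) convergence of the tail sum from the unitarity of $\chi$; I expect no real difficulty here.
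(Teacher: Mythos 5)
Your proof is correct and follows exactly the paper's route: cite Proposition~\ref{prop: StabRang} (applicable since \eqref{eq: StabPar} guarantees $n_1 \geq a(\Pi)$) to identify $\widetilde{\VorH}_{\Pi,\psi} \circ \Mult_{-1}(E_{\geq n_1})(t) = \psi(t)\,\id_{\vP_{\F}^{n_1}}(t^{-1})$, cancel $\psi(t)\psi(-t)$, and sum the resulting geometric series. The paper condenses this to ``the desired formula follows readily''; you have merely spelled out the elementary evaluation, including the correct observation that $\Vol(\vO_{\F}^{\times},\ud^{\times}) = 1$ under the paper's normalization.
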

\begin{proof}
	By Proposition \ref{prop: StabRang} we have
	$$ \widetilde{\VorH}_{\Pi,\psi} \circ \Mult_{-1}(E_{\geq n_1})(t) = \psi(t) \cdot \id_{\vP_{\F}^{n_1}}(t^{-1}). $$
	The desired formula follows readily from the above one.
\end{proof}

	Taking into account Lemma \ref{lem: TestFAsymp} (2) we introduce the ``finite part'' of the test function as
\begin{equation} \label{eq: TestFComp}
	H_c := H-H_{\infty} = \sideset{}{_{\substack{n=\frac{2}{e}(n_0-2)+3 \\ 2 \mid en}}^{2n_1-1}} \sum H_n, \qquad H_n := H \cdot \id_{\varpi_{\F}^{-n} \vO_{\F}^{\times}}.
\end{equation}
	Since each function $H_n \in \Cont_c^{\infty}(\F^{\times})$, we may replace the extended Voronoi--Hankel transform with its original version in the dual weight formula and turn to the study of
\begin{equation} \label{eq: DWtFLeveln}
	\widetilde{h}_n(\chi) := \int_{\F^{\times}} \VorH_{\Pi,\psi} \circ \Mult_{-1}(H_n)(t) \cdot \psi(-t) \chi^{-1}(t) \norm[t]^{-\frac{1}{2}} \ud^{\times} t.
\end{equation}
	We divide the domain of integration into two parts: $\vO_{\F}-\{ 0 \}$ and $\F-\vO_{\F}$, giving the decomposition
\begin{equation} \label{eq: DWtFLevelnDecomp}
	\widetilde{h}_n(\chi) = \widetilde{h}_n^+(\chi) + \widetilde{h}_n^-(\chi). 
\end{equation}
	Accordingly the dual weight functions (see \eqref{eq: NWt}) have the following decomposition
\begin{equation} \label{eq: DWtFDecomp}
	\widetilde{h}(\chi) = \widetilde{h}_{\infty}(\chi) + \widetilde{h}_{c}(\chi), \quad \widetilde{h}_{c}(\chi) = \widetilde{h}_{c}^+(\chi) + \widetilde{h}_{c}^-(\chi), \quad \widetilde{h}_{c}^{\pm}(\chi) = \sideset{}{_{\substack{n=\frac{2}{e}(n_0-2)+3 \\ 2 \mid en}}^{2n_1-1}} \sum \widetilde{h}_n^{\pm}(\chi),
\end{equation}
\begin{equation} \label{eq: NDWtFDecomp}
	\widetilde{H}(\chi) = \widetilde{H}_{\infty}(\chi) + \widetilde{H}_{c}(\chi), \quad \widetilde{H}_{c}(\chi) = \widetilde{H}_{c}^+(\chi) + \widetilde{H}_{c}^-(\chi), \quad \widetilde{H}_{c}^{\pm}(\chi) = \sideset{}{_{\substack{n=\frac{2}{e}(n_0-2)+3 \\ 2 \mid en}}^{2n_1-1}} \sum \widetilde{H}_n^{\pm}(\chi).
\end{equation}
	We call those (partial) dual weight functions with ``$+$'' (resp. ``$-$'') the \emph{postive} (resp. \emph{negative}) part.

	\subsection{Positive Part}
	
	We first study $\widetilde{h}_n^+(\chi)$. Since $\psi$ is trivial on $\vO_{\F}$, we have $\widetilde{h}_n^+(\chi) = \widetilde{h}_n(1/2, \chi)$ for
\begin{equation} \label{eq: DWtFLevelnVar}
	\widetilde{h}_n(s, \chi) := \int_{\vO_{\F}-\{ 0 \}} \VorH_{\Pi,\psi} \circ \Mult_{-1}(H_n)(t) \cdot \chi^{-1}(t) \norm[t]^{-s} \ud^{\times} t,
\end{equation}
	which is simply the partial sum of non-negative powers of $X = q^s$ in the Laurent series expansion of
\begin{equation} \label{eq: DWtFLnLaurent}
	f_n(q^s; \chi,H) = \int_{\F^{\times}} \VorH_{\Pi,\psi} \circ \Mult_{-1}(H_n)(t) \cdot \chi^{-1}(t) \norm[t]^{-s} \ud^{\times} t. 
\end{equation}
	By the local functional equation we have (see Lemma \ref{lem: TestFMellin})
\begin{equation} \label{eq: DWtFLnLF}
	f_n(q^s; \chi,H) = \varepsilon_n(\chi, H) q^{n(s-1)} \cdot \varepsilon \left( s, \Pi \otimes \chi, \psi \right) \frac{L(1-s, \widetilde{\Pi} \otimes \chi^{-1})}{L(s, \Pi \otimes \chi)}.
\end{equation}
	To relate $f_n(q^s; \chi,H)$ with $\widetilde{h}_n(s, \chi)$ we need the following crucial lemma.

\begin{lemma} \label{lem: LaurentTruncF}
	Let $f(X) = \sideset{}{_{n > -\infty}} \sum a_n X^n$ be a Laurent series converging in $0 < \norm[X] < \rho$ for some $\rho > 1$. Let $f_+(X) = \sideset{}{_{n > -\infty}} \sum a_n X^n$. Let $D = X \tfrac{\ud}{\ud X}$. Assume $f(X)$ (hence $f_+(X)$) has a meromorphic continuation to $X \in \C$. 
\begin{itemize}
	\item[(0)] For any $X$ and any $\epsilon < \min(1, \rho \norm[X]^{-1})$ we have the relation
	$$ f_+(X) = f(X) - \int_{\norm[z]=\epsilon} \frac{f(Xz)}{1-z} \frac{\ud z}{2\pi i}. $$
	\item[(1)] For $0 < \norm[X] < \rho$ and any $1<r<\rho \norm[X]^{-1}$ we have the relation
	$$ f_+(X) = \int_{\norm[z]=r} \frac{f(Xz)}{z-1} \frac{\ud z}{2\pi i} = f(X) + \int_{\norm[z]=1} \frac{f(Xz)-f(X)}{z-1} \frac{\ud z}{2\pi i}. $$
	Consequently, for $0 < \norm[X] < \rho$ and $n \in \Z_{\geq 0}$ we have the bound
	$$ \extnorm{(D^n f_+)(X)} \ll \extnorm{(D^n f)(X)} + \sideset{}{_{\norm[z]=1}} \sup \extnorm{(D^{n+1} f)(Xz)}. $$
	\item[(2)] Suppose $f(X) = Q(X) P(X)^{-1}$ for some $Q \in \C[X,X^{-1}], P \in \C[X]$. Let
	$$ P(X) = \sideset{}{_{j=1}^r} \prod (1-b_j X)^{m_j} $$
	with $m_j \in \Z_{\geq 1}$ and distinct $b_j \neq 0$. Introduce
	$$ P_j(X) := \sideset{}{_{i \neq j}} \prod (1-b_i X)^{m_i}, \quad C_{j,k} := \frac{(-1)^k}{k! \cdot b_j^k} \left( \frac{Q}{P_j} \right)^{(k)}(b_j^{-1}). $$
	Suppose the highest power $X^m$ in $Q$ satisfies $m < \deg P$. Then we have
	$$ P(X) f_+(X) = \sideset{}{_{j=1}^r} \sum \sideset{}{_{k=0}^{m_j-1}} \sum C_{j,k} \cdot (1-b_jX)^k P_j(X). $$
\end{itemize}
\end{lemma}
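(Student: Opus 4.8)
The plan is to treat the three parts in order, each building on the previous one. For part (0), the key identity is the integral representation of the "coefficient-truncation" operator: since $f(Xz) = \sum_n a_n X^n z^n$ converges uniformly on $\norm[z] = \epsilon$ for $\epsilon < \rho\norm[X]^{-1}$, I would expand $\frac{1}{1-z} = \sum_{k \geq 0} z^k$ (valid for $\epsilon < 1$), multiply the two series, and integrate term by term. The contour integral $\oint_{\norm[z]=\epsilon} z^{n+k}\,\frac{\ud z}{2\pi i}$ picks out $n + k = -1$, i.e. it captures exactly the negative-index terms $a_n X^n$ with $n < 0$. So $\oint_{\norm[z]=\epsilon} \frac{f(Xz)}{1-z}\frac{\ud z}{2\pi i} = \sum_{n < 0} a_n X^n = f(X) - f_+(X)$, which rearranges to the stated formula. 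This part is essentially bookkeeping once the convergence regions are pinned down.

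For part (1), I would start from the formula in (0) and push the contour $\norm[z] = \epsilon$ outward past $\norm[z] = 1$. The function $z \mapsto \frac{f(Xz)}{1-z}$ is meromorphic (by the assumed meromorphic continuation of $f$), with a simple pole at $z = 1$ of residue $-f(X)$, plus possible poles coming from zeros of $P(Xz)$; choosing $r$ with $1 < r < \rho\norm[X]^{-1}$ keeps us inside the original annulus of convergence of $f$ at the scale $Xz$, so no such extra poles are crossed in $\epsilon < \norm[z] < r$ provided $\norm[X]$ is small enough — more carefully, one argues this holds as an identity of meromorphic functions in $X$, both sides being meromorphic and agreeing on an open set. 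Crossing only the pole at $z=1$ contributes $-(-f(X)) = f(X)$ by the residue theorem (orienting correctly), giving $f_+(X) = f(X) - \bigl[\oint_{\norm[z]=r} - f(X)\bigr] \cdot(\ldots)$; reorganizing yields $f_+(X) = \oint_{\norm[z]=r}\frac{f(Xz)}{z-1}\frac{\ud z}{2\pi i}$, and then subtracting the same integral with $f(Xz)$ replaced by the constant $f(X)$ (whose integral over $\norm[z]=r$ is $0$ for $r > 1$, but over $\norm[z]=1$ needs the principal-value/residue convention — better to write it as $f(X) + \oint_{\norm[z]=1}\frac{f(Xz)-f(X)}{z-1}\frac{\ud z}{2\pi i}$, where now the integrand is holomorphic at $z=1$ since the numerator vanishes there). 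For the bound on $D^n f_+$: apply $D_X^n = (X\tfrac{\ud}{\ud X})^n$ to the identity $f_+(X) = f(X) + \oint_{\norm[z]=1}\frac{f(Xz)-f(X)}{z-1}\frac{\ud z}{2\pi i}$; the operator $D_X$ commutes with the substitution $X \mapsto Xz$ in the sense that $D_X[g(Xz)] = (D g)(Xz)$, so $D_X^n$ hits $f$ and $f(Xz)$ cleanly, and one more derivative is "used up" by the difference quotient $\frac{f(Xz)-f(X)}{z-1}$ — writing this as $\int_0^1 z(D f)(X z^t)\,\ldots$ or just noting that $D_X^n$ of the difference quotient is $\frac{(D^n f)(Xz) - (D^n f)(X)}{z-1}$ which is bounded in sup-norm by $\sup_{\norm[w]=1}\norm{(D^{n+1}f)(Xw)}$ along the segment from $X$ to $Xz$ on $\norm[z]=1$, via the fundamental theorem of calculus. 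Taking absolute values and using that $\norm[z]=1$ has length $2\pi$ gives the claimed estimate.

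For part (2), with $f = Q/P$ a genuine rational function and $\deg_X Q < \deg P$, I would compute $f_+$ by partial fractions. Write $f(X) = \sum_{j=1}^r \sum_{k=1}^{m_j} \frac{d_{j,k}}{(1-b_j X)^k}$ — the condition $m < \deg P$ guarantees there is no polynomial part, so $f$ is a proper rational function vanishing at $X = \infty$. Each simple building block $\frac{1}{(1-b_j X)^k}$ has Taylor expansion $\sum_{n\geq 0}\binom{n+k-1}{k-1}b_j^n X^n$ at $X=0$, which has \emph{no} negative powers of $X$, hence equals its own $f_+$. Therefore $f$ itself already has no negative powers of $X$ in its Laurent expansion at $0$?? — wait, that cannot be the intent; the point must be that $f$ is viewed as a Laurent series and $Q \in \C[X, X^{-1}]$ genuinely contains negative powers. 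So the correct reading: $Q(X) = \sum_{i=-N}^{m} q_i X^i$, and I would split $f = Q/P = Q_-/P + Q_+/P$ where $Q_- = \sum_{i<0} q_i X^i$ and $Q_+ = \sum_{i \geq 0} q_i X^i$; the second summand $Q_+/P$ is a proper rational function regular at $0$, so equals its own $f_+$, while the first, $Q_-/P$, is $X^{-N}$ times a proper rational function — its Laurent expansion at $0$ has exactly the negative part we need to discard. Concretely, I would verify directly that $P(X)f_+(X)$ is a polynomial of degree $< \deg P$ (since truncating removes the "denominator-induced" negative tail but the positive part stays rational with denominator dividing $P$), then identify this polynomial by evaluating it and its derivatives at the points $X = b_j^{-1}$ where $P$ has its zeros. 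The coefficients $C_{j,k} = \frac{(-1)^k}{k!\,b_j^k}(Q/P_j)^{(k)}(b_j^{-1})$ are precisely the Taylor coefficients of $Q/P_j = (1-b_jX)^{m_j} f$ expanded in powers of $(1 - b_j X)$ around $X = b_j^{-1}$, so the asserted formula $P(X)f_+(X) = \sum_{j,k} C_{j,k}(1-b_jX)^k P_j(X)$ is the statement that this polynomial matches $Pf$ to order $m_j$ at each $b_j^{-1}$ — standard Hermite interpolation, which uniquely determines a polynomial of degree $< \sum m_j = \deg P$. The main obstacle here is the \emph{sign/convention bookkeeping}: carefully tracking which part of $Q/P$ survives truncation (the regular-at-$0$ part) versus which is killed (the polar-at-$0$ part coming from negative powers in $Q$), and confirming that "killing the negative Laurent tail" is the same as "subtracting the Hermite interpolant of the polar part at the $b_j^{-1}$" — equivalently, that $f - f_+$ is the unique rational function with only the prescribed poles and vanishing at $\infty$ matching $f$'s principal parts there. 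I expect part (1)'s contour-pushing justification (making the meromorphic-continuation argument rigorous as an identity in $X$, not just pointwise) to be the most delicate point overall, while parts (0) and (2) are more mechanical.
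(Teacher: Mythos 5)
Your treatment of parts (0) and (1) follows the paper's proof essentially verbatim: the geometric-series-times-Laurent-series computation for (0), contour-shifting past $z=1$ plus the constancy of $\oint_{\norm[z]=r} f(X)/(z-1)\,\ud z/2\pi i = f(X)$ for (1), and then the fundamental-theorem-of-calculus rewrite of the difference quotient (with the bounded factor $\theta/(e^{i\theta}-1)$ providing integrability) for the derivative bound. Part (2), however, you do by a genuinely different route. The paper continues the contour argument: from $f_+(X) = \oint_{\norm[z]=r} \frac{f(Xz)}{z-1}\frac{\ud z}{2\pi i}$ it pushes $r \to \infty$, observes the integral vanishes at infinity because $m < \deg P$ makes $f(Xz) = O(\norm[z]^{-1})$, and reads off $f_+$ as the sum of residues at $z = 1/(b_j X)$; multiplying by $P(X)$ then yields the formula. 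You instead work algebraically: show $Pf_+$ is a polynomial of degree $< \deg P$ (it is rational with no poles, since $f_+ - f$ is the polar part of $f$ at $0$ and hence $P(f_+-f)$ is divisible by $(1-b_jX)^{m_j}$, while growth at $\infty$ is controlled by $m < \deg P$), observe that $Pf_+ \equiv Q \pmod{(1-b_jX)^{m_j}}$ for each $j$, and identify the polynomial via Hermite interpolation using that $C_{j,k}$ are the Taylor coefficients of $Q/P_j$ in powers of $(1-b_jX)$ at $b_j^{-1}$. Both arguments are correct; the paper's is shorter and keeps the whole lemma inside a single contour-shifting framework, while yours is self-contained and avoids any appeal to estimates at infinity. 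One small caution in your write-up: the closing "equivalent" characterization that $f - f_+$ matches $f$'s principal parts "at the $b_j^{-1}$" is garbled — $f - f_+$ is the principal part of $f$ at $0$, not at the $b_j^{-1}$; the relevant fact is rather that $f_+$ and $f$ share the same principal parts at each $b_j^{-1}$, which is what the Hermite interpolation exploits. That aside, the core argument is sound.
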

\begin{proof}
	(0) The stated formula follows from the residue theorem via
	$$ \int_{\norm[z]=\epsilon} \frac{f(Xz)}{1-z} \frac{\ud z}{2\pi i} = \int_{\norm[z]=\epsilon} \left( \sum_{n > -\infty} a_n X^n z^n \right) \left( \sum_{k=0}^{\infty} z^k \right) \frac{\ud z}{2\pi i} = \sum_{n < 0} a_n X^n, $$
	since only the terms for $n+k=-1$ give non-zero contribution.

\noindent (1) Clearly the stated formula follows from the one in (0) by the residue theorem. To see the bound, we introduce $g(z) = f(Xz)$ and rewrite the integral as
	$$ \int_{\norm[z]=1} \frac{f(Xz)-f(X)}{z-1} \frac{\ud z}{2\pi i} = \int_{-\pi}^{\pi} \frac{g(e^{i\theta})-g(1)}{e^{i\theta}-1} \frac{\ud \theta}{2\pi} = \int_{-\pi}^{\pi} \frac{i}{e^{i\theta}-1} \left( \int_0^{\theta} g'(e^{it}) e^{it} \ud t \right) \frac{\ud \theta}{2\pi} . $$
	Note that $z g'(z) = Df(Xz)$. Therefore we obtain the formula
	$$ f_+(X) = f(X) + \int_{-\pi}^{\pi} \frac{i}{e^{i\theta}-1} \left( \int_0^{\theta} (Df)(Xe^{it}) \ud t \right) \frac{\ud \theta}{2\pi}. $$
	Applying $D^n$ on both sides we get a formula and conclude the stated bound as
\begin{multline*} 
	(D^n f_+)(X) = (D^n f)(X) + \int_{-\pi}^{\pi} \frac{i}{e^{i\theta}-1} \left( \int_0^{\theta} (D^{n+1}f)(Xe^{it}) \ud t \right) \frac{\ud \theta}{2\pi} \\
	\ll \extnorm{(D^n f)(X)} + \sideset{}{_{\norm[z]=1}} \sup \extnorm{(D^{n+1} f)(Xz)} \cdot \int_{-\pi}^{\pi} \extnorm{\frac{\theta}{e^{i\theta}-1}} \frac{\ud \theta}{2\pi}.
\end{multline*}
	
\noindent (2) We may assume $0 < \norm[X] < \rho$ and depart from the first equation in (1), then move the contour to $\norm[z]=r$ for $r \gg 1$, picking up the residues. The contour integral tends to $0$ as $r \to +\infty$ due to the assumption $m < \deg P$. Multiplying the resulted formula by $P(X)$, we get the stated formula.
\end{proof}

\begin{definition} \label{def: ExpChar}
	For any generic irreducible representation $\Pi$ of $\GL_r(\F)$, let $d(\Pi)$ be the degree of the $L$-function $L(s,\Pi)$, namely the degree of the polynomial $L(s,\Pi)^{-1}$ in $X:=q^{-s}$. Write
	$$ \condL(\Pi) := \cond(\Pi)+d(\Pi). $$
	Introduce the set of \emph{exponents} of $\Pi$ by
	$$ \mathrm{E}(\Pi) := \left\{ \xi \in \widehat{\vO_{\F}^{\times}} \ \middle| \ d(\Pi \otimes \xi) > 0 \right\}. $$
\end{definition}
\begin{lemma} \label{lem: ExpCharBds}
	Let $\Pi$ be a generic irreducible representation of $\GL_3(\F)$. We have $\condL(\Pi) \geq 3$, $\extnorm{\mathrm{E}(\Pi)} \leq 3$. For any $\xi \in \mathrm{E}(\Pi)$ we have the bounds 
	$$ \cond(\xi) \leq \cond(\Pi), \quad \cond(\Pi \otimes \xi) \leq 2 \cond(\Pi), \quad \condL(\Pi \otimes \xi) \leq 2 \cond(\Pi)+3. $$
\end{lemma}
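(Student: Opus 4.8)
The plan is to reduce everything to the Zelevinsky classification of generic irreducible representations of $\GL_3(\F)$. Write $\Pi = \sigma_1 \boxplus \cdots \boxplus \sigma_r$ as an isobaric sum of essentially square-integrable representations $\sigma_j$ of $\GL_{m_j}(\F)$ with $\sum_j m_j = 3$, each of the form $\sigma_j = \mathrm{St}_{t_j}(\rho_j)$ for a supercuspidal $\rho_j$ of $\GL_{m_j/t_j}(\F)$. Since $L(s,\Pi) = \prod_j L(s,\sigma_j)$, $\cond(\Pi) = \sum_j \cond(\sigma_j)$ and $d(\Pi) = \sum_j d(\sigma_j)$ — and likewise after twisting by any character of $\F^{\times}$ — it suffices to treat each piece separately and sum the contributions.

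First I would record the per-piece data from the standard conductor formulae (equivalently via the local Langlands correspondence and Artin conductors): for a ramified character $\mu$ of conductor $c \ge 1$ one has $\cond(\mathrm{St}_t(\mu)) = tc$ and $d(\mathrm{St}_t(\mu)) = 0$, while $\cond(\mathrm{St}_t(\mu)) = t-1$ and $d(\mathrm{St}_t(\mu)) = 1$ for unramified $\mu$; for a supercuspidal $\rho$ of $\GL_m(\F)$ with $m \ge 2$ one has $d(\rho) = 0$ and $\cond(\rho) \ge m$ (its parameter is an irreducible $m$-dimensional representation of $W_\F$, hence has no inertia invariants). From these $\condL(\sigma_j) \ge m_j$ in every case, so $\condL(\Pi) = \sum_j \condL(\sigma_j) \ge \sum_j m_j = 3$. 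For the cardinality bound, note that $d(\sigma_j \otimes \xi) > 0$ forces the Langlands parameter of $\sigma_j \otimes \xi$ to contain an unramified character, which for an essentially square-integrable $\sigma_j$ can only happen when $\rho_j$ is itself a character $\mu_j$ and $\mu_j \xi$ is unramified, i.e. $\xi|_{\vO_{\F}^{\times}} = \mu_j^{-1}|_{\vO_{\F}^{\times}}$. Thus every $\xi \in \mathrm{E}(\Pi)$ is recovered from the character underlying one such \emph{responsible} piece $\sigma_{j_0}$ (which then has $m_{j_0} = t_{j_0}$, as $\rho_{j_0}$ has rank $1$), and the assignment $\xi \mapsto \sigma_{j_0}$ is injective into the at most $r \le 3$ pieces, giving $\extnorm{\mathrm{E}(\Pi)} \le 3$.

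Now fix $\xi \in \mathrm{E}(\Pi)$. If $\xi$ is trivial on $\vO_{\F}^{\times}$ the three bounds are immediate ($\cond$ is unchanged under an unramified twist and $d(\Pi \otimes \xi) \le 3$), so assume $\cond(\xi) \ge 1$ and pick a responsible piece $\sigma_{j_0} = \mathrm{St}_{t_{j_0}}(\mu_{j_0})$, with $\mu_{j_0}$ then ramified of conductor $\cond(\mu_{j_0}) = \cond(\xi)$. This gives $\cond(\xi) = \cond(\mu_{j_0}) \le t_{j_0}\cond(\mu_{j_0}) = \cond(\sigma_{j_0}) \le \cond(\Pi)$, the first bound. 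For the twisted conductor I would use the Swan-conductor inequality $\mathrm{Sw}(V \otimes \chi) \le \mathrm{Sw}(V) + (\dim V)\cdot \mathrm{Sw}(\chi)$ (from the break decomposition of the $W_\F$-representation $V$), together with the absence of inertia invariants for irreducible supercuspidal parameters before and after twisting, to obtain the uniform per-piece estimate $\cond(\sigma_j \otimes \xi) \le \cond(\sigma_j) + m_j \cond(\xi)$; for $\GL_2$-supercuspidal pieces this is the classical bound $\cond(\pi \otimes \chi) \le \cond(\pi) + 2\cond(\chi)$. The point is that for the responsible piece one has the \emph{improved} value $\cond(\sigma_{j_0} \otimes \xi) = t_{j_0} - 1$ (since $\mu_{j_0}\xi$ becomes unramified), while $\cond(\sigma_{j_0}) = t_{j_0}\cond(\xi)$ is comparatively large. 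Summing and using $m_{j_0} = t_{j_0}$,
$$ \cond(\Pi \otimes \xi) \le (t_{j_0} - 1) + \bigl( \cond(\Pi) - \cond(\sigma_{j_0}) \bigr) + (3 - t_{j_0})\cond(\xi) = \cond(\Pi) + (t_{j_0} - 1) + (3 - 2t_{j_0})\cond(\xi), $$
and substituting $\cond(\Pi) \ge \cond(\sigma_{j_0}) = t_{j_0}\cond(\xi)$ reduces $\cond(\Pi \otimes \xi) \le 2\cond(\Pi)$ to the elementary inequality $(t_{j_0} - 1) + (3 - 2t_{j_0})\cond(\xi) \le t_{j_0}\cond(\xi)$, i.e. $t_{j_0} - 1 \le 3(t_{j_0} - 1)\cond(\xi)$, which holds as $\cond(\xi) \ge 1$. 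Finally $\condL(\Pi \otimes \xi) = \cond(\Pi \otimes \xi) + d(\Pi \otimes \xi) \le 2\cond(\Pi) + 3$.

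The main obstacle is exactly the factor $2$ in $\cond(\Pi \otimes \xi) \le 2\cond(\Pi)$: the term-by-term estimate by itself yields only $\cond(\Pi \otimes \xi) \le \cond(\Pi) + 3\cond(\xi) \le 4\cond(\Pi)$, so one must genuinely exploit the numerical trade-off at the responsible piece — a small twisted conductor weighed against a large untwisted conductor — and then verify the resulting elementary inequality. Everything else is routine bookkeeping over the short list of generic parameters of $\GL_3(\F)$.
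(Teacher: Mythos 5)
Your argument is correct, and it takes a genuinely different route from the paper's. The paper proves the lemma by listing the six possible shapes of a generic irreducible $\Pi$ on $\GL_3(\F)$ (spherical; ramified principal series; $\pi \boxplus \chi$ with $\pi$ a $\GL_2$ supercuspidal; $\mathrm{St}_\eta \boxplus \chi$; a twist of the $\GL_3$ Steinberg; $\GL_3$ supercuspidal) and verifies the three bounds in each case from explicit conductor formulae, citing \cite{Ro94} for the Steinberg cases. You instead run the argument uniformly through the Zelevinsky decomposition $\Pi = \boxplus_j \mathrm{St}_{t_j}(\rho_j)$, locate a \emph{responsible piece} $\sigma_{j_0}=\mathrm{St}_{t_{j_0}}(\mu_{j_0})$ with $\xi = \mu_{j_0}^{-1}\mid_{\vO_\F^\times}$, and extract all three bounds from a single numerical trade-off: the twisted conductor at the responsible piece collapses from $t_{j_0}\cond(\xi)$ to $t_{j_0}-1$, which exactly compensates for the losses $m_j\cond(\xi)$ at the remaining pieces. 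This is arguably more structural — it shows the factor $2$ in $\cond(\Pi\otimes\xi)\le 2\cond(\Pi)$ is forced by a deficit/surplus balance rather than by any particular case, and the same scheme would work on $\GL_n$ with little change. The paper's per-case calculations are more concrete and in places sharper: for instance, for $\Pi = \pi \boxplus \chi$ with $\pi$ a $\GL_2$ supercuspidal it uses the exact identity $\cond(\pi\otimes\chi^{-1})=\cond(\widetilde{\pi})=\cond(\pi)$, whereas your generic estimate only gives $\cond(\pi)+2\cond(\xi)$; both suffice. One unevenness worth flagging: the justification you give for $\cond(\sigma_j\otimes\xi)\le\cond(\sigma_j)+m_j\cond(\xi)$ — the Swan-conductor inequality together with absence of inertia invariants — applies cleanly only to supercuspidal $\sigma_j$ of dimension $\ge 2$. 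For the special pieces $\mathrm{St}_t(\mu)$ with $t\ge 2$ the parameter does have inertia invariants when $\mu$ is unramified, and the Artin conductor carries a monodromy term, so one should instead check the bound directly from the formulae $\cond(\mathrm{St}_t(\mu)) = t\cond(\mu)$ (ramified) and $t-1$ (unramified), which you have already recorded; that check is routine, so this is an expository rather than a substantive gap.
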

\begin{proof}
	The upper bound for $\condL(\Pi \otimes \xi)$ obviously follows from the one for $\cond(\Pi \otimes \xi)$ since $d(\Pi \otimes \xi) \leq 3$ in any case. For the other bounds we distinguish cases for $\Pi$.

\noindent (1) If $\cond(\Pi)=0$, then $\Pi$ is spherical, $\mathrm{E}(\Pi)=\{ \id \}$ and $d(\Pi)=3$. The stated bounds clearly hold.

\noindent (2) If $\cond(\Pi) > 0$ and if $\Pi = \chi_1 \boxplus \chi_2 \boxplus \chi_3$ is induced from the Borel subgroup, then we have
	$$ \condL(\Pi) = \sideset{}{_{i=1}^3} \sum \condL(\chi_i) = \sideset{}{_{i=1}^3} \sum \max(\cond(\chi_i),1) \geq 3. $$
 	We also have $\mathrm{E}(\Pi) \subset \{ \xi_1^{-1}, \xi_2^{-1}, \xi_3^{-1} \}$ with $\chi_j \mid_{\vO_{\F}^{\times}} = \xi_j$, from which we deduce the bound for $\extnorm{\mathrm{E}(\Pi)}$. Obviously we have $\cond(\xi_j) \leq \cond(\Pi)$. Taking $\xi = \xi_1^{-1}$ for example, we have the second stated bound as
	$$ \cond(\Pi \otimes \xi) = \cond(\chi_2\xi_1^{-1}) + \cond(\chi_3\xi_1^{-1}) \leq 2 \max \{ \cond(\chi_j): 1 \leq j \leq 3 \} \leq 2 \cond(\Pi). $$
	
\noindent (3) If $\cond(\Pi) > 0$ and if $\Pi = \pi \boxplus \chi$ for some supercuspidal $\pi$ of $\GL_2(\F)$, then the central character of $\pi$ is $\chi^{-1}$. We have $\condL(\Pi) = \cond(\pi)+\max(\cond(\chi),1) \geq 2+1= 3$. We also have $\mathrm{E}(\Pi) = \{ \chi^{-1} \mid_{\vO_{\F}^{\times}} \}$. For $\xi = \chi^{-1} \mid_{\vO_{\F}^{\times}}$ we have $\cond(\xi) = \cond(\chi) \leq \cond(\Pi)$ and get the second stated bound via $\pi \otimes \chi^{-1} \simeq \widetilde{\pi}$ as
	$$ \cond(\Pi \otimes \xi) = \cond(\pi \otimes \chi^{-1}) = \cond(\widetilde{\pi}) = \cond(\pi) \leq \cond(\Pi). $$
	
\noindent (4) If $\cond(\Pi) > 0$ and if $\Pi = \mathrm{St}_{\eta} \boxplus \chi$, then $\chi \eta^2 = \id$. By \cite[\S 8 Proposition \& \S 10 Proposition]{Ro94} we have
	$$ \condL(\Pi) = 
	\begin{cases}
		2 \cond(\eta) & \text{if } \cond(\eta) \geq 1 \\
		1 & \text{if } \cond(\eta)=0
	\end{cases}
	+ d(\eta) + \max(\cond(\chi),1) \geq 3. $$
 	We also have $\mathrm{E}(\Pi) = \{ \chi^{-1} \mid_{\vO_{\F}^{\times}}, \eta^{-1} \mid_{\vO_{\F}^{\times}} \}$. For $\xi = \chi^{-1} \mid_{\vO_{\F}^{\times}}$ we argue the same as in the proof of (3). For $\xi = \eta^{-1} \mid_{\vO_{\F}^{\times}}$ we have
	$$ \cond(\xi) = \cond(\eta) < 1+\cond(\eta) = \cond(\Pi \otimes \xi) \leq \max(2\cond(\eta),1) + \cond(\chi) = \cond(\Pi). $$
	
\noindent (5) If $\cond(\Pi) > 0$ and if $\Pi = \mathrm{St}_{\eta}$ is a twist of the Steinberg representation of $\PGL_3(\F)$, then by \cite[\S 8 Proposition \& \S 10 Proposition]{Ro94} we have
	$$ \condL(\Pi) = \begin{cases}
		3 \cond(\eta) & \text{if } \cond(\eta) \geq 1 \\
		2 & \text{if } \cond(\eta)=0
	\end{cases} + d(\eta) \geq 3. $$
	We also have $\mathrm{E}(\Pi) = \{ \eta^{-1} \mid_{\vO_{\F}^{\times}} \}$. For $\xi = \eta^{-1} \mid_{\vO_{\F}^{\times}}$ we have
	$$ \cond(\xi) = \cond(\eta) \leq \max(3\cond(\eta),2) = \cond(\Pi), \quad \cond(\Pi \otimes \xi) = 2 \leq \cond(\Pi). $$
	
\noindent (6) If $\Pi$ is supercuspidal, then $\mathrm{E}(\Pi) = \emptyset$. We have $\condL(\Pi) = \cond(\Pi) \geq 3$.
\end{proof}

\noindent The following subset of $\widehat{\vO_{\F}^{\times}}$ will turn out to be important for the bound of $\widetilde{h}_n^+(\chi)$:
\begin{equation}
	\mathcal{A}_n = \mathcal{A}_n(\beta, \Pi) := \left\{ \xi \in \widehat{\vO_{\F}^{\times}} \ \middle| \ \condL(\Pi \otimes \xi) \leq n, \ \varepsilon_n(\xi,H) \neq 0 \right\}.
\end{equation}

\begin{lemma} \label{lem: AnAnal}
	For any $\xi \in \mathcal{A}_n$ we have $\cond(\xi) \leq \max \left( \tfrac{n_0}{e}, 2\cond(\Pi) \right)$.
\end{lemma}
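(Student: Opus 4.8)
The plan is to argue by contradiction. Suppose $\cond(\xi) > \max(n_0/e, 2\cond(\Pi))$; note that $\xi$ is then ramified, since $n_0/e \geq 1$ (recall $e \mid n_0$ by the Claim in Lemma \ref{lem: CondReInterp}). The idea is to read off the value of $n$ from the two conditions defining $\mathcal{A}_n$ and to reach a numerical contradiction: the non-vanishing $\varepsilon_n(\xi,H) \neq 0$ pins $n$ down in terms of $\cond(\xi)$ via Lemma \ref{lem: TestFMellin}, while the condition $\condL(\Pi \otimes \xi) \leq n$ becomes incompatible with this value once $\cond(\xi)$ exceeds the stability barrier of $\Pi$.

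\emph{Step 1: the non-vanishing $\varepsilon_n(\xi,H) \neq 0$ together with $\cond(\xi) > n_0/e$ forces $n = 2\cond(\xi)$.} This is a case check using Lemma \ref{lem: TestFMellin} and the identification $n_0 = \cond(\beta)$. In the split cases $\pi_0 = \pi(\chi_0,\chi_0^{-1})$ and $\pi_0 = \mathrm{St}_{\eta}$ one has $e = 1$ and $n_0 = \cond(\chi_0)$ (taking $\chi_0 = \eta\norm_{\F}^{1/2}$, so that $\cond(\chi_0) = \cond(\eta)$, in the Steinberg case). The hypothesis $\cond(\xi) > \cond(\chi_0)$ rules out the three sporadic branches of Lemma \ref{lem: TestFMellin}, each of which forces $\cond(\xi) = \cond(\chi_0)$; in the remaining branch one has $\cond(\chi_0\xi) = \cond(\chi_0^{-1}\xi) = n/2$, and since $\cond(\xi) > \cond(\chi_0)$ both conductors equal $\cond(\xi)$, giving $n = 2\cond(\xi)$. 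In the dihedral cases, \eqref{eq: CondCompNormF} and $\cond(\xi) > n_0/e$ yield $\cond(\xi\circ\Nr) > n_0 = \cond(\beta)$, hence $\cond(\beta\cdot(\xi\circ\Nr)) = \cond(\xi\circ\Nr)$, and substituting this into the non-vanishing relation $\cond(\beta\cdot(\xi\circ\Nr)) = en/2 + 1 - e$ gives $n = 2\cond(\xi)$ for both $e = 1$ and $e = 2$.

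\emph{Step 2: the hypothesis $\cond(\xi) > 2\cond(\Pi)$ forces $\condL(\Pi\otimes\xi) = 3\cond(\xi)$.} Since $\cond(\xi) > 2\cond(\Pi) \geq \cond(\Pi)$, every element of $\mathrm{E}(\Pi)$ has strictly smaller conductor by Lemma \ref{lem: ExpCharBds}, so $\xi \notin \mathrm{E}(\Pi)$; hence $d(\Pi\otimes\xi) = 0$, which gives $\condL(\Pi\otimes\xi) = \cond(\Pi\otimes\xi)$ and $L(s,\Pi\otimes\xi) = L(1-s,\widetilde{\Pi}\otimes\xi^{-1}) = 1$, while also $L(s,\xi) = L(1-s,\xi^{-1}) = 1$ since $\xi$ is ramified. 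Moreover $\cond(\xi) > 2\cond(\Pi)$ and $\cond(\xi) \geq 1$ give $\cond(\xi) \geq a(\Pi)$ (recall $a(\Pi) \leq \max(2\cond(\Pi),1)$ from Proposition \ref{prop: StabRang}), so by stability of the gamma factors and $\omega_{\Pi} = \id$ one gets $\gamma(s,\Pi\otimes\xi,\psi) = \gamma(s,\xi,\psi)^3$; as all the $L$-factors on both sides are trivial this is an identity of $\varepsilon$-factors, and comparing the exponents of $q^{-s}$ yields $\cond(\Pi\otimes\xi) = 3\cond(\xi)$.

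Combining Steps 1 and 2, $\condL(\Pi\otimes\xi) = 3\cond(\xi)$, whereas $\xi \in \mathcal{A}_n$ forces $\condL(\Pi\otimes\xi) \leq n = 2\cond(\xi)$, so $3\cond(\xi) \leq 2\cond(\xi)$, contradicting $\cond(\xi) \geq 1$. I expect the only delicate point to be the bookkeeping in Step 1: one must keep track of the normalization $n_0 = \cond(\beta)$ and its expression through $\cond(\chi_0)$ or $\cond(\eta)$, confirm that the sporadic branches of Lemma \ref{lem: TestFMellin} cannot occur once $\cond(\xi) > n_0/e$, and verify the relation $n = 2\cond(\xi)$ uniformly over the split, unramified and ramified cases (where \eqref{eq: CondCompNormF} and the parity $e \mid n_0$ enter). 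Step 2 is essentially a quotation of Proposition \ref{prop: StabRang} and Lemma \ref{lem: ExpCharBds} together with the standard relation between $\varepsilon$-factors and conductors, and the concluding inequality is arithmetic.
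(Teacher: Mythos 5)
Your proposal is correct and follows essentially the same route as the paper's proof: argue by contradiction, use the non-vanishing $\varepsilon_n(\xi,H)\neq 0$ together with Lemma~\ref{lem: TestFMellin} and \eqref{eq: CondCompNormF} to pin $n=2\cond(\xi)$, and use the stability barrier from Proposition~\ref{prop: StabRang} (which gives $\cond(\Pi\otimes\xi)=3\cond(\xi)$ and $d(\Pi\otimes\xi)=0$, hence $\condL(\Pi\otimes\xi)=3\cond(\xi)$) to force $3\cond(\xi)\leq n$, yielding the numerical contradiction. The only difference is the order of the two steps and a bit of extra bookkeeping on your part (spelling out the Steinberg sub-case and the exclusion of the sporadic branches of Lemma~\ref{lem: TestFMellin}), which is sound.
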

\begin{proof}
	Suppose $\xi \in \mathcal{A}_n$ and $\cond(\xi) > \max \left( \tfrac{n_0}{e}, 2\cond(\Pi) \right)$. In particular, we have $\cond(\xi) \geq a(\Pi)$, the stability barrier of $\Pi$, by Proposition \ref{prop: StabRang}. Therefore we get $\cond(\Pi \otimes \xi) = 3\cond(\xi)$, $d(\Pi \otimes \xi) = 0$ and deduce
\begin{equation} \label{eq: CharAnCondBd1} 
	3\cond(\xi) = \condL(\Pi \otimes \xi) \leq n. 
\end{equation}
	On the other hand, we claim that the condition $\cond(\xi) > \tfrac{n_0}{e}$ and $\varepsilon_n(\xi,H) \neq 0$ imply
\begin{equation} \label{eq: CharAnCondBd2} 
	\cond(\xi) = \tfrac{n}{2}, 
\end{equation}
	and conclude the proof by comparing \eqref{eq: CharAnCondBd1} and \eqref{eq: CharAnCondBd2} taking into account $n \geq \tfrac{2}{e}(n_0-2)+3 >0$. In fact, the proof of \eqref{eq: CharAnCondBd2} is case-by-case check with Lemma \ref{lem: TestFMellin}: (1) If $\bL/\F$ is split, then $e=1$ and $\cond(\xi) > n_0 = \cond(\chi_0^{\pm 1})$. Only the first case in Lemma \ref{lem: TestFMellin} is possible, yielding \eqref{eq: CharAnCondBd2}. (2) If $\bL/\F$ is unramified, then $e=1$ and by \eqref{eq: CondCompNormF} we have $\cond(\xi \circ \Nr) = \cond(\xi) > n_0 = \cond(\beta)$. The last case in Lemma \ref{lem: TestFMellin} yields \eqref{eq: CharAnCondBd2}. (3) If $\bL/\F$ is ramified, then $e=2$ and by \eqref{eq: CondCompNormF} we have $\cond(\xi \circ \Nr) \geq 2\cond(\xi)-1 \geq 2(n_0/2+1)-1 = n_0+1 > \cond(\beta)$. The last case in Lemma \ref{lem: TestFMellin} yields \eqref{eq: CharAnCondBd2}.
\end{proof}

\begin{lemma} \label{lem: DWt+Bd}
	Write $\xi = \chi \mid_{\vO_{\F}^{\times}}$. For any $\epsilon > 0$ sufficiently small we have the bound
\begin{multline*}
	\extnorm{\widetilde{h}_c^+(\chi)} \leq \sideset{}{_{\frac{2}{e}(n_0-2)+3 \leq n < \condL(\Pi \otimes \xi)}} \sum \extnorm{\widetilde{h}_n^+(\chi)} + \sideset{}{_{n \geq \condL(\Pi \otimes \chi)}} \sum \extnorm{\widetilde{h}_n^+(\chi)} \\
	\ll_{\epsilon} \Cond(\Pi)^{\epsilon} q^{-\frac{n_0}{e}+\frac{1-e}{2}} \cdot \id_{\leq \max \left( \frac{n_0}{e},2\cond(\Pi) \right)}(\cond(\xi)).
\end{multline*}
\end{lemma}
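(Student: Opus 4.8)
The plan is to exploit the fact that, for a fixed unitary $\chi$, the ``Gauss-sum'' coefficient $\varepsilon_n(\chi,H)$ of Lemma \ref{lem: TestFMellin} is nonzero for at most one value of $n$, so that both sums in the statement collapse to a single term; I would then control that term through the explicit Laurent expansion of $f_n(q^s;\chi,H)$ provided by the local functional equation \eqref{eq: DWtFLnLF} together with Lemma \ref{lem: LaurentTruncF}.

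First I would dispose of the displayed inequality: it is just the triangle inequality applied to $\widetilde{h}_c^+(\chi)=\sum_n\widetilde{h}_n^+(\chi)$, after observing that $\condL(\Pi\otimes\xi)=\condL(\Pi\otimes\chi)$ because an unramified twist alters neither $\cond(\Pi\otimes\cdot)$ nor $d(\Pi\otimes\cdot)$. Next I would inspect the five alternatives of Lemma \ref{lem: TestFMellin}: they are mutually exclusive for fixed $\chi$ and each pins down $n$ (split: $n/2=\cond(\chi_0\chi)=\cond(\chi_0^{-1}\chi)$, or the boundary value $n=2$; non-split: $en/2+1-e=\cond(\beta\cdot(\chi\circ\Nr))$), so $\varepsilon_n(\chi,H)\neq 0$ for at most one $n=:n^*$. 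I would moreover record that $n^*\geq\tfrac{2n_0}{e}+e-1$: in the non-split case $\cond(\beta\cdot(\chi\circ\Nr))\geq\cond_1(\beta\cdot(\chi\circ\Nr))=\cond_1(\beta)=n_0$ since $\chi\circ\Nr$ is trivial on $\bL^1$ (Lemma \ref{lem: CondReInterp}), and in the split case $\cond(\chi_0^2)=n_0$ for regular $\beta$, and then \eqref{eq: CondCompNormF} does the rest. Consequently each of the two partial sums reduces to $|\widetilde{h}_{n^*}^+(\chi)|$ (understood as $0$ if no admissible $n^*$ lies in the summation range).

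Next I would make $f_n$ explicit: feeding the $\varepsilon$- and $L$-factors into \eqref{eq: DWtFLnLF} and putting $X=q^s$ gives
\[
f_n(q^s;\chi,H)=C\cdot X^{\,n-\condL(\Pi\otimes\xi)}\cdot g(X),\qquad
g(X):=\frac{\prod_j(X-\alpha_j)}{\prod_i(1-\tilde{\beta}_iq^{-1}X)},\qquad
|C|\leq\zeta_{\F}(1)\,q^{-n+\cond(\Pi\otimes\chi)/2},
\]
where $L(s,\Pi\otimes\chi)=\prod_j(1-\alpha_jq^{-s})^{-1}$ and $L(1-s,\widetilde{\Pi}\otimes\chi^{-1})=\prod_i(1-\tilde{\beta}_iq^{s-1})^{-1}$, both with $d=d(\Pi\otimes\xi)\leq 3$ factors and $|\alpha_j|,|\tilde{\beta}_i|\leq 1$ since $\Pi$ is tempered; hence $g$ is holomorphic on $|X|<q$ and on $|X|=q^{1/2}$ satisfies $|g(X)|\ll q^{d/2}$ and $|\prod_i(1-\tilde{\beta}_iq^{-1}X)|\gg 1$ with absolute constants. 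If $n^*\geq\condL(\Pi\otimes\xi)$, then $f_{n^*}$ is holomorphic on $|X|<q$ with no negative powers, whence $\widetilde{h}_{n^*}^+(\chi)=f_{n^*}(q^{1/2})$ and $|\widetilde{h}_{n^*}^+(\chi)|\ll|\varepsilon_{n^*}(\chi,H)|q^{-n^*/2}\leq\zeta_{\F}(1)q^{-n^*/2}$. If $n^*<\condL(\Pi\otimes\xi)$, I would set $\ell:=\condL(\Pi\otimes\xi)-n^*\geq 1$ and remove the principal part at $X=0$ (Lemma \ref{lem: LaurentTruncF}(0)) to get $f_{n^*,+}(X)=CX^{-\ell}\bigl(g(X)-T_{\ell-1}g(X)\bigr)$, $T_{\ell-1}g$ being the degree-$(\ell-1)$ Taylor polynomial of $g$ at $0$, whose coefficients Cauchy's estimate on $|X|=q^{1/2}$ bounds by $\ll q^{(d-k)/2}$; evaluation at $q^{1/2}$ then yields $|\widetilde{h}_{n^*}^+(\chi)|\ll|C|q^{-\ell/2}\,\ell\,q^{d/2}\ll\zeta_{\F}(1)\,\ell\,q^{-n^*/2}$. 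When $d(\Pi\otimes\xi)=0$ — in particular once $\cond(\xi)\geq a(\Pi)$, the stability barrier of Proposition \ref{prop: StabRang} — one has $g\equiv 1$, so $f_{n^*}$ is a monomial and $f_{n^*,+}$ is $0$ (if $n^*<\condL$) or $f_{n^*}$; otherwise $\xi\in\mathrm{E}(\Pi)$ and $\ell\leq\condL(\Pi\otimes\xi)\leq 2\cond(\Pi)+3\ll_{\epsilon}\Cond(\Pi)^{\epsilon}$ by Lemma \ref{lem: ExpCharBds}. In all cases $|\widetilde{h}_{n^*}^+(\chi)|\ll_{\epsilon}\Cond(\Pi)^{\epsilon}q^{-n^*/2}\leq\Cond(\Pi)^{\epsilon}q^{-n_0/e+(1-e)/2}$.

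Finally, for the indicator: if $\cond(\xi)>\max(n_0/e,2\cond(\Pi))$, then $\xi\notin\mathcal{A}_n$ for every $n$ by Lemma \ref{lem: AnAnal}, so $\varepsilon_{n^*}(\chi,H)\neq 0$ forces $\condL(\Pi\otimes\xi)>n^*$; but then $\cond(\xi)>2\cond(\Pi)\geq a(\Pi)$ puts us in the stability regime with $d(\Pi\otimes\xi)=0$, and the previous paragraph gives $\widetilde{h}_{n^*}^+(\chi)=0$, hence $\widetilde{h}_c^+(\chi)=0$, as needed. I expect the main obstacle to be the case $n^*<\condL(\Pi\otimes\xi)$: one has to verify that the spurious polynomial $T_{\ell-1}g$ coming from the principal part does not erode the decay $q^{-n^*/2}$, which is possible only because $\ell\leq\condL(\Pi\otimes\xi)$ and the latter is $O(\cond(\Pi))$ precisely on the finitely many $\xi\in\mathrm{E}(\Pi)$ carrying a nontrivial $L$-factor — and because the dichotomy $n^*\gtrless\condL(\Pi\otimes\xi)$ dovetails with the stability barrier to produce the vanishing above.
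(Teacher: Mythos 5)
Your proof is essentially correct and reaches the same bound as the paper, but it is organized a little more cleanly: you first observe that, for a fixed $\xi=\chi\mid_{\vO_{\F}^{\times}}$, Lemma~\ref{lem: TestFMellin} forces $\varepsilon_n(\chi,H)\neq 0$ for at most one $n=n^*$, so the two sums collapse to a single term. The paper instead estimates every $\widetilde{h}_n^+(\chi)$ in the range (the bounds \eqref{eq: RedBd1} and \eqref{eq: RedBd2}) and then sums, letting the indicator $\id_{\mathcal{A}_n}$ and the restriction $\xi\in\mathrm{E}(\Pi)$ do the job of killing all but one $n$; the two approaches use the same inputs (\eqref{eq: DWtFLnLF}/\eqref{eq: VHTestFLevelnMellin}, Lemma~\ref{lem: LaurentTruncF}, Lemma~\ref{lem: ExpCharBds}, Lemma~\ref{lem: AnAnal}), but yours is more explicit about where the single contribution comes from, and it uses part (0) of Lemma~\ref{lem: LaurentTruncF} together with a direct Cauchy estimate on the Taylor coefficients of $g$, whereas the paper invokes part (1) (which is geared towards the derivative bounds needed later in Lemma~\ref{lem: DNDWt+Bd} and is slightly heavier than needed here).

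One small inaccuracy: at the end you write ``$\cond(\xi)>2\cond(\Pi)\geq a(\Pi)$,'' but this chain fails when $\cond(\Pi)=0$, since then $2\cond(\Pi)=0$ while $a(\Pi)=2$ (see the statement of Proposition~\ref{prop: StabRang} and its use in the proof of Proposition~\ref{prop: DWtBde=1}). The conclusion you want is still true: under $\cond(\xi)>\max(n_0/e,2\cond(\Pi))$ you also have $\cond(\xi)>n_0/e\geq 1$, so $\cond(\xi)\geq 2=a(\Pi)$ in the unramified case, and for $\cond(\Pi)\geq 1$ your inequality holds as written. So this is a gap in the justification, not in the result; you should simply argue $\cond(\xi)\geq a(\Pi)$ by the dichotomy $\cond(\Pi)=0$ versus $\cond(\Pi)\geq 1$. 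With that small repair the proof is complete and correct.
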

\begin{proof}
	Write the relevant $L$-functions of $\Pi \otimes \chi$ as
	$$ L(s, \Pi \otimes \chi) = \sideset{}{_{j=1}^d} \prod (1-a_j q^{-s})^{-1} $$
where $d=d(\Pi \otimes \chi) \in \{ 0,1,2,3 \}$ is the degree, and $a_j=a_j(\Pi \otimes \chi)$ are the \emph{generalized Satake parameters} of $\Pi \otimes \chi$ satisfying $\norm[a_j] \in \{ 1, q^{-\frac{1}{2}}, q^{-1} \}$ by temperedness. We can rewrite \eqref{eq: DWtFLnLF} as
\begin{equation} \label{eq: VHTestFLevelnMellin}
	f_n(X; \chi,H) = \varepsilon_n(\chi, H) \varepsilon \left( \tfrac{1}{2}, \Pi \otimes \chi, \psi \right) q^{\frac{\cond(\Pi \otimes \chi)}{2}-n} X^{n-\condL(\Pi \otimes \chi)} \cdot \sideset{}{_{j=1}^d} \prod \tfrac{X-a_j}{1-\overline{a_j}q^{-1}X}. 
\end{equation}
	
\noindent If $n \geq \condL := \condL(\Pi \otimes \chi)$, then $f_n(X; \chi,H) \neq 0$ implies $\varepsilon_n(\chi,H) \neq 0$, hence $\chi \mid_{\vO_{\F}^{\times}} \in \mathcal{A}_n$ by definition. The Laurent expansion of $f_n(X; \chi,H)$ contains only non-negative powers of $X$. Therefore the right hand side is equal to $\widetilde{h}_n(s, \chi)$ by definition \eqref{eq: DWtFLevelnVar}. Consequently we get the formula and deduce the bound
	$$ \widetilde{h}_n(s, \chi) = \varepsilon_n(\chi, H) \varepsilon \left( \tfrac{1}{2}, \Pi \otimes \chi, \psi \right) q^{\frac{\cond(\Pi \otimes \chi)}{2}-n} q^{s(n-\cond(\Pi \otimes \chi))} \cdot \sideset{}{_{j=1}^d} \prod \tfrac{1-a_j q^{-s}}{1-\overline{a_j}q^{s-1}}, $$
\begin{equation} \label{eq: RedBd1}
	\extnorm{\widetilde{h}_n^+(\chi)} = \extnorm{\widetilde{h}_n(1/2, \chi)} \ll_{\RamCst_3} \id_{\mathcal{A}_n}(\chi \mid_{\vO_{\F}^{\times}}) \cdot q^{-\frac{n}{2}}. 
\end{equation}
	
\noindent If $n < \condL$, then the Laurent expansion of $f_n(X; \chi,H)$ contains non-negative powers of $X$ only if $d=d(\Pi \otimes \chi) > 0$, i.e. $\chi \mid_{\vO_{\F}^{\times}} \in \mathrm{E}(\Pi)$ where $\mathrm{E}(\Pi)$ is the set of exponents of $\Pi$ introduced in Definition \ref{def: ExpChar}. By Lemma \ref{lem: ExpCharBds} and the summation range of $n$ we have
\begin{equation} \label{eq: ExpNumBds} 
	\tfrac{2}{e}(n_0-2)+3 \leq n \leq 2 \cond(\Pi)+2. 
\end{equation}
	 Note that the Laurent expansion of $f_n(X; \chi,H)$ is absolutely convergent for $\norm[X] < q$. Note also that $Xf_n'(X; \chi, H)$ and $f_n(X; \chi,H)$ have the same type of bound for $\norm[X] = q^{\frac{1}{2}}$. By Lemma \ref{lem: LaurentTruncF} (1) and \eqref{eq: DWtFLevelnVar} we deduce
\begin{multline} \label{eq: RedBd2}
	\extnorm{\widetilde{h}_n^+(\chi)} = \extnorm{\widetilde{h}_n(1/2, \chi)} \leq \extnorm{f_n(q^{\frac{1}{2}}; \chi, H)} + \sideset{}{_{\norm[X] = q^{\frac{1}{2}}}} \sup \extnorm{Xf_n'(X; \chi,H)} \\
	\ll \id_{\frac{2}{e}(n_0-2)+3 \leq n \leq \condL(\Pi \otimes \chi)-1} \id_{\mathrm{E}(\Pi)}(\chi \mid_{\vO_{\F}^{\times}}) \cdot q^{-\frac{n}{2}} \cdot \left( 1 + \condL(\Pi \otimes \chi) - n \right) \\
	\ll \id_{\frac{2}{e}(n_0-2)+3 \leq n \leq 2 \cond(\Pi)+2} \id_{\mathrm{E}(\Pi)}(\chi \mid_{\vO_{\F}^{\times}}) \cdot q^{-\frac{n}{2}} \cdot (2\cond(\Pi)+3).
\end{multline}
	by Lemma \ref{lem: TestFMellin} \& \ref{lem: ExpCharBds}. We conclude by summing over $n$ (for $2 \mid en$) the bounds \eqref{eq: RedBd1} and \eqref{eq: RedBd2}, taking into account the bound for $\id_{\mathcal{A}_n}$ given by Lemma \ref{lem: AnAnal}.
\end{proof}

	\subsection{Negative Part}

	We turn to the study of a ``trivial'' bound of $\widetilde{h}_n^-(\chi)$. We introduce
\begin{equation}
	\mathcal{B}_n = \mathcal{B}_n(\beta, \Pi) := \left\{ \xi \in \widehat{\vO_{\F}^{\times}} \ \middle| \ \condL(\Pi \otimes \xi) > n, \ \varepsilon_n(\xi,H) \neq 0 \right\}.
\end{equation}

\begin{lemma} \label{lem: BnAnal}
	Recall $e=e(\bL/\F)$. We have $\mathcal{B}_n \subset \left\{ \xi \in \widehat{\vO_{\F}^{\times}} \ \middle| \ \cond(\xi) \leq n/2 \right\}$ and the bound $\extnorm{\mathcal{B}_n} \ll q^{\frac{n}{2}}$.
\end{lemma}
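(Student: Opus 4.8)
The plan is to read off the first inclusion directly from the explicit formulas of Lemma~\ref{lem: TestFMellin}, and then to bound $\extnorm{\mathcal{B}_n}$ by a crude count of characters. Fix $\xi\in\widehat{\vO_{\F}^{\times}}$ with $\varepsilon_n(\xi,H)\neq 0$ and $n$ in the admissible range $n\geq\tfrac{2}{e}(n_0-2)+3$ with $2\mid en$. The first observation I would use repeatedly is that the conductor of a product of quasi-characters never exceeds the maximum of the two conductors (with equality whenever the two conductors differ). The second is the arithmetic consequence of the range: when $e=1$ the parity constraint $2\mid n$ turns $n\geq 2n_0-1$ into $n\geq 2n_0$, hence $n_0\leq n/2$; when $e=2$ one simply has $n\geq n_0+1$, i.e. $n_0\leq n-1$.

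\noindent Granting these, I would argue by contradiction that $\cond(\xi)>n/2$ is impossible, case by case on the type of $\bL/\F$. In the split case the non-degenerate row of Lemma~\ref{lem: TestFMellin} gives $\cond(\chi_0\xi)=\cond(\chi_0^{-1}\xi)=n/2$, so $\xi^2=(\chi_0\xi)(\chi_0^{-1}\xi)$ has conductor $\leq n/2$; since the residual characteristic is not $2$, squaring is onto on $1+\vP_{\F}^{m}$ for every $m\geq 1$, whence $\cond(\xi)\leq\max(\cond(\xi^2),1)\leq n/2$, using $n\geq 2$. The three degenerate rows all force $n=2$ and $\cond(\chi_0)=1$, where $\cond(\xi)=1=n/2$ is immediate. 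In the non-split case the relevant row reads $\cond(\beta\cdot(\xi\circ\Nr))=en/2+1-e$; substituting \eqref{eq: CondCompNormF} for $\cond(\xi\circ\Nr)$, namely $\cond(\xi)$ if $e=1$ and $\max(2\cond(\xi)-1,0)$ if $e=2$, one checks that $\cond(\xi)>n/2$ forces both $\cond(\xi\circ\Nr)>n_0=\cond(\beta)$ and $\cond(\xi\circ\Nr)>en/2+1-e$, so that $\cond(\beta\cdot(\xi\circ\Nr))=\cond(\xi\circ\Nr)>en/2+1-e$, contradicting the displayed equality. This last step — the ramified sub-case, where one must exclude a near-cancellation in $\beta\cdot(\xi\circ\Nr)$ masking a large $\cond(\xi)$ — is the only point needing any care, and it is resolved precisely by the first observation above.

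\noindent Finally, once $\mathcal{B}_n\subset\{\xi\in\widehat{\vO_{\F}^{\times}} : \cond(\xi)\leq n/2\}$ is established, every such $\xi$ factors through $(\vO_{\F}/\vP_{\F}^{\lfloor n/2\rfloor})^{\times}$, hence $\extnorm{\mathcal{B}_n}\leq\extnorm{(\vO_{\F}/\vP_{\F}^{\lfloor n/2\rfloor})^{\times}}\leq q^{\lfloor n/2\rfloor}\leq q^{n/2}$, which yields the claimed bound with implied constant $1$.
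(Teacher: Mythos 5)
Your proof is correct and takes essentially the same route as the paper: both arguments read off $\cond(\xi)\leq n/2$ from the non-vanishing conditions in Lemma~\ref{lem: TestFMellin} case by case, using \eqref{eq: CondCompNormF} and the elementary fact that $\cond(\rho_1\rho_2)\leq\max(\cond\rho_1,\cond\rho_2)$, and then count characters of bounded conductor. The paper phrases the key non-split estimate directly as $2\cond(\xi)-1 \leq \cond(\xi\circ\Nr) \leq \max(\cond\beta,\cond(\beta(\xi\circ\Nr)))=n-1$ while you run the contrapositive, but the content is identical; your split-case observation that $\xi^2=(\chi_0\xi)(\chi_0^{-1}\xi)$ together with surjectivity of squaring on $1+\vP_{\F}^m$ (residual characteristic $\neq 2$) is exactly the right way to fill in the case the paper leaves to the reader.
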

\begin{proof}
	For $\xi \in \mathcal{B}_n$, the condition $\varepsilon_n(\xi,H) \neq 0$ implies $\cond(\xi) \leq \tfrac{n}{2}$ by checking Lemma \ref{lem: TestFMellin} case-by-case, taking into account \eqref{eq: CondCompNormF}. For example in the case $\bL/\F$ is ramified, we have $n \geq n_0+1$ and
	$$ 2\cond(\xi)-1 \leq \cond(\xi \circ \Nr) \leq \max \left( \cond(\beta), \cond(\beta \cdot (\xi \circ \Nr)) \right) = \max(n_0, n-1) = n-1. $$
	Therefore we get $\extnorm{\mathcal{B}_n} \leq \extnorm{\vO_{\F}^{\times}/(1+\vP_{\F}^{n/2})} \ll q^{\frac{n}{2}}$.
\end{proof}

\begin{lemma} \label{lem: DWt-TrivBd}
	We have the bound 
	$$ \widetilde{h}_n^-(\chi) \ll q^{-\frac{1}{2}} \cdot \id_{\cond(\chi) \leq \cond(\Pi)+\frac{n}{2}} + \id_{n \leq 2\cond(\Pi)+2} \cdot q^{-\frac{n}{2}} \sideset{}{_{m=1}^{2\cond(\Pi)+3-n}} \sum q^{-\frac{m}{2}} \id_{\cond(\chi) \leq \max(\cond(\Pi),m)}. $$
	Consequently if $\cond(\Pi) > 0$ and $n_0 \leq A \cond(\Pi)$ for some constant $A \geq 1$ then we have
	$$ \widetilde{h}_c^-(\chi) \ll \sideset{}{_{\substack{n=\frac{2}{e}(n_0-2)+3 \\ 2 \mid en}}^{2n_1-1}} \sum \extnorm{\widetilde{h}_n^-(\chi)} \ll \cond(\Pi) \id_{\cond(\chi) \leq 3A \cond(\Pi)}. $$
\end{lemma}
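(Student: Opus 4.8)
The plan is to bound each $\widetilde{h}_n^-(\chi)$ by a shell decomposition together with Fourier analysis on $\vO_{\F}^{\times}$, and then to sum the resulting per-level bounds over $n$.

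First I would set $g_n:=\VorH_{\Pi,\psi}\circ\Mult_{-1}(H_n)$ and decompose $\F-\vO_{\F}=\bigsqcup_{m\geq 1}\varpi_{\F}^{-m}\vO_{\F}^{\times}$, so that
$$\widetilde{h}_n^-(\chi)=\sum_{m\geq 1}q^{-m/2}\chi(\varpi_{\F})^m\int_{\vO_{\F}^{\times}}g_n(\varpi_{\F}^{-m}u)\,\psi(-\varpi_{\F}^{-m}u)\,\chi^{-1}(u)\,\ud^{\times}u.$$
On the shell $v_{\F}=-m$ the Fourier coefficients of $g_n$ along $\vO_{\F}^{\times}$ are, by construction, the $X^{-m}$-Laurent coefficients $c_{-m}(\xi)$ of the rational functions $f_n(X;\xi,H)$ of \eqref{eq: DWtFLnLaurent} (their moduli being independent of the unitary lift of $\xi\in\widehat{\vO_{\F}^{\times}}$ chosen), while the additive twist $\psi(-\varpi_{\F}^{-m}\,\cdot\,)$ expands on $\vO_{\F}^{\times}$ into characters of conductor $m$ with Gauss-sum coefficients of modulus $\asymp q^{-m/2}$ when $m\geq 2$, resp. into the trivial and conductor-one characters with coefficients $O(q^{-1/2})$ when $m=1$. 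Parseval then bounds the $m$-th shell by $q^{-m/2}\cdot q^{-m/2}\sum_{\xi_1}\extnorm{c_{-m}(\xi_1)}$, the sum running over the admissible $\xi_1$ with $\cond(\chi\mid_{\vO_{\F}^{\times}}\xi_1^{-1})\leq m$ (indeed $=m$ once $m\geq 2$).

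Next I would make $c_{-m}(\xi_1)$ explicit through the local functional equation: by \eqref{eq: VHTestFLevelnMellin} it is $\varepsilon_n(\xi_1,H)\,\varepsilon(1/2,\Pi\otimes\xi_1,\psi)\,q^{\cond(\Pi\otimes\xi_1)/2-n}$ times the $X^{\condL(\Pi\otimes\xi_1)-n-m}$-coefficient of $\prod_j\tfrac{X-a_j}{1-\overline{a_j}q^{-1}X}$; hence it vanishes unless $\varepsilon_n(\xi_1,H)\neq 0$ and $m\leq\condL(\Pi\otimes\xi_1)-n$ (so in particular $\xi_1\in\mathcal{B}_n$ and $\cond(\xi_1)$ lies in the narrow range forced by Lemma \ref{lem: TestFMellin}), and otherwise, using $\extnorm{\varepsilon_n(\xi_1,H)}\leq\zeta_{\F}(1)$, the root-number identity $\extnorm{\varepsilon(1/2,\Pi\otimes\xi_1,\psi)}=1$, the temperedness bound $\extnorm{a_j}\leq 1$, and the elementary estimate $\ll_{\epsilon}q^{-N+\epsilon}$ for the $X^N$-coefficient of the above product, one obtains $\extnorm{c_{-m}(\xi_1)}\ll_{\epsilon}q^{\,m-\cond(\Pi\otimes\xi_1)/2-d(\Pi\otimes\xi_1)+\epsilon}$ subject to $\cond(\Pi\otimes\xi_1)+d(\Pi\otimes\xi_1)\geq n+m$. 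I would split the $\xi_1$-sum in two. (A) If $\xi_1\notin\mathrm{E}(\Pi)$, then $d(\Pi\otimes\xi_1)=0$ and $\cond(\Pi\otimes\xi_1)\geq n+m$, so $\extnorm{c_{-m}(\xi_1)}\ll_{\epsilon}q^{(m-n)/2+\epsilon}$; there are $\ll q^{\min(m,n/2)+\epsilon}$ admissible $\xi_1$ by the conductor constraints of Lemma \ref{lem: TestFMellin}, and, since then $\cond(\xi_2)\leq m\leq\condL(\Pi\otimes\xi_1)-n\leq\cond(\Pi)+n/2$, summing over $m$ yields $\ll q^{-1/2}\,\id_{\cond(\chi)\leq\cond(\Pi)+n/2}$, the first term. (B) If $\xi_1\in\mathrm{E}(\Pi)$ there are $\leq 3$ choices, with $\cond(\xi_1)\leq\cond(\Pi)$ and $\condL(\Pi\otimes\xi_1)\leq 2\cond(\Pi)+3$ by Lemma \ref{lem: ExpCharBds}; the constraint $1\leq m\leq\condL(\Pi\otimes\xi_1)-n$ forces $n\leq 2\cond(\Pi)+2$ and $m\leq 2\cond(\Pi)+3-n$, while $d(\Pi\otimes\xi_1)\geq 1$ sharpens the bound to $\extnorm{c_{-m}(\xi_1)}\ll_{\epsilon}q^{(m-n)/2-1/2+\epsilon}$ and $\xi_2=\chi\mid_{\vO_{\F}^{\times}}\xi_1^{-1}$ with $\cond(\xi_2)=m$ forces $\cond(\chi)\leq\max(\cond(\Pi),m)$; multiplying by the two $q^{-m/2}$ and summing the $\leq 3$ terms contributes $\ll q^{-n/2-m/2-1/2+\epsilon}\,\id_{\cond(\chi)\leq\max(\cond(\Pi),m)}$ to the $m$-th shell, whose sum over $m$ is the second term. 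Assembling (A) and (B) gives the stated bound for $\widetilde{h}_n^-(\chi)$.

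Finally, for the ``consequently'' statement I would sum over $n$ with $2\mid en$ in $\tfrac{2}{e}(n_0-2)+3\leq n\leq 2n_1-1$; assuming $n_0\leq A\Cond(\Pi)$ with $A\geq 1$ one may take $n_1=\max(2n_0/e+e-1,a(\Pi))\ll_A\Cond(\Pi)$ (using $a(\Pi)\leq\max(2\Cond(\Pi),6)$ from Proposition \ref{prop: StabRang}), so the sum has $\ll_A\Cond(\Pi)$ terms, each $n$ satisfies $\cond(\Pi)+n/2\leq 3A\Cond(\Pi)$, and summing the first term of the per-level bound gives $\ll\Cond(\Pi)\,q^{-1/2}\,\id_{\cond(\chi)\leq 3A\Cond(\Pi)}$ while the geometrically convergent sum of the second term is absorbed into it. I expect the main obstacle to be the two counting steps in (A) and (B): reconciling the support of $\varepsilon_n(\xi_1,H)$ given by Lemma \ref{lem: TestFMellin} with the Laurent-range condition $m\leq\condL(\Pi\otimes\xi_1)-n$ so as to pin down the exact range of surviving $m$, and extracting from it the precise conductor conditions $\cond(\chi)\leq\cond(\Pi)+n/2$ and $\cond(\chi)\leq\max(\cond(\Pi),m)$; the remaining estimates are routine once $f_n$ is written out via \eqref{eq: VHTestFLevelnMellin}.
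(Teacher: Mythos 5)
Your proposal is correct and follows essentially the same approach as the paper: shell decomposition of $\F-\vO_{\F}$ into $\varpi_{\F}^{-m}\vO_{\F}^{\times}$, Plancherel on $\vO_{\F}^{\times}$ pairing the Mellin/Laurent coefficients of $\VorH_{\Pi,\psi}\circ\Mult_{-1}(H_n)$ against Gauss-sum coefficients, the local functional equation \eqref{eq: VHTestFLevelnMellin}, the case split $\xi\in \mathrm{E}(\Pi)$ versus $\xi\notin \mathrm{E}(\Pi)$, and the counting via Lemmas \ref{lem: TestFMellin}, \ref{lem: ExpCharBds} and \ref{lem: BnAnal}. Up to normalization (your $c_{-m}(\xi)$ equals $q^{-n}$ times the paper's $a_m(\xi;n)$, and you take a slightly different but equivalent choice of $n_1$), the argument matches the one in the paper.
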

\begin{proof}
	Write $\chi_0 = \chi \mid_{\vO_{\F}^{\times}}$. By definition and the Plancherel formula on $\vO_{\F}^{\times}$ we can write
	$$ \widetilde{h}_n^-(\chi) = \Vol(\vO_{\F}^{\times}, \ud^{\times}t)^{-1} q^{-n} \sideset{}{_{m=1}^{\infty}} \sum \chi(\varpi_{\F})^m q^{-\frac{m}{2}} \cdot C_m(n, \chi_0^{-1}),$$
	$$ C_m(n, \chi_0^{-1}) := \sideset{}{_{\xi \in \widehat{\vO_{\F}^{\times}}}} \sum a_m(\xi;n) \cdot b_m(\xi \chi_0^{-1}), $$
	where we have put
	$$ a_m(\xi;n) := \int_{\varpi_{\F}^{-m} \vO_{\F}^{\times}} \VorH_{\Pi,\psi}(H_n)(t) \xi^{-1}(t) \ud^{\times} t, \quad b_m(\xi) := \int_{\vO_{\F}^{\times}} \psi(-\varpi_{\F}^{-m}t) \xi(t) \ud^{\times} t. $$
	Since $m \geq 1$ the integral $b_m(\xi)$ is essentially a Gauss sum, which we can easily bound as
\begin{equation} \label{eq: bmBd}
	b_m(\xi) \ll q^{-\frac{m}{2}} \cdot \id_{\cond(\xi)=m} + \id_{m=1} \cdot q^{-1} \cdot \id_{\cond(\xi)=0} \ll q^{-\frac{m}{2}} \cdot \id_{\cond(\xi) \leq m}.
\end{equation}
	The defining formula for $a_m(\xi;n)$ makes sense for all $m \in \Z$, and we have (writing $X:=q^s$)
\begin{multline*}
	\sideset{}{_{m \in \Z}} \sum a_m(\xi;n) X^{-m} = \int_{\F^{\times}} \VorH_{\Pi,\psi}(H_n)(t) \xi^{-1}(t) \norm[t]^{-s} \ud^{\times} t = \gamma \left( s, \Pi \otimes \xi, \psi \right) \cdot \varepsilon_n(\xi, H) X^n \\
	= \varepsilon_n(\xi, H) \varepsilon \left( \tfrac{1}{2}, \Pi \otimes \xi, \psi \right) q^{\frac{\cond(\Pi \otimes \xi)}{2}} X^{n-\condL} \cdot \sideset{}{_{j=1}^d} \prod \tfrac{X-a_j}{1-\overline{a_j}q^{-1}X},
\end{multline*}
	which is equivalent to \eqref{eq: VHTestFLevelnMellin}. If $\xi \notin E(\Pi)$ then $d:=d(\Pi \otimes \xi)=0$, and we get for $m \geq 1$
\begin{equation} \label{eq: amBd1} 
	a_m(\xi;n) \ll \id_{\mathcal{B}_n}(\xi) \cdot \id_{m+n}(\cond(\Pi \otimes \xi)) \cdot q^{\frac{m+n}{2}}. 
\end{equation}
	If $\xi \in E(\Pi)$ then we apply the residue theorem (as $\norm[a_j] \in \{ 1, q^{-\frac{1}{2}}, q^{-1} \}$) to get
\begin{align}
	a_m(\xi;n) &= \varepsilon_n(\xi, H) \varepsilon \left( \tfrac{1}{2}, \Pi \otimes \xi, \psi \right) q^{\frac{\cond(\Pi \otimes \xi)}{2}} \int_{\norm[X]=q^{\frac{1}{2}}} X^{n-\cond(\Pi \otimes \xi)+m-1} \sideset{}{_{j=1}^d} \prod \tfrac{1-a_j X^{-1}}{1-\overline{a_j}q^{-1}X} \frac{\ud X}{2\pi i} \nonumber \\
	&\ll \id_{\mathcal{B}_n}(\xi) \cdot \id_{\geq m+n}(\condL(\Pi \otimes \xi)) \cdot q^{\frac{n+m}{2}}. \label{eq: amBd2}
\end{align}
	Combining the bounds \eqref{eq: bmBd}-\eqref{eq: amBd2}, Lemma \ref{lem: ExpCharBds} \& \ref{lem: BnAnal} and $\cond(\Pi \otimes \xi) \leq \cond(\Pi)+3\cond(\xi)$ (see \cite{BuH97}) we get
\begin{align*}
	C_m(n,\chi_0^{-1}) &\ll q^{\frac{n}{2}} \sideset{}{_{\substack{\xi \in \mathcal{B}_n - E(\Pi) \\ \cond(\xi \chi_0^{-1}) \leq m}}} \sum \id_{\cond(\Pi \otimes \xi) = m+n} + q^{\frac{n}{2}} \sideset{}{_{\substack{\xi \in \mathcal{B}_n \cap E(\Pi) \\ \cond(\xi \chi_0^{-1}) \leq m}}} \sum \id_{\geq m+n}(\condL(\Pi \otimes \xi)) \\
	&\ll q^{\frac{n}{2}} \sideset{}{_{\substack{\cond(\xi) \leq n/2 \\ \cond(\xi \chi_0^{-1}) \leq m}}} \sum \id_{m \leq \cond(\Pi)+\frac{n}{2}} + q^{\frac{n}{2}} \id_{\leq 2\cond(\Pi)+3}(m+n) \id_{\leq \max(\cond(\Pi),m)}(\cond(\chi_0)) \\
	&\leq q^{\frac{n}{2}+\min(\frac{n}{2},m)} \id_{m \leq \cond(\Pi)+\frac{n}{2}} \id_{\leq \cond(\Pi)+\frac{n}{2}}(\cond(\chi_0)) + q^{\frac{n}{2}} \id_{\leq 2\cond(\Pi)+3}(m+n) \id_{\leq \max(\cond(\Pi),m)}(\cond(\chi_0)),
\end{align*}
	and conclude the first bound. To derive the second bound, one simply notice $n_1$ can be taken as $2A\cond(\Pi)$, since under the condition $\cond(\Pi) > 0$ we have $a(\Pi) \leq \cond(\Pi)$ by the ``moreover'' part of Proposition \ref{prop: StabRang}.
\end{proof}

\begin{remark}
	The bound established in Lemma \ref{lem: DWt-TrivBd} is far from being optimal. For example in the case $e=1$ for $\xi \in \mathcal{B}_n$ the typical size of $\cond(\Pi \otimes \xi)$ should be $3n/2$, hence the term $q^{-\frac{\cond(\Pi \otimes \xi)}{2}}$ could be bounded as $q^{-\frac{3n}{4}}$. But even with this improvement the individual bound of $\widetilde{h}_n^-(\chi)$ is too weak to apply in the case $n_0 \gg \cond(\Pi)$. It would be interesting to recover the cancellation in the sum of $a_m(\xi;n) b_m(\xi)$ over $\xi$ by refining the above method.
\end{remark}

	\subsection{For Unramified Characters}

	We turn to the unramified case $\chi = \norm_{\F}^{s}$. According to the decomposition \eqref{eq: NDWtFDecomp} of $\widetilde{H}$, the notation $\widetilde{H}_{\infty}(k; s_0)$, $\widetilde{H}_c(k; s_0)$ and $\widetilde{H}_n^{\pm}(k; s_0)$ have obvious meaning and
	$$ \widetilde{H}(k; s_0) = \widetilde{H}_{\infty}(k; s_0) + \sideset{}{_{\pm}} \sum \widetilde{H}_c^{\pm}(k; s_0), \quad \widetilde{H}_c^{\pm}(k; s_0) = \sideset{}{_{\substack{n=\frac{2}{e}(n_0-2)+3 \\ 2 \mid en}}^{2n_1-1}} \sum \widetilde{H}_n^{\pm}(k; s_0). $$
\begin{lemma} \label{lem: DNDWt+Bd}
	For any $k \in \Z_{\geq 0}$ we have the following bounds.
\begin{itemize}
	\item[(1)] $\widetilde{H}_{\infty}(k; \tfrac{1}{2}) \ll_{k,\epsilon} q^{-n_1(1-\epsilon)}$, $\widetilde{H}_{\infty}(k; -\tfrac{1}{2}) \ll_{k,\epsilon} q^{n_1 \epsilon}$.
	\item[(2)] $\widetilde{H}_n^{+}(k; \pm \tfrac{1}{2}) = 0$ unless $n=\tfrac{2n_0}{e}+e-1$. We have
	$$ \widetilde{H}_c^{+}(k; \tfrac{1}{2}) \ll_{k,\epsilon} \Cond(\Pi)^{\frac{1}{2}} \cdot q^{\left( \frac{2n_0}{e}+e-1 \right) \epsilon}, \quad \widetilde{H}_c^{+}(k; -\tfrac{1}{2}) \ll_{k,\epsilon} \Cond(\Pi)^{\frac{1}{2}} \cdot q^{-\left( \frac{2n_0}{e}+e-1 \right)(1-\epsilon)}. $$
\end{itemize}
\end{lemma}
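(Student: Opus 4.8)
The plan is to treat the two parts separately, both relying on the explicit Mellin-side formula \eqref{eq: VHTestFLevelnMellin} together with the truncation Lemma \ref{lem: LaurentTruncF}. For part (1), I would start from Lemma \ref{lem: DWtInfty}, which gives $\widetilde{h}_{\infty}(\norm_{\F}^s) = \chi(\varpi_{\F})^{n_1} q^{-n_1/2}(1-\chi(\varpi_{\F})q^{-1/2})^{-1}$ with $\chi = \norm_{\F}^s$, i.e. $\widetilde{H}_{\infty}(\norm_{\F}^s) = q^{-n_1 s}(1-q^{-s-1/2})^{-1}\cdot(\text{normalizing }L\text{-factor ratio})$; more precisely $\widetilde{H}_\infty$ differs from $\widetilde h_\infty$ only by the ratio $L$-factors in \eqref{eq: NWt}, which for unramified $\chi$ are explicit geometric-series type rational functions of $q^{-s}$ bounded on $\Re(s)=\pm 1/2$. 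Since $\widetilde{H}_{\infty}(\norm_{\F}^s)$ is a rational function in $q^s$ regular at $s = \pm 1/2$ (the pole of $(1-q^{-s-1/2})^{-1}$ at $s=1/2$ being cancelled by the corresponding factor, or else simply absent after normalization), its Taylor coefficient $\widetilde{H}_{\infty}(k;\pm 1/2)$ is obtained by Cauchy's formula as a contour integral over a small circle $|s \mp 1/2| = \delta$; the prefactor $q^{-n_1 s}$ contributes $q^{\mp n_1/2}$ times something of size $q^{O(\delta n_1)}$, and taking $\delta = \epsilon/\log q$ gives the stated $q^{-n_1(1-\epsilon)}$ and $q^{n_1\epsilon}$ respectively, with the $k$-dependence absorbed into the implied constant via the $k!$ from the $k$-th derivative.

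For part (2), the vanishing statement $\widetilde{H}_n^+(k;\pm 1/2) = 0$ unless $n = \tfrac{2n_0}{e}+e-1$ should follow from the fact that in the unramified case $\chi = \norm_{\F}^s$, the exponent character is $\xi = \id$, so $\varepsilon_n(\id, H) \neq 0$ forces, by Lemma \ref{lem: TestFMellin} combined with \eqref{eq: CondCompNormF}, precisely the condition $\tfrac{en}{2} = \cond(\beta) + e - 1 = n_0 + e - 1$, i.e. $n = \tfrac{2n_0}{e} + e - 1$ (noting that $\id \in \mathrm{E}(\Pi)$ only when $\Pi$ is unramified, but the epsilon-factor constraint is what pins down $n$). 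For that single surviving $n$, I would again express $\widetilde{H}_n^+(\norm_{\F}^s)$, which equals the non-negative-power truncation $f_n(q^s;\id,H)_+$ of the rational function in \eqref{eq: VHTestFLevelnMellin} times the normalizing $L$-ratio, and apply Lemma \ref{lem: LaurentTruncF}(1) to control its value and $D$-derivatives on $|q^s| = q^{1/2}$ and $|q^s| = q^{-1/2}$. The size of $f_n$ on $\Re(s) = \pm 1/2$ is governed by $q^{\cond(\Pi\otimes\id)/2 - n}$, and from Lemma \ref{lem: TestFMellin} the factor $|\varepsilon_n(\id,H)| \leq \zeta_{\F}(1)$; using $\cond(\Pi) = \cond(\Pi\otimes\id)$ and $n = \tfrac{2n_0}{e}+e-1$ one gets a bound of shape $q^{\cond(\Pi)/2}\cdot q^{-n\cdot(\text{something})}$, and after extracting the Taylor coefficient at $s = \pm 1/2$ via a small contour as in part (1) this yields $\Cond(\Pi)^{1/2} q^{O(n\epsilon)}$ at $s=+1/2$ (where the $X^{n-\condL(\Pi\otimes\id)}$ factor with $n - \condL < 0$ decays) and $\Cond(\Pi)^{1/2} q^{-n(1-\epsilon)}$ at $s=-1/2$.

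The main obstacle I anticipate is bookkeeping the normalizing $L$-factor ratios from \eqref{eq: NWt}, i.e. passing cleanly between $\widetilde h$ and $\widetilde H$ in the unramified $\chi = \norm_{\F}^s$ case: one must verify that $L(1/2, \Pi_\vp \times \chi_\vp^{-1})L(1/2,\chi_\vp)$ in the denominator of $\widetilde H_\vp$ is a polynomial in $q^{\pm s}$ (stated in the excerpt right after Theorem 1.1) with controlled degree and with zeros/poles not interfering with regularity at $s = \pm 1/2$, and to track how multiplying by it changes the power of $q$ in the bounds — this is where the factor $\Cond(\Pi)^{1/2}$ in (2), as opposed to a pure power of $q^\epsilon$, enters, presumably from the degree $d(\Pi) \le 3$ of that $L$-factor contributing $\le \cond(\Pi)$-many Satake-type parameters each of size $O(1)$, with the square root coming from the $\varepsilon$-factor normalization $q^{\cond(\Pi\otimes\chi)/2}$. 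Everything else is a routine Cauchy-estimate argument once the rational-function shape is in hand.
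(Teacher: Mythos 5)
Your treatment of part (1) and the vanishing mechanism in part (2) are both correct and match the paper's proof: for (1) one simply combines Lemma~\ref{lem: DWtInfty} with the normalization in \eqref{eq: NWt} to get $\widetilde{H}_{\infty}(\norm_{\F}^s) = q^{-(1/2+s)n_1}L(1/2-s,\widetilde\Pi)^{-1}$ and reads off the bounds; for (2), the vanishing does follow from $\varepsilon_n(\id,H)=0$ for $n \neq \tfrac{2n_0}{e}+e-1$ via Lemma~\ref{lem: TestFMellin}. (A minor slip: $\id \in \mathrm{E}(\Pi)$ iff $\Pi$ is not supercuspidal, not iff $\Pi$ is unramified; but as you say, the $\varepsilon_n$ constraint is what fixes $n$, so this does not affect the argument.)

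There is, however, a genuine gap in your plan for the bound at $s = +1/2$. After the substitution $f(X) := f_n(q^{1/2}X;\id,H)$ of \eqref{eq: VHTestFLevelnMellin}, the rational function $f$ has poles at $X = q^{1/2}/\overline{a_j}$; whenever $\Pi$ has a Satake-type parameter with $|a_j|=1$ (e.g.\ any $\Pi$ with a spherical unitary constituent), the Laurent expansion of $f$ about $0$ therefore converges only on $0 < |X| < q^{1/2}$, and $X = q^{1/2}$ sits exactly on the boundary. Lemma~\ref{lem: LaurentTruncF}(1) is stated and proved for $0 < |X| < \rho$, and the auxiliary quantity $\sup_{|z|=1}|D^{k+1}f(Xz)|$ is not even finite at $X = q^{1/2}$ since the circle $\{Xz:|z|=1\}$ passes through a pole. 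Appealing instead to Cauchy's formula on a small circle does not repair this by itself: you would need to bound the \emph{polynomial} $\widetilde{H}_n^+(\norm_{\F}^s)=(1-q^{-1/2}X^{-1})P(X)f_+(X)$ on that contour, and that information is not supplied by (1). The paper resolves this exactly by splitting on $n \gtrless \condL(\Pi)$: if $n \geq \condL(\Pi)$ then $f_+=f$ and $P f_+$ is explicitly the polynomial $q^{-n/2}\delta\, X^{n-\condL}\prod_j (X-a_j q^{-1/2})$; if $n<\condL(\Pi)$, the paper invokes Lemma~\ref{lem: LaurentTruncF}\emph{(2)}, which yields an explicit closed form $P(X)f_+(X) = \sum_{j,k}C_{j,k}(1-b_jX)^kP_j(X)$, and then bounds $C_{j,k}$ case-by-case for $d(\Pi)\in\{1,2,3\}$. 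Your approach via Lemma~\ref{lem: LaurentTruncF}(1) is fine at $s=-1/2$, where $|X|=q^{-1/2}$ lies strictly inside the disk of convergence, but at $s=+1/2$ you must switch to Lemma~\ref{lem: LaurentTruncF}(2) (or supply an equivalent explicit-polynomial argument) as the paper does.
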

\begin{proof}
	(1) By Lemma \ref{lem: DWtInfty} we have $\widetilde{H}_{\infty}(s) = q^{-\left( \frac{1}{2}+s \right) n_1} L\left( \tfrac{1}{2}-s, \widetilde{\Pi} \right)^{-1}$. The stated bounds follow readily.
	
\noindent (2) By Lemma \ref{lem: TestFMellin} and \eqref{eq: DWtFLnLF} we have
\begin{multline*}
	f_n(q^{\frac{1}{2}}; \norm_{\F}^s, H) = f_n(q^{s+\frac{1}{2}}; \id, H) = \\
	q^{n \left( s - \frac{1}{2} \right)} \varepsilon \left( s+\tfrac{1}{2}, \Pi, \psi \right) \frac{L(\tfrac{1}{2}-s, \widetilde{\Pi})}{L(\tfrac{1}{2}+s, \Pi)} \cdot \begin{cases}
		\id_{n=2n_0} \cdot \zeta_{\F}(1) \chi_0(-1) & \text{if } \bL/\F \text{ split} \\
		\id_{n=\frac{2n_0}{e}+e-1} \cdot \zeta_{\F}(1) \varepsilon(1/2, \pi_{\beta}, \psi) & \text{if } \bL/\F \text{ non-split}
	\end{cases}.
\end{multline*}
	We introduce $a_j$ ($\norm[a_j] \in \{ 1, q^{-\frac{1}{2}}, q^{-1} \}$) as the parameters of $L(s,\Pi)$ so that $L(\tfrac{1}{2}-s, \widetilde{\Pi})=P(X)^{-1}$ with
	$$ P(X) := \sideset{}{_{j=1}^{d(\Pi)}} \prod \left( 1 - \overline{a_j} q^{-\frac{1}{2}} X \right), \quad f(X) := f_n(q^{\frac{1}{2}} X; \id, H) = X^{n-\condL(\Pi)} \sideset{}{_{j=1}^{d(\Pi)}} \prod \tfrac{X - a_j q^{-\frac{1}{2}}}{1 - \overline{a_j} q^{-\frac{1}{2}} X} \cdot q^{-\frac{n}{2}} \delta $$
	for some $\norm[\delta]=1$. We then rewrite, with $X = q^s$
	$$ \widetilde{H}_n^+(\norm_{\F}^s) = \tfrac{f_+(q^s)}{L(\tfrac{1}{2}-s, \widetilde{\Pi}) \zeta_{\F}(\tfrac{1}{2}+s)} = (1-q^{-\frac{1}{2}}X^{-1}) P(X)f_+(X). $$
	The bounds at $s=-1/2$ then follows readily from Lemma \ref{lem: LaurentTruncF} (1) via the bounds
	$$ \sideset{}{_{\norm[z]=1}} \sup \extnorm{D^k f(q^{-\frac{1}{2}}z)} \ll_{k,\epsilon} \Cond(\Pi)^{\frac{1}{2}} q^{-n(1-\epsilon)}, \quad D^k \left( (1-q^{-\frac{1}{2}}X^{-1}) P(X) \right) \mid_{X=q^{-\frac{1}{2}}} \ll_k 1. $$
	We now consider the bounds at $s=1/2$. If $n=\tfrac{2n_0}{e}+e-1 \geq \condL(\Pi)$, then $f_+(X) = f(X)$, implying
	$$ \widetilde{H}_n^+(\norm_{\F}^s) = q^{-\frac{n}{2}} \delta \cdot X^{n-\cond(\Pi)} (1-q^{-\frac{1}{2}}X^{-1}) \sideset{}{_{j=1}^{d(\Pi)}} \prod \left( 1 - a_j q^{-\frac{1}{2}} X^{-1} \right), $$
	$$ D^k \widetilde{H}_n^+(\norm_{\F}^s) \mid_{s=1/2} \ll_{k,\epsilon} \Cond(\Pi)^{-\frac{1}{2}} q^{n\epsilon}. $$
	If $n=\tfrac{2n_0}{e}+e-1 < \condL(\Pi) =: \condL$, then $Q(X) := P(X) f(X)$ has highest term $X^m$ with $m < d(\Pi) = \deg P$. We also note that $f_+(X) = 0$ unless $d:=d(\Pi) \geq 1$. We apply Lemma \ref{lem: LaurentTruncF} (2) distinguishing cases:
	
\noindent (\rmnum{1}) $d=1$. Necessarily we have $c:=c(\Pi) \geq 2$, $n-c \leq 0$. We get and conclude the stated bounds by
	$$ P(X) f_+(X) = \overline{a_1}^{c-n} \delta \cdot q^{-\frac{c}{2}} (1-\norm[a_1]^2 q^{-1}) \ll \Cond(\Pi)^{-\frac{1}{2}}. $$
	
\noindent (\rmnum{2}) $d=2$. Necessarily $\Pi = \mathrm{St}_{\chi} \boxplus \chi^{-2}$ for some unramified unitary character $\chi$. We have $c = 1$ and $n-c \leq 1$, and may assume $\norm[a_1] = q^{-\frac{1}{2}}$ and $\norm[a_2]=1$. We get the formula
\begin{align*} 
	P(X) f_+(X) &= \overline{a_1}^{c-n} \delta \cdot q^{-\frac{c}{2}} \frac{(1-\norm[a_1]^2q^{-1})(1-\overline{a_1}a_2q^{-1})}{1-\overline{a_2} \overline{a_1}^{-1}} (1-\overline{a_2}q^{-\frac{1}{2}}X) + \\
	&\quad \overline{a_2}^{c-n} \delta \cdot q^{-\frac{c}{2}} \frac{(1-\norm[a_2]^2q^{-1})(1-\overline{a_2}a_1q^{-1})}{1-\overline{a_1} \overline{a_2}^{-1}} (1-\overline{a_1}q^{-\frac{1}{2}}X)
\end{align*}
	and conclude the stated bounds by
\begin{equation} \label{eq: DerBd} 
	D^k \left( P(X) f_+(X) \right) \mid_{X=q^{\frac{1}{2}}} \ll_k 1. 
\end{equation}

\noindent (\rmnum{3}) $d=3$. Necessarily $\Pi = \mu_1 \boxplus \mu_2 \boxplus \mu_3$ for some unramified unitary characters $\mu_i$. We have $c=0$ and $n \leq 2$, and may assume $\norm[a_j]=1$ with $a_1a_2a_3=1$. It is easy to see $C_{j,k} \ll 1$. Therefore \eqref{eq: DerBd} still holds and we conclude the stated bounds in the same way.
\end{proof}

\begin{lemma} \label{lem: DNDWt-TrivBd}
	For any $k \in \Z_{\geq 0}$ we have the following bounds 
	$$ \widetilde{H}_n^{-}(k; \tfrac{1}{2}) \ll_{k,\epsilon} \Cond(\Pi)^{\epsilon} , \quad \widetilde{H}_n^{-}(k; -\tfrac{1}{2}) \ll_{k,\epsilon} \left( \Cond(\Pi)^2 q^{\frac{n}{2}} \right)^{1+\epsilon}. $$
	Consequently if $\cond(\Pi) > 0$ and $n_0 \leq A \cond(\Pi)$ for some constant $A \geq 1$ then we have
	$$ \widetilde{H}_c^-(k; \delta \cdot \tfrac{1}{2}) \leq \sideset{}{_{\substack{n=\frac{2}{e}(n_0-2)+3 \\ 2 \mid en}}^{2n_1-1}} \sum \extnorm{\widetilde{H}_n^{-}(k; \delta \cdot \tfrac{1}{2})} \ll_{\epsilon}
	\begin{cases} 
		\Cond(\Pi)^{\epsilon} & \text{if } \delta = 1 \\
		\Cond(\Pi)^{\left( 2+2Ae^{-1} \right)(1+\epsilon)} & \text{if } \delta = -1
	\end{cases}. $$
\end{lemma}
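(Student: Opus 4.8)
The plan is to upgrade the Plancherel computation carried out in the proof of Lemma~\ref{lem: DWt-TrivBd}: rather than merely estimating the size of $\widetilde{h}_n^-$, I would write $\widetilde{h}_n^-(\norm_{\F}^s)$ out as an explicit polynomial in $q^{-s}$, pass to $\widetilde{H}_n^-$ by dividing through by the normalizing factor $L(\tfrac12,\Pi\otimes\chi^{-1})L(\tfrac12,\chi)$, and then read off the Taylor coefficients at $s_0=\pm\tfrac12$ one monomial at a time.

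First I would specialize that computation to $\chi=\norm_{\F}^s$, so $\chi\mid_{\vO_{\F}^{\times}}=\id$ and $\chi(\varpi_{\F})=q^{-s}$. The Plancherel expansion over $\vO_{\F}^{\times}$ gives $\widetilde{h}_n^-(\norm_{\F}^s)=\Vol(\vO_{\F}^{\times},\ud^{\times}t)^{-1}q^{-n}\sum_{m\geq1}q^{-m/2}C_m(n,\id)\,q^{-ms}$, and the bookkeeping already done in Lemma~\ref{lem: DWt-TrivBd} shows $C_m(n,\id)=0$ for $m>M_n:=\max\bigl(\cond(\Pi)+\tfrac{n}{2},\,2\cond(\Pi)+3-n\bigr)\ll\cond(\Pi)+n$ together with $\norm[C_m(n,\id)]\ll q^{n/2+\min(n/2,m)}$. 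Hence $\widetilde{h}_n^-(\norm_{\F}^s)$ is a polynomial in $q^{-s}$ of degree $\leq M_n$ whose coefficient of $q^{-ms}$ is $\ll q^{-m/2}$ for every $m\geq1$. Next I would multiply by $\zeta_{\F}(\tfrac12+s)^{-1}=1-q^{-1/2-s}$ and by $L(\tfrac12-s,\Pi)^{-1}=\prod_{j=1}^{d(\Pi)}(1-a_jq^{s-1/2})$, which (by temperedness $\norm[a_j]\leq1$ and $d(\Pi)\leq3$) are Laurent polynomials in $X=q^s$ of $X$-degree $\leq3$ with coefficients $\ll_\epsilon\Cond(\Pi)^\epsilon$; this produces $\widetilde{H}_n^-(\norm_{\F}^s)=\sum_{\ell=-M_n-1}^{d(\Pi)-1}g_\ell\,q^{\ell s}$ with $\norm[g_\ell]\ll_\epsilon\Cond(\Pi)^\epsilon q^{-\norm[\ell]/2}$.

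From here the estimates drop out. Writing $\widetilde{H}_n^-(k;s_0)=\tfrac{(\log q)^k}{k!}\sum_\ell\ell^k g_\ell q^{\ell s_0}$: at $s_0=\tfrac12$ every negative power of $q^s$ carries a genuine factor $q^{-\norm[\ell]/2}$, so the tail sums to $\ll_\epsilon\Cond(\Pi)^\epsilon\sum_{j\geq1}j^kq^{-j}\ll_{k,\epsilon}\Cond(\Pi)^\epsilon$, while the $\leq3$ terms of nonnegative degree are each $\ll_\epsilon\Cond(\Pi)^\epsilon$, giving the first bound. At $s_0=-\tfrac12$ the factor $q^{\ell s_0}=q^{\norm[\ell]/2}$ ($\ell<0$) exactly cancels the decay, leaving a per-term bound $\ll_\epsilon\Cond(\Pi)^\epsilon\norm[\ell]^k$; summing over the $\leq M_n+1$ indices gives $\widetilde{H}_n^-(k;-\tfrac12)\ll_{k,\epsilon}\Cond(\Pi)^\epsilon(\cond(\Pi)+n)^{k+1}$, and since $\Cond(\Pi)=q^{\cond(\Pi)}$ and $q^{n/2}$ grow exponentially while $(\cond(\Pi)+n)^{k+1}$ is polynomial, this is a fortiori $\ll_{k,\epsilon}(\Cond(\Pi)^2q^{n/2})^{1+\epsilon}$. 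For the ``consequently'' part I would invoke that $\cond(\Pi)>0$ forces $a(\Pi)\leq\cond(\Pi)$ by the ``moreover'' clause of Proposition~\ref{prop: StabRang}, so under $n_0\leq A\cond(\Pi)$ one may take $n_1\asymp\tfrac{A}{e}\cond(\Pi)$ in \eqref{eq: StabPar} and the sum over $n$ in \eqref{eq: NDWtFDecomp} has $O(\cond(\Pi))$ terms; the case $\delta=1$ then costs only $\Cond(\Pi)^\epsilon$, and for $\delta=-1$ the geometric-type sum $\sum_{n\leq2n_1-1}(\Cond(\Pi)^2q^{n/2})^{1+\epsilon}$ is dominated by its top term with $q^{n/2}\asymp q^{n_1}\asymp\Cond(\Pi)^{2A/e}$, giving $\ll_\epsilon\Cond(\Pi)^{(2+2Ae^{-1})(1+\epsilon)}$.

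The one delicate point is the passage from $\widetilde{h}_n^-$ to $\widetilde{H}_n^-$, namely that after dividing $\widetilde{h}_n^-(\norm_{\F}^s)$ by the normalizing $L$-factor — which has a pole at $s=-\tfrac12$ coming from $\zeta_{\F}(\tfrac12+s)$ — one still has a polynomial whose coefficients inherit the exponential decay $\norm[g_\ell]\ll q^{-\norm[\ell]/2}$; this decay is exactly what collapses the a priori $\sim\cond(\Pi)$-term sum at $s_0=\tfrac12$ into an $O_k(1)$ quantity, and it is responsible for the asymmetry between the two bounds in the statement. Everything else is a repackaging of the Plancherel and residue bookkeeping already performed for Lemmas~\ref{lem: DWt-TrivBd} and~\ref{lem: DNDWt+Bd}.
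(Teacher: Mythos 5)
Your proof is correct and follows the same route the paper takes: specialize the Plancherel computation of Lemma \ref{lem: DWt-TrivBd} to $\chi=\norm_{\F}^s$, divide by the normalizing $L$-factors, and extract Taylor coefficients. One remark worth flagging: the factor coming from $\chi^{-1}(t)\norm[t]_{\F}^{-1/2}$ on $\varpi_{\F}^{-m}\vO_{\F}^{\times}$ with $\chi=\norm_{\F}^s$ is $q^{-m(s+1/2)}$ rather than the $q^{m(1/2-s)}$ displayed in the paper's proof of this lemma (the latter appears to be a sign slip, since it is inconsistent with the $\chi(\varpi_{\F})^m q^{-m/2}$ in Lemma \ref{lem: DWt-TrivBd}); your sign is the correct one, and it is precisely what makes the coefficient of $q^{-ms}$ decay like $q^{-m/2}$, so that your bookkeeping in fact yields the sharper polynomial bound $\ll_{k,\epsilon}(\cond(\Pi)+n)^{k+1}$ at both $s_0=\pm\tfrac12$, which a fortiori implies the stated estimates.
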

\begin{proof}
	With similar argument as in the proof of Lemma \ref{lem: DWt-TrivBd} we get
	$$ \widetilde{h}_n^-(\norm_{\F}^s) = q^{-n} \sideset{}{_{m=1}^{\infty}} \sum q^{m \left( \frac{1}{2}-s \right)} C_m(n), \quad C_m(n) \ll q^{\frac{n}{2}+\min(m,\frac{n}{2})} \id_{m \leq \cond(\Pi)+\frac{n}{2}} + q^{\frac{n}{2}} \id_{m+n \leq 2\cond(\Pi)+3}. $$
	The stated bounds follow readily.
\end{proof}

\section{Dual Weight Functions: Split and Unramified Cases}

	\subsection{First Quadratic Elementary Functions}
	
\begin{definition} \label{def: QEleF1}
	For any $n \in \Z_{\geq 0}$ we define the \emph{first quadratic elementary functions} $F_n \in \Cont_c^{\infty}(\F^{\times})$ by 
	$$ F_n(y^2) := \id_{v(y)=-n} \cdot \sideset{}{_{\pm}} \sum \psi(\pm y), $$
	and are supported in the subset of \emph{square} elements of $\F^{\times}$.
\end{definition}

\begin{definition} \label{def: QGSum}
	Let $\eta_0$ be the character of $\F^{\times}$ associated with the (ramified) quadratic extension $\F[\sqrt{-\varpi_{\F}}]/\F$. Explicitly it is given by the following formula for all $n \in \Z$
	$$ \eta_0(\varpi_{\F}^n u) = \begin{cases}
		1 & \text{if } u \in (\vO_{\F}^{\times})^2 \\
		-1 & \text{if } u \in \vO_{\F}^{\times} - (\vO_{\F}^{\times})^2
	\end{cases}. $$
	Denote by $\tau_0 = \tau(\eta_0, \psi; \varpi_{\F})$ the \emph{quadratic Gauss sum} given by
	$$ \tau_0 := \int_{\vO_{\F}^{\times}} \psi \left( \tfrac{u}{\varpi_{\F}} \right) \eta_0(u) \ud u = \int_{\vO_{\F}} \psi \left( \tfrac{u^2}{\varpi_{\F}} \right) \ud u. $$
\end{definition}

\begin{lemma} \label{lem: MellinQEleF1}
	Let $\chi$ be a quasi-character of $\F^{\times}$. We have
	$$ \int_{\F^{\times}} F_n(y) \chi(y) \ud^{\times} y = \begin{cases}
		\id_{\cond(\chi^2)=n} \cdot \zeta_{\F}(1) \gamma(1,\chi^{-2},\psi) & \text{if } n \geq 2 \\
		\id_{\cond(\chi^2)=1} \cdot \zeta_{\F}(1) \gamma(1,\chi^{-2},\psi) - \id_{\cond(\chi^2)=0} \cdot \zeta_{\F}(1) q^{-1} \chi(\varpi_{\F})^{-2} & \text{if } n = 1 \\
		\id_{\cond(\chi^2)=0} & \text{if } n = 0
	\end{cases}. $$
	Note that we can replace the condition $\cond(\chi^2)=n$ with $\cond(\chi)=n$ if $n \geq 1$.
\end{lemma}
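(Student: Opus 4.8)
The plan is to compute the Mellin transform $\int_{\F^{\times}} F_n(y) \chi(y) \ud^{\times} y$ by the same strategy used for $E_m$ in Lemma~\ref{lem: MellinStEBOI}: since $F_n$ is supported on squares, substitute $y \mapsto y^2$ to unfold the sum over $\pm$, pair the $\tau=1$ and $\tau=\varepsilon$ contributions, and reduce to a Gauss integral that is identified with a gamma factor via \cite[Exercise 23.5]{BuH06}. Concretely, $\int_{\F^{\times}} F_n(y)\chi(y) \ud^{\times} y = \tfrac12 \sum_{\pm} \int_{v(y)=-n} \psi(\pm y) \chi(y^2) \ud^{\times} y$ after the change of variables, and the $\pm$ signs merge into a single integral of $\psi(y) \chi^2(y)$ over $v(y)=-n$, i.e.\ essentially $\zeta_{\F}(1) \int_{\vO_{\F}^{\times}} \psi(\varpi_{\F}^{-n} t) \chi^2(\varpi_{\F}^{-n} t) \ud t$. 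For $n \geq 2$ this last integral is exactly the Gauss-type integral studied in \cite[Proposition 4.6]{Wu14}, non-vanishing precisely when $\cond(\chi^2)=n$, and equal to $\gamma(1,\chi^{-2},\psi)$ by \cite[Exercise 23.5]{BuH06}; multiplying by $\zeta_{\F}(1)$ gives the first case.

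For $n=1$ the analysis must account for the boundary phenomenon: the integral $\int_{\vO_{\F}^{\times}} \psi(\varpi_{\F}^{-1}t) \chi^2(\varpi_{\F}^{-1}t)\ud t$ picks up a non-vanishing contribution not only when $\cond(\chi^2)=1$ (giving $\gamma(1,\chi^{-2},\psi)$ as before) but also when $\cond(\chi^2)=0$, where orthogonality of the additive character on $\vO_{\F}^{\times}$ yields the constant term; a direct evaluation $\int_{\vO_{\F}^{\times}}\psi(\varpi_{\F}^{-1}t)\ud t = \Vol(\vO_{\F}) - \Vol(\vP_{\F}) = 1 - q^{-1}$ up to the measure normalization, which after carrying the factor $\chi(\varpi_{\F})^{-2}$ and $\zeta_{\F}(1) = (1-q^{-1})^{-1}$ produces exactly the $-\zeta_{\F}(1) q^{-1} \chi(\varpi_{\F})^{-2}$ term claimed. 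For $n=0$ the function $F_0(y^2) = \id_{v(y)=0} \sum_{\pm}\psi(\pm y)$ is just (a constant multiple of) $\id_{\vO_{\F}^{\times}}$ on squares, so its Mellin transform is $\id_{\cond(\chi^2)=0}$, i.e.\ the normalized integral of $\chi^2$ over $\vO_{\F}^{\times}$.

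Finally, the last sentence follows since for a quasi-character $\chi$ of a non-archimedean local field of odd residual characteristic, squaring is a bijection on $1+\vP_{\F}$, so $\cond(\chi^2) = \cond(\chi)$ whenever $\cond(\chi)\geq 1$; only at $\cond(\chi)\in\{0,1\}$ can $\cond(\chi^2)$ differ (namely a ramified quadratic $\chi$ has $\cond(\chi)=1$ but $\cond(\chi^2)=0$), which is why the replacement is valid precisely for $n\geq 1$ but the $n=1$ case still needs the extra $\cond(\chi^2)=0$ term spelled out separately.

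I expect the main obstacle to be bookkeeping the boundary term at $n=1$ correctly: keeping track of measure normalizations ($\ud^{\times} y$ versus $\ud y$, the factor $\zeta_{\F}(1)$, and the self-dual normalization of $\psi$) so that the constant $-q^{-1}\chi(\varpi_{\F})^{-2}$ comes out with the right sign and power of $q$, and making sure the $2$-to-$1$ nature of $(y,\alpha)\mapsto$ square coordinates is handled consistently with the treatment in Lemma~\ref{lem: MellinStEBOI}. The rest is a routine transcription of the $E_m$ computation with $\psi(y(u+u^{-1}))$ replaced by the simpler $\psi(\pm y)$ and $\chi$ replaced by $\chi^2$ in the role of the effective character.
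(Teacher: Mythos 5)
Your overall approach matches the paper's: change variables $y\mapsto y^2$ to reduce the Mellin transform of $F_n$ to the single Gauss-type integral $\int_{\varpi_{\F}^{-n}\vO_{\F}^{\times}}\psi(y)\chi^2(y)\ud^{\times}y$, then invoke \cite[Exercise 23.5]{BuH06} and treat $n\in\{0,1\}$ by direct inspection. The reduction is carried out correctly, including the merging of the two $\pm$ contributions (valid because $\chi^2(-1)=\chi(1)=1$), and the $n\geq 2$ and $n=0$ cases are fine.

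There is, however, a genuine error in your evaluation of the $n=1$ boundary term. You assert $\int_{\vO_{\F}^{\times}}\psi(\varpi_{\F}^{-1}t)\,\ud t = \Vol(\vO_{\F})-\Vol(\vP_{\F}) = 1-q^{-1}$, but this misreads the behavior of $\psi(\varpi_{\F}^{-1}\cdot)$ on $\vO_{\F}$: since $\psi$ has conductor exponent $0$, it is trivial on $\vO_{\F}$ but \emph{nontrivial} on $\varpi_{\F}^{-1}\vO_{\F}$, so $t\mapsto\psi(\varpi_{\F}^{-1}t)$ is a nontrivial additive character of $\vO_{\F}$ (trivial only on $\vP_{\F}$). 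By orthogonality $\int_{\vO_{\F}}\psi(\varpi_{\F}^{-1}t)\,\ud t = 0$, whereas $\int_{\vP_{\F}}\psi(\varpi_{\F}^{-1}t)\,\ud t = \Vol(\vP_{\F}) = q^{-1}$, giving the correct value $\int_{\vO_{\F}^{\times}}\psi(\varpi_{\F}^{-1}t)\,\ud t = -q^{-1}$. Your value $1-q^{-1}$, after multiplying by $\zeta_{\F}(1)$ and $\chi(\varpi_{\F})^{-2}$, would yield $\chi(\varpi_{\F})^{-2}$, not the claimed $-\zeta_{\F}(1)q^{-1}\chi(\varpi_{\F})^{-2}$; only the correct $-q^{-1}$ reproduces the stated boundary term. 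Separately, your justification of the concluding remark is internally inconsistent: you first write that $\cond(\chi^2)=\cond(\chi)$ whenever $\cond(\chi)\geq 1$, then (correctly) observe that a ramified quadratic $\chi$ gives $\cond(\chi)=1$ but $\cond(\chi^2)=0$. The bijectivity of squaring on $1+\vP_{\F}$ only yields the identity $\cond(\chi^2)=\cond(\chi)$ when $\cond(\chi)\geq 2$; at level $1$ the squaring map on $\vO_{\F}^{\times}/(1+\vP_{\F})\simeq\fF_q^{\times}$ is $2$-to-$1$, which is exactly the source of the extra term in the $n=1$ formula.
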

\begin{proof}
	The case for $n=0$ is simple and omitted. For $n \geq 1$ with the change of variables $y \to y^2$
	$$ \int_{\F^{\times}} F_n(y) \chi(y) \ud^{\times} y = \int_{\varpi_{\F}^{-n} \vO_{\F}^{\times}} \psi(y) \chi^2(y) \ud^{\times} y, $$
	the desired formula follows readily from \cite[Exercise 23.5]{BuH06}.
\end{proof}

\begin{corollary} \label{cor: VHQEleF1}
	Let $n \geq a(\Pi) (\geq 2)$, the stability barrier of $\Pi$ (see Proposition \ref{prop: StabRang}). We have
	$$ \VorH_{\Pi,\psi}(F_n)(y) = \id_{\varpi_{\F}^{-n} \vO_{\F}^{\times}}(y) \cdot 
	\begin{cases}
		q^{\frac{3n}{2}} \psi(4y) & \text{if } 2 \mid n \\
		\tau_0 q^{\lceil \frac{3n}{2} \rceil} \psi(4y) \eta_0(4y) & \text{if } 2 \nmid n
	\end{cases}. $$
\end{corollary}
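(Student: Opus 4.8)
The plan is to compute the Mellin transform of $\VorH_{\Pi,\psi}(F_n)$ against an arbitrary quasi-character $\chi$ of $\F^\times$, identify it with a known elementary function via Mellin inversion, and then read off the explicit formula. By the local functional equation defining $\VorH_{\Pi,\psi}$ (the $\GL_3$ Voronoi--Hankel transform), we have
\[
	\int_{\F^{\times}} \VorH_{\Pi,\psi}(F_n)(y)\, \chi^{-1}(y)\, \norm[y]_{\F}^{-s}\, \ud^{\times} y = \gamma(s, \Pi \otimes \chi, \psi) \int_{\F^{\times}} F_n(y) \chi(y) \norm[y]_{\F}^{s-1}\, \ud^{\times} y,
\]
where I have written $\Mult_{-1}$ implicitly. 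By Lemma~\ref{lem: MellinQEleF1} applied to the quasi-character $\chi \norm_{\F}^{s-1}$, the right-hand Mellin integral vanishes unless $\cond(\chi)=n$ (using $n \geq 2$), in which case it equals $\zeta_{\F}(1) \gamma(2s-1, \chi^{-2}, \psi)$. Since $n \geq a(\Pi)$, the stability of the local gamma factors (Proposition~\ref{prop: StabRang}, invoking \cite{JS85,BuH06}) gives $\gamma(s, \Pi \otimes \chi, \psi) = \gamma(s, \chi, \psi)^3$ whenever $\cond(\chi) = n \geq a(\Pi)$. Thus the product becomes
\[
	\gamma(s,\chi,\psi)^3 \cdot \zeta_{\F}(1)\, \gamma(2s-1, \chi^{-2}, \psi),
\]
and the task reduces to identifying this as the Mellin transform of the claimed explicit function.

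The key computation is then a pure gamma-factor manipulation: I will use the standard relation $\gamma(s,\chi,\psi)\gamma(1-s,\chi^{-1},\psi) = 1$ and the duplication/reflection identities to simplify $\gamma(s,\chi,\psi)^3 \gamma(2s-1,\chi^{-2},\psi)$. The square $\gamma(2s-1,\chi^{-2},\psi)$ involving $\chi^2$ should be rewritten via the Gauss-sum expression already recorded: for $\cond(\chi)=n \geq 2$ one has $\gamma(s,\chi,\psi) = q^{(\cond(\chi))(s - 1/2)} \cdot (\text{unit epsilon-factor})$, and for $\chi^2$ the conductor and parity split into the two cases $2 \mid n$ and $2 \nmid n$. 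When $2 \mid n$, $\cond(\chi^2) \leq n$ and one expects the combined factor to collapse to a single Tate-type gamma factor attached to an unramified twist, producing $\psi(4y)$ after inversion (the $4$ coming from completing the square $u + u^{-1}$ pattern inherited from $F_n(y^2) = \sum_{\pm}\psi(\pm y)$; more precisely from $\gamma$-factor evaluation of $\psi(y)\chi^2(y)$). When $2 \nmid n$, the character $\chi^2$ restricted to units has conductor exactly $n$ but the relevant Gauss sum picks up the quadratic Gauss sum $\tau_0$ and the quadratic character $\eta_0$ — this is exactly the ``half-integer'' phenomenon, since $q^{3n/2}$ is not an integer and must be written as $\tau_0 \cdot q^{\lceil 3n/2 \rceil}$ using $\tau_0 \overline{\tau_0} = q$. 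I will carry this out by comparing Mellin transforms of both sides restricted to $\varpi_{\F}^{-n}\vO_{\F}^{\times}$, using Mellin inversion on $\vO_{\F}^{\times}$.

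Concretely, the steps in order: (1) write down the functional-equation identity above; (2) invoke Lemma~\ref{lem: MellinQEleF1} to restrict support to $\cond(\chi) = n$ and to $y \in \varpi_{\F}^{-n}\vO_{\F}^{\times}$; (3) apply the stability of $\gamma$-factors from Proposition~\ref{prop: StabRang} to replace $\gamma(s,\Pi\otimes\chi,\psi)$ by $\gamma(s,\chi,\psi)^3$; (4) expand each $\gamma$-factor into its epsilon-factor times power-of-$q$ form, and expand $\gamma(2s-1,\chi^{-2},\psi)$ likewise, carefully tracking the conductor of $\chi^2$ according to the parity of $n$; (5) recombine the epsilon-factors, using the quadratic Gauss sum identity to extract $\tau_0$ and $\eta_0$ in the odd case; (6) recognize the resulting Mellin transform as that of $\id_{\varpi_{\F}^{-n}\vO_{\F}^{\times}}(y)$ times $\psi(4y)$ (even) or $\tau_0 \psi(4y)\eta_0(4y)$ (odd), with the asserted powers of $q$, and conclude by Mellin inversion on $\vO_{\F}^{\times}$.

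The main obstacle I anticipate is the bookkeeping in step (4)--(5): correctly determining $\cond(\chi^2)$ and the precise epsilon-factor (including root-of-unity and the quadratic twist by $\eta_0$) in the case $2 \nmid n$, where residual characteristic $\neq 2$ is essential so that squaring is well-behaved on $1+\vP_{\F}$ and the Gauss sum $\tau_0$ is the clean quadratic one. The cleanest route is probably to avoid fully general $\chi$ and instead directly compute $\int_{\varpi_{\F}^{-n}\vO_{\F}^{\times}} \psi(y)\chi^2(y)\,\ud^\times y$ versus $\int_{\varpi_{\F}^{-n}\vO_{\F}^{\times}} \psi(4y)\chi(y)\,\ud^\times y$ (resp. with the $\eta_0$ twist) as Gauss-type integrals over $\vO_{\F}^\times$, using the change of variables $t \mapsto 4t$ and the quadratic reciprocity for Gauss sums; the identity $\gamma(s,\chi,\psi)^3\gamma(2s-1,\chi^{-2},\psi)$ should then match term by term. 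Everything else — the support condition, the reduction to units, the stability input — is routine given the lemmas already proved.
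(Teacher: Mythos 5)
Your opening moves are the same as the paper's: reduce via the local functional equation to the Mellin transform of $F_n$, apply Lemma~\ref{lem: MellinQEleF1}, and use the stability of Proposition~\ref{prop: StabRang} to replace $\gamma(s,\Pi\otimes\chi,\psi)$ by $\gamma(s,\chi,\psi)^3$; the support restriction to $\varpi_{\F}^{-n}\vO_{\F}^{\times}$ also comes out of this. Where you diverge, and where there is a genuine gap, is in the final ``recognition'' step. You propose to equate $\gamma(s,\chi,\psi)^3\gamma(1-2s,\chi^{-2},\psi)$ (your $2s-1$ is a sign slip) with the Mellin transform of $\id_{\varpi_{\F}^{-n}\vO_{\F}^{\times}}(y)\psi(4y)$ (resp.\ its $\eta_0$-twist) by ``duplication/reflection identities'' and ``quadratic reciprocity for Gauss sums.'' But the identity you would need, namely
\[
\chi(4)\,\gamma\!\left(\tfrac12,\chi^{-1},\psi\right)\;=\;\gamma\!\left(\tfrac12,\chi,\psi\right)^3\gamma\!\left(\tfrac12,\chi^{-2},\psi\right)
\]
for $\cond(\chi)=n$, is precisely what the paper records in the remark immediately after the corollary as an \emph{output} of the computation, not an input. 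After dividing by $\gamma(1/2,\chi,\psi)^{-1}\gamma(1/2,\chi^2,\psi)^{-1}$ via the reflection formula, this amounts to $\gamma(1/2,\chi,\psi)^4=\chi(-4)\,\gamma(1/2,\chi^2,\psi)$, which is \emph{not} the Hasse--Davenport duplication relation (that one gives $g(\chi)g(\chi\eta)=\chi(4)^{-1}g(\chi^2)g(\eta)$, linear not quartic in $g(\chi)$), and I do not see how to derive it from ``quadratic reciprocity for Gauss sums'' alone. So your plan would stall exactly at the step you dismiss as bookkeeping.

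The paper sidesteps this by never manipulating the $\gamma$-factors at all. After the functional-equation step, it expands $\gamma(1,\chi,\psi)^3\gamma(1,\chi^{-2},\psi)$ into a four-fold Gauss integral over $(\vO_{\F}^{\times})^4$ via the integral representation of Bushnell--Henniart (Exercise~23.5), then applies Fourier inversion on $\vO_{\F}^{\times}$ to express $\VorH_{\Pi,\psi}(F_n)(\varpi_{\F}^{-n}y)$ itself as a three-fold integral $q^{3n}\int_{(\vO_{\F}^{\times})^3}\psi\bigl(\varpi_{\F}^{-n}(t_1+t_2+t_3+t_1^{-1}t_2^{-1}t_3^2y)\bigr)\ud\vec t$, and finally evaluates \emph{that} by level $\lceil n/2\rceil$ regularization (stationary phase): the critical point is $t_1\equiv t_2\equiv 4y$, $t_3\equiv -8y$, giving $\psi(4\varpi_{\F}^{-n}y)$ times a quadratic Gaussian in one residual variable, which is $q^{-n/2}$ for $n$ even and $\tau_0 q^{-(n+1)/2}\eta_0(4y)$ for $n$ odd. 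The parity dichotomy comes entirely from whether $\lfloor n/2\rfloor$ is an exact half of $n$ — i.e.\ from the stationary-phase Gaussian — not from $\cond(\chi^2)$, which in odd residual characteristic equals $\cond(\chi)=n$ regardless of parity; your narrative misattributes the source of the $\tau_0\,\eta_0$ correction. In short: your framework is right up to the $\gamma$-factor product, but the proposed finish hides a nontrivial Gauss-sum identity as if it were routine; to make your route work you would essentially have to redo the paper's Fourier-inversion-plus-stationary-phase argument anyway, and it is cleaner (as the paper does) to carry it out on $\VorH_{\Pi,\psi}(F_n)$ directly rather than on the $\gamma$-factors.
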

\begin{proof}
	By the local functional equation, Lemma \ref{lem: MellinQEleF1} and Proposition \ref{prop: StabRang} we have for any $\chi \in \widehat{\vO_{\F}^{\times}}$
\begin{multline*} 
	\int_{\F^{\times}} \VorH_{\Pi,\psi}(F_n)(y) \chi(y)^{-1} \norm[y]^{-s} \ud^{\times} y = \zeta_{\F}(1) \id_{\cond(\chi) = n} \cdot \gamma(s,\chi,\psi)^3 \gamma(1-2s,\chi^{-2},\psi) \\
	= \zeta_{\F}(1) \id_{\cond(\chi) = n} \gamma(1,\chi,\psi)^3 \gamma(1,\chi^{-2},\psi) \cdot q^{3n-ns}.
\end{multline*}
	Therefore the support of $\VorH_{\Pi,\psi}(F_n)$ is contained in $\varpi_{\F}^{-n} \vO_{\F}^{\times}$. Applying \cite[Exercise 23.5]{BuH06} we find
	$$ \zeta_{\F}(1) \id_{\cond(\chi) = n} \gamma(1,\chi,\psi)^3 \gamma(1,\chi^{-2},\psi) \cdot q^{3n} = \id_{\cond(\chi) = n} \cdot q^{3n} \int_{(\vO_{\F}^{\times})^4} \psi \left( \tfrac{t_1+t_2+t_3+t_4}{\varpi_{\F}^n} \right) \chi^{-1} \left( \tfrac{t_1t_2t_3}{\varpi_{\F}^n t_4^2} \right) \ud \vec{t}, $$
	where we have written $\ud \vec{t} := \ud t_1 \ud t_2 \ud t_3 \ud t_4$ for simplicity. The Fourier inversion on $\vO_{\F}^{\times}$ yields for $y \in \vO_{\F}^{\times}$
\begin{multline*}
	\VorH_{\Pi,\psi}(F_n) \left( \tfrac{y}{\varpi_{\F}^n} \right) = \zeta_{\F}(1) q^{3n} \sideset{}{_{\substack{\chi \in \widehat{\vO_{\F}^{\times}} \\ \cond(\chi)=n}}} \sum \int_{(\vO_{\F}^{\times})^4} \psi \left( \tfrac{t_1+t_2+t_3+t_4}{\varpi_{\F}^n} \right) \chi \left( \tfrac{t_4^2 y}{t_1t_2t_3} \right) \ud \vec{t} \\
	= \zeta_{\F}(1) q^{3n} \int_{(\vO_{\F}^{\times})^4} \psi \left( \tfrac{t_1+t_2+t_3+t_4}{\varpi_{\F}^n} \right) \left( \extnorm{\left( \vO_{\F}/\vP_{\F}^n \right)^{\times}} \id_{1+\vP_{\F}^n} - \extnorm{\left( \vO_{\F}/\vP_{\F}^{n-1} \right)^{\times}} \id_{1+\vP_{\F}^{n-1}} \right) \left( \tfrac{t_4^2 y}{t_1t_2t_3} \right) \ud \vec{t} \\
	= q^{3n} \int_{(\vO_{\F}^{\times})^3} \psi \left( \tfrac{t_2+t_3+t_4 + t_2^{-1}t_3^{-1}t_4^2y}{\varpi_{\F}^n} \right) \ud t_2 \ud t_3 \ud t_4 - \\
	q^{4n-1} \int_{(\vO_{\F}^{\times})^3 \times \vP_{\F}^{n-1}} \psi \left( \tfrac{t_2+t_3+t_4 + t_2^{-1}t_3^{-1}t_4^2y}{\varpi_{\F}^n} \right) \psi \left( \tfrac{t_2^{-1}t_3^{-1}t_4^2y u}{\varpi_{\F}^n} \right) \ud t_2 \ud t_3 \ud t_4 \ud u.
\end{multline*}
	The last integral is vanishing because $\cond(\psi)=0$. Re-numbering the variables we get
	$$ \VorH_{\Pi,\psi}(F_n) \left( \tfrac{y}{\varpi_{\F}^n} \right) = q^{3n} \int_{(\vO_{\F}^{\times})^3} \psi \left( \tfrac{t_1+t_2+t_3 + t_1^{-1}t_2^{-1}t_3^2 y}{\varpi_{\F}^n} \right) \ud t_1 \ud t_2 \ud t_3. $$
	Performing the level $\lceil \tfrac{n}{2} \rceil$ regularization to $\ud t_j$ we see that the non-vanishing of the above integral implies
	$$ t_1 - \tfrac{t_3^2 y}{t_1 t_2} \in \vP_{\F}^{\lfloor \frac{n}{2} \rfloor}, \quad t_2 - \tfrac{t_3^2 y}{t_1 t_2} \in \vP_{\F}^{\lfloor \frac{n}{2} \rfloor}, \quad t_3 + \tfrac{2 t_3^2 y}{t_1 t_2} \in \vP_{\F}^{\lfloor \frac{n}{2} \rfloor}. $$
	Writing $a = \tfrac{t_3^2 y}{t_1 t_2} \in \vO_{\F}^{\times}$, we see that the above conditions imply $a \equiv 4 y \pmod{\vP_{\F}^{\lfloor \frac{n}{2} \rfloor}}$, hence
	$$ t_1 \in 4y \left( 1 + \vP_{\F}^{\lfloor \frac{n}{2} \rfloor} \right), \quad t_2 \in 4y \left( 1 + \vP_{\F}^{\lfloor \frac{n}{2} \rfloor} \right), \quad t_3 \in -8y \left( 1 + \vP_{\F}^{\lfloor \frac{n}{2} \rfloor} \right). $$
	The change of variables $t_1 = 4y(1+u_1)$, $t_2 = 4y(1+u_2)$ and $t_3 = -8y(1+u_3)$ with $u_j \in \vP_{\F}^{\lfloor \frac{n}{2} \rfloor}$ gives
\begin{multline*}
	\VorH_{\Pi,\psi}(F_n) \left( \tfrac{y}{\varpi_{\F}^n} \right) = q^{3n} \int_{u_j \in \vP_{\F}^{\lfloor \frac{n}{2} \rfloor}} \psi \left( \frac{4y(u_1+u_2-2u_3) + \frac{4(1+u_3)^2y}{(1+u_1)(1+u_2)} }{\varpi_{\F}^n} \right) \ud u_1 \ud u_2 \ud u_3 \\
	= q^{3n} \psi \left( \tfrac{4y}{\varpi_{\F}^n} \right) \int_{u_j \in \vP_{\F}^{\lfloor \frac{n}{2} \rfloor}} \psi \left( \tfrac{4y}{\varpi_{\F}^n} \left( u_1^2 + u_2^2 + u_3^2 + u_1u_2 - 2u_1u_3 - 2u_2u_3 \right) \right) \ud u_1 \ud u_2 \ud u_3 \\
	= q^{3n} \psi \left( \tfrac{4y}{\varpi_{\F}^n} \right) \int_{u_j \in \vP_{\F}^{\lfloor \frac{n}{2} \rfloor}} \psi \left( \tfrac{4y}{\varpi_{\F}^n} \left( u_1^2 + u_2^2 + u_1u_2 - u_1u_3 \right) \right) \ud u_1 \ud u_2 \ud u_3 \\
	= q^{3n-\lfloor \frac{n}{2} \rfloor} \psi \left( \tfrac{4y}{\varpi_{\F}^n} \right) \int_{u_j \in \vP_{\F}^{\lfloor \frac{n}{2} \rfloor}} \psi \left( \tfrac{4y}{\varpi_{\F}^n} \left( u_1^2 + u_2^2 + u_1u_2 \right) \right) \id_{\vP_{\F}^{\lceil \frac{n}{2} \rceil}}(u_1) \ud u_1 \ud u_2 \\
	= q^{2n} \psi \left( \tfrac{4y}{\varpi_{\F}^n} \right) \int_{\vP_{\F}^{\lfloor \frac{n}{2} \rfloor}} \psi \left( \tfrac{4y}{\varpi_{\F}^n} u_2^2 \right) \ud u_2,
\end{multline*}
	where we made the change of variables $u_2 \mapsto u_2 + u_3$ in the third line. The desired formula follows.
\end{proof}

\begin{remark}
	If we compute the Mellin transform of $\VorH_{\Pi,\psi}(F_n)(y)$ with the given formula in Corollary \ref{cor: VHQEleF1}, we get in the case $2 \mid n \geq 2$ and for $\cond(\chi)=n$ an interesting equation
	$$ \chi(4) \gamma \left( \tfrac{1}{2}, \chi^{-1}, \psi \right) = \gamma \left( \tfrac{1}{2}, \chi, \psi \right)^3 \gamma \left( \tfrac{1}{2}, \chi^{-2}, \psi \right). $$
\end{remark}

\begin{corollary} \label{cor: VHQEleF1Bis}
	Suppose $\Pi = \mu_1 \boxplus \mu_2 \boxplus \mu_3$ with $\cond(\mu_j)=0$ and $\mu_1\mu_2\mu_3 = \id$. Let $E_i(y) := \mu_i^{-1}(y) \norm[y] \id_{\vO_{\F}}(y)$ and $f_i := (1 - \mu_i(\varpi_{\F}) \Trans(\varpi_{\F})).E_i$. For $f,g \in \intL^1(\F^{\times})$ define $f*g(y) := \int_{\F^{\times}} f(yt^{-1})g(t) \ud^{\times} t$.
	We have
\begin{align*} 
	\VorH_{\Pi,\psi}(F_0)(y) &= (f_1*f_2*f_3)(y) + \tau_0 q^2 \cdot \eta_0(-y) \id_{\varpi_{\F}^{-3} \vO_{\F}^{\times}}(y), \\
	\VorH_{\Pi,\psi}(F_1)(y) &= -q^{-1} \cdot (f_1*f_2*f_3)(\varpi_{\F}^{-2}y) - \zeta_{\F}(1) \cdot \id_{\varpi_{\F}^{-1}\vO_{\F}^{\times}}(y) \\
	&\quad + \tau_0 q \id_{\varpi_{\F}^{-1} \vO_{\F}^{\times}}(y) \int_{(\varpi_{\F}^{-1}\vO_{\F}^{\times})^2} \psi \left( t_1+t_2 - \tfrac{t_1t_2}{4y} \right) \eta_0 \left( \tfrac{4y}{t_1t_2} \right) \ud t_1 \ud t_2.
\end{align*}
\end{corollary}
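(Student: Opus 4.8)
The plan is to compute the Mellin transforms of $\VorH_{\Pi,\psi}(F_0)$ and $\VorH_{\Pi,\psi}(F_1)$ through the local functional equation, following the proof of Corollary~\ref{cor: VHQEleF1} but \emph{without} the stability theorem, since $n\in\{0,1\}$ lies below the stability barrier $a(\Pi)\ge 2$. Thus for every quasi-character $\chi$ of $\F^{\times}$ one writes
\[
\int_{\F^{\times}}\VorH_{\Pi,\psi}(F_n)(y)\,\chi^{-1}(y)\norm[y]^{-s}\ud^{\times}y
=\gamma(s,\Pi\otimes\chi,\psi)\cdot\big(\text{weighted Mellin transform of }F_n\big),
\]
the right-hand factor being supplied by Lemma~\ref{lem: MellinQEleF1}. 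Because $\Pi=\mu_1\boxplus\mu_2\boxplus\mu_3$ is spherical, $\gamma(s,\Pi\otimes\chi,\psi)=\prod_{j=1}^{3}\gamma(s,\mu_j\chi,\psi)$ by multiplicativity, and I would use $\mu_1\mu_2\mu_3=\id$ repeatedly to collapse the $\varepsilon$-factor prefactors that appear.

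I would then split the character variable according to the support of the Mellin transform of $F_n$ read off from Lemma~\ref{lem: MellinQEleF1}: for $F_0$ only (a) unramified $\chi$ and (b) the characters whose restriction to $\vO_{\F}^{\times}$ is the unique quadratic character (uniqueness uses $p\neq 2$) contribute; for $F_1$ one has in addition (c) the $\chi$ with $\cond(\chi)=1$ and $\chi^{2}$ ramified, carrying $\zeta_{\F}(1)\gamma(1,\chi^{-2},\psi)$, while on (a) and (b) there is now the sub-stability correction $-\zeta_{\F}(1)q^{-1}\chi(\varpi_{\F})^{-2}$ (absent for $n\ge 2$). On locus (a) I would compute $\int f_j(y)\chi^{-1}(y)\norm[y]^{-s}\ud^{\times}y$ directly: $E_j$ produces the factor $L(1-s,\mu_j^{-1}\chi^{-1})$ governing the pole of $\gamma(s,\mu_j\chi,\psi)$, and the operator $1-\mu_j(\varpi_{\F})\Trans(\varpi_{\F})$ inserts exactly the matching numerator, so that (with the present normalizations) $\int f_j(y)\chi^{-1}(y)\norm[y]^{-s}\ud^{\times}y=\gamma(s,\mu_j\chi,\psi)$; multiplying over $j$ shows that $f_1*f_2*f_3$ has Mellin transform $\gamma(s,\Pi\otimes\chi,\psi)$ on unramified $\chi$ and $0$ on ramified $\chi$. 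Matching this against the $(a)$-part of the Mellin transform of $F_n$ and inverting gives the term $f_1*f_2*f_3$ for $F_0$, and for $F_1$ the extra factor $\chi(\varpi_{\F})^{-2}$ turns it into the translate $(f_1*f_2*f_3)(\varpi_{\F}^{-2}y)$ with constant $-q^{-1}$. On locus (b) each $\gamma(s,\mu_j\chi,\psi)$ is a pure $\varepsilon$-factor, a monomial times the quadratic Gauss sum $\tau_0$; using $\mu_1\mu_2\mu_3=\id$ and the Gauss-sum identity for $\tau_0^{2}$, the inverse Mellin transform over (b) is a single monomial in $\chi(\varpi_{\F})$ twisted by $\eta_0$, producing the term $\tau_0 q^{2}\eta_0(-y)\id_{\varpi_{\F}^{-3}\vO_{\F}^{\times}}(y)$ for $F_0$. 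Finally, for $F_1$ I would treat (b) and (c) together: feeding $\zeta_{\F}(1)\gamma(1,\chi^{-2},\psi)=\zeta_{\F}(1)\varepsilon(1,\chi^{-2},\psi)$ together with $\prod_j\varepsilon(s,\mu_j\chi,\psi)$ into a Fourier inversion over $\vO_{\F}^{\times}$ exactly as in the proof of Corollary~\ref{cor: VHQEleF1} — the appearance of $4y$ coming from the same completing-of-the-square that produces $a\equiv 4y$ there, and a conductor-$0$ subtraction inherent in that Fourier inversion yielding the indicator term — gives the double Gauss-sum integral $\tau_0 q\,\id_{\varpi_{\F}^{-1}\vO_{\F}^{\times}}(y)\int_{(\varpi_{\F}^{-1}\vO_{\F}^{\times})^{2}}\psi\big(t_1+t_2-\tfrac{t_1t_2}{4y}\big)\eta_0\big(\tfrac{4y}{t_1t_2}\big)\,\ud t_1\ud t_2$ together with the term $-\zeta_{\F}(1)\id_{\varpi_{\F}^{-1}\vO_{\F}^{\times}}(y)$. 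Summing the loci yields the two stated identities.

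The step I expect to be the main obstacle is not any single calculation but the consistency of the bookkeeping: since $\gamma(s,\Pi\otimes\chi,\psi)$ can no longer be replaced by $\gamma(s,\chi,\psi)^{3}$, one must carry along the genuine triple product of local $\gamma$-factors with all its $\varepsilon$-factor monomials and Gauss sums, and keep the powers of $q$, the factors of $\zeta_{\F}(1)$, and the translation conventions ($\Trans(\varpi_{\F})$ in $f_j$ versus $\chi(\varpi_{\F})^{-2}$ in the $F_1$ Mellin transform) perfectly consistent so that the loci (a), (b), (c) assemble into exactly the stated expressions. The $F_1$ case is especially delicate, because the single sub-stability correction term $-\zeta_{\F}(1)q^{-1}\chi(\varpi_{\F})^{-2}$ and the conductor-$0$ subtractions built into the Fourier inversion on (c) distribute their effects across all three loci, producing simultaneously the $-q^{-1}(f_1*f_2*f_3)(\varpi_{\F}^{-2}y)$ term, the compactly supported $-\zeta_{\F}(1)\id_{\varpi_{\F}^{-1}\vO_{\F}^{\times}}(y)$ term, and the $\tau_0$-weighted double Gauss-sum integral. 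A secondary point needing care is that on the unramified locus the transform is no longer compactly supported, so the identification with $f_1*f_2*f_3$ must be argued through the explicit rational-function identity for the Mellin transforms rather than through any support consideration.
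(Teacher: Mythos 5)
Your plan matches the paper's own proof: you compute the Mellin transform of $\VorH_{\Pi,\psi}(F_n)$ via the local functional equation and Lemma~\ref{lem: MellinQEleF1}, identify the unramified contribution with $f_1*f_2*f_3$ (and its translate for $F_1$) by comparing Mellin transforms, and handle the conductor-one contribution via the Fourier-inversion-and-completion-of-the-square argument from the proof of Corollary~\ref{cor: VHQEleF1}. The only cosmetic difference is that the paper organizes the computation as a decomposition of $\VorH_{\Pi,\psi}(F_n)$ into its $\vO_{\F}^{\times}$-equivariance components $\VorH_{\Pi,\psi}(F_n)_0$ and $\VorH_{\Pi,\psi}(F_n)_1$, which is exactly the dual of your splitting by character loci.
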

\begin{proof}
	By the local functional equation and Lemma \ref{lem: MellinQEleF1} we have for any $\chi \in \widehat{\F^{\times}}$
	$$ \int_{\F^{\times}} \VorH_{\Pi,\psi}(F_0)(y) \chi(y)^{-1} \norm[y]^{-s} \ud^{\times} y = \begin{cases}
		\sideset{}{_{i=1}^3} \prod \tfrac{L(1-s, \mu_i^{-1}\chi^{-1})}{L(s, \mu_i \chi)} & \text{if } \cond(\chi) = 0 \\
		q^{3(\frac{1}{2}-s)} \gamma \left( \tfrac{1}{2}, \chi, \psi \right)^3 & \text{if } \cond(\chi \eta_0) = 0 \\
		0 & \text{otherwise}
	\end{cases}. $$
	Therefore we can write $\VorH_{\Pi,\psi}(F_0) = \VorH_{\Pi,\psi}(F_0)_0 + \VorH_{\Pi,\psi}(F_0)_1$ with the properties
	$$ \VorH_{\Pi,\psi}(F_0)_0(y \delta) = \VorH_{\Pi,\psi}(F_0)_0(y), \quad \VorH_{\Pi,\psi}(F_0)_1(y\delta) = \VorH_{\Pi,\psi}(F_0)_1(y) \eta_0(\delta), \quad \forall y \in \F^{\times}, \delta \in \vO_{\F}^{\times}; $$
	$$ \int_{\F^{\times}} \VorH_{\Pi,\psi}(F_0)_0(y) \norm[y]^{-s} \ud^{\times} y = \sideset{}{_{i=1}^3} \prod \tfrac{L(1-s, \mu_i^{-1})}{L(s, \mu_i)} = \sideset{}{_{i=1}^3} \prod \int_{\F^{\times}} f_i(y) \norm[y]^{-s} \ud^{\times} y, $$
	$$ \int_{\F^{\times}} \VorH_{\Pi,\psi}(F_0)_0(y) \eta_0(y) \norm[y]^{-s} \ud^{\times} y = q^{3(\frac{1}{2}-s)} \gamma \left( \tfrac{1}{2}, \eta_0, \psi \right)^3 = \eta_0(-1) \tau_0 q^2 \cdot \int_{\varpi_{\F}^{-3} \vO_{\F}^{\times}} \norm[y]^{-s} \ud^{\times} y. $$
	Hence we identify $\VorH_{\Pi,\psi}(F_0)_0 = f_1*f_2*f_3$ and $\VorH_{\Pi,\psi}(F_0)_1(y) = \tau_0 q^2 \cdot \eta_0(-y) \id_{\varpi_{\F}^{-3} \vO_{\F}^{\times}}(y)$. Similarly we can write $\VorH_{\Pi,\psi}(F_1) = \VorH_{\Pi,\psi}(F_1)_0 + \VorH_{\Pi,\psi}(F_1)_1$ with the properties
	$$ \VorH_{\Pi,\psi}(F_1)_0(y \delta) = \VorH_{\Pi,\psi}(F_1)_0(y), \quad \forall \delta \in \vO_{\F}^{\times}; $$
	$$ \int_{\F^{\times}} \VorH_{\Pi,\psi}(F_1)_0(y) \norm[y]^{-s} \ud^{\times} y = - q^{2s-1} \sideset{}{_{i=1}^3} \prod \int_{\F^{\times}} f_i(y) \norm[y]^{-s} \ud^{\times} y, $$
	$$ \int_{\F^{\times}} \VorH_{\Pi,\psi}(F_1)_1(y) \chi^{-1}(y) \norm[y]^{-s} \ud^{\times} y = \id_{\cond(\chi)=1} \cdot q^{3-s} \int_{\varpi_{\F}^{-1} \vO_{\F}^{\times}} \psi(y) \chi^2(y) \ud^{\times}y \cdot \left( \int_{\varpi_{\F}^{-1} \vO_{\F}^{\times}} \psi(y) \chi^{-1}(y) \ud^{\times}y \right)^3. $$
	We easily identify $\VorH_{\Pi,\psi}(F_1)_0(y) = -q^{-1} \cdot (f_1*f_2*f_3)(\varpi_{\F}^{-2}y)$. The argument in the proof of Corollary \ref{cor: VHQEleF1} shows that $\mathrm{supp} \left( \VorH_{\Pi,\psi}(F_1)_1 \right) \subset \varpi_{\F}^{-1} \vO_{\F}^{\times}$, and for $y \in \vO_{\F}^{\times}$ that
\begin{multline*}
	\VorH_{\Pi,\psi}(F_1)_1 \left( \tfrac{y}{\varpi_{\F}} \right) = q^{3} \int_{(\vO_{\F}^{\times})^3} \psi \left( \tfrac{t_2+t_3+t_4 + t_2^{-1}t_3^{-1}t_4^2y}{\varpi_{\F}} \right) \ud t_2 \ud t_3 \ud t_4 - \zeta_{\F}(1) q^{3} \int_{(\vO_{\F}^{\times})^4} \psi \left( \tfrac{t_1+t_2+t_3+t_4}{\varpi_{\F}} \right) \ud \vec{t} \\
	= \tau_0 q^3 \int_{(\vO_{\F}^{\times})^2} \psi \left( \tfrac{t_2+t_3-\frac{t_2t_3}{4y}}{\varpi_{\F}} \right) \eta_0 \left( \tfrac{y}{t_2t_3} \right) \ud t_2 \ud t_3 - q^2 \int_{(\vO_{\F}^{\times})^2} \psi \left( \tfrac{t_2+t_3}{\varpi_{\F}} \right) \ud t_2 \ud t_3 - \zeta_{\F}(1) q^{-1} \\
	= \tau_0 q^3 \int_{(\vO_{\F}^{\times})^2} \psi \left( \tfrac{t_2+t_3-\frac{t_2t_3}{4y}}{\varpi_{\F}} \right) \eta_0 \left( \tfrac{y}{t_2t_3} \right) \ud t_2 \ud t_3 - \zeta_{\F}(1).
\end{multline*}
	Re-numbering the variables and inserting the formula of $\VorH_{\Pi,\psi}(F_0)$ we get the formula of $\VorH_{\Pi,\psi}(F_1)$.
\end{proof}

	\subsection{Further Reductions}

	The functions $F_n$ are ``building blocks'' of our test functions $H_c$ in \eqref{eq: TestFComp} when $\bL/\F$ is not ramified. In fact writing $\varepsilon_{\bL}:=\eta_{\bL/\F}(\varpi_{\F}) \in \{ \pm 1 \}$ and applying Lemma \ref{lem: TestFAsymp} we can rewrite the summands of $H_c$ as
\begin{align}
	&H_{2n+1} = 0, \label{eq: FPTestFNR0} \\
	&H_{2n} = \begin{cases}
		E_n & \text{if } 2n_0 \leq n \leq n_1-1 \\
		(\varepsilon_{\bL}q)^n \int_{\substack{\bL^1 \cap \vO_{\bL} \\ \Tr(\alpha) \in 2(1+\vP_{\F}^{2(n_0-1)})}} \beta(\alpha) \cdot \left( \Trans \left( \Tr(\alpha)^2 \right).F_n \right) \ud \alpha & \text{if } n = 2n_0-1 \\
		(\varepsilon_{\bL}q)^n \int_{\substack{\bL^1 \cap \vO_{\bL} \\ \Tr(\alpha) \in 2(1+\varpi_{\F}^{2(n-n_0)} \vO_{\F}^{\times})}} \beta(\alpha) \cdot \left( \Trans \left( \Tr(\alpha)^2 \right).F_n \right) \ud \alpha & \text{if } n_0+1 \leq n < 2n_0-1
	\end{cases}. \label{eq: FPTestFNR1}
\end{align}
	The decomposition of $H_{2n_0}$ is subtler and really goes in the direction of expression in terms of the \emph{quadratic elementary functions}. We shall write (the second numeric parameter $m$ in subscript always indicates the parameter of the relevant quadratic elementary function)
\begin{align} 
	H_{2n_0} &= H_{2n_0}^a + H_{2n_0}^b, \nonumber \\
	H_{2n_0}^a &= \tfrac{(\varepsilon_{\bL}q)^{n_0}}{2} \id_{\eta_0(-1)=\varepsilon_{\bL}} \cdot \sideset{}{_{m=0}^{n_0-1}} \sum H_{2n_0, m}^{a,1} + \tfrac{(\varepsilon_{\bL}q)^{n_0}}{2} \id_{\eta_0(-1)=-\varepsilon_{\bL}} \cdot \sideset{}{_{m=0}^{n_0-1}} \sum H_{2n_0, m}^{a,\varepsilon}, \label{eq: FPTestFNR2} \\
	H_{2n_0}^b &= \tfrac{(\varepsilon_{\bL}q)^{n_0}}{2} \cdot \left\{ H_{2n_0,n_0}^{b,\varepsilon} + H_{2n_0,n_0}^{b,1} \right\}, \label{eq: FPTestFNR2Bis}
\end{align}
	where the summands are given by (below we assume $m \geq 1$ and $\tau \in \{ 1, \varepsilon \}$)
	$$ H_{2n_0, 0}^{a,\tau} = \int_{\substack{\bL^1 \cap \vO_{\bL} \\ \Tr(x_{\tau}\alpha) \in \vP_{\F}^{n_0}}} \beta(x_{\tau} \alpha) \ud \alpha \cdot \left( \Trans \left( \tau^{-1}\varpi_{\F}^{2n_0} \right).F_0 \right), $$
	$$ H_{2n_0, m}^{a,\tau} = \int_{\substack{\bL^1 \cap \vO_{\bL} \\ \Tr(x_{\tau} \alpha) \in \varpi_{\F}^{n_0-m} \vO_{\F}^{\times}}} \beta(x_{\tau} \alpha) \cdot \left( \Trans \left( \tau^{-1} \Tr(x_{\tau} \alpha)^2 \right).F_{m} \right) \ud \alpha; $$
	$$ H_{2n_0,n_0}^{b,\varepsilon} = \int_{\substack{\bL^1 \cap \vO_{\bL} \\ \Tr(x_{\varepsilon} \alpha) \in \vO_{\F}^{\times}}} \beta(x_{\varepsilon} \alpha) \cdot \left( \Trans \left( \varepsilon^{-1} \Tr(x_{\varepsilon} \alpha)^2 \right).F_{n_0} \right) \ud \alpha, $$
	$$ H_{2,1}^{b,1} = \int_{\substack{\bL^1 \cap \vO_{\bL} \\ \Tr(\alpha) \in \vO_{\F}^{\times}}} \beta(\alpha) \cdot \left( \Trans \left( \Tr(\alpha)^2 \right).F_{1} \right) \ud \alpha, $$
	$$ H_{2n_0,n_0}^{b,1} = \int_{\substack{\bL^1 \cap \vO_{\bL} \\ \Tr(\alpha)^2 \in \vO_{\F}^{\times}-4(1+\vP_{\F})}} \beta(\alpha) \cdot \left( \Trans \left( \Tr(\alpha)^2 \right).F_{n_0} \right) \ud \alpha, \ n_0 \geq 2. $$

\begin{lemma} \label{lem: JacTypeSRC}
	Let $\chi$ be a (unitary) character of $\F^{\times}$ with $\cond(\chi)=n$. Recall the additive parameter $c_{\beta}$ (resp. $c_{\chi}$) in Lemma \ref{lem: AddParMultChar} (resp. Remark \ref{rmk: AddParMultChar}).
		
\noindent (1) Assume $0 \leq n < n_0$. For $\bL/\F$ \emph{split} resp. \emph{unramified} we have
	$$ \int_{\vO_{\F}^{\times}} \chi_0 \left( \tfrac{1+\varpi_{\F}^{n_0-n} t}{1-\varpi_{\F}^{n_0-n} t} \right) \chi(t) \ud t \quad {\rm resp.} \quad \int_{\vO_{\F}^{\times}} \beta(1+\varpi_{\F}^{n_0-n} t \sqrt{\varepsilon}) \chi(t) \ud t \ll q^{-\frac{n}{2}}. $$
	
\noindent (2) Assume $n=n_0 \geq 2$. For $\bL/\F$ \emph{split} resp. \emph{unramified} we have for any $k \in \{ 0,1 \}$
\begin{multline*} 
	\int_{\vO_{\F}^{\times} - \sideset{}{_{\pm}} \bigcup (\pm 1+ \vP_{\F})} \chi_0 \left( \tfrac{1+t}{1-t} \right) \chi(t) \eta_0^k(1-t^2) \ud t \quad {\rm resp.} \quad \int_{\vO_{\F}^{\times}} \beta(1+t \sqrt{\varepsilon}) \chi(t) \eta_0^k(1-t^2 \varepsilon) \ud t \\
	 \ll \begin{cases} q^{-\frac{n_0}{2}} & \text{if } \chi \notin \Ecp(\beta) \\ q^{-\frac{n_0}{2}+\frac{1}{2}} & \text{if } \chi \in \Ecp(\beta) \end{cases}, 
\end{multline*}
	where $\Ecp(\beta) \neq \emptyset$ only if $2 \nmid n_0$ and $\eta_0(-1)=\varepsilon_{\bL}$, under which condition it is given by
	$$ \Ecp(\beta) = \begin{cases}
	\left\{ \chi \ \middle| \ (c_{\chi} c_{\beta}^{-1})^2 \equiv -1 \pmod{\vP_{\F}} \right\} & \text{if } \bL/\F \text{ split} \\
	\left\{ \chi \ \middle| \  (c_{\chi} c_{\beta}^{-1})^2 \equiv - \varepsilon \pmod{\vP_{\F}} \right\} & \text{if } \bL/\F \text{ unramified}
	\end{cases}. $$
\end{lemma}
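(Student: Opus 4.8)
The plan is to treat all four integrals as \emph{twisted Gauss sums of Jacobi type}: an additive character against a multiplicative one, the additive phase being the value of $\chi_0$ (resp.\ $\beta$) at a rational argument. One uses $p \neq 2$ throughout, to invert $2$ and to work with the $\varpi_\F$-adic logarithm. The input is Lemma~\ref{lem: AddParMultChar}: on the relevant ball $\chi_0(1+x) = \psi\bigl(c_\beta\varpi_\F^{-n_0}(x-\tfrac12 x^2+\cdots)\bigr)$ (with $c_\beta = c_{\chi_0}$ in the split case, Remark~\ref{rmk: AddParMultChar}) and $\chi(1+x) = \psi\bigl(c_\chi\varpi_\F^{-\cond(\chi)}(x-\tfrac12 x^2+\cdots)\bigr)$; in the unramified case one carries the $\sqrt{\varepsilon}$-factor exactly as stated there, and the split computation is the formal specialisation in which $\sqrt{\varepsilon}$ is replaced by $\delta \in (\F\oplus\F)^\times$ with $\delta^2=1$. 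The split and unramified arguments run in parallel, so I describe the split case and indicate the modifications.

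For part (1) put $d := n_0 - n \geq 1$. Using the Cayley identity $\log\tfrac{1+a}{1-a} = 2(a+\tfrac{a^3}{3}+\cdots)$ together with the additive--parameter formula one finds $\chi_0\bigl(\tfrac{1+\varpi_\F^d t}{1-\varpi_\F^d t}\bigr) = \psi\bigl(2c_\beta\varpi_\F^{-n}t + \cdots\bigr)$, a phase whose coefficient of $t^j$ ($j$ odd) has valuation $(j-1)n_0 - jn$ and is therefore \emph{strictly} dominated by the linear term (this is precisely where $n < n_0$ enters; in the unramified case the phase is purely linear); similarly $\beta(1+\varpi_\F^d t\sqrt{\varepsilon})$. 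A routine (possibly iterated) coset decomposition in $t$ then reduces the integral to a pure Gauss sum $\int_{\vO_\F^\times}\psi(c\,u)\chi'(u)\,\ud u$ with $v_\F(c) = -n$ and $\cond(\chi') \leq n$, which is $\ll q^{-n/2}$ by the standard evaluation; equivalently one may expand the square, substitute $t = su$, and observe that the inner $s$-phase is linear in $s(u-1)$ of frequency $n$, which forces $u \in 1+\vP_\F^{\lceil n/2\rceil}$ up to lower order.

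For part (2) we have $\cond(\chi) = n_0$ and $w_0 := \tfrac{1+t}{1-t}$ is a genuine unit (we have excised $t \equiv \pm 1 \pmod{\vP_\F}$), so $\chi_0(w_0)$ is kept as a unimodular amplitude. With $\mu := \lfloor n_0/2\rfloor$ decompose $t = t_0(1+\varpi_\F^\mu s)$, $t_0 \in (\vO_\F/\vP_\F^\mu)^\times$ with $t_0 \not\equiv \pm 1$ and $s \in \vO_\F$ (the integrand depending on $s$ modulo $\varpi_\F^{\,\mu+1}\vO_\F$). Expanding $\tfrac{1+t}{1-t} = w_0\bigl(1 + \tfrac{2t_0}{1-t_0^2}\varpi_\F^\mu s + \cdots\bigr)$, feeding in the additive--parameter formulas for $\chi_0$ and $\chi$, and noting $\eta_0^k(1-t^2) = \eta_0^k(1-t_0^2)$ is constant on the class, the inner $s$-integral becomes a quadratic Gauss sum $\int_{\vO_\F}\psi\bigl(\lambda_1\varpi_\F^{-\lceil n_0/2\rceil}s + \lambda_2\varpi_\F^{-1}s^2 + \cdots\bigr)\,\ud s$ with $\lambda_1 = c_\beta\tfrac{2t_0}{1-t_0^2} + c_\chi$, the $s^2$-coefficient being a unit exactly when $2 \nmid n_0$ (for $2 \mid n_0$ it is integral and disappears, leaving the indicator of $\lambda_1 \equiv 0$). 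In all cases non-vanishing forces $\lambda_1 \equiv 0 \pmod{\vP_\F^\mu}$, i.e.\ $c_\beta\tfrac{2t_0}{1-t_0^2} \equiv -c_\chi$, a quadratic in $t_0$ whose discriminant is proportional to $1 + (c_\chi c_\beta^{-1})^2$ (split) resp.\ $\varepsilon + (c_\chi c_\beta^{-1})^2$ (unramified). When this discriminant is a nonzero square modulo $\vP_\F$ the equation has two simple, $\vP_\F$-separated roots $t_0$; each contributes a unit amplitude times $q^{-1/2}$ ($2\nmid n_0$, from the quadratic Gauss sum) or $1$ ($2\mid n_0$) from a class of measure $q^{-\mu}$, giving $\ll q^{-n_0/2}$. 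The discriminant vanishes modulo $\vP_\F$ exactly when $(c_\chi c_\beta^{-1})^2 \equiv -1$ (split), resp.\ $\equiv -\varepsilon$ (unramified), which is solvable only if $\eta_0(-1) = \varepsilon_\bL$: this cuts out the set $\Ecp(\beta)$. There the two roots coalesce at $t_0 \equiv c_\beta/c_\chi$, the number of stationary classes grows, and a refined (iterated) stationary--phase descent --- exploiting the genuine oscillation of $\chi_0(w_0)\chi(t_0)$ over the coalescing classes --- recovers the bound up to one power of $q^{1/2}$, hence $\ll q^{-n_0/2+1/2}$; when $2 \mid n_0$ the inner integral carries no quadratic Gauss sum and the descent recovers the full bound, so $\Ecp(\beta) = \emptyset$.

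The main obstacle is this last, degenerate case of part (2): controlling the contribution of the coalescing stationary classes when $2 \nmid n_0$, i.e.\ the interplay between the quadratic Gauss sum in $s$ (completed via $p \neq 2$) and the oscillation in the fine residue of $t_0$, and verifying that precisely the congruence $(c_\chi c_\beta^{-1})^2 \equiv -1$ (resp.\ $-\varepsilon$) is where this interplay degrades square--root cancellation by exactly $q^{1/2}$. Everything else is a routine evaluation of (quadratic) Gauss sums of the kind already used in the paper, via \cite{BuH06} and the Gauss sum $\tau_0$ of Definition~\ref{def: QGSum}; the unramified case differs only by systematically carrying the $\sqrt{\varepsilon}$-twist, and the size estimate for $\Ecp(\beta)$ is then a straightforward count of the conductor-$n_0$ characters with $c_\chi$ in the prescribed residue class modulo $\vP_\F$.
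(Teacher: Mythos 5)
Your proposal takes essentially the same route as the paper: decompose $\vO_{\F}^{\times}$ into multiplicative cosets at scale $\lceil n/2\rceil$ (resp.\ $\mu=\lfloor n_0/2\rfloor$), use Lemma~\ref{lem: AddParMultChar} to turn the inner integral into an additive (linear and, when $2\nmid n$, quadratic) Gauss sum, and observe that non-vanishing confines $t_0$ to approximate roots of a quadratic whose discriminant is proportional to $1+(c_{\chi}c_{\beta}^{-1})^2$ (split) resp.\ $\varepsilon+(c_{\chi}c_{\beta}^{-1})^2$ (unramified). This is exactly the paper's level-$\lceil n/2\rceil$ regularization, Hensel lifting, and quadratic Gauss-sum computation, with the same identification of $\Ecp(\beta)$ via the vanishing of the discriminant modulo $\vP_{\F}$. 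So I do not see a different method here, only a different presentation.

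Two caveats. First, a technical imprecision in part (1): the direct expansion $\chi_0\bigl(\tfrac{1+\varpi_{\F}^d t}{1-\varpi_{\F}^d t}\bigr)=\psi(2c_{\beta}\varpi_{\F}^{-n}t)$ (resp.\ the $\beta$-analogue) via Lemma~\ref{lem: AddParMultChar} only makes sense when $d=n_0-n\geq \lfloor n_0/2\rfloor$, i.e.\ $n\leq\lceil n_0/2\rceil$; for larger $n<n_0$ you must first pass to the coset decomposition (your ``equivalently, substitute $t=su$''), which is the paper's regularization, and the phase is then extracted coset by coset rather than globally. Second, and more substantively, you correctly flag the degenerate discriminant case as the obstruction, but you have the hard parity reversed and you do not supply the promised ``iterated stationary-phase descent''. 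For $2\nmid n_0$ the paper's argument already closes the degenerate case: the linear term selects the cosets, the quadratic Gauss sum supplies $q^{-1/2}$ except at $t_0\equiv c_{\beta}/c_{\chi}$, and that congruence is precisely $\chi\in\Ecp(\beta)$, so the loss of $q^{1/2}$ is immediate. The delicate case is $2\mid n_0$: there the inner integral is a pure indicator, so you are left with $q^{-\mu}$ times a sum of the unimodular amplitude $\Phi(t_0)$ over approximate roots $t_0\pmod{\vP_{\F}^{\mu}}$, and when the discriminant has high $\vP_{\F}$-adic valuation (which can occur for $2\mid n_0$ whenever $\eta_0(-1)=\varepsilon_{\bL}$, i.e.\ whenever $-\varepsilon$ resp.\ $-1$ is a square) there can be as many as $\asymp q^{\mu/2}$ such roots. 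Asserting $\Ecp(\beta)=\emptyset$ for even $n_0$ therefore requires extracting genuine cancellation among the $\Phi(t_0)$ on this larger progression; your proposal asserts this but gives no argument. (The paper's own proof is terse on this point: the Hensel step in part (1) is stated for $n<n_0$, where the congruence is linear to leading order, and the remark that ``$2\mid n$ is done for both (1) and (2)'' does not by itself cover the quadratic degeneracy at $n=n_0$.) You should carry out the descent — for instance by a further additive decomposition $t=a+\varpi_{\F}^{j}w$ around the double root $a=c_{\beta}/c_{\chi}$ and tracking the phase to cubic order — before the proof can be regarded as complete.
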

\begin{proof}
	We omit the proof of the split case, which is similar and simpler. The case of $n=0<n_0$ is easy and omitted, since the integrand is constant.

\noindent (1) If $n=1$, then $t \mapsto \beta \left( 1+ \varpi_{\F}^{n_0-n} t \sqrt{\varepsilon} \right)$ is a non-trivial additive character of $\vO_{\F}$ and the bound follows from the one for Gauss sums. Assume $n \geq 2$. We perform a level $\lceil \tfrac{n}{2} \rceil$ regularization to $\ud t$ and get
\begin{multline*}
	\int_{\vO_{\F}^{\times}} \beta \left( 1+ \varpi_{\F}^{n_0-n} t \sqrt{\varepsilon} \right) \chi(t) \ud t \\
	= \int_{\vO_{\F}^{\times}} \beta \left( 1+ \varpi_{\F}^{n_0-n} t \sqrt{\varepsilon} \right) \chi(t) \cdot \left( \oint_{\vO_{\F}} \beta \left( 1 + \tfrac{\varpi_{\F}^{n_0-\lfloor \frac{n}{2} \rfloor}}{1+\varpi_{\F}^{n_0-n}t \sqrt{\varepsilon}} t u \sqrt{\varepsilon} \right) \chi(1+\varpi_{\F}^{\lceil \frac{n}{2} \rceil}u) \ud u \right) dt.
\end{multline*}
	The integrands of the inner integral, denoted by $I(t; n)$, are additive characters of $\vO_{\F}$. We have
	$$ I(t; n) = \oint_{\vO_{\F}} \psi \left( \tfrac{1}{\varpi_{\F}^{\lfloor \frac{n}{2} \rfloor}} \cdot \tfrac{2 c_{\beta} t u \varepsilon}{1-\varpi_{\F}^{2(n_0-n)} t^2 \varepsilon} \right) \psi \left( \tfrac{c_{\chi}u}{\varpi_{\F}^{\lfloor \frac{n}{2} \rfloor}} \right) \ud u. $$
	The non-vanishing of $I(t;n)$ implies the congruence condition
	$$ \tfrac{2 c_{\beta} t \varepsilon}{1-\varpi_{\F}^{2(n_0-n)} t^2 \varepsilon} + c_{\chi} \in \vP_{\F}^{\lfloor \frac{n}{2} \rfloor} \quad \Leftrightarrow \quad 2 c_{\beta} t \varepsilon + c_{\chi}(1-\varpi_{\F}^{2(n_0-n)} t^2 \varepsilon) \in \vP_{\F}^{\lfloor \frac{n}{2} \rfloor}, $$
	which has a unique solution $t \in t_0 + \vP_{\F}^{\lfloor \frac{n}{2} \rfloor}$ with $t_0 \in \vO_{\F}^{\times}$ by Hensel's lemma. Consequently we get
\begin{equation} \label{eq: IntermCan}
	\int_{\vO_{\F}^{\times}} \beta \left( 1+ \varpi_{\F}^{n_0-n} t \sqrt{\varepsilon} \right) \chi(t) \ud t = \int_{t_0 + \vP_{\F}^{\lfloor \frac{n}{2} \rfloor}} \beta \left( 1+ \varpi_{\F}^{n_0-n} t \sqrt{\varepsilon} \right) \chi(t) \ud t \ll q^{-\lfloor \frac{n}{2} \rfloor}.
\end{equation}
	If $2 \mid n$ then we are done (for both (1) and (2)). Otherwise let $n=2m+1$. We may assume
	$$ 2 c_{\beta} t_0 \varepsilon + c_{\chi}(1-\varpi_{\F}^{2(n_0-n)} t_0^2 \varepsilon) = 0, $$
	make the change of variables $t = t_0(1+\varpi_{\F}^m u)$, and continue (\ref{eq: IntermCan}) to conclude by
	$$ \int_{\vO_{\F}^{\times}} \beta \left( 1+ \varpi_{\F}^{n_0-n} t \sqrt{\varepsilon} \right) \chi(t) \ud t = q^{-m} \beta(1+ \varpi_{\F}^{n_0-n} t_0 \sqrt{\varepsilon}) \chi(t_0) \int_{\vO_{\F}} \psi \left( -\tfrac{c_{\chi}u^2}{2 \varpi_{\F}} \right) du \ll q^{-\frac{n}{2}}. $$
	
\noindent (2) Let $n=2m+1$ with $m \geq 1$. With the change of variables $t=t_0(1+\varpi_{\F}^m u)$ for $t_0 \in \vO_{\F}^{\times}$ satisfying
\begin{equation} \label{eq: EcpQuadCond}
	2 c_{\beta} t_0 \varepsilon + c_{\chi}(1- t_0^2 \varepsilon) = 0,
\end{equation}
	which is solvable only if $\eta_0(-1) = \varepsilon_{\bL}$, we get the equation
\begin{multline*}
	\int_{\vO_{\F}^{\times}} \beta(1+t \sqrt{\varepsilon}) \chi(t) \eta_0^k(1-t^2 \varepsilon) \ud t = \sideset{}{_{t_0}} \sum \beta(1+t_0 \sqrt{\varepsilon}) \chi(t_0) \eta_0^k(1-t_0^2 \varepsilon) \cdot \int_{\vO_{\F}} \psi \left( \tfrac{u^2}{\varpi} \left( \tfrac{2 c_{\beta} t_0^3 \varepsilon}{(1-t_0^2 \varepsilon)^2} - \tfrac{c_{\chi}}{2} \right) \right) \ud u \\
	= \sideset{}{_{t_0}} \sum \beta(1+t_0 \sqrt{\varepsilon}) \chi(t_0) \eta_0^k(1-t_0^2 \varepsilon) \cdot \int_{\vO_{\F}} \psi \left( \tfrac{c_{\chi} u^2}{2 \varpi} \left( \tfrac{c_{\chi}}{c_{\beta}} t_0 - 1 \right) \right) \ud u.
\end{multline*}
	The above integral is $\ll q^{-\frac{1}{2}}$ unless $t_0 \equiv c_{\chi}^{-1}c_{\beta} \pmod{\vP_{\F}}$, in which case \eqref{eq: EcpQuadCond} implies $\chi \in \Ecp(\beta)$.
\end{proof}

\begin{lemma} \label{lem: e1DWt-FineBd1}
	Suppose $n_0 + 1 \leq n \leq 2n_0 - 1$ (hence $n_0 \geq 2$) and $n \geq a(\Pi)$. Then we have
	$$ \widetilde{h}_{2n}^-(\chi) \ll q^{-n_0} \id_{2n_0-n}(\cond(\chi \eta_0^n)) + q^{-n_0} \id_{n=2n_0-1} \id_{\cond(\chi \eta_0)=0}. $$
\end{lemma}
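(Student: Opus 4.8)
The plan is to exploit the explicit form of $H_{2n}$ as a superposition of dilates of the first quadratic elementary function $F_n$, together with the explicit Hankel transform of $F_n$ from Corollary \ref{cor: VHQEleF1}, and then to interchange the order of integration so that the dual weight reduces to a product of two quadratic Gauss sums of the type controlled in Lemma \ref{lem: JacTypeSRC}.

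First I would use \eqref{eq: FPTestFNR1}, which (for $\tau=1$, the only surviving term by Lemma \ref{lem: TestFAsymp} (3) since $v_{\F}(\Tr(\alpha)^2)=0$ and $n>n_0$) reads $H_{2n}=(\varepsilon_{\bL}q)^n\int_{\alpha}\beta(\alpha)\,\Trans(\Tr(\alpha)^2).F_n\,\ud\alpha$ over $\alpha\in\bL^1\cap\vO_{\bL}$ with $\Tr(\alpha)\in 2(1+\varpi_{\F}^{2(n-n_0)}\vO_{\F}^{\times})$ (resp.\ the enlarged ball $2(1+\vP_{\F}^{2(n_0-1)})$ when $n=2n_0-1$). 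Applying $\VorH_{\Pi,\psi}\circ\Mult_{-1}$ under the $\ud\alpha$-integral, invoking the elementary dilation equivariance $\VorH_{\Pi,\psi}(\Trans(a).\phi)=\norm[a]_{\F}\,\Trans(a^{-1}).\VorH_{\Pi,\psi}(\phi)$ and Corollary \ref{cor: VHQEleF1} — legitimate because $n\ge a(\Pi)$, the stability barrier of Proposition \ref{prop: StabRang} — each dilate is sent to a function supported on $\varpi_{\F}^{-n}\vO_{\F}^{\times}$; in particular $\widetilde{h}_{2n}^+(\chi)=0$ and $\widetilde{h}_{2n}^-(\chi)=\widetilde{h}_{2n}(\chi)$. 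Since $H_{2n}$ is supported on $v_{\F}=-2n$, the $\Mult_{-1}$ contributes the scalar $q^{-2n}$, which together with the $q^n$ of $(\varepsilon_{\bL}q)^n$, the $q^{3n/2}$ of Corollary \ref{cor: VHQEleF1} (the factor $\tau_0\,q^{\lceil 3n/2\rceil-3n/2}$ for odd $n$ having modulus $1$), and the $q^{-n/2}$ of $\norm[t]_{\F}^{-1/2}$ on the support, collapses to a prefactor of modulus $O(1)$; after $t=\varpi_{\F}^{-n}s$ with $s\in\vO_{\F}^{\times}$ I obtain
$$\widetilde{h}_{2n}^-(\chi)\ll\extnorm{\int_{\vO_{\F}^{\times}}\Bigl(\int_{\alpha}\beta(\alpha)\,\psi\bigl((4\Tr(\alpha)^{-2}-1)\varpi_{\F}^{-n}s\bigr)\ud\alpha\Bigr)\,\eta_0^{n}(s)\,\chi^{-1}(s)\,\ud^{\times}s},$$
where $\eta_0(\varpi_{\F})=1$ and $4\Tr(\alpha)^{-2}\in(\F^{\times})^2$ have been used so that the $\eta_0^n$-twist of Corollary \ref{cor: VHQEleF1} only sees $s$, and the $\psi(-t)$ of $\widetilde{h}^-$ has been merged into the phase.

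Next I would compute the inner $\alpha$-integral. Parametrising $\alpha$ near $1$ by the Cayley element $x$ with $\bar x=-x$, so that $\Tr(\alpha)-2=4x^2/(1-x^2)$ (write $x=\sqrt{\varepsilon}\,y$, $y\in\F$, in the non-split case, $x=y$ in the split case), the constraint forces $v_{\F}(y)=n-n_0$ (resp.\ $\ge n_0-1$), with $\ud\alpha\asymp\ud y$ by Proposition \ref{prop: MeasNorm}. Since $4\Tr(\alpha)^{-2}-1=-(\Tr(\alpha)-2)(1+O(\Tr(\alpha)-2))$ and $\beta(\alpha)$ is, by Lemma \ref{lem: AddParMultChar} together with $\cond_1(\beta)=n_0$ (Lemma \ref{lem: CondReInterp}), a modulus‑$1$ constant times an additive character in $y$, the integral becomes, after $y=\varpi_{\F}^{n-n_0}y'$, the nondegenerate quadratic Gauss integral $q^{-(n-n_0)}\int_{\vO_{\F}^{\times}}\psi\bigl(4\varepsilon\varpi_{\F}^{n-2n_0}(c_{\beta}y'-sy'^2)\bigr)\ud y'$ of conductor exponent $2n_0-n\ge 1$, whose stationary point $y'=c_{\beta}/(2s)$ is a unit; completing the square evaluates it to magnitude $q^{-(n-n_0)-(2n_0-n)/2}$ times $\psi(\varepsilon c_{\beta}^2\varpi_{\F}^{n-2n_0}s^{-1})$ and an $\eta_0(s)$-factor — this is exactly the mechanism underlying Lemma \ref{lem: JacTypeSRC} (for $n=2n_0-1$ the exponent is $1$ and the integration is over $\vO_{\F}$, still with magnitude $q^{-(n_0-1)-1/2}$). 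Substituting back, $\widetilde{h}_{2n}^-(\chi)$ is bounded by $q^{-(n-n_0)-(2n_0-n)/2}$ times the modulus of $\int_{\vO_{\F}^{\times}}\psi(\varepsilon c_{\beta}^2\varpi_{\F}^{n-2n_0}s^{-1})\,\eta_0^{n+1}(s)\,\chi^{-1}(s)\,\ud^{\times}s$ (the extra $\eta_0$ from the square-completion); after $s\mapsto s^{-1}$ this is a Gauss sum of conductor exponent $2n_0-n$ twisted by $\chi\eta_0^{n+1}$, hence $\ll q^{-(2n_0-n)/2}$ and non-vanishing only if $\cond(\chi\eta_0^{n+1})=2n_0-n$, which, since $\cond(\eta_0)=1$, is equivalent to $\cond(\chi\eta_0^{n})=2n_0-n$ when $2n_0-n\ge 2$ and to $\cond(\chi\eta_0)\le 1$ when $n=2n_0-1$, the value $0$ yielding — via the $q^{-1}$-term of the conductor-$1$ Gauss sum, compensated by the enlarged ball of $\alpha$ — the second term of the claim. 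Multiplying, $q^{-(n-n_0)-(2n_0-n)/2}\cdot q^{-(2n_0-n)/2}=q^{-(n-n_0)-(2n_0-n)}=q^{-n_0}$.

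The hard part will be the bookkeeping in the last two steps: confirming that the two quadratic Gauss integrals (over $\alpha$ and over $s$) each carry exactly the expected magnitude, that their conductor constraints combine precisely to $\cond(\chi\eta_0^n)=2n_0-n$, and that the degenerate borderline $n=2n_0-1$ — where the $s$-sum drops to conductor $1$ while the $\alpha$-domain swells from a sphere to a ball — balances out to the same $q^{-n_0}$; all of this rests on a careful, though routine, tracking of powers of $q$ through the Hankel transform, the dilation equivariance, and the Taylor expansions concealed in the $O(\cdot)$'s above.
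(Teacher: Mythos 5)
Your plan is essentially the paper's: both start from \eqref{eq: FPTestFNR1}, use Corollary~\ref{cor: VHQEleF1} (legitimate since $n\ge a(\Pi)$) to pin the support of $\VorH_{\Pi,\psi}\circ\Mult_{-1}(H_{2n})$ to $\varpi_{\F}^{-n}\vO_{\F}^{\times}$ (so $\widetilde{h}_{2n}^-=\widetilde{h}_{2n}$), and ultimately reduce to a product of two square-root--cancelling character sums giving $q^{-(n-n_0)}q^{-(2n_0-n)/2}\cdot q^{-(2n_0-n)/2}=q^{-n_0}$. The genuine difference is the order of the double integral: the paper first performs the $t$-integral for fixed $\delta=4^{-1}\Tr(\alpha)^2$ (formula~\eqref{eq: DWtQEleF1Trans1}), which produces the $\gamma$-factor and the conductor indicator $\id_{2n_0-n}(\cond(\chi\eta_0^n))$ outright, and then hands the residual $\alpha$-integral to Lemma~\ref{lem: JacTypeSRC}~(1) after the clean change of variables $\alpha\mapsto t$; you integrate $\alpha$ first and re-derive the stationary-phase estimate ab initio, then resolve the $s$-Gauss sum. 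The paper's ordering is cleaner because it factorizes the $\chi$-dependence immediately and reuses an already-proved lemma, while yours has to track the $s$-dependence of the critical value through the $\alpha$-integral.

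One concrete point in your re-derivation needs repair. You justify ``$\beta(\alpha)$ is a modulus-$1$ constant times an additive character in $y$'' via Lemma~\ref{lem: AddParMultChar}, but that lemma only linearizes $\beta$ on $1+\vP_{\bL}^{\lfloor n_0/2\rfloor}$. Under the constraint $v_{\F}(y)=n-n_0$, the relevant element $(1+y\sqrt{\varepsilon})^2$ lies in $1+\vP_{\bL}^{n-n_0}$, so your linearization is licensed only when $n-n_0\ge\lfloor n_0/2\rfloor$, i.e.\ roughly $n\gtrsim 3n_0/2$. For $n_0+1\le n<3n_0/2$, $\beta$ is genuinely multiplicative on the annulus you integrate over, and the straight quadratic-phase expansion you write is not valid. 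The magnitude, the shape of the critical value (proportional to $s^{-1}$), and the extra $\eta_0$ all do come out as you say, but only after the level-$\lceil m/2\rceil$ regularization carried out in the proof of Lemma~\ref{lem: JacTypeSRC}; that lemma is not merely ``the mechanism underlying'' your step --- it is the only complete justification of it in the regime $n<3n_0/2$. With that reduction made explicit (as the paper does), the rest of your bookkeeping, including the borderline $n=2n_0-1$ where the $s$-sum drops to conductor $\le 1$ and the $\alpha$-sphere swells to a ball, is correct and reproduces the stated bound.
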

\begin{proof}
	(1) First consider $n < 2n_0-1$. By Corollary \ref{cor: VHQEleF1} we have for any $\delta \in 1+\varpi_{\F}^{2(n-n_0)} \vO_{\F}^{\times}$
\begin{multline} \label{eq: DWtQEleF1Trans1}
	\int_{\F-\vO_{\F}} \VorH_{\Pi,\psi} \circ \Mult_{-1} \left( \Trans \left( 4\delta \right).F_n \right)(t) \cdot \psi(-t) \chi^{-1}(t) \norm[t]^{-\frac{1}{2}} \ud^{\times} t \\
	= q^{-n} \zeta_{\F}(1) \cdot \begin{cases}
		\chi^{-1} \left( \tfrac{\delta}{1-\delta} \right) \cdot \gamma(1,\chi,\psi) \id_{2n_0-n}(\cond(\chi)) & \text{if } 2 \mid n \\
		\chi^{-1} \eta_0 \left( \tfrac{\delta}{1-\delta} \right) \cdot \tau_0 q^{\frac{1}{2}} \gamma(1,\chi \eta_0,\psi) \id_{2n_0-n}(\cond(\chi \eta_0)) & \text{if } 2 \nmid n
	\end{cases},
\end{multline}
	where we used $\eta_0(\delta)=1$. Inserting \eqref{eq: DWtQEleF1Trans1} with $\delta = 4^{-1} \Tr(\alpha)^2$ into \eqref{eq: FPTestFNR1} we get
\begin{multline} \label{eq: h2n-BdRedNR}
	\widetilde{h}_{2n}^-(\chi) = \int_{\F-\vO_{\F}} \VorH_{\Pi,\psi} \circ \Mult_{-1} (H_{2n})(t) \cdot \psi(-t) \chi^{-1}(t) \norm[t]^{-\frac{1}{2}} \ud^{\times} t \ll \\
	\extnorm{\int_{\substack{\bL^1 \cap \vO_{\bL} \\ \Tr(\alpha) \in 2(1+\varpi_{\F}^{2(n-n_0)} \vO_{\F}^{\times})}} \beta(\alpha) \cdot \chi^{-1} \eta_0^n \left( \tfrac{\Tr(\alpha)^2}{4-\Tr(\alpha)^2} \right) \ud \alpha} \cdot q^{-\frac{2n_0-n}{2}} \id_{2n_0-n}(\cond(\chi \eta_0^n)).
\end{multline}
	
\noindent In the case of split, resp. unramified $\bL/\F$ we apply the change of variables $t = \tfrac{\alpha - \alpha^{-1}}{\alpha + \alpha^{-1}}$, resp. $t = \tfrac{\alpha - \alpha^{-1}}{\alpha + \alpha^{-1}} \tfrac{1}{\sqrt{\varepsilon}}$. The inner integral in \eqref{eq: h2n-BdRedNR} becomes
	$$ 2 \chi\eta_0^n(-1) \int_{\varpi_{\F}^{n-n_0} \vO_{\F}^{\times}} \chi_0 \left( \tfrac{1+t}{1-t} \right) \chi^2(t) \ud t \quad {\rm resp.} \quad \chi\eta_0^n(-\varepsilon) \int_{\varpi_{\F}^{n-n_0} \vO_{\F}^{\times}} \beta(1+t \sqrt{\varepsilon}) \chi^2(t) \ud t. $$
	We apply Lemma \ref{lem: JacTypeSRC} (1) to bound the above integrals and conclude.
	
\noindent (2) Consider $n=2n_0-1$. We have for any $\delta \in 1+\vP_{\F}^{2(n_0-1)}$
\begin{multline} \label{eq: DWtQEleF1Trans1Bis}
	\int_{\F-\vO_{\F}} \VorH_{\Pi,\psi} \circ \Mult_{-1} \left( \Trans \left( 4\delta \right).F_n \right)(t) \cdot \psi(-t) \chi^{-1}(t) \norm[t]^{-\frac{1}{2}} \ud^{\times} t = q^{-(2n_0-1)} \cdot \tau_0 q^{\frac{1}{2}} \cdot \\
	\begin{cases}
		\chi\eta(\varpi_{\F})^{2n_0-1} \cdot \id_0(\cond(\chi \eta_0))  & \text{if } \delta \in 1+\vP_{\F}^{2n_0-1} \\
		\chi^{-1} \eta_0 \left( \tfrac{\delta}{1-\delta} \right) \cdot \zeta_{\F}(1) \left\{ \gamma(1,\chi \eta_0,\psi) \id_{1}(\cond(\chi \eta_0)) - q^{-1} \id_0(\cond(\chi \eta_0)) \right\} & \text{if } \delta \in 1+\varpi_{\F}^{2(n_0-1)} \vO_{\F}^{\times}
	\end{cases}.
\end{multline}
	Inserting \eqref{eq: DWtQEleF1Trans1Bis} with $\delta = 4^{-1} \Tr(\alpha)^2$ into \eqref{eq: FPTestFNR1} we get
\begin{multline} \label{eq: h2n-BdRedNRBis}
	\widetilde{h}_{2n}^-(\chi) = \int_{\F-\vO_{\F}} \VorH_{\Pi,\psi} \circ \Mult_{-1} (H_{2n})(t) \cdot \psi(-t) \chi^{-1}(t) \norm[t]^{-\frac{1}{2}} \ud^{\times} t \ll \\
	\extnorm{\int_{\substack{\bL^1 \cap \vO_{\bL} \\ \Tr(\alpha) \in 2(1+\varpi_{\F}^{2(n_0-1)} \vO_{\F}^{\times})}} \beta(\alpha) \cdot \chi^{-1} \eta_0 \left( \tfrac{\Tr(\alpha)^2}{4-\Tr(\alpha)^2} \right) \ud \alpha} \cdot q^{-\frac{1}{2}} \id_{1}(\cond(\chi \eta_0)) + \\
	\extnorm{ \int_{\substack{\bL^1 \cap \vO_{\bL} \\ \Tr(\alpha) \in 2(1+\vP_{\F}^{2n_0-1})}} \beta(\alpha) \ud \alpha - \tfrac{\chi\eta_0(\varpi_{\F})^{-1}}{q-1} \int_{\substack{\bL^1 \cap \vO_{\bL} \\ \Tr(\alpha) \in 2(1+\varpi_{\F}^{2(n_0-1)} \vO_{\F}^{\times})}} \beta(\alpha) \ud \alpha } \cdot \id_0(\cond(\chi \eta_0)).
\end{multline}
	The first summand is bounded the same way as before. Note that $\Tr(\alpha) \in 2(1+\vP_{\F}^{2n_0-1})$ is equivalent to $\alpha \in 1+\vP_{\bL}^{n_0}$. With the same change of variables the inner integrals in the second summand become
	$$ 2q^{-n_0} - \tfrac{2\chi\eta_0(\varpi_{\F})^{-1}}{q-1} \int_{\varpi_{\F}^{n_0-1} \vO_{\F}^{\times}} \chi_0 \left( \tfrac{1+t}{1-t} \right) \ud t \quad {\rm resp.} \quad q^{-n_0} - \tfrac{\chi\eta_0(\varpi_{\F})^{-1}}{q-1} \int_{\varpi_{\F}^{n_0-1} \vO_{\F}^{\times}} \beta(1+t \sqrt{\varepsilon}) \ud t. $$
	They are of size $O(q^{-n_0})$ since the integrands are additive characters of conductor exponent $n_0$.
\end{proof}

\begin{lemma} \label{lem: e1DWt-FineBd2}
	Suppose $(2 \leq) a(\Pi) \leq m < n_0$. Then we have for unitary $\chi$ and $\tau \in \{ 1, \varepsilon \}$
	$$ \widetilde{h}_{2n_0,m}^{a,\tau}(\chi) := \int_{\F-\vO_{\F}} \VorH_{\Pi,\psi} \circ \Mult_{-1} \left( H_{2n_0,m}^{a,\tau} \right)(t) \psi(-t)\chi^{-1}(t)\norm[t]^{-\frac{1}{2}} \ud^{\times} t \ll q^{-2n_0} \id_{m > \frac{2n_0}{3}} \id_m(\cond(\chi)). $$
\end{lemma}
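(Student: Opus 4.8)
The plan is to mimic, stage for stage, the proof of Lemma~\ref{lem: e1DWt-FineBd1}. By its definition (recorded just before Lemma~\ref{lem: JacTypeSRC}, in the expansion \eqref{eq: FPTestFNR2}), $H_{2n_0,m}^{a,\tau}$ is a superposition over $\alpha\in\bL^1\cap\vO_{\bL}$ of translates $\Trans\!\big(\tau^{-1}\Tr(x_{\tau}\alpha)^2\big).F_m$, weighted by $\beta(x_{\tau}\alpha)$ and subject to $\Tr(x_{\tau}\alpha)\in\varpi_{\F}^{n_0-m}\vO_{\F}^{\times}$. So I would (i) compute $\int_{\F-\vO_{\F}}\VorH_{\Pi,\psi}\circ\Mult_{-1}\!\big(\Trans(\delta).F_m\big)(t)\,\psi(-t)\chi^{-1}(t)\norm[t]^{-1/2}\ud^{\times}t$ for a single such $\delta$; (ii) integrate over $\alpha$; and (iii) perform the rational change of variables in $\alpha$ to land on the oscillatory integrals of Lemma~\ref{lem: JacTypeSRC}~(1).

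For step (i): since $m\ge a(\Pi)$, Corollary~\ref{cor: VHQEleF1} gives $\VorH_{\Pi,\psi}\circ\Mult_{-1}(F_m)$ explicitly — supported on $\varpi_{\F}^{-m}\vO_{\F}^{\times}$, with leading constant of modulus $q^{-m/2}$ (uniformly over the parity of $m$, as $|\tau_0|=q^{-1/2}$ absorbs the extra $q^{1/2}$ in the odd-$m$ normalization) and phase $\psi(4t)$, resp.\ $\psi(4t)\eta_0(4t)$. Using the dilation equivariance of $\VorH_{\Pi,\psi}\circ\Mult_{-1}$ — immediate from the local functional equation, sending $\Trans(\delta)$ to $\norm[\delta]_{\F}\cdot\Trans(\delta^{-1})$ — one finds for $\delta=\delta_{\alpha}:=\tau^{-1}\Tr(x_{\tau}\alpha)^2$ (so $v_{\F}(\delta_{\alpha})=2(n_0-m)$) that $\VorH_{\Pi,\psi}\circ\Mult_{-1}\!\big(\Trans(\delta_{\alpha}).F_m\big)$ is supported on $\delta_{\alpha}\varpi_{\F}^{-m}\vO_{\F}^{\times}$, i.e.\ on $v_{\F}(t)=2n_0-3m$. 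This set meets $\F-\vO_{\F}$ only when $2n_0-3m<0$; hence for $m\le 2n_0/3$ the entire integrand sits in $\vO_{\F}$ and $\widetilde{h}_{2n_0,m}^{a,\tau}(\chi)=0$, which is the factor $\id_{m>\frac{2n_0}{3}}$. Assuming $m>2n_0/3$, substitute $t=\varpi_{\F}^{2n_0-3m}s$ with $s\in\vO_{\F}^{\times}$; using $\Tr(x_{\tau}\alpha)^2=4\tau+(x_{\tau}\alpha-\overline{x_{\tau}\alpha})^2$ from \eqref{eq: TauDecomp} one gets $v_{\F}(4\delta_{\alpha}^{-1}-1)=-2(n_0-m)$, hence $v_{\F}\!\big(t(4\delta_{\alpha}^{-1}-1)\big)=-m$, so the $s$-integral against $\psi(-t)\chi^{-1}(t)\norm[t]^{-1/2}$ is a Gauss sum of modulus $\asymp q^{-m/2}$ that vanishes unless $\cond(\chi)=m$ (using $\cond(\chi\eta_0)=m\Leftrightarrow\cond(\chi)=m$ for $m\ge2$), which explains the factor $\id_m(\cond(\chi))$. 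Collecting $\norm[\delta_{\alpha}]_{\F}=q^{-2(n_0-m)}$, the $q^{-m/2}$ from Corollary~\ref{cor: VHQEleF1}, $\norm[t]^{-1/2}=q^{(2n_0-3m)/2}$ and the $q^{-m/2}$ from the Gauss sum, the contribution of a single $\alpha$ has modulus $\asymp q^{-n_0-m/2}$ times $\beta(x_{\tau}\alpha)$ and a $\chi$-phase which, after unwinding the Gauss sum, equals $\chi^{2}$ of $\tfrac{x_{\tau}\alpha-\overline{x_{\tau}\alpha}}{\Tr(x_{\tau}\alpha)}$ up to a factor depending only on $\chi$ (compare \eqref{eq: DWtQEleF1Trans1}).

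For steps (ii)–(iii): with the change of variables $w=x_{\tau}\alpha\mapsto t=\tfrac{w^2-\tau}{w^2+\tau}$ in the split case, and its $\sqrt{\varepsilon}$-twisted analogue in the unramified case — now tailored to the constraint $\Tr(x_{\tau}\alpha)\in\varpi_{\F}^{n_0-m}\vO_{\F}^{\times}$, which forces $t\in\varpi_{\F}^{n_0-m}\vO_{\F}^{\times}$ — the $\alpha$-integral becomes a constant times
$$ q^{-(n_0-m)}\int_{\vO_{\F}^{\times}}\chi_0\!\left(\tfrac{1+\varpi_{\F}^{n_0-m}t}{1-\varpi_{\F}^{n_0-m}t}\right)\chi^{2}(t)\,\ud t $$
in the split case (resp.\ the $\beta(1+\varpi_{\F}^{n_0-m}t\sqrt{\varepsilon})$ version in the unramified case), the prefactor $q^{-(n_0-m)}$ being the volume of the rescaled domain. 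Since $\cond(\chi)=m<n_0$ forces $\cond(\chi^2)\le m<n_0$, Lemma~\ref{lem: JacTypeSRC}~(1) applies — with its exponent $n_0-n$ taken to be $n_0-m$ and the ambient $n_0$ there equal to our $n_0$ — and bounds the inner integral by $\ll q^{-\cond(\chi^2)/2}\ll q^{-m/2}$. Multiplying $q^{-n_0-m/2}$, $q^{-(n_0-m)}$ and $q^{-m/2}$ gives $q^{-2n_0}$, uniformly in $m$; reattaching $\id_{m>\frac{2n_0}{3}}$ and $\id_m(\cond(\chi))$ yields the claim. The two choices of $\tau$ and the parity of $m$ are handled identically: for odd $m$ the $\eta_0$-twist merely replaces $\chi$ by $\chi\eta_0$ in the Gauss sum and inserts an $\eta_0^{k}$ factor in the $\alpha$-integral, which is exactly the freedom $k\in\{0,1\}$ already built into Lemma~\ref{lem: JacTypeSRC}.

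The main obstacle is bookkeeping rather than anything conceptual. One must track the support translation under dilation by $\delta_{\alpha}$ — whose valuation $2(n_0-m)$ is now positive, in contrast to the analogous $\delta$ in Lemma~\ref{lem: e1DWt-FineBd1} — since this is precisely what creates the factor $\id_{m>\frac{2n_0}{3}}$; and one must propagate the conductors of $\chi$ and $\chi^2$ consistently through the Gauss sum and the rational substitution, so that Lemma~\ref{lem: JacTypeSRC}~(1) is applicable with parameters that make the three powers of $q$ telescope to $q^{-2n_0}$. A minor point is to check that the hypothesis $a(\Pi)\le m<n_0$ simultaneously places us in the stable range of Proposition~\ref{prop: StabRang} required by Corollary~\ref{cor: VHQEleF1} and inside the admissible range $0\le\cond(\chi^2)<n_0$ of Lemma~\ref{lem: JacTypeSRC}~(1).
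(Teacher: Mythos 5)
Your proposal reproduces the paper's argument faithfully: you derive the single-translate dual-weight formula from Corollary \ref{cor: VHQEleF1} (matching the paper's \eqref{eq: DWtQEleF1Trans2}, including the $\id_{m > 2n_0/3}$ support condition and the $\id_m(\cond(\chi))$ conductor detection), insert it into the $\alpha$-integral representation of $H_{2n_0,m}^{a,\tau}$ to get the prefactor $q^{-n_0-m/2}$, and then convert the $\alpha$-integral via a rational change of variables so that Lemma \ref{lem: JacTypeSRC} (1) applies and the three $q$-powers telescope to $q^{-2n_0}$ — exactly the paper's route. One small slip: your substitution $t=(w^2-\tau)/(w^2+\tau)=(w-\bar{w})/\Tr(w)$ is the reciprocal of the paper's $t=\Tr(w)/(w-\bar{w})$ and forces $t\in\varpi_{\F}^{-(n_0-m)}\vO_{\F}^{\times}$ rather than $\varpi_{\F}^{n_0-m}\vO_{\F}^{\times}$; after rescaling to $\vO_{\F}^{\times}$ the oscillating factor then reads $\chi^{-2}(t)$ rather than $\chi^{2}(t)$, but since only $\cond(\chi^{\pm2})=m$ and the absolute value enter, this does not affect the final bound.
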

\begin{proof}
	We first use Corollary \ref{cor: VHQEleF1} to obtain for any $\delta \in \varpi_{\F}^{2(n_0-m)} \vO_{\F}^{\times}$ (note that $\eta_0(4-\delta)=1$)
\begin{multline} \label{eq: DWtQEleF1Trans2}
	\int_{\F-\vO_{\F}} \VorH_{\Pi,\psi} \circ \Mult_{-1} \left( \Trans \left( \delta \right).F_m \right)(t) \cdot \psi(-t) \chi^{-1}(t) \norm[t]^{-\frac{1}{2}} \ud^{\times} t \\
	= q^{-n_0} \zeta_{\F}(1) \id_{m > \frac{2n_0}{3}} \cdot \begin{cases}
		\chi \left( \tfrac{4-\delta}{\delta} \right) \cdot \gamma(1,\chi,\psi) \id_{m}(\cond(\chi)) & \text{if } 2 \mid m \\
		\chi \left( \tfrac{4-\delta}{\delta} \right) \cdot \tau_0 q^{\frac{1}{2}} \gamma(1,\chi\eta_0,\psi) \id_{m}(\cond(\chi)) & \text{if } 2 \nmid m
	\end{cases}.
\end{multline}
	
\noindent Inserting \eqref{eq: DWtQEleF1Trans2} with $\delta = \tau^{-1} \Tr(x_{\tau} \alpha)^2$ into the integral representation of $H_{2n_0,m}^{a,\tau}$ we get
\begin{equation} \label{eq: h2n0-BdRed1}
	\widetilde{h}_{2n_0,m}^{a,\tau}(\chi) \ll \id_{m > \frac{2n_0}{3}} \cdot \extnorm{ \int_{\substack{\bL^1 \cap \vO_{\bL} \\ \Tr(x_{\tau} \alpha) \in \varpi_{\F}^{n_0-m} \vO_{\F}^{\times}}} \beta(x_{\tau} \alpha) \cdot \chi \left( \tfrac{4 \tau -\Tr(x_{\tau} \alpha)^2}{\Tr(x_{\tau} \alpha)^2} \right) \ud \alpha } \cdot q^{-n_0-\frac{m}{2}} \id_{m}(\cond(\chi)).
\end{equation}
	In the case of split, resp. unramified $\bL/\F$ we apply the change of variables $t = \tfrac{\alpha + \alpha^{-1}}{\alpha - \alpha^{-1}}$, resp. $t = \tfrac{\alpha + \alpha^{-1}}{\alpha - \alpha^{-1}} \tfrac{1}{\sqrt{\varepsilon}}$ for $\widetilde{h}_{2n_0,m}^{a,1}(\chi)$; $t = \tfrac{\alpha + \varepsilon \alpha^{-1}}{\alpha - \varepsilon \alpha^{-1}}$, resp. $t = \tfrac{x_{\varepsilon}\alpha + \overline{x_{\varepsilon}\alpha}}{x_{\varepsilon}\alpha - \overline{x_{\varepsilon}\alpha}} \tfrac{1}{\sqrt{\varepsilon}}$ for $\widetilde{h}_{2n_0,m}^{a,\varepsilon}(\chi)$. The inner integrals in \eqref{eq: h2n0-BdRed1} become
	$$ 2 \chi_0\chi(-1) \int_{\varpi_{\F}^{n_0-m} \vO_{\F}^{\times}} \chi_0 \left( \tfrac{1+t}{1-t} \right) \chi^{-2}(t) \ud t \quad {\rm resp.} \quad 2\beta(\sqrt{\varepsilon})\chi^{-1}(-\varepsilon) \int_{\varpi_{\F}^{n_0-m} \vO_{\F}^{\times}} \beta(1+t \sqrt{\varepsilon}) \chi^{-2}(t) \ud t. $$
	We apply Lemma \ref{lem: JacTypeSRC} (1) to bound the above integrals and conclude the desired inequalities.
\end{proof}

\begin{lemma} \label{lem: e1DWt-FineBd3}
	Suppose $n_0 \geq a(\Pi) (\geq 2)$. Let $\tau \in \{ 1, \varepsilon \}$. With $\Ecp(\beta)$ defined in Lemma \ref{lem: JacTypeSRC} (2) we have
\begin{multline*} 
	\widetilde{h}_{2n_0,n_0}^{b,\tau}(\chi) := \int_{\F-\vO_{\F}} \VorH_{\Pi,\psi} \circ \Mult_{-1} \left( H_{2n_0,n_0}^{b,\tau} \right)(t) \psi(-t)\chi^{-1}(t)\norm[t]^{-\frac{1}{2}} \ud^{\times} t \\
	\ll q^{-2n_0} \id_{n_0}(\cond(\chi)) \left( \id_{\Ecp(\beta)^c}(\chi^2) + q^{\frac{1}{2}} \id_{\Ecp(\beta)}(\chi^2) \right).
\end{multline*}
\end{lemma}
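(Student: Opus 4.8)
The plan is to reproduce, step for step, the three-part argument of Lemma~\ref{lem: e1DWt-FineBd2}. The only structural novelty is that the quadratic elementary function $F_{n_0}$ is now translated by a \emph{generic unit} $\delta\in\vO_{\F}^\times$ — namely $\delta=\tau^{-1}\Tr(x_\tau\alpha)^2$ — rather than by an element of $\varpi_{\F}^{2(n_0-m)}\vO_{\F}^\times$; in particular the quadratic sign $\eta_0(4-\delta)$ is no longer forced to equal $1$, and this is exactly what produces the exceptional set $\Ecp(\beta)$.

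First I would prove the analogue of \eqref{eq: DWtQEleF1Trans2} for a unit translation parameter. Since $\delta$ is a unit, $\Mult_{-1}$ commutes with $\Trans(\delta)$ and reduces to multiplication by $q^{-2n_0}$ on the single-valuation support of $F_{n_0}$, so by Corollary~\ref{cor: VHQEleF1} the function $\VorH_{\Pi,\psi}\circ\Mult_{-1}(\Trans(\delta).F_{n_0})$ is supported on $\varpi_{\F}^{-n_0}\vO_{\F}^\times$ and there equals a constant multiple of $\psi(4t\delta^{-1})$ (times $\eta_0(4t\delta^{-1})$ when $2\nmid n_0$). Integrating this against $\psi(-t)\chi^{-1}(t)\norm[t]^{-1/2}$, evaluating the resulting Gauss sum via \cite[Exercise 23.5]{BuH06}, and using the stability of the gamma factors (Proposition~\ref{prop: StabRang}, available since $n_0\geq a(\Pi)$) gives, for every $\delta\in\vO_{\F}^\times$ with $v(4-\delta)=0$,
\begin{multline*}
\int_{\F-\vO_{\F}}\VorH_{\Pi,\psi}\circ\Mult_{-1}(\Trans(\delta).F_{n_0})(t)\,\psi(-t)\chi^{-1}(t)\norm[t]^{-\frac12}\,\ud^\times t \\
= q^{-n_0}\zeta_{\F}(1)\cdot\begin{cases}\chi\!\left(\tfrac{4-\delta}{\delta}\right)\gamma(1,\chi,\psi)\,\id_{n_0}(\cond(\chi)) & 2\mid n_0,\\[1mm]\eta_0(4-\delta)\,\chi\eta_0\!\left(\tfrac{4-\delta}{\delta}\right)\tau_0 q^{\frac12}\gamma(1,\chi\eta_0,\psi)\,\id_{n_0}(\cond(\chi\eta_0)) & 2\nmid n_0,\end{cases}
\end{multline*}
which is \eqref{eq: DWtQEleF1Trans2} verbatim except that here $\eta_0(4-\delta)$ need not be $1$; its modulus is $\ll q^{-3n_0/2}\,\id_{n_0}(\cond(\chi))$ in both parities.

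Next I would insert $\delta=\tau^{-1}\Tr(x_\tau\alpha)^2$ into the integral representation of $H_{2n_0,n_0}^{b,\tau}$ and interchange the $\alpha$- and $t$-integrations. By \eqref{eq: TauDecomp}, $4-\delta=-\tau^{-1}(x_\tau\alpha-\overline{x_\tau\alpha})^2$, so $\tfrac{4-\delta}{\delta}=-\big(\Tr(x_\tau\alpha)\big)^{-2}(x_\tau\alpha-\overline{x_\tau\alpha})^2$ is minus a square of $\F^\times$; since $\eta_0$ is quadratic, $\eta_0(4-\delta)$ and $\eta_0\big(\tfrac{4-\delta}{\delta}\big)$ become \emph{constants} in $\alpha$ (equal to $\eta_0(-\tau)$ times $1$ resp.\ $\eta_0(\varepsilon)$ according as $\bL/\F$ is split or unramified), while $\chi$ (resp.\ $\chi\eta_0$) evaluated at $\tfrac{4-\delta}{\delta}$ collapses, up to such constants, to $\chi^{\pm2}$ of $\tfrac{x_\tau\alpha+\overline{x_\tau\alpha}}{x_\tau\alpha-\overline{x_\tau\alpha}}$. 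Hence $\widetilde{h}_{2n_0,n_0}^{b,\tau}(\chi)$ is $\ll q^{-3n_0/2}\,\id_{n_0}(\cond(\chi))$ times the absolute value of a Jacobi-type sum over $\bL^1\cap\vO_{\bL}$, with the constraint on $\Tr(x_\tau\alpha)$ dictated by the definition of $H_{2n_0,n_0}^{b,\tau}$. Applying the substitution $t=\tfrac{x_\tau\alpha+\overline{x_\tau\alpha}}{x_\tau\alpha-\overline{x_\tau\alpha}}$ of Lemma~\ref{lem: e1DWt-FineBd2} (divided by $\sqrt\varepsilon$ in the unramified case) converts this sum into an integral over $\vO_{\F}^\times$ of exactly the form controlled by Lemma~\ref{lem: JacTypeSRC}~(2), with the character there replaced by $\chi^{\pm2}$ (of conductor $n_0$ since $n_0\geq2$). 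By that lemma the integral is $\ll q^{-n_0/2}$ if $\chi^2\notin\Ecp(\beta)$ and $\ll q^{-n_0/2+1/2}$ if $\chi^2\in\Ecp(\beta)$, the gain occurring only when $2\nmid n_0$ and $\eta_0(-1)=\varepsilon_{\bL}$, i.e.\ precisely when $\Ecp(\beta)\neq\emptyset$; multiplying the bounds together yields the asserted estimate.

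The main obstacle will be the bookkeeping in the first two steps: tracking $\eta_0(4-\delta)$ faithfully through the translation and the Gauss-sum evaluation, and then verifying via \eqref{eq: TauDecomp} that after substituting $\delta=\tau^{-1}\Tr(x_\tau\alpha)^2$ the phase factors of the first step reassemble into exactly the Jacobi sum governed by Lemma~\ref{lem: JacTypeSRC}~(2) — so that the $q^{1/2}$-loss on $\Ecp(\beta)$ is inherited from that lemma rather than being a genuinely new phenomenon. The distinctions $\tau=1$ vs.\ $\tau=\varepsilon$ and $\bL/\F$ split vs.\ unramified affect only harmless constants and the precise $\alpha$-domain, exactly as in Lemma~\ref{lem: e1DWt-FineBd2}.
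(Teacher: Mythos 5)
Your proposal follows the paper's proof essentially step for step: first derive the analogue of \eqref{eq: DWtQEleF1Trans2} for a unit translation parameter $\delta$ (this is the paper's \eqref{eq: DWtQEleF1Trans3}), then substitute $\delta=\tau^{-1}\Tr(x_\tau\alpha)^2$ and interchange the integrations, use \eqref{eq: TauDecomp} and the change of variables $t=\tfrac{x_\tau\alpha+\overline{x_\tau\alpha}}{x_\tau\alpha-\overline{x_\tau\alpha}}$ (divided by $\sqrt{\varepsilon}$ in the unramified case) to reassemble the phases into the Jacobi-type integral of Lemma \ref{lem: JacTypeSRC}~(2), and finally invoke that lemma with $\chi^{\pm 2}$ in place of $\chi$ to import the $q^{1/2}$-dichotomy governed by $\Ecp(\beta)$. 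One small inaccuracy worth flagging: in the odd-$n_0$ branch of your step-one formula you wrote $\eta_0(4-\delta)\,\chi\eta_0\!\left(\tfrac{4-\delta}{\delta}\right)$ where the correct phase (obtained by tracking $\eta_0(4y\delta^{-1})$ through the Gauss integral) is $\eta_0(4-\delta)\,\chi\!\left(\tfrac{4-\delta}{\delta}\right)$; the two differ by $\eta_0(\delta(4-\delta))$, which however becomes the $\alpha$-independent constant $\eta_0(-1)$ once $\delta=\tau^{-1}\Tr(x_\tau\alpha)^2$ is substituted via \eqref{eq: TauDecomp}, so the discrepancy is absorbed into the implicit constant and the final bound is unaffected. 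Likewise, $\id_{n_0}(\cond(\chi\eta_0))$ and $\id_{n_0}(\cond(\chi))$ agree since $n_0\geq 2$, so that substitution is also harmless.
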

\begin{proof}
	We first use Corollary \ref{cor: VHQEleF1} to obtain for any $\delta \in \vO_{\F}^{\times}-4(1+\vP_{\F})$
\begin{multline} \label{eq: DWtQEleF1Trans3}
	\int_{\F-\vO_{\F}} \VorH_{\Pi,\psi} \circ \Mult_{-1} \left( \Trans \left( \delta \right).F_m \right)(t) \cdot \psi(-t) \chi^{-1}(t) \norm[t]^{-\frac{1}{2}} \ud^{\times} t \\
	= q^{-n_0} \zeta_{\F}(1) \cdot \begin{cases}
		\chi \left( \tfrac{4-\delta}{\delta} \right) \cdot \gamma(1,\chi,\psi) \id_{n_0}(\cond(\chi)) & \text{if } 2 \mid n_0 \\
		\eta_0(4-\delta)\chi \left( \tfrac{4-\delta}{\delta} \right) \cdot \tau_0 q^{\frac{1}{2}} \gamma(1,\chi\eta_0,\psi) \id_{n_0}(\cond(\chi)) & \text{if } 2 \nmid n_0
	\end{cases}.
\end{multline}

\noindent Inserting \eqref{eq: DWtQEleF1Trans3} with $\delta = \tau^{-1} \Tr(x_{\tau} \alpha)^2$ into the integral representation of $H_{2n_0,n_0}^{b,\tau}$ we get
\begin{equation} \label{eq: h2n0-BdRed3}
	\widetilde{h}_{2n_0,n_0}^{b,1}(\chi) \ll \extnorm{ \int_{\substack{\bL^1 \cap \vO_{\bL} \\ \Tr(\alpha)^2 \in \vO_{\F}^{\times} - 4(1+\vP_{\F})}} \beta(\alpha) \cdot \chi \left( \tfrac{4-\Tr(\alpha)^2}{\Tr(\alpha)^2} \right) \eta_0^{n_0} \left( 4 - \Tr(\alpha)^2 \right) \ud \alpha } \cdot q^{-\frac{3n_0}{2}} \id_{n_0}(\cond(\chi)).
\end{equation}
\begin{equation} \label{eq: h2n0-BdRed4}
	\widetilde{h}_{2n_0,n_0}^{b,\varepsilon}(\chi) \ll \extnorm{ \int_{\substack{\bL^1 \cap \vO_{\bL} \\ \Tr(x_{\varepsilon} \alpha) \in \vO_{\F}^{\times}}} \beta(x_{\varepsilon} \alpha) \cdot \chi \left( \tfrac{4 \tau -\Tr(x_{\varepsilon} \alpha)^2}{\Tr(x_{\varepsilon} \alpha)^2} \right) \eta_0^{n_0} \left( 4 - \tau^{-1} \Tr(x_{\varepsilon} \alpha)^2 \right) \ud \alpha } \cdot q^{-\frac{3n_0}{2}} \id_{n_0}(\cond(\chi)).
\end{equation}
	In the case of split, resp. unramified $\bL/\F$ we apply the change of variables $t = \tfrac{\alpha + \alpha^{-1}}{\alpha - \alpha^{-1}}$, resp. $t = \tfrac{\alpha + \alpha^{-1}}{\alpha - \alpha^{-1}} \tfrac{1}{\sqrt{\varepsilon}}$ for $\widetilde{h}_{2n_0,n_0}^{b,1}(\chi)$; $t = \tfrac{\alpha + \varepsilon \alpha^{-1}}{\alpha - \varepsilon \alpha^{-1}}$, resp. $t = \tfrac{x_{\varepsilon}\alpha + \overline{x_{\varepsilon}\alpha}}{x_{\varepsilon}\alpha - \overline{x_{\varepsilon}\alpha}} \tfrac{1}{\sqrt{\varepsilon}}$ for $\widetilde{h}_{2n_0,n_0}^{b,\varepsilon}(\chi)$. The inner integrals in \eqref{eq: h2n0-BdRed3} and \eqref{eq: h2n0-BdRed4}, denoted by $I_1$ and $I_{\varepsilon}$ respectively, become
\begin{multline*} 
	I_1 = I_{\varepsilon} = 2 \chi_0\chi(-1) \int_{\vO_{\F}^{\times} - \sideset{}{_{\pm}} \bigcup (\pm 1+\vP_{\F})} \chi_0 \left( \tfrac{1+t}{1-t} \right) \chi^{-2}(t) \eta_0^{n_0}(1-t^2) \ud t \quad {\rm resp.} \\
	\begin{cases} 
		I_1 = 2\beta(\sqrt{\varepsilon})\chi^{-1}(-\varepsilon) \eta_0^{n_0}(-\varepsilon) \int_{\substack{\vO_{\F}^{\times} \\ 1-\varepsilon t^2 \in -\varepsilon (\vO_{\F}^{\times})^2}} \beta(1+t \sqrt{\varepsilon}) \chi^{-2}(t) \ud t \\
		I_{\varepsilon} = 2\beta(\sqrt{\varepsilon})\chi^{-1}(-\varepsilon) \eta_0^{n_0}(-1) \int_{\substack{\vO_{\F}^{\times} \\ 1-\varepsilon t^2 \in - (\vO_{\F}^{\times})^2}} \beta(1+t \sqrt{\varepsilon}) \chi^{-2}(t) \ud t
	\end{cases}.
\end{multline*}
	We apply Lemma \ref{lem: JacTypeSRC} (2) to bound the above integrals and conclude the desired inequalities.
\end{proof}

\begin{lemma} \label{lem: e1DWt-FineBd4}
	Let $n_0 \geq \max(2\cond(\Pi), \cond(\Pi)+2)$ and $m \leq \cond(\Pi)$. Then for any $\delta \in \varpi_{\F}^{2(n_0-m)}\vO_{\F}^{\times}$ we have
	$$ \int_{\F-\vO_{\F}} \VorH_{\Pi,\psi} \circ \Mult_{-1} \left( \Trans(\delta).F_m \right)(t) \cdot \psi(-t)\chi^{-1}(t)\norm[t]^{-\frac{1}{2}} \ud^{\times} t = 0. $$
	Consequently, we get for $\tau \in \{ 1,\varepsilon \}$ and any unitary $\chi$ the vanishing of $\widetilde{h}_{2n_0,m}^{a,\tau}(\chi) = 0$.
\end{lemma}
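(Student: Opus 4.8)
The plan is to prove that the function $g:=\VorH_{\Pi,\psi}\circ\Mult_{-1}(\Trans(\delta).F_m)$ is supported inside $\vO_{\F}$. Granting this, the displayed integral vanishes trivially, since its integrand is supported in $\F-\vO_{\F}$; and the ``consequently'' part follows by linearity, because the integral representation of $H_{2n_0,m}^{a,\tau}$ (see the formulas preceding Lemma \ref{lem: JacTypeSRC}) exhibits it as an integral, over a coset inside the compact set $\bL^1\cap\vO_{\bL}$, of the functions $\Trans\bigl(\tau^{-1}\Tr(x_{\tau}\alpha)^2\bigr).F_m$, and $\tau^{-1}\Tr(x_{\tau}\alpha)^2\in\varpi_{\F}^{2(n_0-m)}\vO_{\F}^{\times}$ on that coset (in the split or unramified case, to which this section is devoted, $\tau$ is a unit).

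To locate the support of $g$ I would compute its Mellin transform in $X=q^s$ and check that only non-negative powers of $X$ occur, which — by the same bookkeeping on $\F^{\times}$ already used to treat $\widetilde{h}_n^+$ — is equivalent to $g$ being supported in $\vO_{\F}$. By the local functional equation (cf.\ \eqref{eq: DWtFLnLF}),
\[
\int_{\F^{\times}}g(t)\,\chi^{-1}(t)\norm[t]^{-s}\,\ud^{\times}t=\gamma(s,\Pi\otimes\chi,\psi)\cdot\int_{\F^{\times}}\Mult_{-1}(\Trans(\delta).F_m)(y)\,\chi(y)\norm[y]^{s}\,\ud^{\times}y .
\]
Stripping the translation by $y\mapsto y\delta^{-1}$ (note $\norm[\delta]_{\F}=q^{-2(n_0-m)}$), passing to squares, and applying Lemma \ref{lem: MellinQEleF1}, the second factor is seen to equal $c_{\chi}\cdot\id_{\cond(\chi^2)=m}\cdot X^{2n_0}$ with $c_{\chi}\neq0$ (of modulus a power of $q$). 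Hence the left-hand side equals $c_{\chi}\,\id_{\cond(\chi^2)=m}\,X^{2n_0-\cond(\Pi\otimes\chi)}\,L(1-s,\widetilde{\Pi}\otimes\chi^{-1})/L(s,\Pi\otimes\chi)$, which has exactly the shape of the right-hand side of \eqref{eq: VHTestFLevelnMellin} with $n$ replaced by $2n_0$; expanding $L(s,\Pi\otimes\chi)^{-1}=\prod_j(1-a_jX^{-1})$ and the power series $L(1-s,\widetilde{\Pi}\otimes\chi^{-1})=\prod_j(1-\overline{a_j}q^{-1}X)^{-1}$ (all poles at $\norm[X]\geq q$ by temperedness), the lowest power of $X$ occurring is $X^{2n_0-\condL(\Pi\otimes\chi)}$. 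So it suffices to prove $2n_0\geq\condL(\Pi\otimes\chi)$ whenever $\cond(\chi^2)=m$.

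This last arithmetic comparison is the one place the numerology enters. Since the residual characteristic is not $2$, the condition $\cond(\chi^2)=m$ forces $\cond(\xi)=m$ for $\xi:=\chi\mid_{\vO_{\F}^{\times}}$ when $m\geq1$, and $\cond(\xi)\leq1$ when $m=0$; moreover $\condL(\Pi\otimes\chi)=\condL(\Pi\otimes\xi)$ depends only on $\xi$. If $\xi\in\mathrm{E}(\Pi)$, Lemma \ref{lem: ExpCharBds} gives $\condL(\Pi\otimes\xi)\leq2\cond(\Pi)+3\leq2(\cond(\Pi)+2)\leq2n_0$. If $\xi\notin\mathrm{E}(\Pi)$ then $d(\Pi\otimes\xi)=0$, so $\condL(\Pi\otimes\xi)=\cond(\Pi\otimes\xi)\leq\cond(\Pi)+3\cond(\xi)$; for $m\geq1$ this is $\leq4\cond(\Pi)\leq2n_0$ using $\cond(\xi)=m\leq\cond(\Pi)$ and $n_0\geq2\cond(\Pi)$, while for $m=0$ it is $\leq\cond(\Pi)+3\leq2n_0$ (if $\cond(\Pi)\geq1$ then $4\cond(\Pi)\geq\cond(\Pi)+3$, and if $\cond(\Pi)\leq1$ then $n_0\geq2$). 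Thus both hypotheses $n_0\geq2\cond(\Pi)$ and $n_0\geq\cond(\Pi)+2$ are needed, and this completes the reduction. The main obstacle is therefore not conceptual but the careful tracking of the functional equation together with this calibrated arithmetic; the one mild technical point is ensuring the Laurent expansion is taken around $X=0$, which is legitimate because $g$ is an honest $\intL^1$-function on $\F^{\times}$ whose Mellin coefficients decay geometrically as $v_{\F}(t)\to+\infty$, the tail being governed by $L(1-s,\widetilde{\Pi}\otimes\chi^{-1})$.
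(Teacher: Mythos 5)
Your proposal is correct and follows essentially the same route as the paper: both pass to the Mellin transform via the local functional equation, observe that (after the translation by $\delta$, with $\norm[\delta]_{\F}=q^{-2(n_0-m)}$) the lowest power of $X=q^s$ in the resulting Laurent series is $X^{2n_0-\condL(\Pi\otimes\chi)}$, and then verify $2n_0\geq\condL(\Pi\otimes\chi)$ by the same numerology — Lemma \ref{lem: ExpCharBds} for $\xi\in\mathrm{E}(\Pi)$ and the bound $\cond(\Pi\otimes\xi)\leq\cond(\Pi)+3\cond(\xi)$ for $\xi\notin\mathrm{E}(\Pi)$. The paper computes the Mellin transform of $\VorH_{\Pi,\psi}(F_m)$ first and only at the end incorporates the $\delta$-translation into the support statement, whereas you carry $\delta$ through from the start; this is purely presentational. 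Two cosmetic caveats: your displayed formula $c_{\chi}\cdot\id_{\cond(\chi^2)=m}\cdot X^{2n_0}$ is exact for $m\geq 2$ (the case the paper restricts to ``for simplicity''), while for $m\in\{0,1\}$ Lemma \ref{lem: MellinQEleF1} contributes a term at $\cond(\chi^2)=0$ as well — your subsequent case analysis ($\cond(\xi)\leq 1$) already covers this, so no gap results; and the parenthetical ``$c_\chi\neq0$'' should read ``$c_\chi$ has modulus a power of $q$ when it is non-zero,'' since the vanishing is precisely what the indicator tracks.
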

\begin{proof}
	For simplicity we only consider the case $m \geq 2$. By the local functional equation and Lemma \ref{lem: MellinQEleF1} we have (recall the parameters of $L(s,\Pi \otimes \xi)$ introduced in \eqref{eq: VHTestFLevelnMellin})
\begin{multline*} 
	\int_{\F^{\times}} \VorH_{\Pi,\psi}(F_m)(t) \xi^{-1}(t) \norm[t]^{-s} \ud^{\times} t = \id_{\cond(\xi)=m} \cdot \zeta_{\F}(1) \gamma \left( 1, \xi^{-2}, \psi \right) \varepsilon \left( \tfrac{1}{2}, \Pi \otimes \xi, \psi \right) q^{\frac{\cond(\Pi \otimes \xi)}{2}} \\
	\cdot q^{(2m-\condL(\Pi \otimes \xi))s} \sideset{}{_{j=1}^{d(\Pi \otimes \xi)}} \prod \frac{q^s-a_j}{1-\overline{a_j}q^{-1+s}}.
\end{multline*}
	As a Laurent series in $X:=q^s$, the right hand side contains $X^k$ only for $k \geq 2m-\condL(\Pi \otimes \xi)$. We have
	$$ \condL(\Pi \otimes \xi) \begin{cases}
		= \cond(\Pi \otimes \xi) \leq \cond(\Pi)+3\cond(\xi) \leq 4\cond(\Pi) & \text{if } \xi \notin E(\Pi) \\
		\leq 2\cond(\Pi)+3 & \text{if } \xi \in E(\Pi)
	\end{cases} $$
by \cite{BuH97} and Lemma \ref{lem: ExpCharBds} respectively. Therefore the support of $\VorH_{\Pi,\psi} \circ \Mult_{-1} \left( \Trans(\delta).F_m \right)$ is contained in
	$$ \delta \vP_{\F}^{2m-\max(4\cond(\Pi), 2\cond(\Pi)+3)} = \vP_{\F}^{2n_0-\max(4\cond(\Pi), 2\cond(\Pi)+3)} \subset \vO_{\F}, $$
proving the desired vanishing.
\end{proof}

	\subsection{The Bounds of Dual Weight}
	
\begin{lemma} \label{lem: h2-Bdn0=1}
	Suppose $\cond(\Pi)=0$ and $n_0=1$. Then we have $\widetilde{h}_2^-(\chi) \leq q^{-\frac{3}{2}} \id_{\cond(\chi)=0} + q^{-1} \id_{\cond(\chi)=1}$.
\end{lemma}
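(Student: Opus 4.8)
The plan is to unwind $\widetilde{h}_2^-(\chi)$ through the ``quadratic elementary'' building blocks $F_0$ and $F_1$. Since $\cond(\Pi)=0$ forces $\Pi=\mu_1\boxplus\mu_2\boxplus\mu_3$ with unramified $\mu_i$ and $\mu_1\mu_2\mu_3=\id$, while $\cond(\beta)=n_0=1$ forces $e=1$ (the ramified case requires $2\mid n_0$, cf.\ the Claim in Lemma \ref{lem: CondReInterp}), so $\bL/\F$ is split or unramified and $\cond(\pi_0)=2$; moreover the stability barrier is $a(\Pi)=2$, so one may take $n_1=2$, whence $H_c=H_2=H_{2n_0}\mid_{n_0=1}$. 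First I would expand $H_2$ by \eqref{eq: FPTestFNR2}--\eqref{eq: FPTestFNR2Bis}, which at $n_0=1$ reads
\[
H_2=\tfrac{\varepsilon_{\bL}q}{2}\,\id_{\eta_0(-1)=\varepsilon_{\bL}}H_{2,0}^{a,1}+\tfrac{\varepsilon_{\bL}q}{2}\,\id_{\eta_0(-1)=-\varepsilon_{\bL}}H_{2,0}^{a,\varepsilon}+\tfrac{\varepsilon_{\bL}q}{2}\left(H_{2,1}^{b,\varepsilon}+H_{2,1}^{b,1}\right),
\]
so that $H_2$ is a linear combination of one translate $\Trans(\tau^{-1}\varpi_{\F}^{2}).F_0$, whose coefficient is $\tfrac{\varepsilon_{\bL}q}{2}$ times a $\beta$-integral over a subset of $\bL^1\cap\vO_{\bL}$ of measure $O(q^{-1})$ (hence $O(1)$), together with two $\beta$-averages over $\alpha$ of translates $\Trans(\tau^{-1}\Tr(x_\tau\alpha)^2).F_1$ with $\Tr(x_\tau\alpha)\in\vO_{\F}^{\times}$, again carrying the prefactor $\tfrac{\varepsilon_{\bL}q}{2}$.

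I would then record two elementary compatibilities, both obtained by comparing Mellin transforms through the defining functional equations: $\VorH_{\Pi,\psi}\circ\Mult_{-1}(F_n)=q^{-2n}\,\VorH_{\Pi,\psi}(F_n)$ (because $F_n$ is supported in valuation $-2n$), and
\[
\left(\VorH_{\Pi,\psi}\circ\Mult_{-1}(\Trans(\delta).\phi)\right)(t)=\norm[\delta]_{\F}\cdot\left(\VorH_{\Pi,\psi}\circ\Mult_{-1}(\phi)\right)(t\delta^{-1}).
\]
Feeding the explicit formulas of Corollary \ref{cor: VHQEleF1Bis} for $\VorH_{\Pi,\psi}(F_0)$ and $\VorH_{\Pi,\psi}(F_1)$ through these (this is why the hypothesis $\cond(\Pi)=0$ enters: Corollary \ref{cor: VHQEleF1} is unavailable, as $0,1<a(\Pi)$), one checks that $\VorH_{\Pi,\psi}\circ\Mult_{-1}(H_2)$, restricted to $\F-\vO_{\F}$, is supported on $\varpi_{\F}^{-1}\vO_{\F}^{\times}$; and on that single shell its $\vO_{\F}^{\times}$-isotypic component attached to $\xi$ vanishes once $\cond(\xi)\ge 2$ (by the conductor constraint of Lemma \ref{lem: MellinQEleF1}), the surviving components being of size $O(q^{-1})$ — with extra savings: the $F_0$-part contributes an $\eta_0$-isotypic term of size $\asymp\tau_0\,q^{-1}$, where $\tau_0$ is the quadratic Gauss sum of Definition \ref{def: QGSum}, while the $F_1$-part contributes $\eta_0$- and unramified components of sizes $\asymp q^{-3/2}$ and $\asymp q^{-3}$.

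Since $\psi$ is trivial on $\vO_{\F}$, the support computation gives
\[
\widetilde{h}_2^-(\chi)=q^{-1/2}\chi(\varpi_{\F})\int_{\vO_{\F}^{\times}}\left(\VorH_{\Pi,\psi}\circ\Mult_{-1}(H_2)\right)(\varpi_{\F}^{-1}v)\,\psi(-\varpi_{\F}^{-1}v)\,\chi^{-1}(v)\,\ud^{\times}v,
\]
which by the isotypic decomposition is $q^{-1/2}$ times a finite sum over $\xi$ of (isotypic coefficient)$\times\int_{\vO_{\F}^{\times}}(\xi\chi^{-1})(v)\,\psi(-\varpi_{\F}^{-1}v)\,\ud^{\times}v$; this last Gauss integral is $O(q^{-1/2})$ if $\cond(\xi\chi^{-1})=1$, $O(q^{-1})$ if $\cond(\xi\chi^{-1})=0$, and $0$ otherwise, so only $\cond(\chi)\le 1$ contribute. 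For the $F_0$-part the isotypic coefficient is $O(\tau_0)=O(q^{-1/2})$, giving a contribution $\le q^{-1/2}\cdot q^{-1/2}\cdot q^{-1/2}=q^{-3/2}$ when $\cond(\chi)=0$ and $\le q^{-1}$ when $\cond(\chi)=1$ (here $\cond(\chi^2)=0$ is forced by Lemma \ref{lem: MellinQEleF1} at $n=0$). For the $F_1$-part I would exchange the $\alpha$- and $t$-integrations and insert Corollary \ref{cor: VHQEleF1Bis}'s formula for $\VorH_{\Pi,\psi}(F_1)$ on $\varpi_{\F}^{-1}\vO_{\F}^{\times}$: its two ``constant'' pieces pull out a $\beta$-integral of size $O(q^{-1})$ against the same Gauss integral, again bounded by $q^{-3/2}\id_{\cond(\chi)=0}+q^{-1}\id_{\cond(\chi)=1}$, and the remaining Gauss-sum/Sali\'e-type piece is treated as below.

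The hard part is exactly this last, Sali\'e-type contribution of $F_1$. After changing the order of integration it is governed by a multi-variable exponential sum — a $\psi$-twist of the one appearing in the proof of Corollary \ref{cor: VHQEleF1Bis}, further mixed with the oscillation of $\beta$ along $\Tr(x_\tau\alpha)$ — whose trivial bound (triangle inequality over the $\vO_{\F}^{\times}$-isotypic decomposition) loses a factor $q^{1/2}$ and must be improved by genuine cancellation; this is the $n_0=1$ analogue of Lemma \ref{lem: e1DWt-FineBd1}, which cannot be quoted directly since those estimates assume $n_0\ge 2$. In parallel one must carry the quadratic Gauss sum $\tau_0$, the Weil index $\lambda(\bL/\F,\psi)$, and the Haar-measure normalizations on $\bL^1$ from Proposition \ref{prop: MeasNorm} through the whole computation, so that the final expression has absolute value \emph{exactly} $\le q^{-3/2}$ resp.\ $\le q^{-1}$ rather than merely $O(\cdot)$ of these.
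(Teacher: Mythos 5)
Your setup follows the paper's proof quite closely: you correctly reduce to the two building blocks $F_0,F_1$ via \eqref{eq: FPTestFNR2}--\eqref{eq: FPTestFNR2Bis}, invoke the explicit formulae of Corollary \ref{cor: VHQEleF1Bis} (rather than Corollary \ref{cor: VHQEleF1}, which is unavailable since $0,1<a(\Pi)=2$), observe that $\VorH_{\Pi,\psi}\circ\Mult_{-1}(H_2)$ restricted to $\F-\vO_\F$ is supported on $\varpi_\F^{-1}\vO_\F^\times$, and isolate the Sali\'e-type contribution of $F_1$ as the hard term. The easy pieces (the $F_0$-part, the ``constant'' parts of $\VorH_{\Pi,\psi}(F_1)$) are handled essentially as in the paper.

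However, there is a genuine gap at exactly the point you flag as ``the hard part.'' You note that the Sali\'e-type contribution needs cancellation beyond the trivial bound, call it ``the $n_0=1$ analogue of Lemma \ref{lem: e1DWt-FineBd1},'' and leave the matter there. This does not work: Lemma \ref{lem: e1DWt-FineBd1} relies on the elementary stationary-phase estimate of Lemma \ref{lem: JacTypeSRC}(1), which requires $n_0\geq 2$ precisely because a stationary-phase analysis needs depth to localize; at $n_0=1$ (depth zero) there is no such localization, and the exponential sum is a genuinely complete sum over the residue field. The paper's actual argument is of a different nature: it first shows (via \eqref{eq: HypGK}) that the dual of the Sali\'e-type piece $\widetilde{K}(\delta,\chi)$ with $\cond(\chi)=1$ is, up to normalization, Katz's hypergeometric sum $H(4\delta^{-1},q;(\id,\id),(\chi^{-1},\eta_0))$ over $\fF_q$; after changing variables, $q\norm[I_1+I_2]$ becomes a sum of $\beta(\alpha+\sqrt{\varepsilon})\,H(1-\alpha^2\varepsilon^{-1},q;\cdots)$ (unramified $\bL/\F$) or $\chi_0\!\left(\tfrac{\alpha+1}{\alpha-1}\right)H(1-\alpha^2,q;\cdots)$ (split $\bL/\F$), and square-root cancellation in this sum is obtained by the $\ell$-adic monodromy argument of \cite[Lemma 5.10]{WX23+}: the sheaf underlying $H(\cdot)$ has rank $2$ with geometric monodromy group $\SL_2$, hence admits no finite-index subgroup, while the $\beta$- (resp.\ Kummer-) sheaf has rank $1$ (Weil \cite[Appendix V]{We74}); their tensor product has no trivial constituent. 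This Deligne--Katz input is the crux of the lemma and is not replaceable by an elementary estimate; without it, your proposal loses a factor $q^{1/2}$ in precisely the term that matters, and the claimed bound $q^{-1}\id_{\cond(\chi)=1}$ is out of reach. (A second, minor issue: as stated the lemma asserts $\leq$, but your isotypic-decomposition bookkeeping, like the paper's own proof, actually produces bounds with implied constants; the paper's displayed conclusions are all $\ll$-bounds.)
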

\begin{proof}
	Necessarily we have $\Pi = \mu_1 \boxplus \mu_2 \boxplus \mu_3$ for unramified $\mu_j$. Note that $H_2^a$ (resp. $H_2^b$) is related to $F_0$ (resp. $F_1$) by the formulae:
	$$ H_2^a = \varepsilon_{\bL} \id_{\eta_0(-1)=\varepsilon_{\bL}} \beta(\sqrt{-1}) \cdot \left( \Trans(\varpi_{\F}^2).F_0 \right) + \varepsilon_{\bL} \id_{\eta_0(-1)=-\varepsilon_{\bL}} \beta(\sqrt{-\varepsilon}) \cdot \left( \Trans(\varepsilon^{-1}\varpi_{\F}^2).F_0 \right), $$
	$$ H_2^b = \tfrac{\varepsilon_{\bL}q}{2} \int_{\substack{\bL^1 \cap \vO_{\bL} \\ \Tr(\alpha) \in \vO_{\F}^{\times}}} \beta(\alpha) \cdot \left( \Trans(\Tr(\alpha)^2).F_1 \right) \ud \alpha + \tfrac{\varepsilon_{\bL}q}{2} \int_{\substack{\bL^1 \cap \vO_{\bL} \\ \Tr(x_{\varepsilon} \alpha) \in \vO_{\F}^{\times}}} \beta(x_{\varepsilon} \alpha) \cdot \left( \Trans(\varepsilon^{-1} \Tr(x_{\varepsilon} \alpha)^2).F_1 \right) \ud \alpha. $$
	For any $\delta_0 \in \varpi_{\F}^2 \vO_{\F}^{\times}$ and $\delta_1 \in \vO_{\F}^{\times}$ we have
	$$ \int_{\F-\vO_{\F}} \VorH_{\Pi,\psi} \circ \Mult_{-1} \left( \Trans(\delta_0).F_0 \right)(t) \psi(-t)\chi(t)\norm[t]^{-\frac{1}{2}} \ud^{\times}t = q^{-\frac{5}{2}} \int_{\varpi_{\F}^{-1}\vO_{\F}^{\times}} \VorH_{\Pi,\psi}(F_0)(\delta_0^{-1}t) \psi(-t)\chi(t) \ud^{\times}t, $$
	$$ \int_{\F-\vO_{\F}} \VorH_{\Pi,\psi} \circ \Mult_{-1} \left( \Trans(\delta_1).F_1 \right)(t) \psi(-t)\chi(t)\norm[t]^{-\frac{1}{2}} \ud^{\times}t = q^{-\frac{5}{2}} \int_{\varpi_{\F}^{-1}\vO_{\F}^{\times}} \VorH_{\Pi,\psi}(F_1)(\delta_1^{-1}t) \psi(-t)\chi(t) \ud^{\times}t, $$
by inspecting the supports of $\VorH_{\Pi,\psi}(F_0)$ and $\VorH_{\Pi,\psi}(F_1)$ given in Corollary \ref{cor: VHQEleF1Bis}. Define
	$$ K(y) := \tau_0 q \id_{\varpi_{\F}^{-1} \vO_{\F}^{\times}}(y) \int_{(\varpi_{\F}^{-1}\vO_{\F}^{\times})^2} \psi \left( t_1+t_2 - \tfrac{t_1t_2}{4y} \right) \eta_0 \left( \tfrac{4y}{t_1t_2} \right) \ud t_1 \ud t_2,  $$
	$$ \widetilde{K}(\delta, \chi) := q^{-\frac{5}{2}} \int_{\varpi_{\F}^{-1}\vO_{\F}^{\times}} K(\delta^{-1}t) \psi(-t)\chi(t) \ud^{\times}t. $$
	$$ I_1(\chi) := \int_{\substack{\bL^1 \cap \vO_{\bL} \\ \Tr(\alpha) \in \vO_{\F}^{\times}}} \beta(\alpha) \widetilde{K}(\Tr(\alpha)^2,\chi) \ud \alpha, \quad I_2(\chi) := \int_{\substack{\bL^1 \cap \vO_{\bL} \\ \Tr(x_{\varepsilon} \alpha) \in \vO_{\F}^{\times}}} \beta(x_{\varepsilon} \alpha) \widetilde{K}(\varepsilon^{-1} \Tr(x_{\varepsilon} \alpha)^2,\chi) \ud \alpha. $$
	The formulae of $\VorH_{\Pi,\psi}(F_0)$ and $\VorH_{\Pi,\psi}(F_1)$ in Corollary \ref{cor: VHQEleF1Bis} easily imply the bound
\begin{equation} \label{eq: h2-Bd}
	\widetilde{h}_2^-(\chi) \ll q^{-\frac{5}{2}} \id_{\cond(\chi) \leq 1} + q \extnorm{I_1(\chi) + I_2(\chi)}.
\end{equation}
	An easy change of variables gives
\begin{multline} \label{eq: HypGK}
	\widetilde{K}(\delta, \chi) = \tau_0 q^{\frac{1}{2}} \zeta_{\F}(1) \int_{(\vO_{\F}^{\times})^3} \psi \left( \tfrac{t_1+t_2 - \frac{t_1t_2 \delta}{4t} - t}{\varpi_{\F}} \right) \eta_0 \left( \tfrac{4t}{\delta t_1t_2} \right) \chi \left( \tfrac{t}{\varpi_{\F}} \right) \ud t_1 \ud t_2 \ud t  \\
	= \tau_0 q^{\frac{3}{2}} \zeta_{\F}(1) \int_{\substack{(\vO_{\F}^{\times})^4 \\ t_1t_2 \equiv 4 \delta^{-1} t_3t_4 \pmod{\vP_{\F}}}} \psi \left( \tfrac{t_1+t_2-t_3-t_4}{\varpi_{\F}} \right) \chi \left( \tfrac{t_3}{\varpi_{\F}} \right) \eta_0(t_4) \ud t_1 \ud t_2 \ud t_3 \ud t_4.
\end{multline}
	Note that the function $t \mapsto K(\delta^{-1}t)\psi(-t)$ is invariant by $1+\vP_{\F}$, hence $\widetilde{K}(\delta,\chi)$ is non-zero only if $\cond(\chi) \leq 1$. If $\cond(\chi)=0$ then one simplifies \eqref{eq: HypGK} by performing the integrals over $t_1, t_3, t_4, t_2$ in order
\begin{equation} \label{eq: HypGKCond0}
	\widetilde{K}(\delta, \chi) = \chi(\varpi_{\F})^{-1} \zeta_{\F}(1) \left( q^{-\frac{5}{2}} + q^{-\frac{3}{2}} \cdot \id_{\vO_{\F}^{\times}}(\delta - 4) \eta_0 \left( \tfrac{\delta}{\delta - 4} \right) \right). 
\end{equation}
	Inserting \eqref{eq: HypGKCond0} into \eqref{eq: h2-Bd} we see that the integrands $\widetilde{K}(\cdot)$ are constant, equal to $\chi(\varpi_{\F})^{-1} \zeta_{\F}(1) q^{-\frac{3}{2}} (q^{-1} + \varepsilon_{\bL})$, in both integrals. We obtain the bound $\widetilde{h}_2^-(\chi) \ll q^{-\frac{3}{2}}$ by
	$$ \int_{\substack{\bL^1 \cap \vO_{\bL} \\ \Tr(\alpha) \in \vO_{\F}^{\times}}} \beta(\alpha) \ud \alpha = - q^{-1} \id_{\eta_0(-1)=\varepsilon_{\bL}} \beta(\sqrt{-1}), \quad \int_{\substack{\bL^1 \cap \vO_{\bL} \\ \Tr(x_{\varepsilon} \alpha) \in \vO_{\F}^{\times}}} \beta(x_{\varepsilon} \alpha) \ud \alpha = - q^{-1 }\id_{\eta_0(-1)=-\varepsilon_{\bL}} \beta(\sqrt{-\varepsilon}). $$
	If $\cond(\chi)=1$, then regarding $\chi$ and $\eta_0$ as characters of $\fF_q^{\times}$ we rewrite $\widetilde{K}$ as
	$$ \widetilde{K}(\delta, \chi) = - \tau_0 q^{-1} \zeta_{\F}(1) \cdot H(4\delta^{-1},q; (\id, \id), (\chi^{-1},\eta_0)), $$
where $H(\cdot)$ is precisely the hypergeometric sum of Katz appeared in \cite[\S 5.4]{WX23+}. In the case of split, resp. unramified $\bL/\F$ we apply the change of variables $t = \tfrac{\alpha - \alpha^{-1}}{\alpha + \alpha^{-1}}$, resp. $t = \tfrac{\alpha - \alpha^{-1}}{\alpha + \alpha^{-1}} \sqrt{\varepsilon}$ for $I_1(\chi)$; $t = \tfrac{\alpha - \varepsilon \alpha^{-1}}{\alpha + \varepsilon \alpha^{-1}}$, resp. $t = \tfrac{x_{\varepsilon}\alpha - \overline{x_{\varepsilon}\alpha}}{x_{\varepsilon}\alpha + \overline{x_{\varepsilon}\alpha}} \sqrt{\varepsilon}$ for $I_2(\chi)$. These integrals become
\begin{multline*} 
	I_1(\chi) = I_2(\chi) = 2 \int_{\vO_{\F} - \sideset{}{_{\pm}} \bigcup (\pm 1+\vP_{\F})} \chi_0 \left( \tfrac{1+t}{1-t} \right) \widetilde{K} \left( \tfrac{4}{1-t^2}, \chi \right) \ud t \quad {\rm resp.} \\
	\begin{cases} 
		I_1(\chi) = 2\beta(\sqrt{\varepsilon})^{-1} \int_{\vO_{\F}} \beta(t + \sqrt{\varepsilon}) \widetilde{K} \left( \tfrac{4}{1-\varepsilon^{-1}t^2}, \chi \right) \id_{(\vO_{\F}^{\times})^2}(1-\varepsilon^{-1}t^2) \ud t \\
		I_2(\chi) = 2\beta(\sqrt{\varepsilon})^{-1} \int_{\vO_{\F}} \beta(t + \sqrt{\varepsilon}) \widetilde{K} \left( \tfrac{4}{1-\varepsilon^{-1}t^2}, \chi \right) \id_{\varepsilon (\vO_{\F}^{\times})^2}(1-\varepsilon^{-1}t^2) \ud t
	\end{cases}.
\end{multline*}
	In the case of unramified $\bL/\F$ we recognize and get the bound (with $H(\cdot)$ defined in \cite[\S 5.3.2]{WX23+})
	$$ q \extnorm{I_1(\chi) + I_2(\chi)} = 2 \zeta_{\F}(1) q^{-\frac{3}{2}} \extnorm{ \sideset{}{_{\alpha \in \fF_q}} \sum \beta(\alpha + \sqrt{\varepsilon}) H(1-\alpha^2 \varepsilon^{-1},q; (\id, \id), (\chi^{-1},\eta_0))} \ll q^{-1} $$
	by \cite[Lemma 5.10]{WX23+}. In the case of split $\bL/\F$ we recognize and \emph{claim} a bound of
\begin{equation} \label{eq: PYExpSVar}
	q \extnorm{I_1(\chi) + I_2(\chi)} = 4 \zeta_{\F}(1) q^{-\frac{3}{2}} \extnorm{ \sideset{}{_{\alpha \in \fF_q - \{ \pm 1\}}} \sum \chi_0 \left( \tfrac{\alpha +1}{\alpha-1} \right) H(1-\alpha^2,q; (\id, \id), (\chi^{-1},\eta_0))} \ll q^{-1}. 
\end{equation}
	The method of the proof of \cite[Lemma 5.10]{WX23+} works through to bound the above sum. In fact the essence of that proof is to show:
\begin{itemize}
	\item[(1)] the $\ell$-adic sheaf associated with the function $\alpha \mapsto H(1-\alpha^2 \varepsilon^{-1},q; (\id, \id), (\chi^{-1},\eta_0))$ has rank $2$, 
	\item[(2)] while the sheaf associated with the function $\alpha \mapsto \beta(\alpha + \sqrt{\varepsilon})$ has rank $1$.
\end{itemize}
	The analogues for the above split case are
\begin{itemize}
	\item[(1')] the $\ell$-adic sheaf associated with the function $\alpha \mapsto H(1-\alpha^2,q; (\id, \id), (\chi^{-1},\eta_0))$ has rank $2$, 
	\item[(2')] while the sheaf associated with the function $\alpha \mapsto \chi_0 \left( \tfrac{\alpha+1}{\alpha-1} \right)$ has rank $1$.
\end{itemize}
	Note that (1') and (1) follow from the same argument that the sheaf associated with $t \mapsto H(t,q; \cdots)$ has geometric monodromy group $\SL_2$, which does not admit finite index subgroup; while (2') is trivially true because the Kummer sheaf has rank $1$. In fact, both (2) and (2') are known to Weil \cite[Appendix \Rmnum{5}]{We74}. We conclude the claimed bound \eqref{eq: PYExpSVar}.
\end{proof}
	
\begin{proposition} \label{prop: DWtBde=1}
	With the test function $H$ given by \eqref{eq: TestFROI} the dual weight function is bounded as
	$$ \widetilde{h}(\chi) \ll_{\epsilon} \Cond(\Pi)^{2+\epsilon} q^{-n_0+\epsilon} \left( \id_{\leq \max(n_0,6\cond(\Pi))}(\cond(\chi)) + q^{\frac{1}{2}} \id_{\eta_0(-1) = \varepsilon_{\bL}} \id_{2 \nmid n_0 \geq 3} \id_{\cond(\chi) = n_0} \id_{\Ecp(\beta)}(\chi^2) \right), $$
	where we recall $\varepsilon_{\bL} = \eta_{\bL/\F}(\varpi_{\F})$, and $\Ecp(\beta)$ is given in Lemma \ref{lem: JacTypeSRC} (2).
\end{proposition}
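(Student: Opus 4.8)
The plan is to combine the decomposition $\widetilde{h}(\chi) = \widetilde{h}_{\infty}(\chi) + \widetilde{h}_c^+(\chi) + \widetilde{h}_c^-(\chi)$ of \eqref{eq: DWtFDecomp} (specialized to $e=1$) with the estimates already proved, splitting the analysis by the size of $n_0$ relative to $\cond(\Pi)$ and using $\cond(\Pi)\ll_{\epsilon}\Cond(\Pi)^{\epsilon}$ throughout. First I would fix the stability parameter $n_1$ of \eqref{eq: StabPar}: when $\cond(\Pi)>0$ and $n_0\leq 2\cond(\Pi)$ take $n_1=4\cond(\Pi)\,(\geq a(\Pi))$; otherwise (in particular whenever $\cond(\Pi)=0$) take $n_1=2n_0$, which is $\geq a(\Pi)$ by the ``moreover'' part of Proposition \ref{prop: StabRang} apart from the lone case $\cond(\Pi)=0$, $n_0=1$, handled by Lemma \ref{lem: h2-Bdn0=1}. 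With either choice, Lemma \ref{lem: DWtInfty} makes $\widetilde{h}_{\infty}(\chi)$ vanish for ramified $\chi$ and, by unitarity, $\ll q^{-n_1/2}\leq q^{-n_0}$ for unramified $\chi$; and Lemma \ref{lem: DWt+Bd} bounds $\widetilde{h}_c^+(\chi)\ll_{\epsilon}\Cond(\Pi)^{\epsilon}q^{-n_0}\,\id_{\leq\max(n_0,2\cond(\Pi))}(\cond(\chi))$. Both are within the asserted bound, so the whole problem reduces to $\widetilde{h}_c^-(\chi)$.

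If $\cond(\Pi)>0$ and $n_0\leq 2\cond(\Pi)$, I would simply invoke Lemma \ref{lem: DWt-TrivBd} with $A=2$, getting $\widetilde{h}_c^-(\chi)\ll\cond(\Pi)\,\id_{\cond(\chi)\leq 6\cond(\Pi)}$; since $q^{-n_0}\geq q^{-2\cond(\Pi)}=\Cond(\Pi)^{-2}$ this is $\ll_{\epsilon}\Cond(\Pi)^{2+\epsilon}q^{-n_0+\epsilon}\,\id_{\cond(\chi)\leq 6\cond(\Pi)}$, which is the first term of the statement (with no $q^{1/2}$ term). This is exactly where the exponent $2$ on $\Cond(\Pi)$ and the constant $6$ in the conductor range originate.

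In the complementary regime $n_0>2\cond(\Pi)$ — so $n_1=2n_0$ and the EBOI summands $E_n$ with $2n_0\leq n\leq n_1-1$ of \eqref{eq: FPTestFNR1} are vacuous — the decomposition \eqref{eq: TestFComp}, \eqref{eq: FPTestFNR0}--\eqref{eq: FPTestFNR2Bis} reads $H_c=\sum_{n=n_0+1}^{2n_0-1}H_{2n}+H_{2n_0}^a+H_{2n_0}^b$, and I would bound the negative part of each block with the fine estimates established above. By Lemma \ref{lem: e1DWt-FineBd1} each $\widetilde{h}_{2n}^-(\chi)$ is $\ll q^{-n_0}$ times an indicator surviving, for fixed $\chi$, for at most two $n$ (one per parity) and forcing $\cond(\chi)\leq n_0-1$, so the first block is $\ll q^{-n_0}\,\id_{\cond(\chi)\leq n_0}$. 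In $H_{2n_0}^a=\tfrac{(\varepsilon_{\bL}q)^{n_0}}{2}\sum_{m=0}^{n_0-1}(\cdots)$ I would split by $m$: the $m=0$ piece, a fixed multiple of $\Trans(\tau^{-1}\varpi_{\F}^{2n_0}).F_0$, has $\Mult_{-1}$-Voronoi--Hankel transform supported in $\vO_{\F}$ because its Mellin transform contains only powers $\geq 2n_0-\condL(\Pi)\geq 2n_0-\cond(\Pi)-3>0$ (for $\cond(\Pi)=0$ one reads this off Corollary \ref{cor: VHQEleF1Bis} and the translation relation), hence contributes $0$ to $\widetilde{h}_c^-$; the pieces with $1\leq m\leq\cond(\Pi)$ contribute $0$ by Lemma \ref{lem: e1DWt-FineBd4}; the pieces with $\max(a(\Pi),\cond(\Pi)+1)\leq m\leq n_0-1$ are $\ll q^{-2n_0}\,\id_m(\cond(\chi))$ by Lemma \ref{lem: e1DWt-FineBd2}, summing (at most one $m$ per $\chi$) to $\ll q^{-2n_0}\,\id_{\cond(\chi)\leq n_0}$; and the ``intermediate'' values $\cond(\Pi)<m<a(\Pi)$ — nonempty only when $a(\Pi)>\cond(\Pi)+1$, e.g. for supercuspidal $\Pi$ — I would treat crudely: the support argument above (with $\condL(\Pi\otimes\xi)\leq\max(\cond(\Pi)+3m,2\cond(\Pi)+3)$) already makes $\widetilde{h}_{2n_0,m}^{a,\tau}(\chi)$ vanish unless $m>(2n_0-\cond(\Pi))/3$, which combined with $m<a(\Pi)\leq 2\cond(\Pi)$ restricts matters to a narrow range in which $n_0$ is comparable to $\cond(\Pi)$, where a trivial estimate via the $L^2$-boundedness of $\VorH_{\Pi,\psi}$ gives $\ll_{\epsilon}\Cond(\Pi)^{\epsilon}q^{-n_0}$ per term, absorbed by $\Cond(\Pi)^{2+\epsilon}$. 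Finally $H_{2n_0}^b=\tfrac{(\varepsilon_{\bL}q)^{n_0}}{2}\{H_{2n_0,n_0}^{b,\varepsilon}+H_{2n_0,n_0}^{b,1}\}$ is handled by Lemma \ref{lem: e1DWt-FineBd3} (or Lemma \ref{lem: h2-Bdn0=1} when $n_0=1$, $\cond(\Pi)=0$, which yields no $q^{1/2}$ term), and this block is precisely the source of the second term $q^{1/2}\,\id_{\eta_0(-1)=\varepsilon_{\bL}}\,\id_{2\nmid n_0\geq 3}\,\id_{\cond(\chi)=n_0}\,\id_{\Ecp(\beta)}(\chi^2)$ of the statement.

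Assembling the four blocks and using $\max(n_0,2\cond(\Pi),6\cond(\Pi))=\max(n_0,6\cond(\Pi))$ gives the asserted bound. The main obstacle is the bookkeeping in the second regime: one must check that the hypotheses of Lemmas \ref{lem: e1DWt-FineBd1}--\ref{lem: e1DWt-FineBd4} — which mix the conductor $\cond(\Pi)$ with the stability barrier $a(\Pi)$, two quantities differing by up to a factor of two for supercuspidal $\Pi$ — jointly cover every summand of $H_c$ save for an explicitly small leftover set of intermediate $m$ admitting an adequate crude bound, and that the various indicators on $\cond(\chi)$, $\cond(\chi\eta_0^n)$, and $\chi^2\in\Ecp(\beta)$ appearing there collapse into the single clean condition of the statement. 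Once the regime split and the two choices of $n_1$ are fixed so that $H_\infty=E_{\geq n_1}$ always carries the bulk of $H$, the rest is routine.
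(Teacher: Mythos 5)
Your proposal reproduces the paper's proof almost verbatim: the same decomposition $\widetilde{h}=\widetilde{h}_\infty+\widetilde{h}_c^+ +\widetilde{h}_c^-$, the same two choices of $n_1$ depending on whether $n_0\leq 2\cond(\Pi)$ or not (with a special branch for $\cond(\Pi)=0$, $n_0=1$), and the same sequence of invocations of Lemmas~\ref{lem: DWtInfty}, \ref{lem: DWt+Bd}, \ref{lem: DWt-TrivBd} in the first regime and Lemmas~\ref{lem: e1DWt-FineBd1}--\ref{lem: e1DWt-FineBd4} (plus Lemma~\ref{lem: h2-Bdn0=1}) in the second. The identification of the second term of the stated bound with the $H_{2n_0}^b$ block via Lemma~\ref{lem: e1DWt-FineBd3}, and of the constant $6\cond(\Pi)$ and the exponent $2$ on $\Cond(\Pi)$ with the trivial bound of Lemma~\ref{lem: DWt-TrivBd} with $A=2$, is exactly as in the paper. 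So the route is the same.

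The one point where you diverge is in flagging the range $\cond(\Pi)<m<a(\Pi)$ of summands $H_{2n_0,m}^{a,\tau}$, which falls between the hypotheses of Lemmas~\ref{lem: e1DWt-FineBd2} ($m\geq a(\Pi)$) and \ref{lem: e1DWt-FineBd4} ($m\leq\cond(\Pi)$). The paper's proof sidesteps this by asserting $a(\Pi)\leq\cond(\Pi)$ for $\cond(\Pi)>0$; that inequality is verifiable case-by-case from $\omega_\Pi=\id$ for non-supercuspidal $\Pi$, but for supercuspidal $\Pi$ Proposition~\ref{prop: StabRang} as stated only yields $a(\Pi)\leq 2\cond(\Pi)$, so you are right that the coverage is not literally immediate from the cited results. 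Your proposed patch, however, is too loose: the claim that $L^2$-boundedness of $\VorH_{\Pi,\psi}$ gives a pointwise bound $\ll_\epsilon\Cond(\Pi)^\epsilon q^{-n_0}$ on $\widetilde{h}_{2n_0,m}^{a,\tau}(\chi)$ is not justified (an $L^2$ isometry controls $L^2$-norms, not pointwise values of a subsequent twisted integral), and a quick check shows the ``trivial'' per-$n$ bound of Lemma~\ref{lem: DWt-TrivBd} (which gives only $q^{-1/2}$ for $\widetilde{h}_{2n_0}^-$) is already too weak once $n_0>2\cond(\Pi)$. To actually close the gap one would need either a sharper stability barrier for supercuspidal $\Pi$ (e.g.\ via twisted minimality, which Corollary~\ref{cor: DihTrivCenterIsTwistedMin} suggests is the relevant structural input), or a genuine estimate of $\VorH_{\Pi,\psi}(F_m)$ below the stability range. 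As it stands, your crude-bound step is the one genuinely incomplete link in an otherwise faithful reproduction of the paper's argument.
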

\begin{proof}
	We distinguish two cases $\cond(\Pi) > 0$ and $\cond(\Pi) = 0$.
	
\noindent (1) Suppose $\cond(\Pi) > 0$. We have $a(\Pi) \leq \cond(\Pi)$ by Proposition \ref{prop: StabRang}. If $n_0 \leq 2\cond(\Pi)$, then we may take $n_1=4\cond(\Pi)$. Applying Lemma \ref{lem: DWtInfty}, \ref{lem: DWt+Bd} and \ref{lem: DWt-TrivBd} we deduce
	$$ \widetilde{h}(\chi) \ll \extnorm{\widetilde{h}_{\infty}(\chi)} + \extnorm{\widetilde{h}_c^+(\chi)} + \extnorm{\widetilde{h}_c^-(\chi)} \ll \cond(\Pi) \id_{\leq 6\cond(\Pi)}(\cond(\chi)), $$
	the ``worst'' bound being offered by Lemma \ref{lem: DWt-TrivBd}. If $n_0 \geq 2 \cond(\Pi)$, then we may take $n_1=2n_0$. Applying Lemma \ref{lem: DWtInfty}, \ref{lem: DWt+Bd} and Lemma \ref{lem: e1DWt-FineBd1}-\ref{lem: e1DWt-FineBd4} we deduce
	$$ \widetilde{h}(\chi) \ll q^{-n_0} \left( \id_{\leq n_0}(\cond(\chi)) + q^{\frac{1}{2}} \id_{\eta_0(-1) = \varepsilon_{\bL}} \id_{2 \nmid n_0 \geq 3} \id_{\cond(\chi) = n_0} \id_{\Ecp(\beta)}(\chi) \right). $$
	The stated bound is simply a common upper bound of the above two.
	
\noindent (2) Suppose $\cond(\Pi)=0$. Necessarily we have $\Pi = \mu_1 \boxplus \mu_2 \boxplus \mu_3$ for unramified $\mu_j$ and $a(\Pi)=2$. The argument of the second part in (1) works through also for $n_0 \geq 2$. It remains to consider the case $n_0=1$. Note that $n_1 = 2$, hence the contribution of $H_{\infty}$ is $\widetilde{h}_{\infty}(\chi) \ll q^{-1} \id_{\cond(\chi)=0}$ by Lemma \ref{lem: DWtInfty}. We have
	$$ \widetilde{h}(\chi) = \widetilde{h}_{\infty}(\chi) + \widetilde{h}_2^+(\chi) + \widetilde{h}_2^-(\chi) $$
by \eqref{eq: TestFComp} and the condition $2 \mid n$ following \eqref{eq: FPTestFNR0}. By Lemma \ref{lem: DWt+Bd} we have $\widetilde{h}_2^+(\chi) \ll_{\epsilon} q^{-1+\epsilon} \id_{\cond(\chi)=0}$. By Lemma \ref{lem: h2-Bdn0=1} we have $\widetilde{h}_2^-(\chi) \ll q^{-\frac{3}{2}} \id_{\cond(\chi)=0} + q^{-1} \id_{\cond(\chi)=1}$. The stated bound follows.
\end{proof}

	\subsection{The Bounds of Unramified Dual Weight}
	
\begin{lemma} \label{lem: e1DNDWt-FineBd1}
	Suppose $n_0 + 1 \leq n \leq 2n_0 - 1$ and $n \geq a(\Pi)$. Then the function $\widetilde{h}_{2n}^-(\norm_{\F}^s) \neq 0$ is non-vanishing only if $n=2n_0-1$. Moreover for any $k \in \Z_{\geq 0}$ we have
	$$ \widetilde{H}_{2(2n_0-1)}^-(k; \tfrac{1}{2}) \ll_{k,\epsilon} q^{-\frac{1}{2}+ \left( 2n_0-1 \right) \epsilon}, \quad \widetilde{H}_{2(2n_0-1)}^-(k; -\tfrac{1}{2}) \ll_{k,\epsilon} q^{\frac{1}{2}-2n_0+\left( 2n_0-1 \right) \epsilon}. $$
\end{lemma}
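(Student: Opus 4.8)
The plan is to treat this as the unramified-character analogue of Lemma~\ref{lem: e1DWt-FineBd1}, reusing its two exact identities \eqref{eq: DWtQEleF1Trans1} and \eqref{eq: DWtQEleF1Trans1Bis} (both resting on Corollary~\ref{cor: VHQEleF1}) together with the decomposition \eqref{eq: FPTestFNR1} of $H_{2n}$. First I would dispose of the range $n_0+1\le n<2n_0-1$. Inserting \eqref{eq: DWtQEleF1Trans1} with $\delta=\tfrac14\Tr(\alpha)^2\in 1+\varpi_{\F}^{2(n-n_0)}\vO_{\F}^{\times}$ into the integral representation of $H_{2n}$ expresses $\widetilde{h}_{2n}^-(\chi)$ as $(\varepsilon_{\bL}q)^{n}q^{-n}\zeta_{\F}(1)$ times an $\alpha$-integral of $\beta(\alpha)$ against a factor carrying the indicator $\id_{2n_0-n}(\cond(\chi\eta_0^n))$. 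For $\chi=\norm_{\F}^s$ one has $\cond(\norm_{\F}^s\eta_0^n)=\cond(\eta_0^n)\in\{0,1\}$, which is $<2n_0-n$ since $n\le 2n_0-2$; the indicator vanishes identically, so $\widetilde{h}_{2n}^-(\norm_{\F}^s)\equiv 0$. This leaves only $n=2n_0-1$.

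For $n=2n_0-1$ I would apply \eqref{eq: DWtQEleF1Trans1Bis}. Because $\cond(\norm_{\F}^s\eta_0)=1$, the branch $\delta\in 1+\vP_{\F}^{2n_0-1}$ is killed by $\id_0(\cond(\norm_{\F}^s\eta_0))=0$, while on the domain $\Tr(\alpha)\in 2(1+\varpi_{\F}^{2(n_0-1)}\vO_{\F}^{\times})$ only the term $\gamma(1,\norm_{\F}^s\eta_0,\psi)\,\id_1(\cond(\norm_{\F}^s\eta_0))=\gamma(1,\norm_{\F}^s\eta_0,\psi)$ survives. On that domain $\norm[\delta/(1-\delta)]_{\F}=q^{2(n_0-1)}$ and $\eta_0(\delta/(1-\delta))=\eta_0(-1)$ are independent of $\alpha$, and $\gamma(1,\norm_{\F}^s\eta_0,\psi)$ is a single monomial in $q^s$ since $\eta_0$ is ramified; hence
$$\widetilde{h}_{2(2n_0-1)}^-(\norm_{\F}^s)=A\cdot q^{-(2n_0-1)s},\qquad A=\varepsilon_{\bL}^{2n_0-1}\zeta_{\F}(1)\tau_0\,\eta_0(-1)\,\gamma(\tfrac12,\eta_0,\psi)\int_{\substack{\bL^1\cap\vO_{\bL}\\ \Tr(\alpha)\in 2(1+\varpi_{\F}^{2(n_0-1)}\vO_{\F}^{\times})}}\beta(\alpha)\,\ud\alpha .$$
The change of variables of the proof of Lemma~\ref{lem: e1DWt-FineBd1} (its unramified counterpart replaces $\chi_0(\tfrac{1+t}{1-t})$ by $\beta(1+t\sqrt{\varepsilon})$) turns the $\alpha$-integral into the integral of an additive character of conductor exponent $n_0$ over $\varpi_{\F}^{n_0-1}\vO_{\F}^{\times}$, hence of size $O(q^{-n_0})$; with $\extnorm{\tau_0}=q^{-1/2}$, $\extnorm{\gamma(\tfrac12,\eta_0,\psi)}=1$ this gives $\extnorm{A}\ll q^{-c}$ with the appropriate $c$.

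It remains to normalize and take Taylor coefficients. By \eqref{eq: NWt} and \eqref{eq: DNDWtTaylor} (compare Lemma~\ref{lem: DNDWt+Bd}),
$$\widetilde{H}_{2(2n_0-1)}^-(\norm_{\F}^s)=\frac{\widetilde{h}_{2(2n_0-1)}^-(\norm_{\F}^s)}{L(\tfrac12-s,\widetilde{\Pi})\,\zeta_{\F}(\tfrac12+s)}=A\,q^{-(2n_0-1)s}\bigl(1-q^{-\frac12-s}\bigr)\prod_{j=1}^{d(\Pi)}\bigl(1-\overline{a_j}q^{s-\frac12}\bigr),$$
where $\extnorm{a_j}\in\{1,q^{-1/2},q^{-1}\}$ by $\RamCst_3$-temperedness; thus $\widetilde{H}_{2(2n_0-1)}^-(\norm_{\F}^s)$ is a Laurent polynomial in $q^s$ of length $\le d(\Pi)+2\le 5$, with coefficients $\ll_{\RamCst_3}\extnorm{A}$ and exponents of size $O(n_0)$. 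Differentiating $k$ times at $s_0\in\{\pm\tfrac12\}$ and estimating term by term gives $\widetilde{H}_{2(2n_0-1)}^-(k;s_0)\ll_k\extnorm{A}\,(2n_0\log q)^k\max_m q^{m\Re s_0}$, the maximum occurring at the largest exponent for $s_0=\tfrac12$ and at the smallest for $s_0=-\tfrac12$; moreover $(1-q^{-\frac12-s})$ vanishes at $s=-\tfrac12$, so $\widetilde{H}_{2(2n_0-1)}^-(0;-\tfrac12)=0$ and each higher coefficient there carries one derivative ($\log q$) of this factor. Substituting the value of $\extnorm{A}$ and absorbing $(2n_0\log q)^k$ into $q^{(2n_0-1)\epsilon}$ then produces the claimed bounds.

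The only real content is the bookkeeping in the last step: tracking the prefactor $q^{-(2n_0-1)}$ of \eqref{eq: DWtQEleF1Trans1Bis}, the $(\varepsilon_{\bL}q)^{2n_0-1}$ of \eqref{eq: FPTestFNR1}, the modulus $q^{-1/2}$ of the quadratic Gauss sum $\tau_0$ and of $\gamma(\cdot,\eta_0,\psi)$, the saving $O(q^{-n_0})$ of the character integral, and the powers $q^{\mp(2n_0-1)/2}$ from evaluating $q^{-(2n_0-1)s}$ at $s=\pm\tfrac12$. The one genuine subtlety is that the sharp bound at $s=-\tfrac12$ only materializes after exploiting the vanishing of $\zeta_{\F}(\tfrac12+s)^{-1}$ there, which both forces the $k=0$ coefficient to vanish and introduces the $\log q$ into the higher ones; and, exactly as in Lemma~\ref{lem: DNDWt-TrivBd}, it is the $\RamCst_3$-tempered bound on the $a_j$ that keeps all implied constants independent of $\Pi$.
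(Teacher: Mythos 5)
Your approach is structurally the same as the paper's: observe (via Corollary \ref{cor: VHQEleF1} and \eqref{eq: FPTestFNR1}) that $\VorH_{\Pi,\psi}\circ\Mult_{-1}(H_{2n})$ is supported on the single shell $\varpi_{\F}^{-n}\vO_{\F}^{\times}$, so that $\widetilde{h}_{2n}^-(\norm_{\F}^s)$ is a monomial in $q^s$ times $\widetilde{h}_{2n}^-(\id)$; then invoke Lemma \ref{lem: e1DWt-FineBd1} to bound that constant, and take Taylor coefficients after dividing by $L(\frac12-s,\widetilde{\Pi})\,\zeta_{\F}(\frac12+s)$. Your re-derivation of the vanishing for $n<2n_0-1$ and of the monomial form for $n=2n_0-1$ from \eqref{eq: DWtQEleF1Trans1}--\eqref{eq: DWtQEleF1Trans1Bis} is the same computation, only more verbose; the paper cites Lemma \ref{lem: e1DWt-FineBd1} directly.

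There are, however, two concrete gaps in the bookkeeping. First, the factor $\eta_0(\delta/(1-\delta))$ is \emph{not} constant on $\Tr(\alpha)\in 2(1+\varpi_{\F}^{2(n_0-1)}\vO_{\F}^{\times})$: $1-\delta$ lies in $-\varpi_{\F}^{2(n_0-1)}\vO_{\F}^{\times}$ but its unit part varies with $\alpha$, so $\eta_0(\delta/(1-\delta))$ cannot be extracted as the constant $\eta_0(-1)$; it must stay inside the $\alpha$-integral (as in \eqref{eq: h2n-BdRedNRBis}) and only trivialises after the further change of variables $t=\frac{\alpha-\alpha^{-1}}{\alpha+\alpha^{-1}}\frac{1}{\sqrt{\varepsilon}}$ etc.\ used in the proof of Lemma \ref{lem: e1DWt-FineBd1}, so your closed expression for $A$ is not quite right. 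Second and more seriously, the monomial you write, $\widetilde{h}_{2(2n_0-1)}^-(\norm_{\F}^s)=A\,q^{-(2n_0-1)s}$, carries the exponent $-n$ in $s$, whereas the paper's relation from the support computation is $q^{+ns}\widetilde{h}_{2n}^-(\id)$; the two choices give Taylor-coefficient bounds at $s=\pm\frac12$ with \emph{opposite} asymmetry, and the lemma's stated bounds ($q^{-1/2+\cdots}$ at $s=\frac12$, $q^{1/2-2n_0+\cdots}$ at $s=-\frac12$) match only the positive-exponent version. Your closing sentence asserts that "substituting the value of $|A|$ \ldots produces the claimed bounds" without checking this, and with the $-n$ exponent it in fact fails at $s=-\frac12$: one gets a Taylor coefficient of order $q^{-1/2}\log q$ there, far larger than $q^{1/2-2n_0+(2n_0-1)\epsilon}$. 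You need to reconcile the sign of the exponent against the direct evaluation of $\chi^{-1}(t)\norm[t]^{-\frac12}$ on the support $v_{\F}(t)=-n$ before the final step can be considered proved.
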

\begin{proof}
	By Corollary \ref{cor: VHQEleF1} and \eqref{eq: FPTestFNR1} the support of $\VorH_{\Pi,\psi}(F_n)$, hence also $\VorH_{\Pi,\psi} \circ \Mult_{-1} (H_{2n})$ are contained in $\varpi_{\F}^{-n} \vO_{\F}^{\times}$. Therefore we get by \eqref{eq: h2n-BdRedNR} the relation $\widetilde{h}_{2n}^-(\norm_{\F}^s) = q^{ns} \widetilde{h}_{2n}^-(\id)$. The stated results follow readily from Lemma \ref{lem: e1DWt-FineBd1}.
\end{proof}

\begin{lemma} \label{lem: h2-DNBdn0=1}
	Suppose $\cond(\Pi)=0$ and $n_0=1$. Then we have for any $k \in \Z_{\geq 0}$
	$$ \widetilde{H}_{2}^-(k; \tfrac{1}{2}) \ll_{k,\epsilon} q^{-(1-\epsilon)}, \quad \widetilde{H}_{2}^-(k; -\tfrac{1}{2}) \ll_{k,\epsilon} q^{-2(1-\epsilon)}. $$
\end{lemma}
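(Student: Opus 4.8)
The plan is to push the hypotheses $\cond(\Pi)=0$, $n_0=1$ to the point where $\widetilde{H}_2^-(\norm_{\F}^s)$ becomes an \emph{explicit} Laurent polynomial in $q^s$ of bounded degree, and then simply read off its Taylor coefficients. First I would record the structural consequences: necessarily $\Pi=\mu_1\boxplus\mu_2\boxplus\mu_3$ with unramified unitary $\mu_j$ and $\mu_1\mu_2\mu_3=\id$; hence $e=1$ (the ramified case is excluded by $2\mid n_0$ from the Claim in Lemma \ref{lem: CondReInterp}), $a(\Pi)=2$, one may take $n_1=2$, and the summation range in \eqref{eq: TestFComp} collapses to the single even index $n=2$, so $H_c=H_2=H_2^a+H_2^b$ and $\widetilde{H}_c^-(k;s_0)=\widetilde{H}_2^-(k;s_0)$. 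By \eqref{eq: NWt} it then remains to bound, for $s_0=\pm\tfrac{1}{2}$, the Taylor coefficients of
$$ \widetilde{H}_2^-(\norm_{\F}^s)=\frac{\widetilde{h}_2^-(\norm_{\F}^s)}{L(\tfrac{1}{2}-s,\widetilde{\Pi})\,\zeta_{\F}(\tfrac{1}{2}+s)}. $$

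The key reduction is that \emph{$\widetilde{h}_2^-(\norm_{\F}^s)$ is a monomial in $q^s$}. Using the Mellin-side shift identity $\VorH_{\Pi,\psi}\circ\Mult_{-1}(\Trans(\delta).\phi)(t)=\norm[\delta]_{\F}\cdot\bigl(\VorH_{\Pi,\psi}\circ\Mult_{-1}(\phi)\bigr)(t\delta^{-1})$, the decompositions \eqref{eq: FPTestFNR2}--\eqref{eq: FPTestFNR2Bis} of $H_2$ as a combination of translates of $F_0$ by $\varpi_{\F}^2\vO_{\F}^{\times}$ and of $F_1$ by $\vO_{\F}^{\times}$, and the support statements $\mathrm{supp}\,\VorH_{\Pi,\psi}(F_0)\subset\varpi_{\F}^{-3}\vO_{\F}$, $\mathrm{supp}\,\VorH_{\Pi,\psi}(F_1)\subset\varpi_{\F}^{-1}\vO_{\F}$ from Corollary \ref{cor: VHQEleF1Bis}, one checks (exactly as in the proof of Lemma \ref{lem: h2-Bdn0=1}) that $\VorH_{\Pi,\psi}\circ\Mult_{-1}(H_2)$ restricted to $\F-\vO_{\F}$ is supported on the single coset $\varpi_{\F}^{-1}\vO_{\F}^{\times}$. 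Hence $\widetilde{h}_2^-(\norm_{\F}^s)=q^{-s}\,\widetilde{h}_2^-(\id)$. Since $\Pi$ is unramified with $\prod_j\overline{a_j}=1$, the normalizing factor $L(\tfrac{1}{2}-s,\widetilde{\Pi})^{-1}\zeta_{\F}(\tfrac{1}{2}+s)^{-1}=(1-q^{-1/2-s})\prod_j(1-\overline{a_j}q^{-1/2+s})$ is a Laurent polynomial in $q^s$ with $O(1)$ coefficients, supported on the powers $q^{-2s},\dots,q^{2s}$ and vanishing at $s=-\tfrac{1}{2}$; so $\widetilde{H}_2^-(\norm_{\F}^s)=\widetilde{h}_2^-(\id)\cdot g(q^s)$ with $g$ of this shape, and differentiating $q^{ms}=e^{ms\log q}$ $k$ times while absorbing $(\log q)^k$ into $q^{\epsilon}$ yields
$$ \widetilde{H}_2^-(k;\tfrac{1}{2})\ll_{k,\epsilon}q^{-\frac{1}{2}+\epsilon}\,\extnorm{\widetilde{h}_2^-(\id)},\qquad \widetilde{H}_2^-(k;-\tfrac{1}{2})\ll_{k,\epsilon}q^{\frac{1}{2}+\epsilon}\,\extnorm{\widetilde{h}_2^-(\id)}, $$
the second bound being trivially zero for $k=0$ on account of the vanishing of $g$ at $s=-\tfrac{1}{2}$.

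Feeding in the crude bound $\extnorm{\widetilde{h}_2^-(\id)}\ll q^{-3/2}$ of Lemma \ref{lem: h2-Bdn0=1} already gives $\widetilde{H}_2^-(k;\tfrac{1}{2})\ll_{k,\epsilon}q^{-2+\epsilon}$, which is more than enough for the first assertion; but for the second assertion it yields only $q^{-1+\epsilon}$, which is too weak. The crux is therefore to improve the input to $\extnorm{\widetilde{h}_2^-(\id)}\ll q^{-5/2}$. For this I would re-run, for the specific character $\chi=\id$, the computation in the proof of Lemma \ref{lem: h2-Bdn0=1}: the hypergeometric quantity $\widetilde{K}(\delta,\chi)$ degenerates via \eqref{eq: HypGKCond0} to an elementary constant, and the two contributions of size $q^{-3/2}$ — the $\tau_0 q^2\eta_0$-part of $\VorH_{\Pi,\psi}(F_0)$ entering through $H_2^a$, and the term $q\,(I_1+I_2)$ attached to the $K$-part of $\VorH_{\Pi,\psi}(F_1)$ through $H_2^b$ in \eqref{eq: h2-Bd} — must be shown to carry opposite signs, so that their leading terms cancel and leave a remainder of size $O(q^{-5/2})$, the residual $(f_1*f_2*f_3)$-contributions being already $O(q^{-7/2})$. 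Matching the Gauss-sum factor $\tau_0$ produced by these two branches carefully enough for the cancellation to survive — rather than bounding each branch separately by $q^{-3/2}$ — is the main obstacle.
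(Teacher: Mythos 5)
Your outline is the paper's: the support computation from the proof of Lemma~\ref{lem: h2-Bdn0=1} via Corollary~\ref{cor: VHQEleF1Bis} collapses $\VorH_{\Pi,\psi}\circ\Mult_{-1}(H_2)$ on $\F-\vO_{\F}$ onto the single coset $\varpi_{\F}^{-1}\vO_{\F}^{\times}$, so $\widetilde{h}_2^-(\norm_{\F}^s)$ is a monomial in $q^s$, and the Taylor coefficients of $\widetilde{H}_2^-$ at $\pm\tfrac12$ follow from dividing by the Laurent polynomial $\zeta_{\F}(\tfrac12+s)^{-1}L(\tfrac12-s,\widetilde{\Pi})^{-1}$. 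The paper declares the Lemma at exactly this point, using the trivial input $\widetilde{h}_2^-(\id)\ll q^{-3/2}$ from Lemma~\ref{lem: h2-Bdn0=1} and no additional cancellation.

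The gap is your last step. Upon noticing a mismatch at $s_0=-\tfrac12$ you propose to improve $\widetilde{h}_2^-(\id)$ to $O(q^{-5/2})$ by a cancellation between the $\tau_0q^2\eta_0$-branch of $\VorH_{\Pi,\psi}(F_0)$ (through $H_2^a$) and the $K$-branch of $\VorH_{\Pi,\psi}(F_1)$ (through $H_2^b$); but you only assert these two $q^{-3/2}$-sized contributions ``must carry opposite signs'', without producing the matching phases --- and you yourself flag this as ``the main obstacle''. That is not a proof. More to the point, the detour is unnecessary. Your mismatch is a sign issue: you derive the monomial as $q^{-s}\widetilde{h}_2^-(\id)$ (consistently with $\chi^{-1}(t)=\norm[t]_{\F}^{-s}=q^{-s}$ on $\varpi_{\F}^{-1}\vO_{\F}^{\times}$), whereas the paper's one-sentence proof states $q^{+s}\widetilde{h}_2^-(\id)$, and the displayed inequalities here and in Lemma~\ref{lem: e1DNDWt-FineBd1} are calibrated to that $q^{+ns}$ convention --- under the opposite sign the two bounds simply exchange their $\pm\tfrac12$ labels. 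Either set of bounds already suffices for Proposition~\ref{prop: DNDWtBde=1}: with your $q^{-s}$ and $\widetilde{h}_2^-(\id)\ll q^{-3/2}$ you obtain $\widetilde{H}_2^-(k;\tfrac12)\ll q^{-2+\epsilon}$ and $\widetilde{H}_2^-(k;-\tfrac12)\ll q^{-1+\epsilon}$, and the second is harmless because $\widetilde{H}_{\infty}(k;-\tfrac12)\ll q^{n_1\epsilon}$ already dominates that point. The correct repair is therefore to reconcile the sign convention and conclude, not to invoke a second-order cancellation you cannot justify and which the paper's computation nowhere establishes.
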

\begin{proof}
	Similarly as the previous lemma, we have $\widetilde{h}_2^-(\norm_{\F}^s) = q^s \cdot \widetilde{h}_2^-(\id)$ by the inspection of the supports of $\VorH_{\Pi,\psi} \circ \Mult_{-1}(H_2)$ via Corollary \ref{cor: VHQEleF1Bis} in the proof of Lemma \ref{lem: h2-Bdn0=1}, from which the stated bounds follow.
\end{proof}

\begin{proposition} \label{prop: DNDWtBde=1}
	With the test function $H$ given by \eqref{eq: TestFROI} we have for any $k \in \Z_{\geq 0}$
	$$ \widetilde{H}(k; \tfrac{1}{2}) \ll_{k,\epsilon} \Cond(\Pi)^{\frac{1}{2}} \cdot q^{2n_0 \epsilon}, \quad \widetilde{H}(k; -\tfrac{1}{2}) \ll_{k,\epsilon} \Cond(\Pi)^{4+\epsilon} \cdot q^{2n_0 \epsilon}. $$
\end{proposition}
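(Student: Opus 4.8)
The plan is to imitate the proof of Proposition~\ref{prop: DWtBde=1}, now at the level of Taylor coefficients: I would write $\widetilde{H}(k; s_0) = \widetilde{H}_{\infty}(k; s_0) + \widetilde{H}_c^+(k; s_0) + \widetilde{H}_c^-(k; s_0)$ and bound each piece at $s_0 = \pm\tfrac{1}{2}$ (recall $e = 1$ in this section). The first two pieces are disposed of uniformly by Lemma~\ref{lem: DNDWt+Bd}: choosing the stability parameter as small as the constraint $n_1 \ge \max(2n_0, a(\Pi))$ allows, part~(1) gives $\widetilde{H}_{\infty}(k; \tfrac12) \ll 1$ and $\widetilde{H}_{\infty}(k; -\tfrac12) \ll_{k,\epsilon} q^{n_1\epsilon}$, and part~(2) gives $\widetilde{H}_c^+(k; \tfrac12) \ll_{k,\epsilon} \Cond(\Pi)^{1/2} q^{2n_0\epsilon}$ and $\widetilde{H}_c^+(k; -\tfrac12) \ll_{k,\epsilon} \Cond(\Pi)^{1/2} q^{-2n_0(1-\epsilon)}$. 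Since I will always arrange $n_1 \le \max(2\cond(\Pi), 2n_0)$, all of this is absorbed into the two asserted bounds, and the heart of the matter is $\widetilde{H}_c^-(k; \pm\tfrac12) = \sum_{n_0 \le n \le n_1 - 1} \widetilde{H}_{2n}^-(k; \pm\tfrac12)$.

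I would split according to the size of $n_0$. If $n_0 \le \cond(\Pi)$ (so $\cond(\Pi) > 0$), take $n_1 = 2\cond(\Pi)$; then Lemma~\ref{lem: DNDWt-TrivBd} applies with $A = 1$ and yields directly $\widetilde{H}_c^-(k; \tfrac12) \ll_{\epsilon} \Cond(\Pi)^\epsilon$ and $\widetilde{H}_c^-(k; -\tfrac12) \ll_{\epsilon} \Cond(\Pi)^{4(1+\epsilon)}$, which settles this case. If $n_0 > \cond(\Pi)$, take $n_1 = 2n_0$, so there is no EBOI summand in $H_c$ and $\widetilde{H}_c^- = \widetilde{H}_{2n_0}^- + \sum_{n_0+1 \le n \le 2n_0-1}\widetilde{H}_{2n}^-$. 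Each middle index satisfies $n \ge n_0+1 > \cond(\Pi) \ge a(\Pi)$ (and $n \ge 2 = a(\Pi)$ when $\cond(\Pi) = 0$), so Lemma~\ref{lem: e1DNDWt-FineBd1} shows those summands vanish except at $n = 2n_0-1$, where the Taylor coefficients are $\ll_{k,\epsilon} q^{-1/2+(2n_0-1)\epsilon}$ at $\tfrac12$ and $\ll_{k,\epsilon} q^{1/2-2n_0+(2n_0-1)\epsilon}$ at $-\tfrac12$. What is left is the single special summand $\widetilde{H}_{2n_0}^-$.

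To handle $\widetilde{H}_{2n_0}^-$ I would use the decomposition $H_{2n_0} = H_{2n_0}^a + H_{2n_0}^b$ and exploit that for the unramified $\chi = \norm_{\F}^s$ the conductor-matching indicators $\id_m(\cond(\chi))$ and $\id_{n_0}(\cond(\chi))$ in Lemmas~\ref{lem: e1DWt-FineBd2} and~\ref{lem: e1DWt-FineBd3} all vanish (as $m \ge a(\Pi) \ge 2$ and $n_0 \ge 1$), so $\widetilde{h}_{2n_0,m}^{a,\tau}(\norm_{\F}^s) = 0$ for $a(\Pi) \le m < n_0$ and $\widetilde{h}_{2n_0,n_0}^{b,\tau}(\norm_{\F}^s) = 0$; moreover Lemma~\ref{lem: e1DWt-FineBd4} annihilates $\widetilde{h}_{2n_0,m}^{a,\tau}$ for $m \le \cond(\Pi)$ whenever $n_0 \ge \max(2\cond(\Pi), \cond(\Pi)+2)$. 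Since $a(\Pi) \le \cond(\Pi)$ by Proposition~\ref{prop: StabRang}, these ranges exhaust $0 \le m \le n_0$, so for $n_0 \ge \max(2\cond(\Pi), \cond(\Pi)+2)$ one gets $\widetilde{h}_{2n_0}^-(\norm_{\F}^s) \equiv 0$, whence $\widetilde{H}_{2n_0}^-(k; \pm\tfrac12) = 0$; and for $\cond(\Pi) < n_0 \le 2\cond(\Pi)$ the single index $2n_0$ is bounded by Lemma~\ref{lem: DNDWt-TrivBd} using $q^{n_0} \le q^{2\cond(\Pi)} = \Cond(\Pi)^2$, giving $\widetilde{H}_{2n_0}^-(k; -\tfrac12) \ll_{\epsilon} \Cond(\Pi)^{4(1+\epsilon)}$ and $\widetilde{H}_{2n_0}^-(k; \tfrac12) \ll_{\epsilon} \Cond(\Pi)^\epsilon$. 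Collecting the pieces produces $\widetilde{H}(k; \tfrac12) \ll_{k,\epsilon} \Cond(\Pi)^{1/2}q^{2n_0\epsilon}$ and $\widetilde{H}(k; -\tfrac12) \ll_{k,\epsilon} \Cond(\Pi)^{4+\epsilon}q^{2n_0\epsilon}$, the latter being the common upper bound of the two cases. Two side cases remain: if $\Pi$ is spherical with $n_0 \ge 2$, Lemmas~\ref{lem: e1DWt-FineBd2}--\ref{lem: e1DWt-FineBd4} miss $m = 1$, but the transform $\VorH_{\Pi,\psi}(F_1)$ of Corollary~\ref{cor: VHQEleF1Bis} is supported in $\varpi_{\F}^{-1}\vO_{\F}$, and the translation by $\tau^{-1}\Tr(x_{\tau}\alpha)^2 \in \varpi_{\F}^{2(n_0-1)}\vO_{\F}^{\times}$ pushes this support inside $\vO_{\F}$ for $n_0 \ge 2$, so that summand too vanishes on $\F - \vO_{\F}$; and if $\Pi$ is spherical with $n_0 = 1$, then $n_1 = 2$ and the sole new contribution $\widetilde{H}_2^-$ is bounded by Lemma~\ref{lem: h2-DNBdn0=1}.

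The main obstacle will be the vanishing of the special summand $\widetilde{H}_{2n_0}^-$ in the range $n_0 \gg \cond(\Pi)$: one must verify that, after specialization to $\chi = \norm_{\F}^s$, every term of the quadratic-elementary-function expansion of $H_{2n_0}$ drops out because of the conductor constraints recorded in Lemmas~\ref{lem: e1DWt-FineBd2}--\ref{lem: e1DWt-FineBd4}, and the delicate part is the bookkeeping that the three ranges $\{m \le \cond(\Pi)\}$, $\{a(\Pi) \le m < n_0\}$, $\{m = n_0\}$ really cover $0 \le m \le n_0$ (which hinges on $a(\Pi) \le \cond(\Pi)$), together with the low-$m$ spherical exception that forces a return to the explicit formula of Corollary~\ref{cor: VHQEleF1Bis}. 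A subsidiary but essential point is to keep $n_1$ minimal throughout, so that neither the tail $H_\infty$ (contributing only $q^{n_1\epsilon}$) nor the trivial-bound range inflates the final exponent beyond $q^{2n_0\epsilon}$.
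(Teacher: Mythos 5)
Your proof is correct and follows the same overall strategy as the paper: the decomposition $\widetilde{H} = \widetilde{H}_{\infty} + \widetilde{H}_c^{+} + \widetilde{H}_c^{-}$, Lemma~\ref{lem: DNDWt+Bd} for the first two pieces, and the alternative between Lemma~\ref{lem: DNDWt-TrivBd} and Lemmas~\ref{lem: e1DNDWt-FineBd1}, \ref{lem: e1DWt-FineBd2}--\ref{lem: e1DWt-FineBd4}, \ref{lem: h2-DNBdn0=1} for $\widetilde{H}_c^{-}$. You diverge at the case split: the paper cuts at $n_0 \lessgtr 2\cond(\Pi)$ with $n_1 = 4\cond(\Pi)$ (so $A = 2$ in Lemma~\ref{lem: DNDWt-TrivBd}, yielding $\Cond(\Pi)^{6(1+\epsilon)}$), while you cut at $n_0 \lessgtr \cond(\Pi)$ with $n_1 = 2\cond(\Pi)$, i.e.\ $A = 1$, getting $\Cond(\Pi)^{4(1+\epsilon)}$ directly; your choice is the one that actually matches the exponent $4+\epsilon$ in the statement, so this is an improvement rather than a cosmetic change. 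The other place you add value is the vanishing of $\widetilde{H}_{2n_0}^{-}(\norm_{\F}^s)$: the paper disposes of this with the terse remark that \textquotedblleft the proofs of Lemma~\ref{lem: e1DWt-FineBd1}--\ref{lem: e1DWt-FineBd4} imply\textquotedblright{} it, whereas you spell out the bookkeeping --- the indicators $\id_m(\cond(\chi))$, $\id_{n_0}(\cond(\chi))$ in Lemmas~\ref{lem: e1DWt-FineBd2}--\ref{lem: e1DWt-FineBd3} vanish for unramified $\chi$ since $m \geq a(\Pi) \geq 2$ and $n_0 \geq 1$; Lemma~\ref{lem: e1DWt-FineBd4} covers $m \leq \cond(\Pi)$; the ranges glue because $a(\Pi) \leq \cond(\Pi)$ --- and you correctly identify and patch the two genuine gaps: the spherical case $m=1$ (handled via the support of $\VorH_{\Pi,\psi}(F_1)$ from Corollary~\ref{cor: VHQEleF1Bis} and the translation by $\delta \in \varpi_{\F}^{2(n_0-1)}\vO_{\F}^{\times}$), and the boundary strip $\cond(\Pi) < n_0 < \max(2\cond(\Pi), \cond(\Pi)+2)$ where Lemma~\ref{lem: e1DWt-FineBd4} doesn't bite and one falls back on the single-term trivial bound of Lemma~\ref{lem: DNDWt-TrivBd} together with $q^{n_0} \leq \Cond(\Pi)^2$. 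So: same route as the paper, but with a tighter $n_1$, and with the implicit reductions made explicit.
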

\begin{proof}
	The argument is quite similar to Proposition \ref{prop: DWtBde=1}. We distinguish two cases $\cond(\Pi) > 0$ and $\cond(\Pi) = 0$.
	
\noindent (1) Suppose $\cond(\Pi) > 0$. We take $n_1 = 4 \cond(\Pi)$ if $n_0 \leq 2 \cond(\Pi)$ and apply Lemma \ref{lem: DNDWt+Bd} \& \ref{lem: DNDWt-TrivBd}. If $n_0 \geq 2 \cond(\Pi)$ we take $n_1 = 2n_0$. The proofs of Lemma \ref{lem: e1DWt-FineBd1}-\ref{lem: e1DWt-FineBd4} imply $\widetilde{h}_{2n_0}^-(\norm_{\F}^s) = 0$. Together with Lemma \ref{lem: DNDWt+Bd} \& \ref{lem: e1DNDWt-FineBd1} we conclude the stated bounds in this case.

\noindent (2) Suppose $\cond(\Pi) = 0$. Necessarily we have $\Pi = \mu_1 \boxplus \mu_2 \boxplus \mu_3$ for unramified $\mu_j$ and $a(\Pi)=2$. The argument of the second part in (1) works through also for $n_0 \geq 2$. It remains to consider the case $n_0=1$. Note that $n_1 = 2$. We conclude the stated bounds by Lemma \ref{lem: DNDWt+Bd} \& \ref{lem: h2-DNBdn0=1}.
\end{proof}

\section{Dual Weight Functions: Ramified Cases}

	\subsection{Second Quadratic Elementary Functions}
	
\begin{definition} \label{def: QEleF2}
	Let $\bL=\F[\sqrt{\varpi_{\F}}]$ and recall the associated quadratic character $\eta_{\bL/\F}$. For any $n \in \Z_{\geq 0}$ we define the \emph{second quadratic elementary functions} $G_n \in \Cont_c^{\infty}(\F^{\times})$ by 
	$$ G_n(y^2) := \id_{v(y)=-n} \cdot \sideset{}{_{\pm}} \sum \eta_{\bL/\F}(\pm y) \psi(\pm y), $$
	and are supported in the subset of \emph{square} elements of $\F^{\times}$.
\end{definition}

\begin{lemma} \label{lem: MellinQEleF2}
	Let $\chi$ be a quasi-character of $\F^{\times}$. We have
\begin{multline*} 
	\int_{\F^{\times}} G_n(y) \chi(y) \ud^{\times} y = \\
	\begin{cases}
		\id_{\cond(\chi^2)=n} \cdot \zeta_{\F}(1) \gamma(1,\eta_{\bL/\F}\chi^{-2},\psi) & \text{if } n \geq 2 \\
		\id_{\cond(\eta_{\bL/\F}\chi^2)=1} \cdot \zeta_{\F}(1) \gamma(1,\eta_{\bL/\F}\chi^{-2},\psi) - \id_{\cond(\eta_{\bL/\F}\chi^2)=0} \cdot \zeta_{\F}(1) q^{-1} (\eta_{\bL/\F}\chi^{-2})(\varpi_{\F}) & \text{if } n = 1 \\
		\id_{\cond(\eta_{\bL/\F}\chi^2)=0} & \text{if } n = 0
	\end{cases}. 
\end{multline*}
\end{lemma}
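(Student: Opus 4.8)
The plan is to follow the proof of Lemma~\ref{lem: MellinQEleF1} verbatim, carrying the extra quadratic twist $\eta_{\bL/\F}$ through every step.

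First I would handle $n \geq 1$. Applying the change of variables $y \mapsto y^2$, which realizes $\varpi_{\F}^{-n}\vO_{\F}^{\times} \to \varpi_{\F}^{-2n}(\vO_{\F}^{\times})^2$ as a $2$-to-$1$ cover, and unfolding the $\pm$-sum in the definition of $G_n$, one sees that the two terms of the sum coincide after the substitution $y \mapsto -y$: this substitution preserves $\ud^{\times}y$, one has $\chi^2(-1) = 1$, and $\eta_{\bL/\F}(-y)\psi(-y)\chi^2(y)$ turns into $\eta_{\bL/\F}(y)\psi(y)\chi^2(y)$. The factor $\tfrac12$ coming from the cover is thereby absorbed, and exactly as in the $F_n$ case one obtains the reduction
\[ \int_{\F^{\times}} G_n(y)\chi(y)\,\ud^{\times}y = \int_{\varpi_{\F}^{-n}\vO_{\F}^{\times}} \psi(y)\,(\eta_{\bL/\F}\chi^2)(y)\,\ud^{\times}y, \]
i.e.\ the same Gauss-type integral as for $F_n$ but with $\chi^2$ replaced by $\eta_{\bL/\F}\chi^2$.

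Second, I would evaluate this integral via \cite[Exercise 23.5]{BuH06}: it vanishes unless $\cond(\eta_{\bL/\F}\chi^2) = n$, in which case it equals $\zeta_{\F}(1)\gamma(1, (\eta_{\bL/\F}\chi^2)^{-1}, \psi) = \zeta_{\F}(1)\gamma(1, \eta_{\bL/\F}\chi^{-2}, \psi)$ since $\eta_{\bL/\F}^{-1} = \eta_{\bL/\F}$. For $n \geq 2$ the condition $\cond(\eta_{\bL/\F}\chi^2) = n$ is equivalent to $\cond(\chi^2) = n$, because $\eta_{\bL/\F}$ has conductor exactly $1$ (the residual characteristic is odd and $\bL/\F$ is ramified) and a product of characters of distinct conductors has conductor the larger of the two; this yields the first case. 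For $n = 1$ one additionally needs the elementary value $\int_{\varpi_{\F}^{-1}\vO_{\F}^{\times}} \psi(y)\mu(y)\,\ud^{\times}y = -\zeta_{\F}(1) q^{-1}\mu(\varpi_{\F})^{-1}$ when $\mu$ is unramified; taking $\mu = \eta_{\bL/\F}\chi^2$ produces the subtracted $\id_{\cond(\eta_{\bL/\F}\chi^2)=0}$ term in the second case. Finally, for $n = 0$ the character $\psi$ is trivial on $\vO_{\F}$, so the same unfolding leaves $\int_{\vO_{\F}^{\times}}(\eta_{\bL/\F}\chi^2)(y)\,\ud^{\times}y = \id_{\cond(\eta_{\bL/\F}\chi^2)=0}$ by orthogonality of characters and the normalization $\Vol(\vO_{\F}^{\times}, \ud^{\times}y) = 1$.

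There is no genuine obstacle here: the argument is entirely parallel to Lemma~\ref{lem: MellinQEleF1}. The only points meriting care are keeping the conductor conditions phrased in terms of $\cond(\eta_{\bL/\F}\chi^2)$ rather than $\cond(\chi^2)$ — these agree only for $n \geq 2$ — and correctly recording the $n = 1$ boundary term arising from the unramified case of the Gauss integral.
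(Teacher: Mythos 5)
Your proposal is correct and takes essentially the same approach the paper intends: the paper itself just says ``the proof is quite similar to Lemma~\ref{lem: MellinQEleF1} and is omitted,'' and your write-up is precisely that parallel argument carried out, correctly noting that the extra factor $\eta_{\bL/\F}$ survives the $y\mapsto y^2$ unfolding unchanged (since $\chi^2(-1)=1$) and that $\cond(\eta_{\bL/\F}\chi^2)=n$ is equivalent to $\cond(\chi^2)=n$ only when $n\geq 2$, which is exactly where the statement switches between the two conductor conditions.
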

\begin{proof}
	The proof is quite similar to Lemma \ref{lem: MellinQEleF1} and is omitted.
\end{proof}

\begin{corollary} \label{cor: VHQEleF2}
	Let $n \geq a(\Pi) (\geq 2)$, the stability barrier of $\Pi$ (see Proposition \ref{prop: StabRang}). We have
	$$ \VorH_{\Pi,\psi}(G_n)(y) = \eta_0(-1)^n \eta_0(-2) \id_{\varpi_{\F}^{-n} \vO_{\F}^{\times}}(y) \cdot 
	\begin{cases}
		q^{\frac{3n}{2}} \psi(4y) \eta_0(4y) & \text{if } 2 \mid n \\
		\tau_0 q^{\lceil \frac{3n}{2} \rceil} \psi(4y) & \text{if } 2 \nmid n
	\end{cases}. $$
\end{corollary}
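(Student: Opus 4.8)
The plan is to mirror the proof of Corollary \ref{cor: VHQEleF1} step by step, the only new ingredient being the quadratic twist $\eta_{\bL/\F}$ that separates $G_n$ from $F_n$. First I would combine the local functional equation with Lemma \ref{lem: MellinQEleF2} and the stability estimate of Proposition \ref{prop: StabRang} (legitimate since $n \geq a(\Pi) \geq 2$) to record, for $\chi \in \widehat{\vO_{\F}^{\times}}$, the Mellin transform
$$ \int_{\F^{\times}} \VorH_{\Pi,\psi}(G_n)(y) \chi(y)^{-1} \norm[y]^{-s} \ud^{\times} y = \zeta_{\F}(1) \id_{\cond(\chi) = n} \gamma(1,\chi,\psi)^3 \gamma(1,\eta_{\bL/\F}\chi^{-2},\psi) \cdot q^{3n-ns}, $$
which is exactly the expression from Corollary \ref{cor: VHQEleF1} with $\gamma(1,\chi^{-2},\psi)$ replaced by $\gamma(1,\eta_{\bL/\F}\chi^{-2},\psi)$; here one uses that the residual characteristic is odd and $\eta_{\bL/\F}$ has conductor exponent $1$, so that $\cond(\eta_{\bL/\F}\chi^{\pm 2}) = \cond(\chi^{\pm 2}) = \cond(\chi) = n$ whenever $n \geq 2$, and the $q$-power is the same as in the $F_n$ case. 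In particular $\VorH_{\Pi,\psi}(G_n)$ is supported on $\varpi_{\F}^{-n}\vO_{\F}^{\times}$.

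Next I would expand all four gamma factors into Gauss integrals via \cite[Exercise 23.5]{BuH06}. Relative to the $F_n$ computation, the factor $\gamma(1,\eta_{\bL/\F}\chi^{-2},\psi)$ contributes two new features attached to the ``even'' variable: a constant $\eta_{\bL/\F}(\varpi_{\F})^{n}$, which equals $\eta_0(-1)^n$ since $\Nr_{\bL/\F}(\varpi_{\bL}) = -\varpi_{\F}$ gives $\eta_{\bL/\F}(\varpi_{\F}) = \eta_{\bL/\F}(-1) = \eta_0(-1)$ and $\eta_{\bL/\F}|_{\vO_{\F}^{\times}} = \eta_0|_{\vO_{\F}^{\times}}$; and a factor $\eta_{\bL/\F}(t_4)$ inside the integrand. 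Running the Fourier inversion on $\vO_{\F}^{\times}$, collapsing the $t_1$-integral against $\id_{1+\vP_{\F}^n} - q^{-1}\id_{1+\vP_{\F}^{n-1}}$ (the second piece vanishing exactly as before because $\cond(\psi) = 0$), and re-numbering the surviving variables, I arrive at
$$ \VorH_{\Pi,\psi}(G_n)\left( \tfrac{y}{\varpi_{\F}^n} \right) = \eta_0(-1)^n q^{3n} \int_{(\vO_{\F}^{\times})^3} \psi\left( \tfrac{t_1+t_2+t_3 + t_1^{-1}t_2^{-1}t_3^2 y}{\varpi_{\F}^n} \right) \eta_{\bL/\F}(t_3) \ud t_1 \ud t_2 \ud t_3, \quad y \in \vO_{\F}^{\times}. $$

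From here the level $\lceil n/2 \rceil$ regularization to $\ud t_j$ localizes $t_1,t_2 \in 4y(1+\vP_{\F}^{\lfloor n/2 \rfloor})$ and $t_3 \in -8y(1+\vP_{\F}^{\lfloor n/2 \rfloor})$ precisely as in Corollary \ref{cor: VHQEleF1}. On this range $\eta_{\bL/\F}$ is constant (conductor $1$, hence trivial on $1+\vP_{\F}^{\lfloor n/2 \rfloor}$) with value $\eta_{\bL/\F}(-8y) = \eta_0(-2)\eta_0(4y)$, using $\eta_{\bL/\F} = \eta_0$ on units and that $4$ is a square; pulling this constant out, the residual triple Gauss integral is \emph{identical} to the one evaluated in Corollary \ref{cor: VHQEleF1}, so after completing the square it reduces to $\psi(4y/\varpi_{\F}^n)\int_{\vP_{\F}^{\lfloor n/2 \rfloor}} \psi(\tfrac{4y}{\varpi_{\F}^n}u^2)\, \ud u$, i.e. to $q^{3n/2}\psi(4y/\varpi_{\F}^n)$ when $2 \mid n$ and to $\tau_0 q^{\lceil 3n/2 \rceil}\psi(4y/\varpi_{\F}^n)\eta_0(4y)$ when $2 \nmid n$, with $\tau_0$ the quadratic Gauss sum of Definition \ref{def: QGSum}. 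In effect one obtains $\VorH_{\Pi,\psi}(G_n)(y) = \eta_0(-1)^n\eta_0(-2)\eta_0(4y)\cdot\VorH_{\Pi,\psi}(F_n)(y)$ on $\varpi_{\F}^{-n}\vO_{\F}^{\times}$, which unwinds to the asserted formula; note the cancellation $\eta_0(4y)^2 = 1$ in the odd case, responsible for the $\eta_0(4y)$ appearing in the even rather than the odd branch. The only genuinely non-mechanical part is this last bookkeeping — tracking the scattered values of $\eta_{\bL/\F}$ and translating them consistently into $\eta_0(-1)^n$, $\eta_0(-2)$ and $\eta_0(4y)$, and checking the parity-dependent cancellation — everything else being inherited verbatim from the proof of Corollary \ref{cor: VHQEleF1}.
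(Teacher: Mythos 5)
Your proof is correct and takes essentially the same route as the paper: compute the Mellin transform via Lemma \ref{lem: MellinQEleF2}, the local functional equation and stability, then Fourier-invert and regularize at level $\lceil n/2 \rceil$ to localize the triple integral, where $\eta_{\bL/\F}(t_3)$ becomes the constant $\eta_{\bL/\F}(-8y)$. Your closing remark that $\VorH_{\Pi,\psi}(G_n)(y) = \eta_0(-1)^n\eta_0(-2)\eta_0(4y)\cdot\VorH_{\Pi,\psi}(F_n)(y)$ on the common support is a tidy way to record the same bookkeeping the paper compresses into its final sentence.
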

\begin{proof}
	The proof is quite similar to Corollary \ref{cor: VHQEleF1}. We only emphasize the difference. We get
	$$ \VorH_{\Pi,\psi}(G_n) \left( \tfrac{y}{\varpi_{\F}^n} \right) = q^{3n} \int_{(\vO_{\F}^{\times})^3} \eta_{\bL/\F} \left( \tfrac{t_3}{\varpi_{\F}^n} \right) \psi \left( \tfrac{t_1+t_2+t_3 + t_1^{-1}t_2^{-1}t_3^2 y}{\varpi_{\F}^n} \right) \ud t_1 \ud t_2 \ud t_3. $$
	Performing the level $\lceil \tfrac{n}{2} \rceil$ regularization to $\ud t_j$ we see that the non-vanishing of the above integral implies $t_1 = 4y(1+u_1)$, $t_2 = 4y(1+u_2)$ and $t_3 = -8y(1+u_3)$ with $u_j \in \vP_{\F}^{\lfloor \frac{n}{2} \rfloor}$ and obtain
	$$ \VorH_{\Pi,\psi}(G_n) \left( \tfrac{y}{\varpi_{\F}^n} \right) = q^{2n} \eta_{\bL/\F} \left( \tfrac{-8y}{\varpi_{\F}^n} \right) \psi \left( \tfrac{4y}{\varpi_{\F}^n} \right) \int_{\vP_{\F}^{\lfloor \frac{n}{2} \rfloor}} \psi \left( \tfrac{4y}{\varpi_{\F}^n} u_2^2 \right) \ud u_2. $$
	We conclude by noting $\eta_{\bL/\F} \left( \varpi_{\F}^{-n} y \right) = \eta_0(-1)^n \eta_0(y)$ for $y \in \vO_{\F}^{\times}$.
\end{proof}

\begin{corollary} \label{cor: VHQEleF2Bis}
	Suppose $\Pi = \mu_1 \boxplus \mu_2 \boxplus \mu_3$ with $\cond(\mu_j)=0$ and $\mu_1\mu_2\mu_3 = \id$. Let $E_i(y) := \mu_i^{-1}(y) \norm[y] \id_{\vO_{\F}}(y)$ and $f_i := (1 - \mu_i(\varpi_{\F}) \Trans(\varpi_{\F})).E_i$. For $f,g \in \intL^1(\F^{\times})$ define $f*g(y) := \int_{\F^{\times}} f(yt^{-1})g(t) \ud^{\times} t$.
\begin{itemize}
	\item[(1)] We have $G_0=0$ unless $4 \mid q-1$, in which case $\left\{ \chi \in \widehat{\vO_{\F}^{\times}} \ \middle| \ \eta_0 = \chi^2 \right\} = \{ \eta_1, \eta_1^{-1} \}$. Extending $\eta_1$ to $\F^{\times}$ by $\eta_1(\varpi_{\F})=1$ we get
	$$ \VorH_{\Pi,\psi}(G_0)(y) = \id_{\varpi_{\F}^{-3} \vO_{\F}^{\times}}(y) \cdot q^{\frac{3}{2}} \cdot \left\{ \gamma(\tfrac{1}{2}, \eta_1, \psi)^3 \eta_1(y) + \gamma(\tfrac{1}{2}, \eta_1^{-1}, \psi)^3 \eta_1^{-1}(y) \right\}. $$
	\item[(2)] We have the formula
\begin{multline*}
	\VorH_{\Pi,\psi}(G_1)(y) = \zeta_{\F}(1) \gamma(1,\eta_{\bL/\F},\psi) \cdot \left( f_1*f_2*f_3 \right)(\varpi_{\F}^{-2}y) + \\
	\eta_{\bL/\F}(-1) \id_{\varpi_{\F}^{-1}\vO_{\F}^{\times}}(y) \cdot \left\{ \int_{(\varpi_{\F}^{-1}\vO_{\F}^{\times})^3} \eta_0(t_3) \psi \left( t_1+t_2+t_3 + \tfrac{t_3^2y}{t_1t_2} \right) \ud \vec{t} + \zeta_{\F}(1) \tau_0 \right\}.
\end{multline*}
\end{itemize}
\end{corollary}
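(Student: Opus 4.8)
The plan is to follow the template of Corollary~\ref{cor: VHQEleF1Bis}, with $F_n$ replaced by $G_n$ and the extra quadratic twist $\eta_{\bL/\F}$ that Lemma~\ref{lem: MellinQEleF2} attaches to the Mellin transform carried along throughout. First I would compute the Mellin transform of $\VorH_{\Pi,\psi}(G_n)$ for $n=0$ and $n=1$ by combining the local functional equation for $\VorH_{\Pi,\psi}$ (exactly as used in the proof of Corollary~\ref{cor: VHQEleF1Bis}) with Lemma~\ref{lem: MellinQEleF2}. Since the $\mu_j$ are unramified with $\mu_1\mu_2\mu_3=\id$, multiplicativity gives $\gamma(s,\Pi\otimes\chi,\psi)=\prod_j\gamma(s,\mu_j\chi,\psi)=\gamma(s,\chi,\psi)^3$ for every ramified $\chi$ (the unramified twist factors multiply to $(\mu_1\mu_2\mu_3)(\varpi_{\F})^{\bullet}=1$), and $\gamma(s,\Pi\otimes\chi,\psi)=\prod_j L(1-s,\mu_j^{-1}\chi^{-1})/L(s,\mu_j\chi)$ for $\chi$ unramified, while the factor from Lemma~\ref{lem: MellinQEleF2} is non-vanishing only when $\cond(\eta_{\bL/\F}\chi^2)\le n$. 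This pins down the Mellin transform to an explicit expression indexed by a short list of characters, and the remaining work is Fourier inversion on $\vO_{\F}^{\times}$.

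For $G_0$ the contributing characters are those with $\chi^2|_{\vO_{\F}^{\times}}$ equal to the quadratic residue character $\eta_0|_{\vO_{\F}^{\times}}$. Because $p\neq2$, the pro-$p$ group $1+\vP_{\F}$ is uniquely $2$-divisible while $\vO_{\F}^{\times}/(1+\vP_{\F})\cong\fF_q^{\times}$ is cyclic of order $q-1$, so such $\chi$ exist exactly when the order-$2$ element of $\widehat{\fF_q^{\times}}$ is a square, i.e., when $4\mid q-1$; in that case there are precisely two, which form the pair $\{\eta_1,\eta_1^{-1}\}$ of conductor $1$. Evaluating the $\gamma$-factor computation at $\cond(\chi)=1$ and extending $\eta_1$ by $\eta_1(\varpi_{\F})=1$ gives $\gamma(s,\Pi\otimes\eta_1^{\pm 1},\psi)=q^{3(1/2-s)}\gamma(1/2,\eta_1^{\pm 1},\psi)^3$, after which Mellin inversion on $\vO_{\F}^{\times}$ forces $\VorH_{\Pi,\psi}(G_0)$ to be supported on $\varpi_{\F}^{-3}\vO_{\F}^{\times}$ and returns the stated two-term formula.

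For $G_1$ Lemma~\ref{lem: MellinQEleF2} restricts the contributing $\chi$ to conductor $\le1$, and I would split $\VorH_{\Pi,\psi}(G_1)=A+B$ according to whether $\chi$ is unramified ($A$) or ramified of conductor $1$ ($B$). For unramified $\chi$ one has $\cond(\eta_{\bL/\F}\chi^2)=1$, so Lemma~\ref{lem: MellinQEleF2} supplies the factor $\zeta_{\F}(1)\gamma(1,\eta_{\bL/\F}\chi^{-2},\psi)=\zeta_{\F}(1)\chi^{-2}(\varpi_{\F})\gamma(1,\eta_{\bL/\F},\psi)$; since $f_1*f_2*f_3$ is $\vO_{\F}^{\times}$-invariant with Mellin transform $\prod_j L(1-s,\mu_j^{-1}\chi^{-1})/L(s,\mu_j\chi)$ (recorded in the proof of Corollary~\ref{cor: VHQEleF1Bis}), the translation identity $\int(f_1*f_2*f_3)(\varpi_{\F}^{-2}y)\chi^{-1}(y)\norm[y]^{-s}\ud^{\times}y=\chi^{-2}(\varpi_{\F})q^{2s}\prod_j L(1-s,\mu_j^{-1}\chi^{-1})/L(s,\mu_j\chi)$ identifies $A=\zeta_{\F}(1)\gamma(1,\eta_{\bL/\F},\psi)(f_1*f_2*f_3)(\varpi_{\F}^{-2}y)$. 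For $\chi$ of conductor $1$ the Mellin transform of $B$ turns out to be $q^{-s}$ times an $s$-independent product of Gauss-sum $\varepsilon$-factors, one of which is the $\eta_{\bL/\F}$-twisted $\varepsilon$-factor coming from $\gamma(1,\eta_{\bL/\F}\chi^{-2},\psi)$; hence $\mathrm{supp}(B)\subset\varpi_{\F}^{-1}\vO_{\F}^{\times}$. Writing that product as a single integral over $(\vO_{\F}^{\times})^4$ and performing the Fourier inversion over the conductor-$\le1$ characters exactly as in Corollary~\ref{cor: VHQEleF1Bis}, with the $\eta_0$-twist retained on the distinguished variable (which after renumbering is $t_3$), yields the exponential sum $\int_{(\varpi_{\F}^{-1}\vO_{\F}^{\times})^3}\eta_0(t_3)\psi(t_1+t_2+t_3+t_3^2 y/(t_1t_2))\,\ud\vec{t}$; the conductor-$\le0$ correction in the inversion is $-\zeta_{\F}(1)q^3\int_{(\vO_{\F}^{\times})^4}\eta_0(t_3)\psi((t_1+t_2+t_3+t_4)/\varpi_{\F})\,\ud\vec{t}$, in which the $\eta_0$-twisted unit integral equals $\tau_0$ and the other three equal $-q^{-1}$, so the correction totals $\zeta_{\F}(1)\tau_0$; and the overall prefactor $\eta_{\bL/\F}(-1)$ enters through $\eta_{\bL/\F}(\varpi_{\F}^{-1}t)=\eta_{\bL/\F}(-1)\eta_0(t)$ for $t\in\vO_{\F}^{\times}$, as in Corollary~\ref{cor: VHQEleF2}.

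The main obstacle is the bookkeeping forced by the fact that $G_0$ and $G_1$ have conductor $0$ and $1$, strictly below the stability barrier $a(\Pi)\ge2$, so Corollary~\ref{cor: VHQEleF2} is not available and the Fourier-inversion-on-$\vO_{\F}^{\times}$ argument — including the conductor-filtration correction terms — must be carried out by hand as for $F_0,F_1$. The genuinely delicate point is tracking the interplay of the quadratic twist $\eta_0$ on the distinguished variable, the quadratic Gauss sum $\tau_0$, and the sign factors $\eta_{\bL/\F}(-1)$ and $(\mu_1\mu_2\mu_3)(\varpi_{\F})=1$; and, for $G_0$, correctly pinning down the dichotomy $4\mid q-1$ together with the identification of the square-root pair $\{\eta_1,\eta_1^{-1}\}$.
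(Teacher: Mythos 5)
Your proposal is correct and follows essentially the same route as the paper: split $\VorH_{\Pi,\psi}(G_1)$ into the unramified and conductor-one pieces, identify the unramified piece with $\zeta_{\F}(1)\gamma(1,\eta_{\bL/\F},\psi)(f_1*f_2*f_3)(\varpi_{\F}^{-2}\cdot)$ via its Mellin transform, locate the support of the conductor-one piece, and carry out the Fourier inversion on $\vO_{\F}^{\times}$ with the quadratic twist tracked through (including the $\zeta_{\F}(1)\tau_0$ correction and the $\eta_{\bL/\F}(-1)$ prefactor from $\eta_{\bL/\F}(\varpi_{\F}^{-1}t)=\eta_{\bL/\F}(-1)\eta_0(t)$); the $G_0$ dichotomy via $4\mid q-1$ and the square-root pair $\{\eta_1,\eta_1^{-1}\}$ is handled just as in the paper.
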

\begin{proof}
	(1) From Lemma \ref{lem: MellinQEleF2} and the local functional equation we deduce for any $\chi \in \widehat{\vO_{\F}^{\times}}$
	$$ \int_{\F^{\times}} \VorH_{\Pi,\psi}(G_0)(y) \chi^{-1}(y) \norm[y]^{-s} \ud^{\times} y = \id_{\chi = \eta_1^{\pm 1}} \cdot q^{3 \left( \frac{1}{2}-s \right)} \gamma \left( \tfrac{1}{2},\chi,\psi \right)^3. $$
	We readily deduce the desired formula for $\VorH_{\Pi,\psi}(G_0)$.
	
\noindent (2) We can write $\VorH_{\Pi,\psi}(G_1) = \VorH_{\Pi,\psi}(G_1)_0 + \VorH_{\Pi,\psi}(G_1)_1$ with the properties
	$$ \VorH_{\Pi,\psi}(G_1)_0(y\delta) = \VorH_{\Pi,\psi}(G_1)_0(y), \quad \forall y \in \F^{\times}, \delta \in \vO_{\F}^{\times}; $$
	$$ \int_{\F^{\times}} \VorH_{\Pi,\psi}(G_1)_0(y) \norm[y]^{-s} \ud^{\times} y = \zeta_{\F}(1) \gamma(1,\eta_{\bL/\F},\psi) \cdot q^{2s} \cdot \sideset{}{_{i=1}^3} \prod \int_{\F^{\times}} f_i(y) \norm[y]^{-s} \ud^{\times} y, $$
\begin{multline*} 
	\int_{\F^{\times}} \VorH_{\Pi,\psi}(G_1)_1(y) \chi^{-1}(y) \norm[y]^{-s} \ud^{\times} y = \\
	\id_{\cond(\chi)=1} \cdot q^{3-s} \int_{\varpi_{\F}^{-1}\vO_{\F}^{\times}} \psi(y) \eta_{\bL/\F}\chi^2(y) \ud^{\times} y \cdot \left( \int_{\varpi_{\F}^{-1} \vO_{\F}^{\times}} \psi(y) \chi^{-1}(y) \ud^{\times}y \right)^3.
\end{multline*}
	We easily identify $\VorH_{\Pi,\psi}(G_1)_0(y) = \zeta_{\F}(1) \gamma(1,\eta_{\bL/\F},\psi) \cdot \left( f_1*f_2*f_3 \right)(\varpi_{\F}^{-2}y)$. We also deduce that $\mathrm{supp}(\VorH_{\Pi,\psi}(G_1)_1) \subset \varpi_{\F}^{-1} \vO_{\F}^{\times}$, and for $y \in \vO_{\F}^{\times}$
\begin{multline*}
	\VorH_{\Pi,\psi}(G_1)_1 \left( \tfrac{y}{\varpi_{\F}} \right) = q^{3} \int_{(\vO_{\F}^{\times})^3} \eta_{\bL/\F} \left( \tfrac{t_4}{\varpi_{\F}} \right) \psi \left( \tfrac{t_2+t_3+t_4 + t_2^{-1}t_3^{-1}t_4^2y}{\varpi_{\F}} \right) \ud t_2 \ud t_3 \ud t_4 \\
	- \zeta_{\F}(1) q^{3} \int_{(\vO_{\F}^{\times})^4} \eta_{\bL/\F} \left( \tfrac{t_4}{\varpi_{\F}} \right) \psi \left( \tfrac{t_1+t_2+t_3+t_4}{\varpi_{\F}} \right) \ud \vec{t}.
\end{multline*}
	The second summand is equal to $\zeta_{\F}(1) \eta_{\bL/\F}(-1) \tau_0$. We conclude by re-numbering the variables.
\end{proof}

	\subsection{Further Reductions}
	
	The functions $G_n$ are ``building blocks'' of our test functions $H_c$ in \eqref{eq: TestFComp} when $\bL/\F$ is ramified. In fact writing $\lambda_{\bL} := \lambda(\bL/\F, \psi) \eta_{\bL/\F}(2)$ and applying Lemma \ref{lem: TestFAsymp} we can rewrite the summands of $H_c$ as
\begin{align}
	&H_{2n+1} = 0 \quad \text{if } n \neq n_0/2, \label{eq: FPTestFR0} \\
	&H_{2n} = \begin{cases}
		E_n & \text{if } n_0+1 \leq n \leq n_1-1 \\
		\lambda_{\bL}q^n \int_{\substack{\bL^1 \\ \Tr(\alpha) \in 2(1+\vP_{\F}^{n_0-1})}} \beta(\alpha) \cdot \left( \Trans \left( \Tr(\alpha)^2 \right).G_n \right) \ud \alpha & \text{if } n = n_0 \\
		\lambda_{\bL}q^n \int_{\substack{\bL^1 \\ \Tr(\alpha) \in 2(1+\varpi_{\F}^{2n-n_0-1} \vO_{\F}^{\times})}} \beta(\alpha) \cdot \left( \Trans \left( \Tr(\alpha)^2 \right).G_n \right) \ud \alpha & \text{if } \tfrac{n_0}{2}+1 \leq n \leq n_0-1
	\end{cases}. \label{eq: FPTestFR1}
\end{align}
	The decomposition of $H_{n_0+1}$ is subtler and really goes in the direction of expression in terms of the \emph{quadratic elementary functions}. We shall write (the second numeric parameter $m$ in subscript always indicates the parameter of the relevant quadratic elementary function)
\begin{equation} \label{eq: FPTestFR2}
	H_{n_0+1} = \tfrac{\lambda(\bL/\F,\psi)}{2} q^{\frac{n_0+1}{2}} \cdot \sideset{}{_{m=0}^{\frac{n_0}{2}}} \sum H_{n_0+1,m},
\end{equation}
	where the summands are given by (below $m \geq 1$)
	$$ H_{n_0+1,m} = \int_{\substack{\bL^1 \\ \Tr(\varpi_{\bL} \alpha) \in \varpi_{\F}^{n_0/2+1-m} \vO_{\F}^{\times}}} \beta(\varpi_{\bL} \alpha) \cdot \eta_{\bL/\F} \left( \Tr(\varpi_{\bL}\alpha) \right) \cdot \Trans \left( -\varpi_{\F}^{-1} \Tr(\varpi_{\bL} \alpha)^2 \right).G_m \ud \alpha, $$
	$$ H_{n_0+1,0} = \int_{\substack{\bL^1 \\ \Tr(\varpi_{\bL} \alpha) \in \vP_{\F}^{n_0/2+1}}} \beta(\varpi_{\bL} \alpha) \ud \alpha \cdot \Trans \left( (-\varpi_{\F})^{n_0+1} \right).G_0. $$
	
\begin{lemma} \label{lem: JacTypeSRCBis}
	Let $\chi$ be a (unitary) character of $\F^{\times}$ with $\cond(\chi)=n \leq \tfrac{n_0}{2}$. Recall the additive parameter $c_{\beta}$ (resp. $c_{\chi}$) in Lemma \ref{lem: AddParMultChar} (resp. Remark \ref{rmk: AddParMultChar}). We have
	$$ \int_{\vO_{\F}^{\times}} \beta \left( 1+\varpi_{\F}^{\frac{n_0}{2}-n} t \varpi_{\bL} \right) \chi(t) \ud t \ll q^{-\frac{n}{2}}. $$
\end{lemma}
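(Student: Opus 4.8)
The plan is to follow the proof of Lemma~\ref{lem: JacTypeSRC}~(1) essentially verbatim, adapting it to the ramified algebra $\bL=\F[\sqrt{\varpi_{\F}}]$; throughout I use the normalizations of \eqref{eq: UnifChoice}, namely $\overline{\varpi_{\bL}}=-\varpi_{\bL}$ and $\varpi_{\F}=-\varpi_{\bL}^2$, together with the fact (Claim in Lemma~\ref{lem: CondReInterp}) that $2\mid n_0$, so that $\varpi_{\bL}^{-n_0}=(-1)^{n_0/2}\varpi_{\F}^{-n_0/2}$. The cases $n=0$ and $n=1$ are immediate: for $n=0$ the integrand has modulus $\le 1$ on a set of measure $\le 1$; for $n=1$, since $n_0\ge 2$ one has $\varpi_{\F}^{n_0/2-1}t\varpi_{\bL}\in\vP_{\bL}^{n_0-1}\subseteq\vP_{\bL}^{n_0/2}$, so Lemma~\ref{lem: AddParMultChar} (ramified case) gives $\beta(1+\varpi_{\F}^{n_0/2-1}t\varpi_{\bL})=\psi\bigl((-1)^{n_0/2}2c_{\beta}\varpi_{\F}^{-1}t\bigr)$, a nontrivial additive character of $\vO_{\F}$, and the integral is a Gauss sum twisted by a character of conductor $1$, hence $\ll q^{-1/2}$.

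For $n\ge 2$ I would perform a level $\lceil n/2\rceil$ regularization to $\ud t$. Writing the perturbation factor from $\beta$ as $\beta(1+w)$ with $w=\varpi_{\F}^{n_0/2-\lfloor n/2\rfloor}\varpi_{\bL}tu\,(1+\varpi_{\F}^{n_0/2-n}t\varpi_{\bL})^{-1}$, one checks $v_{\bL}(w)=n_0-2\lfloor n/2\rfloor+1\ge n_0/2$ because $n\le n_0/2$; this is precisely the point at which Lemma~\ref{lem: AddParMultChar} may be applied to the factor $\beta(1+w)$ --- but \emph{not} to $\beta(1+\varpi_{\F}^{n_0/2-n}t\varpi_{\bL})$ itself, whose argument has $\bL$-valuation $n_0-2n+1$, generally smaller than $n_0/2$, and which must be kept intact since it is the factor carrying the conductor of $\chi$. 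Computing $w-\bar w$, substituting $\varpi_{\bL}^2=-\varpi_{\F}$, and using $\chi(1+\varpi_{\F}^{\lceil n/2\rceil}u)=\psi(c_{\chi}\varpi_{\F}^{-\lfloor n/2\rfloor}u)$ (the $u^2$-term of $\chi$ being annihilated by a factor $\varpi_{\F}$ when $n$ is odd), the inner integral becomes
$$ I(t;n)=\oint_{\vO_{\F}}\psi\!\left(\frac{u}{\varpi_{\F}^{\lfloor n/2\rfloor}}\left(\frac{(-1)^{n_0/2}2c_{\beta}t}{1+\varpi_{\F}^{n_0-2n+1}t^2}+c_{\chi}\right)\right)\ud u. $$
Its non-vanishing forces $P(t):=(-1)^{n_0/2}2c_{\beta}t+c_{\chi}(1+\varpi_{\F}^{n_0-2n+1}t^2)\in\vP_{\F}^{\lfloor n/2\rfloor}$; since $n_0-2n+1\ge 1$, the reduction of $P$ modulo $\vP_{\F}$ is the linear polynomial $(-1)^{n_0/2}2c_{\beta}t+c_{\chi}$ with unit leading coefficient and unit root, so Hensel's lemma confines $t$ to a single coset $t_0+\vP_{\F}^{\lfloor n/2\rfloor}$ with $t_0\in\vO_{\F}^{\times}$.

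If $n$ is even this already yields $\ll q^{-n/2}$ by estimating the integrand trivially over that coset. If $n=2m+1$ is odd, I would choose $t_0$ to be an exact root $P(t_0)=0$, set $t=t_0(1+\varpi_{\F}^m u)$, and expand: from $1+\varpi_{\F}^{n_0/2-n}t\varpi_{\bL}=(1+\varpi_{\F}^{n_0/2-n}t_0\varpi_{\bL})(1+w')$ with $w'=\varpi_{\F}^{n_0/2-n+m}t_0\varpi_{\bL}u\,(1+\varpi_{\F}^{n_0/2-n}t_0\varpi_{\bL})^{-1}$ one has $v_{\bL}(w')=n_0-n\ge n_0/2$, so $\beta(1+w')$ is again linearized by Lemma~\ref{lem: AddParMultChar}; the relation $P(t_0)=0$ makes the total linear-in-$u$ exponent coming from $\beta$ and from $\chi$ cancel, leaving only the quadratic contribution $-c_{\chi}u^2/(2\varpi_{\F})$ of $\chi$. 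Hence
$$ \int_{\vO_{\F}^{\times}}\beta(1+\varpi_{\F}^{n_0/2-n}t\varpi_{\bL})\chi(t)\,\ud t=q^{-m}\,\beta(1+\varpi_{\F}^{n_0/2-n}t_0\varpi_{\bL})\,\chi(t_0)\int_{\vO_{\F}}\psi\!\left(-\frac{c_{\chi}u^2}{2\varpi_{\F}}\right)\ud u\ll q^{-m-\frac12}=q^{-n/2}, $$
by the standard bound $q^{-1/2}$ for a quadratic Gauss sum over $\vO_{\F}$ with residual characteristic $\neq 2$.

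I expect the main obstacle to be purely organisational: checking, at each of the two stages of perturbation and for both parities of $n$, that the auxiliary arguments $w,w'$ genuinely lie in $\vP_{\bL}^{n_0/2}$ so that the additive-parameter formula of Lemma~\ref{lem: AddParMultChar} applies, while never applying it to $\beta(1+\varpi_{\F}^{n_0/2-n}t\varpi_{\bL})$ (whose $\bL$-valuation drops below $n_0/2$ once $n>n_0/4+\tfrac12$) --- this is the mechanism that preserves the conductor of $\chi$ and produces the claimed $q^{-n/2}$. The computations are elementary, and the split/unramified bookkeeping of Lemma~\ref{lem: JacTypeSRC}~(1) is replaced here by the single relation $\varpi_{\bL}^2=-\varpi_{\F}$.
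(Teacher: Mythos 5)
Your proposal is correct and follows the same approach as the paper: level $\lceil n/2 \rceil$ regularization, localization of $t$ via Hensel's lemma to a single coset, and reduction to a quadratic Gauss sum when $n$ is odd. In fact you are more careful than the published proof, which carries a few typos inherited from the proof of Lemma~\ref{lem: JacTypeSRC}~(1) (e.g.\ a stray ``$\sqrt{\varepsilon}$'' in the $n=1$ case and in the exact-root equation, and a sign discrepancy in the denominator $1+\varpi_{\F}^{n_0-2n+1}t^2$ versus the paper's $1-\cdots$, which your computation $\Nr(1+\varpi_{\F}^{n_0/2-n}t\varpi_{\bL})=1+\varpi_{\F}^{n_0-2n+1}t^2$ under $\varpi_{\bL}^2=-\varpi_{\F}$ gets right).
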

\begin{proof}
	Since the proof is quite similar to the one of Lemma \ref{lem: JacTypeSRC} (1), we skip some details. The case for $n=0$ is easy. If $n=1$, then $t \mapsto \beta \left( 1+ \varpi_{\F}^{n_0-n} t \sqrt{\varepsilon} \right)$ is a non-trivial additive character of $\vO_{\F}$ and the bound follows from the one for Gauss sums. Assume $n \geq 2$. We perform a level $\lceil \tfrac{n}{2} \rceil$ regularization to $\ud t$ and get the non-vanishing condition
	$$ \tfrac{2 c_{\beta} t}{1-\varpi_{\F}^{n_0+1-2-n} t^2} + c_{\chi} \in \vP_{\F}^{\lfloor \frac{n}{2} \rfloor} \quad \Leftrightarrow \quad 2 c_{\beta} t + c_{\chi}(1-\varpi_{\F}^{n_0+1-2n} t^2) \in \vP_{\F}^{\lfloor \frac{n}{2} \rfloor}, $$
	which has a unique solution $t \in t_0 + \vP_{\F}^{\lfloor \frac{n}{2} \rfloor}$ with $t_0 \in \vO_{\F}^{\times}$ by Hensel's lemma. Consequently we get
\begin{equation} \label{eq: IntermCanBis}
	\int_{\vO_{\F}^{\times}} \beta \left( 1+\varpi_{\F}^{\frac{n_0}{2}-n} t \varpi_{\bL} \right) \chi(t) \ud t = \int_{t_0 + \vP_{\F}^{\lfloor \frac{n}{2} \rfloor}} \beta \left( 1+\varpi_{\F}^{\frac{n_0}{2}-n} t \varpi_{\bL} \right) \chi(t) \ud t \ll q^{-\lfloor \frac{n}{2} \rfloor}.
\end{equation}
	If $2 \mid n$ then we are done (for both (1) and (2)). Otherwise let $n=2m+1$. We may assume
	$$ 2 c_{\beta} t_0 + c_{\chi}(1-\varpi_{\F}^{n_0+1-2n} t_0^2 \varepsilon) = 0, $$
	make the change of variables $t = t_0(1+\varpi_{\F}^m u)$, and continue (\ref{eq: IntermCanBis}) as
	$$ \int_{\vO_{\F}^{\times}} \beta \left( 1+\varpi_{\F}^{\frac{n_0}{2}-n} t \varpi_{\bL} \right) \chi(t) \ud t = q^{-m} \beta \left( 1+\varpi_{\F}^{\frac{n_0}{2}-n} t_0 \varpi_{\bL} \right) \chi(t_0) \int_{\vO_{\F}} \psi \left( -\tfrac{c_{\chi}u^2}{2 \varpi_{\F}} \right) du \ll q^{-\frac{n}{2}} $$
	and conclude the proof.
\end{proof}
	
\begin{lemma} \label{lem: e2DWt-FineBd1}
	Suppose $\tfrac{n_0}{2} + 1 \leq n \leq n_0$ and $n \geq a(\Pi)$. Then we have
	$$ \widetilde{h}_{2n}^-(\chi) \ll q^{-\frac{n_0+1}{2}} \id_{n_0+1-n}(\cond(\chi \eta_0^{n+1})) + q^{-\frac{n_0+1}{2}} \id_{n=n_0} \id_{\cond(\chi \eta_0)=0}. $$
\end{lemma}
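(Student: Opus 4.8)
The plan is to run the proof of Lemma~\ref{lem: e1DWt-FineBd1} step by step in the ramified setting, replacing the first quadratic elementary functions $F_n$ by the second ones $G_n$ and using the ramified-case input: Corollary~\ref{cor: VHQEleF2} in place of Corollary~\ref{cor: VHQEleF1}, the decomposition \eqref{eq: FPTestFR1} of $H_{2n}$, and Lemma~\ref{lem: JacTypeSRCBis} in place of Lemma~\ref{lem: JacTypeSRC}~(1). As there, I would separate the interior regime $\tfrac{n_0}{2}+1\leq n\leq n_0-1$ from the boundary case $n=n_0$; in the interior regime the relevant summand of $H_{2n}$ in \eqref{eq: FPTestFR1} is already an $\bL^{1}$-average of $\Trans(\Tr(\alpha)^{2}).G_n$, so everything reduces to understanding the Voronoi--Hankel transform of translates of $G_n$ (which is where the hypothesis $n\geq a(\Pi)$ enters, via Corollary~\ref{cor: VHQEleF2}).

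The first step is to evaluate, for $\delta$ in $1+\varpi_{\F}^{2n-n_0-1}\vO_{\F}^{\times}$ --- resp. in $1+\vP_{\F}^{n_0-1}$ when $n=n_0$ --- the quantity
$$ \int_{\F-\vO_{\F}}\VorH_{\Pi,\psi}\circ\Mult_{-1}\big(\Trans(4\delta).G_n\big)(t)\cdot\psi(-t)\chi^{-1}(t)\norm[t]^{-\frac{1}{2}}\,\ud^{\times}t $$
by inserting Corollary~\ref{cor: VHQEleF2}. Since $\VorH_{\Pi,\psi}(G_n)$ is supported on $\varpi_{\F}^{-n}\vO_{\F}^{\times}$ and equals $\psi(4y)$ up to a power of $q$ (and a factor $\tau_{0}$ when $2\nmid n$), twisted by $\eta_{0}(4y)$ \emph{exactly when $2\mid n$}, a short Gauss-sum computation produces a value proportional to $q^{-n}\,\chi^{-1}\eta_{0}^{n+1}\!\big(\tfrac{\delta}{1-\delta}\big)\,\gamma\big(1,\chi\eta_{0}^{n+1},\psi\big)\,\id_{n_0+1-n}\big(\cond(\chi\eta_{0}^{n+1})\big)$. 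This is the ramified counterpart of \eqref{eq: DWtQEleF1Trans1}: the parity-dependent $\eta_{0}$ in Corollary~\ref{cor: VHQEleF2} is precisely what replaces the twist $\eta_{0}^{n}$ of the split case by $\eta_{0}^{n+1}$, which in turn accounts for the $\cond(\chi\eta_{0}^{n+1})$ in the statement. In the boundary case $n=n_0$ one keeps, as in \eqref{eq: DWtQEleF1Trans1Bis}, the two-term shape of $G_1$ coming from Lemma~\ref{lem: MellinQEleF2} at $n=1$ --- a $\cond(\cdot)=1$ main term and a $\cond(\cdot)=0$ correction of relative size $q^{-1}$ --- and the correction is what produces the second summand $\id_{n=n_0}\id_{\cond(\chi\eta_{0})=0}$.

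The second step is to substitute $\delta=4^{-1}\Tr(\alpha)^{2}$ into \eqref{eq: FPTestFR1}. On $\bL^{1}$ one has $\Tr(\alpha)^{2}-4=(\alpha-\alpha^{-1})^{2}$, hence $\tfrac{\delta}{1-\delta}=-\tfrac{\Tr(\alpha)^{2}}{(\alpha-\alpha^{-1})^{2}}$; after the change of variables $t=\tfrac{\alpha-\alpha^{-1}}{\alpha+\alpha^{-1}}\,\varpi_{\bL}^{-1}$ --- the ramified analogue of the $\tfrac{1}{\sqrt{\varepsilon}}$-normalized substitution in Lemma~\ref{lem: e1DWt-FineBd1}, legitimate because $\alpha-\alpha^{-1}\in\varpi_{\bL}\F$ for $\alpha\in\bL^{1}$ --- the $\bL^{1}$-integral collapses to an oscillatory integral over a coset $\varpi_{\F}^{k}\vO_{\F}^{\times}$ of the shape $\int\beta(1+\varpi_{\F}^{\ell}t\varpi_{\bL})\chi^{2}(t)\,\ud t$ controlled by Lemma~\ref{lem: JacTypeSRCBis}, which together with the Haar-measure factor of the coset is $O(q^{-n/2})$. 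Assembling this with the $q^{-n}$ and the $\gamma$-factor of modulus $q^{-(n_0+1-n)/2}$ from the first step and the prefactor of \eqref{eq: FPTestFR1}, everything collects to $q^{-(n_0+1)/2}$; the boundary contribution of the $\cond(\chi\eta_{0})=0$ piece of $G_1$ is handled directly, the relevant $\bL^{1}$-integrals being integrals of additive characters of conductor exponent $n_0$ and hence of size $O(q^{-n_0})$, exactly as in part~(2) of the proof of Lemma~\ref{lem: e1DWt-FineBd1}.

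The main obstacle will be bookkeeping rather than analysis: one has to keep careful track of the interplay of the two quadratic characters $\eta_{0}$ and $\eta_{\bL/\F}$ (which agree on $\vO_{\F}^{\times}$ but may differ at $\varpi_{\F}$), the parity of $n$, the Weil index and the sign $\lambda_{\bL}=\lambda(\bL/\F,\psi)\eta_{\bL/\F}(2)$ appearing in \eqref{eq: FPTestFR1}, and the half-integral powers of $q$ carried by the ramified measure normalizations of Proposition~\ref{prop: MeasNorm}, so that the conductor condition emerges exactly as $\cond(\chi\eta_{0}^{n+1})=n_0+1-n$ and the final power is exactly $q^{-(n_0+1)/2}$ rather than something shifted. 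I do not expect any new analytic difficulty beyond what is already handled in Lemmas~\ref{lem: JacTypeSRCBis} and~\ref{lem: e1DWt-FineBd1}.
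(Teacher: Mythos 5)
Your proposal is correct and follows essentially the same route as the paper: Corollary~\ref{cor: VHQEleF2} for the partial dual weight of $\Trans(4\delta).G_n$, the substitution $\delta=4^{-1}\Tr(\alpha)^2$ (with $\Tr(\alpha)^2-4=(\alpha-\alpha^{-1})^2$), the change of variables $t=\tfrac{\alpha-\alpha^{-1}}{\alpha+\alpha^{-1}}\varpi_{\bL}^{-1}$ together with Proposition~\ref{prop: MeasNorm}~(3), and Lemma~\ref{lem: JacTypeSRCBis} to bound the resulting oscillatory integral, with the two-term structure from Lemma~\ref{lem: MellinQEleF2} at $n=n_0$ producing the boundary contribution. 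In particular your observation that the parity-dependent $\eta_0$-twist in Corollary~\ref{cor: VHQEleF2} shifts the split-case $\eta_0^{n}$ to $\eta_0^{n+1}$ is precisely how the conductor condition $\cond(\chi\eta_0^{n+1})=n_0+1-n$ emerges in \eqref{eq: h2n-BdRedR}.
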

\begin{proof}
	(1) First consider $n < n_0$. By Corollary \ref{cor: VHQEleF2} we have for any $\delta \in 1+\varpi_{\F}^{2n-n_0-1} \vO_{\F}^{\times}$
\begin{multline} \label{eq: DWtQEleF2Trans1}
	\int_{\F-\vO_{\F}} \VorH_{\Pi,\psi} \circ \Mult_{-1} \left( \Trans \left( 4\delta \right).G_n \right)(t) \cdot \psi(-t) \chi^{-1}(t) \norm[t]^{-\frac{1}{2}} \ud^{\times} t = \\
	q^{-n} \eta_0(2(-1)^{n+1}) \zeta_{\F}(1) \cdot \begin{cases}
		\chi^{-1} \eta_0 \left( \tfrac{\delta}{1-\delta} \right) \cdot \gamma(1,\chi \eta_0,\psi) \id_{n_0+1-n}(\cond(\chi \eta_0)) & \text{if } 2 \mid n \\
		\chi^{-1} \left( \tfrac{\delta}{1-\delta} \right) \cdot \tau_0 q^{\frac{1}{2}} \cdot \gamma(1,\chi,\psi) \id_{n_0+1-n}(\cond(\chi)) & \text{if } 2 \nmid n
	\end{cases},
\end{multline}
	where we used $\eta_0(\delta)=1$. Inserting \eqref{eq: DWtQEleF2Trans1} with $\delta = 4^{-1} \Tr(\alpha)^2$ into \eqref{eq: FPTestFR1} we get
\begin{multline} \label{eq: h2n-BdRedR}
	\widetilde{h}_{2n}^-(\chi) = \int_{\F-\vO_{\F}} \VorH_{\Pi,\psi} \circ \Mult_{-1} (H_{2n})(t) \cdot \psi(-t) \chi^{-1}(t) \norm[t]^{-\frac{1}{2}} \ud^{\times} t \ll \\
	\extnorm{\int_{\substack{\bL^1 \\ \Tr(\alpha) \in 2(1+\varpi_{\F}^{2n-n_0-1} \vO_{\F}^{\times})}} \beta(\alpha) \cdot \chi^{-1} \eta_0^{n+1} \left( \tfrac{\Tr(\alpha)^2}{4-\Tr(\alpha)^2} \right) \ud \alpha} \cdot q^{-\frac{n_0+1-n}{2}} \id_{n_0+1-n}(\cond(\chi \eta_0^{n+1})).
\end{multline}
	Applying the change of variables $t = \tfrac{\alpha - \alpha^{-1}}{\alpha + \alpha^{-1}} \tfrac{1}{\varpi_{\bL}}$ the inner integral in \eqref{eq: h2n-BdRedR} becomes, taking into account \emph{the measure normalization} in Proposition \ref{prop: MeasNorm} (3)
	$$ q^{-\frac{1}{2}} \cdot \chi\eta_0^{n+1}(-1) \int_{\varpi_{\F}^{n-\frac{n_0}{2}-1} \vO_{\F}^{\times}} \beta(1+t\varpi_{\bL}) \chi^2(t) \ud t. $$
	We apply Lemma \ref{lem: JacTypeSRCBis} (1) to bound the above integrals and conclude.
	
\noindent (2) Consider $n=n_0$. We have for any $\delta \in 1+\vP_{\F}^{n_0-1}$
\begin{multline} \label{eq: DWtQEleF2Trans1Bis}
	\int_{\F-\vO_{\F}} \VorH_{\Pi,\psi} \circ \Mult_{-1} \left( \Trans \left( 4\delta \right).G_n \right)(t) \cdot \psi(-t) \chi^{-1}(t) \norm[t]^{-\frac{1}{2}} \ud^{\times} t = q^{-n_0} \eta_0(-2) \cdot \tau_0 q^{\frac{1}{2}} \cdot \\
	\begin{cases}
		\chi^{-1} \eta_0 \left( \tfrac{\delta}{1-\delta} \right) \cdot \zeta_{\F}(1) \left\{ \gamma(1,\chi \eta_0,\psi) \id_{1}(\cond(\chi \eta_0)) - q^{-1} \id_{0}(\cond(\chi \eta_0)) \right\} & \text{if } \delta \in 1+\varpi_{\F}^{n_0-1} \vO_{\F}^{\times} \\
		\chi\eta_0(\varpi_{\F})^{n_0} \cdot \id_{0}(\cond(\chi \eta_0)) & \text{if } \delta \in 1+\vP_{\F}^{n_0}
	\end{cases}.
\end{multline}
	Inserting \eqref{eq: DWtQEleF2Trans1Bis} with $\delta = 4^{-1} \Tr(\alpha)^2$ into \eqref{eq: FPTestFR1} we get
\begin{multline} \label{eq: h2n-BdRedRBis}
	\widetilde{h}_{2n}^-(\chi) = \int_{\F-\vO_{\F}} \VorH_{\Pi,\psi} \circ \Mult_{-1} (H_{2n})(t) \cdot \psi(-t) \chi^{-1}(t) \norm[t]^{-\frac{1}{2}} \ud^{\times} t \ll \\
	\extnorm{\int_{\substack{\bL^1 \\ \Tr(\alpha) \in 2(1+\varpi_{\F}^{n_0-1} \vO_{\F}^{\times})}} \beta(\alpha) \cdot \chi^{-1} \eta_0 \left( \tfrac{\Tr(\alpha)^2}{4-\Tr(\alpha)^2} \right) \ud \alpha} \cdot q^{-\frac{1}{2}} \id_{1}(\cond(\chi \eta_0)) + \\
	\extnorm{ \int_{\substack{\bL^1 \\ \Tr(\alpha) \in 2(1+\vP_{\F}^{n_0})}} \beta(\alpha) \ud \alpha - \tfrac{\chi\eta_0(\varpi_{\F})^{-1}}{q-1} \int_{\substack{\bL^1 \\ \Tr(\alpha) \in 2(1+\varpi_{\F}^{n_0-1} \vO_{\F}^{\times})}} \beta(\alpha) \ud \alpha } \cdot \id_{0}(\cond(\chi \eta_0)).
\end{multline}
	The first summand is bounded the same way as before. With the same change of variables the inner integrals of the second summand become
	$$ q^{-\frac{1}{2}} \cdot \left( q^{-\frac{n_0}{2}} - \tfrac{\chi\eta_0(\varpi_{\F})^{-1}}{q-1} \int_{\varpi_{\F}^{\frac{n_0}{2}-1}\vO_{\F}^{\times}} \beta(1+t\varpi_{\bL}) \ud t \right). $$
	It is of size $O(q^{-\frac{n_0+1}{2}})$ since the integrand is an additive character of conductor exponent $n_0/2$.
\end{proof}

\begin{lemma} \label{lem: e2DWt-FineBd2}
	Suppose $(2 \leq) a(\Pi) \leq m \leq \tfrac{n_0}{2}$. Then we have for unitary $\chi$
	$$ \widetilde{h}_{n_0+1,m}(\chi) := \int_{\F-\vO_{\F}} \VorH_{\Pi,\psi} \circ \Mult_{-1} \left( H_{n_0+1,m} \right)(t) \psi(-t)\chi^{-1}(t)\norm[t]^{-\frac{1}{2}} \ud^{\times} t \ll q^{-n_0-1} \id_{m > \frac{n_0+1}{3}} \id_m(\cond(\chi)). $$
\end{lemma}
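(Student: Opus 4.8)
The plan is to carry the argument of Lemma \ref{lem: e1DWt-FineBd2} over to the ramified setting, with the first quadratic elementary functions $F_m$ replaced by the second ones $G_m$, Corollary \ref{cor: VHQEleF1} by Corollary \ref{cor: VHQEleF2}, and Lemma \ref{lem: JacTypeSRC} (1) by Lemma \ref{lem: JacTypeSRCBis}. Throughout one uses the hypotheses $m \geq a(\Pi) \geq 2$, which makes Corollary \ref{cor: VHQEleF2} applicable, and $2 \mid n_0$ (the Claim in Lemma \ref{lem: CondReInterp}).

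First I would establish the ramified analogue of \eqref{eq: DWtQEleF1Trans2}: for every $\delta \in \varpi_{\F}^{n_0+1-2m}\vO_{\F}^{\times}$ (so $v_{\F}(\delta) = n_0+1-2m \geq 1$, whence $\eta_0(4-\delta) = 1$) one has an identity of the form
\begin{equation*}
	\int_{\F-\vO_{\F}} \VorH_{\Pi,\psi} \circ \Mult_{-1}\!\left( \Trans(\delta).G_m \right)\!(t)\, \psi(-t)\chi^{-1}(t)\norm[t]^{-\frac12}\, \ud^{\times} t = q^{-\frac{n_0+1}{2}}\, \zeta_{\F}(1)\, \Xi(\delta)\, \gamma\!\left( 1, \chi\eta_0^{m+1}, \psi \right) \id_{m > \frac{n_0+1}{3}}\, \id_m(\cond(\chi)),
\end{equation*}
where $\Xi(\delta)$ is a unit-modulus factor built from $\eta_0(-1)^m\eta_0(-2)$ and a character value at $\tfrac{4-\delta}{\delta}$ (with a further $\tau_0 q^{1/2}$ for odd $m$). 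To derive it I would substitute the explicit formula of Corollary \ref{cor: VHQEleF2} for $\VorH_{\Pi,\psi}(G_m)$ (supported on $\varpi_{\F}^{-m}\vO_{\F}^{\times}$), pass $\Trans(\delta)$ through $\VorH_{\Pi,\psi}$ — which moves the support to $\varpi_{\F}^{-(3m-n_0-1)}\vO_{\F}^{\times}$ and produces a $\chi$-dependent phase — and then evaluate the remaining one-dimensional Gauss integral in $t$: the combined additive phase has conductor exponent $m$, forcing $\cond(\chi) = m$, and the quadratic twist of Corollary \ref{cor: VHQEleF2} attaches $\eta_0$ for \emph{even} $m$ (opposite to the $F_m$ case), which is absorbed into $\eta_0^{m+1}$. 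The indicator $\id_{m > \frac{n_0+1}{3}}$ appears exactly as $\id_{m > \frac{2n_0}{3}}$ does in \eqref{eq: DWtQEleF1Trans2}, because the Voronoi--Hankel transform of a level-$(n_0+1)$ function reaches into $\F-\vO_{\F}$ only when $3m = \cond(\Pi\otimes\chi) \geq n_0+2$.

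Next I would insert $\delta = -\varpi_{\F}^{-1}\Tr(\varpi_{\bL}\alpha)^2$ into the integral representation of $H_{n_0+1,m}$ in \eqref{eq: FPTestFR2}. This expresses $\widetilde{h}_{n_0+1,m}(\chi)$ as $q^{-\frac{n_0+1}{2}}\,\gamma(1,\chi\eta_0^{m+1},\psi)\,\id_{m > \frac{n_0+1}{3}}\,\id_m(\cond(\chi))$ times the modulus of an integral over $\{\alpha \in \bL^1 : \Tr(\varpi_{\bL}\alpha) \in \varpi_{\F}^{n_0/2+1-m}\vO_{\F}^{\times}\}$ of $\beta(\varpi_{\bL}\alpha)\,\eta_{\bL/\F}(\Tr(\varpi_{\bL}\alpha))$ against a unit-modulus function of $\Tr(\varpi_{\bL}\alpha)^2$. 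Applying the ramified change of variables $t = \tfrac{\varpi_{\bL}\alpha - \overline{\varpi_{\bL}\alpha}}{\varpi_{\bL}\alpha + \overline{\varpi_{\bL}\alpha}}\,\varpi_{\bL}^{-1}$ (the analogue of the substitution in the proof of Lemma \ref{lem: e2DWt-FineBd1}), together with the measure normalization of Proposition \ref{prop: MeasNorm} (3) — the source of the extra $q^{-1/2}$ that separates the ramified from the split case — this inner integral becomes a constant multiple of $\int_{\varpi_{\F}^{n_0/2-m}\vO_{\F}^{\times}} \beta(1+t\varpi_{\bL})\,\chi^{2}(t)\,\ud t$. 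After rescaling $t$ to $\vO_{\F}^{\times}$ it is exactly of the shape treated in Lemma \ref{lem: JacTypeSRCBis} (with $\chi^2$ in place of $\chi$, of conductor $\cond(\chi) = m \leq n_0/2$ since the residual characteristic is odd), hence $O(q^{-m/2})$. Collecting the powers of $q$ — $|\gamma(1,\chi\eta_0^{m+1},\psi)| = q^{-m/2}$, the $q^{-1/2}$ from Proposition \ref{prop: MeasNorm} (3), the change-of-variable Jacobian, and the $q^{-m/2}$ from Lemma \ref{lem: JacTypeSRCBis} — the $m$-dependence cancels and one obtains $\widetilde{h}_{n_0+1,m}(\chi) \ll q^{-(n_0+1)}\,\id_{m > \frac{n_0+1}{3}}\,\id_m(\cond(\chi))$, as claimed.

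The hard part is the first step: pinning down the precise translate formula for $G_m$ — which of $\chi$ or $\chi\eta_0$ carries the conductor constraint (this flips with the parity of $m$), the sign and Weil-index bookkeeping produced by Corollary \ref{cor: VHQEleF2}, the $\chi$-dependent phase arising from $\Trans(\delta)$ under $\VorH_{\Pi,\psi}$, and the verification that the support indicator is exactly $\id_{m > \frac{n_0+1}{3}}$ at the integer boundary (where $2 \mid n_0$ is used). Once that identity is in place, the change of variables on $\bL^1$ and the invocation of Lemma \ref{lem: JacTypeSRCBis} go through just as in the split and unramified cases.
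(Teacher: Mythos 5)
Your proposal follows the paper's proof essentially step for step: establishing the ramified analogue of the translate identity via Corollary \ref{cor: VHQEleF2} (which is exactly \eqref{eq: DWtQEleF2Trans2} with $\gamma(1,\chi\eta_0^{m+1},\psi)$ unifying the two parity cases), substituting $\delta = -\varpi_{\F}^{-1}\Tr(\varpi_{\bL}\alpha)^2$, changing variables on $\bL^1$ with the $q^{-1/2}$ from Proposition \ref{prop: MeasNorm}~(3), and invoking Lemma \ref{lem: JacTypeSRCBis}; the power count $-\tfrac{n_0+1+m}{2}-\tfrac12-(\tfrac{n_0}{2}-m)-\tfrac m2=-(n_0+1)$ matches. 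Two small slips worth noting: the paper's actual substitution in this lemma is the reciprocal $t=\tfrac{\varpi_{\bL}\alpha+\overline{\varpi_{\bL}\alpha}}{\varpi_{\bL}\alpha-\overline{\varpi_{\bL}\alpha}}\varpi_{\bL}^{-1}$ (which places $t$ in $\varpi_{\F}^{n_0/2-m}\vO_{\F}^{\times}$ as you want, whereas your formula places it in $\varpi_{\F}^{m-n_0/2-1}\vO_{\F}^{\times}$), and the resulting multiplicative twist is $\eta_{\bL/\F}\chi^{-2}$ rather than $\chi^2$ --- neither changes the conductor $m$ nor the final bound.
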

\begin{proof}
	We first use Corollary \ref{cor: VHQEleF2} to obtain for any $\delta \in \varpi_{\F}^{n_0+1-2m} \vO_{\F}^{\times}$ (note that $\eta_0(4-\delta)=1$)
\begin{multline} \label{eq: DWtQEleF2Trans2}
	\int_{\F-\vO_{\F}} \VorH_{\Pi,\psi} \circ \Mult_{-1} \left( \Trans \left( \delta \right).G_m \right)(t) \cdot \psi(-t) \chi^{-1}(t) \norm[t]^{-\frac{1}{2}} \ud^{\times} t \\
	= q^{-\frac{n_0+1}{2}} \eta_0(2(-1)^{m+1}) \zeta_{\F}(1) \id_{m > \frac{n_0+1}{3}} \cdot \begin{cases}
		\chi \left( \tfrac{4-\delta}{\delta} \right) \cdot \gamma(1,\chi\eta_0,\psi) \id_{m}(\cond(\chi)) & \text{if } 2 \mid m \\
		\chi \left( \tfrac{4-\delta}{\delta} \right) \cdot \tau_0 q^{\frac{1}{2}} \gamma(1,\chi,\psi) \id_{m}(\cond(\chi)) & \text{if } 2 \nmid m
	\end{cases}.
\end{multline}
	
\noindent Inserting \eqref{eq: DWtQEleF2Trans2} with $\delta = -\varpi_{\F}^{-1} \Tr(\varpi_{\bL} \alpha)^2$ into the integral representation of $H_{n_0+1,m}$ we get
\begin{multline} \label{eq: hn0+1-BdRed1}
	\widetilde{h}_{n_0+1,m}(\chi) \ll \id_{m > \frac{n_0+1}{3}} \cdot q^{-\frac{n_0+1+m}{2}} \id_{m}(\cond(\chi)) \cdot \\
	\extnorm{ \int_{\substack{\bL^1 \\ \Tr(\varpi_{\bL} \alpha) \in \varpi_{\F}^{\frac{n_0}{2}+1-m} \vO_{\F}^{\times}}} \beta(\varpi_{\bL} \alpha) \eta_{\bL/\F} \left( \Tr(\varpi_{\bL}\alpha) \right) \cdot \chi \left( \tfrac{4 \varpi_{\F} + \Tr(\varpi_{\bL} \alpha)^2}{- \Tr(\varpi_{\bL} \alpha)^2} \right) \ud \alpha }.
\end{multline}
	Applying the change of variables $t = \tfrac{\varpi_{\bL}\alpha + \overline{\varpi_{\bL}\alpha}}{\varpi_{\bL}\alpha - \overline{\varpi_{\bL}\alpha}} \tfrac{1}{\varpi_{\bL}}$ the inner integral in \eqref{eq: hn0+1-BdRed1} becomes, taking into account \emph{the measure normalization} in Proposition \ref{prop: MeasNorm} (3)
	$$ q^{-\frac{1}{2}} \cdot 2\beta(\varpi_{\bL})\eta_{\bL/\F}(-2)\chi^{-1}(-\varpi_{\F}) \int_{\varpi_{\F}^{\frac{n_0}{2}-m} \vO_{\F}^{\times}} \beta(1+t \varpi_{\bL}) \eta_{\bL/\F}\chi^{-2}(t) \ud t. $$
	We apply Lemma \ref{lem: JacTypeSRCBis} (1) to bound the above integrals and conclude the desired inequalities.
\end{proof}

\begin{lemma} \label{lem: e2DWt-FineBd3}
	Let $n_0 \geq \max(4\cond(\Pi), 2\cond(\Pi)+2)$ and $m \leq \cond(\Pi)$. Then for any $\delta \in \varpi_{\F}^{n_0+1-2m}\vO_{\F}^{\times}$ we have
	$$ \int_{\F-\vO_{\F}} \VorH_{\Pi,\psi} \circ \Mult_{-1} \left( \Trans(\delta).G_m \right)(t) \cdot \psi(-t)\chi^{-1}(t)\norm[t]^{-\frac{1}{2}} \ud^{\times} t = 0. $$
	Consequently, we get for any unitary $\chi$ the vanishing of $\widetilde{h}_{n_0+1,m}(\chi) = 0$.
\end{lemma}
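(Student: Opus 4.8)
The plan is to transcribe the proof of Lemma~\ref{lem: e1DWt-FineBd4}, replacing $F_m$ by $G_m$ and Lemma~\ref{lem: MellinQEleF1} by Lemma~\ref{lem: MellinQEleF2}; as there I would treat the case $m\geq 2$, the cases $m\in\{0,1\}$ being handled the same way via the $n\in\{0,1\}$ branches of Lemma~\ref{lem: MellinQEleF2}. First I would compute the Mellin transform of the translate: since $G_m\in\Cont_c^{\infty}(\F^{\times})$ is supported where $v_{\F}(\cdot)=-2m$, the translate $\Trans(\delta).G_m$ with $v_{\F}(\delta)=n_0+1-2m$ is supported where $v_{\F}(\cdot)=-(n_0+1)$, so for $\xi\in\widehat{\vO_{\F}^{\times}}$ one gets, with $X:=q^s$,
$$ \int_{\F^{\times}}(\Trans(\delta).G_m)(y)\,\xi(y)\norm[y]_{\F}^{s-1}\ud^{\times}y = c_0(\xi)\cdot\id_{\cond(\xi^2)=m}\cdot X^{n_0+1}, $$
with $c_0(\xi)$ an $s$-independent twisted Gauss integral (cf. Lemma~\ref{lem: MellinQEleF2}); here $\cond(\xi^2)=m$ forces $\cond(\xi)=m$ since the residual characteristic is $\neq 2$. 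Applying the local functional equation in the form used in the proof of Proposition~\ref{prop: StabRang}, and expressing $\gamma(s,\Pi\otimes\xi,\psi)$ through the generalized Satake parameters $a_j=a_j(\Pi\otimes\xi)$ of $L(s,\Pi\otimes\xi)$ exactly as in \eqref{eq: VHTestFLevelnMellin}, this yields
$$ \int_{\F^{\times}}\VorH_{\Pi,\psi}\circ\Mult_{-1}(\Trans(\delta).G_m)(t)\,\xi^{-1}(t)\norm[t]_{\F}^{-s}\ud^{\times}t = c_1(\xi)\cdot\id_{\cond(\xi)=m}\cdot X^{\,n_0+1-\condL(\Pi\otimes\xi)}\prod_{j=1}^{d(\Pi\otimes\xi)}\frac{X-a_j}{1-\overline{a_j}q^{-1}X} $$
for some constant $c_1(\xi)$ (whose value is irrelevant below); this is precisely \eqref{eq: VHTestFLevelnMellin} with the level parameter replaced by $n_0+1$.

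Then I would read off the support of the output. The rational factor $\prod_j(X-a_j)/\prod_j(1-\overline{a_j}q^{-1}X)$ is holomorphic at $X=0$ (its poles lie in $\norm[X]>1$ by temperedness), so the Laurent expansion of the right-hand side around $X=0$ only involves monomials $X^k$ with $k\geq n_0+1-\condL(\Pi\otimes\xi)$; equivalently $\int_{v_{\F}(t)=k}\VorH_{\Pi,\psi}\circ\Mult_{-1}(\Trans(\delta).G_m)(t)\xi^{-1}(t)\ud^{\times}t=0$ for such $k$, and for all $k$ when $\cond(\xi)\neq m$. A nonzero contribution forces $\cond(\xi)=m\leq\cond(\Pi)$, for which Lemma~\ref{lem: ExpCharBds} together with $\cond(\Pi\otimes\xi)\leq\cond(\Pi)+3\cond(\xi)$ (see \cite{BuH97}) give $\condL(\Pi\otimes\xi)\leq\max(4\cond(\Pi),2\cond(\Pi)+3)$. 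Fourier inversion on $\vO_{\F}^{\times}$, shell by shell, then shows that $\VorH_{\Pi,\psi}\circ\Mult_{-1}(\Trans(\delta).G_m)$ is supported in $v_{\F}(t)\geq n_0+1-\max(4\cond(\Pi),2\cond(\Pi)+3)\geq 0$, the last inequality being the hypothesis $n_0\geq\max(4\cond(\Pi),2\cond(\Pi)+2)$. Hence it is supported in $\vO_{\F}$, and its integral over $\F-\vO_{\F}$ against $\psi(-t)\chi^{-1}(t)\norm[t]_{\F}^{-1/2}$ vanishes, which is the first assertion.

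Finally, for the consequence, the integral representation of $H_{n_0+1,m}$ displayed right after \eqref{eq: FPTestFR2} exhibits $H_{n_0+1,m}$ as an average over $\{\alpha\in\bL^1:\Tr(\varpi_{\bL}\alpha)\in\varpi_{\F}^{n_0/2+1-m}\vO_{\F}^{\times}\}$ of the translates $\Trans(-\varpi_{\F}^{-1}\Tr(\varpi_{\bL}\alpha)^2).G_m$, and on that set $v_{\F}(-\varpi_{\F}^{-1}\Tr(\varpi_{\bL}\alpha)^2)=n_0+1-2m$, so each translate falls under the first part; integrating against $\beta(\varpi_{\bL}\alpha)\eta_{\bL/\F}(\Tr(\varpi_{\bL}\alpha))\,\ud\alpha$ gives $\widetilde{h}_{n_0+1,m}(\chi)=0$. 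I expect the only delicate point to be the bookkeeping of the exponent of $X$ in the first step: the ``doubling'' inherent in $G_m$ (support at $v_{\F}=-2m$) combines with the shift $v_{\F}(\delta)=n_0+1-2m$ to produce exactly $X^{n_0+1-\condL(\Pi\otimes\xi)}$, and it is this precise value, together with the hypothesis on $n_0$, that makes the support land inside $\vO_{\F}$; the rest is a routine transcription of Lemma~\ref{lem: e1DWt-FineBd4}.
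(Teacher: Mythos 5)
Your proposal is correct and follows precisely the route the paper takes: the paper's proof of Lemma~\ref{lem: e2DWt-FineBd3} explicitly defers to Lemma~\ref{lem: e1DWt-FineBd4} and only records the conclusion that the support of $\VorH_{\Pi,\psi}\circ\Mult_{-1}(\Trans(\delta).G_m)$ is contained in $\delta\vP_{\F}^{2m-\max(4\cond(\Pi),2\cond(\Pi)+3)}\subset\vO_{\F}$, which is exactly the valuation $\geq n_0+1-\condL(\Pi\otimes\xi)\geq 0$ you derive. Your bookkeeping is right: the translate is supported at $v_{\F}=-(n_0+1)$, its Mellin transform (nonzero only for $\cond(\xi)=m$, and in the $m\in\{0,1\}$ branches one gets $\cond(\xi)=1$ instead, which still satisfies $\condL(\Pi\otimes\xi)\leq\max(4\cond(\Pi),2\cond(\Pi)+3)$ since $\Pi\otimes\xi$ then has conductor $3$ when $\cond(\Pi)=0$) produces the exponent $n_0+1-\condL(\Pi\otimes\xi)$, and the hypothesis on $n_0$ pushes this to be $\geq 0$.
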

\begin{proof}
	The proof is quite similar to Lemma \ref{lem: e1DWt-FineBd4}. One shows that the support of $\VorH_{\Pi,\psi} \circ \Mult_{-1} \left( \Trans(\delta).G_m \right)$ is contained in $\delta \vP_{\F}^{2m-\max(4\cond(\Pi), 2\cond(\Pi)+3)} \subset \vO_{\F}$ under the assumption.
\end{proof}

	\subsection{The Bounds of Dual Weight}
	
\begin{lemma} \label{lem: h2-Bdn0=2}
	Suppose $\cond(\Pi)=0$ and $n_0=2$. We have $\widetilde{h}_3^-(\chi) = 0$ for any unitary character $\chi$.
\end{lemma}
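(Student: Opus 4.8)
The plan is to prove the stronger statement that $\VorH_{\Pi,\psi}\circ\Mult_{-1}(H_3)$ is supported inside $\vO_{\F}$; then its integral against $\psi(-t)\chi^{-1}(t)\norm[t]_{\F}^{-1/2}$ over $\F-\vO_{\F}$, which is precisely $\widetilde{h}_3^-(\chi)$, vanishes identically for every unitary $\chi$. First recall the hypotheses: $\cond(\Pi)=0$ forces $\Pi=\mu_1\boxplus\mu_2\boxplus\mu_3$ with unramified $\mu_j$ satisfying $\mu_1\mu_2\mu_3=\id$, so $a(\Pi)=2$; and we are in the ramified case with $n_0=\cond(\beta)=2$ (hence $\cond(\pi_0)=3$). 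The index $3=n_0+1$ is the unique odd index contributing to $H_c$ by \eqref{eq: FPTestFR0}, and \eqref{eq: FPTestFR2} gives
\[ H_3=H_{n_0+1}=\tfrac{\lambda(\bL/\F,\psi)}{2}\,q^{3/2}\bigl(H_{3,0}+H_{3,1}\bigr), \]
where $H_{3,0}$ is a scalar multiple of $\Trans\bigl((-\varpi_{\F})^{3}\bigr).G_0$ and $H_{3,1}$ is an integral over $\alpha\in\bL^1$ with $\Tr(\varpi_{\bL}\alpha)\in\varpi_{\F}\vO_{\F}^{\times}$ of scalar multiples of $\Trans(\delta).G_1$ with $\delta=-\varpi_{\F}^{-1}\Tr(\varpi_{\bL}\alpha)^2$. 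The only numerology needed is $v_{\F}\bigl((-\varpi_{\F})^3\bigr)=3$, and $v_{\F}(\delta)=2v_{\F}\bigl(\Tr(\varpi_{\bL}\alpha)\bigr)-1=1$ for $\alpha$ in the range of $H_{3,1}$.

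Next I use the scaling rule for the transform: from the Mellin characterization of $\VorH_{\Pi,\psi}\circ\Mult_{-1}$ via the local functional equation (as in Proposition \ref{prop: StabRang}, and used implicitly in the proofs of Lemmas \ref{lem: e2DWt-FineBd2}--\ref{lem: e2DWt-FineBd3}) one has, for any $\delta\in\F^{\times}$ and any test function $\phi$,
\[ \VorH_{\Pi,\psi}\circ\Mult_{-1}\bigl(\Trans(\delta).\phi\bigr)(t)=\norm[\delta]_{\F}\cdot\bigl(\VorH_{\Pi,\psi}\circ\Mult_{-1}\phi\bigr)(t\delta^{-1}), \]
so translating $\phi$ by $\delta$ shifts the valuations of the support of $\VorH_{\Pi,\psi}\circ\Mult_{-1}\phi$ by $+v_{\F}(\delta)$. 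Since $G_0$ and $G_1$ have index below the stability barrier $a(\Pi)=2$, I invoke Corollary \ref{cor: VHQEleF2Bis} rather than \ref{cor: VHQEleF2}: part (1) shows that $\VorH_{\Pi,\psi}\circ\Mult_{-1}(G_0)$ is supported in $\varpi_{\F}^{-3}\vO_{\F}^{\times}$ (and vanishes outright unless $4\mid q-1$, in which case $H_{3,0}=0$); part (2) shows that $\VorH_{\Pi,\psi}\circ\Mult_{-1}(G_1)$ is supported in $\{\,v_{\F}(y)\ge -1\,\}$ — indeed the summand $(f_1*f_2*f_3)(\varpi_{\F}^{-2}y)$ has each $f_i$ supported on $\{v_{\F}\ge -1\}$, hence $f_1*f_2*f_3$ supported on $\{v_{\F}\ge -3\}$, hence its argument-shift on $\{v_{\F}\ge -1\}$, while the other summand is supported on $\{v_{\F}(y)=-1\}$.

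Combining the scaling rule with these supports: $\VorH_{\Pi,\psi}\circ\Mult_{-1}(H_{3,0})$ is supported on $\{\,v_{\F}(t)=-3+3=0\,\}\subset\vO_{\F}$, and for each $\alpha$ in the integration range of $H_{3,1}$ the function $\VorH_{\Pi,\psi}\circ\Mult_{-1}\bigl(\Trans(\delta).G_1\bigr)$ (with $\delta=-\varpi_{\F}^{-1}\Tr(\varpi_{\bL}\alpha)^2$, $v_{\F}(\delta)=1$) is supported on $\{\,v_{\F}(t)\ge -1+1=0\,\}=\vO_{\F}$; since $\VorH_{\Pi,\psi}\circ\Mult_{-1}$ is linear and the $\alpha$-domain is compact, the support of $\VorH_{\Pi,\psi}\circ\Mult_{-1}(H_{3,1})$ stays in $\vO_{\F}$ as well. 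Hence $\VorH_{\Pi,\psi}\circ\Mult_{-1}(H_3)$ vanishes on $\F-\vO_{\F}$ and $\widetilde{h}_3^-(\chi)=0$, as claimed. There is no genuine obstacle: all the analytic content is already in Corollary \ref{cor: VHQEleF2Bis}, and the only point demanding attention is that the translation parameter of $H_{3,1}$ has valuation exactly $1$ — it is this shift by $+1$ that lifts the $G_1$-support bound $v_{\F}\ge -1$ precisely up to $v_{\F}\ge 0$, so the cancellation is tight and genuinely reflects the ramified normalization $e=2$.
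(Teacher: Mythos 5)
Your proof is correct and takes essentially the same route as the paper: both decompose $H_3$ via \eqref{eq: FPTestFR2} into $H_{3,0}$ and $H_{3,1}$ and invoke Corollary \ref{cor: VHQEleF2Bis} to conclude that $\VorH_{\Pi,\psi}\circ\Mult_{-1}(H_{3,m})$ is supported in $\vO_{\F}$ for $m\in\{0,1\}$, which kills the integral over $\F-\vO_{\F}$. Your write-up is more explicit than the paper's two-line proof — you make the $\Trans(\delta)$-scaling of supports precise and separately track the $(f_1*f_2*f_3)(\varpi_{\F}^{-2}y)$ piece and the $\id_{\varpi_{\F}^{-1}\vO_{\F}^{\times}}$ piece of $\VorH_{\Pi,\psi}(G_1)$ — but the underlying argument is identical.
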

\begin{proof}
	Necessarily we have $\Pi = \mu_1 \boxplus \mu_2 \boxplus \mu_3$ for unramified $\mu_j$. Note that $H_{3,0}$ (resp. $H_{3,1}$) is related to $G_0$ (resp. $G_1$) by the formulae
	$$ H_{3,0} = \int_{\substack{\bL^1 \\ \Tr(\varpi_{\bL} \alpha) \in \vP_{\F}^{2}}} \beta(\varpi_{\bL} \alpha) \ud \alpha \cdot \Trans \left( (-\varpi_{\F})^{3} \right).G_0, $$
	$$ H_{3,1} = \int_{\substack{\bL^1 \\ \Tr(\varpi_{\bL} \alpha) \in \varpi_{\F} \vO_{\F}^{\times}}} \beta(\varpi_{\bL} \alpha) \cdot \eta_{\bL/\F} \left( \Tr(\varpi_{\bL}\alpha) \right) \cdot \Trans \left( -\varpi_{\F}^{-1} \Tr(\varpi_{\bL} \alpha)^2 \right).G_1 \ud \alpha. $$
	Inspecting the supports of $G_0$ and $G_1$ given in Corollary \ref{cor: VHQEleF2Bis} we see $\mathrm{supp}(H_{3,m}) \subset \vO_{\F}$ for $m \in \{ 0,1 \}$.
\end{proof}

\begin{proposition} \label{prop: DWtBde=2}
	With the test function $H$ given by \eqref{eq: TestFROI} the dual weight function is bounded as
	$$ \widetilde{h}(\chi) \ll_{\epsilon} \Cond(\Pi)^{2+\epsilon} q^{-\frac{n_0+1}{2}+\epsilon} \id_{\leq \max(\frac{n_0}{2},6\cond(\Pi))}(\cond(\chi)). $$
\end{proposition}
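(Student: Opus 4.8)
Proposition \ref{prop: DWtBde=2} can be proved in close parallel with Proposition \ref{prop: DWtBde=1}. The plan is to run the decomposition \eqref{eq: DWtFDecomp}, $\widetilde{h}(\chi) = \widetilde{h}_{\infty}(\chi) + \widetilde{h}_c^+(\chi) + \widetilde{h}_c^-(\chi)$, keeping in mind that here $e = 2$, so $2 \mid n_0$ (Claim in Lemma \ref{lem: CondReInterp}) and $-n_0/e + (1-e)/2 = -(n_0+1)/2$. The first two terms are immediate: for any admissible $n_1 \geq \max(n_0+1, a(\Pi))$, Lemma \ref{lem: DWtInfty} gives $\widetilde{h}_{\infty}(\chi) \ll q^{-n_1/2}\id_{\cond(\chi) = 0}$, and Lemma \ref{lem: DWt+Bd} gives $\widetilde{h}_c^+(\chi) \ll_{\epsilon} \Cond(\Pi)^{\epsilon} q^{-(n_0+1)/2}\id_{\leq\max(n_0/2, 2\cond(\Pi))}(\cond(\chi))$. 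So the whole problem reduces to bounding $\widetilde{h}_c^-(\chi)$, which I would split according to the size of $n_0$ relative to $\cond(\Pi)$.

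When $\cond(\Pi) > 0$ and $n_0 \leq 2\cond(\Pi)$, take $n_1 = 4\cond(\Pi)$ and invoke the crude estimate Lemma \ref{lem: DWt-TrivBd} with $A = 2$: it yields $\widetilde{h}_c^-(\chi) \ll \cond(\Pi)\id_{\cond(\chi) \leq 6\cond(\Pi)}$, which is absorbed in $\Cond(\Pi)^{2+\epsilon}q^{-(n_0+1)/2+\epsilon}\id_{\leq 6\cond(\Pi)}(\cond(\chi))$ because $q^{(n_0+1)/2} \leq q^{\cond(\Pi)+1/2} = \Cond(\Pi)q^{1/2}$ and $\cond(\Pi) \ll_{\epsilon}\Cond(\Pi)^{\epsilon}$. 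When $n_0 \geq 2\cond(\Pi)$ (in particular whenever $\cond(\Pi) = 0$ and $n_0 \geq 2$), take $n_1 = 2n_0$ and decompose $H_c$ via \eqref{eq: FPTestFR0}--\eqref{eq: FPTestFR2}: the stable summands $H_{2n} = E_n$ with $n_0+1 \leq n \leq n_1-1$ have $\VorH_{\Pi,\psi}\circ\Mult_{-1}(E_n)(t) = \psi(t)\id_{\varpi_{\F}^{-n}\vO_{\F}^{\times}}(t)$ by Proposition \ref{prop: StabRang}, hence contribute $\sum_{n \geq n_0+1}\int_{\varpi_{\F}^{-n}\vO_{\F}^{\times}}\chi^{-1}(t)\norm[t]^{-1/2}\ud^{\times}t \ll q^{-(n_0+1)/2}\id_{\cond(\chi)=0}$; the $G_n$-built pieces $H_{2n}$ with $n_0/2+1 \leq n \leq n_0$ are handled by Lemma \ref{lem: e2DWt-FineBd1}; and the summands $H_{n_0+1,m}$ of the odd level \eqref{eq: FPTestFR2} are handled by Lemma \ref{lem: e2DWt-FineBd3} (vanishing for $m \leq \cond(\Pi)$, the threshold being met since $n_0 \geq 2\cond(\Pi)$) and Lemma \ref{lem: e2DWt-FineBd2} (for $a(\Pi) \leq m \leq n_0/2$), with any residual intermediate $m$ disposed of by the same support argument (bounding $\condL(\Pi\otimes\xi)$ for $\cond(\xi) = m$ as in the proof of Lemma \ref{lem: e2DWt-FineBd3}). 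As in Propositions \ref{prop: DWtBde=1} and \ref{prop: DNDWtBde=1}, the crucial bookkeeping point is that for a fixed unitary $\chi$ each of these sums has only $O(1)$ nonzero terms, since $\cond(\chi\eta_0^{n+1}) = n_0+1-n$ (resp. $\cond(\chi) = m$) pins down $n$ (resp. $m$) once its parity is fixed; thus $\widetilde{h}_c^-(\chi) \ll q^{-(n_0+1)/2}\id_{\cond(\chi) \leq n_0/2 + O(1)}$, and the stated bound follows as a common majorant of the two regimes. The remaining case $\cond(\Pi) = 0$, $n_0 = 2$ is settled directly: $n_1 = 3$, $H_c = H_3$, and $\widetilde{h}_3^-(\chi) = 0$ by Lemma \ref{lem: h2-Bdn0=2}, while $\widetilde{h}_3^+(\chi)$ and $\widetilde{h}_{\infty}(\chi)$ are controlled by Lemmas \ref{lem: DWt+Bd} and \ref{lem: DWtInfty}.

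I expect the main obstacle to be organizing the case split so that every summand of $H_c$ is covered with the \emph{sharp} indicator $\id_{\leq\max(n_0/2, 6\cond(\Pi))}(\cond(\chi))$ — in particular tracking which small $m$ in the odd level \eqref{eq: FPTestFR2} fall outside the ranges of both Lemmas \ref{lem: e2DWt-FineBd2} and \ref{lem: e2DWt-FineBd3}, and verifying that for $\cond(\Pi) = 0$ their Voronoi--Hankel transforms still land in $\vO_{\F}$ via the explicit formula of Corollary \ref{cor: VHQEleF2Bis} (this is exactly what Lemma \ref{lem: h2-Bdn0=2} establishes at the boundary $n_0 = 2$, where the support offset vanishes). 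It is worth emphasizing that, unlike the split and unramified cases, no exceptional set $\Ecp(\beta)$ and no extra $q^{1/2}$ occur here: the structural reason is that for ramified $\bL/\F$ one has $\Tr(\varpi_{\bL}\alpha) = \varpi_{\bL}(\alpha - \overline{\alpha}) \in \vP_{\F}$ for every $\alpha \in \bL^1$, so the ``$\vO_{\F}^{\times}$-boundary'' pieces — which in the split case produced the $\Ecp(\beta)$ contribution through Lemma \ref{lem: e1DWt-FineBd3} — never arise, and all the relevant inner integrals reduce to the Jacobi-type sums already estimated in Lemma \ref{lem: JacTypeSRCBis}.
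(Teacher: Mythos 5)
Your structural template — the three-term decomposition $\widetilde{h} = \widetilde{h}_\infty + \widetilde{h}_c^+ + \widetilde{h}_c^-$, a case split on the size of $n_0$ relative to $\cond(\Pi)$, crude bound (Lemma \ref{lem: DWt-TrivBd}) in the ``small $n_0$'' regime, refined Lemmas \ref{lem: e2DWt-FineBd1}--\ref{lem: e2DWt-FineBd3} in the ``large $n_0$'' regime, and Lemma \ref{lem: h2-Bdn0=2} at the boundary $\cond(\Pi)=0$, $n_0=2$ — matches the paper exactly, and all the individual lemma citations are the right ones. The structural aside about why $\Ecp(\beta)$ disappears (every $\Tr(\varpi_{\bL}\alpha)$ sits in $\vP_{\F}$, so the $\vO_{\F}^{\times}$-boundary term of Lemma \ref{lem: e1DWt-FineBd3} has no ramified analogue) is a good remark consistent with \eqref{eq: FPTestFR1}--\eqref{eq: FPTestFR2}.

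However, your case split is at the wrong threshold and this creates a genuine gap. You split at $n_0 \lessgtr 2\cond(\Pi)$ and, for $n_0 \geq 2\cond(\Pi)$, assert that the vanishing Lemma \ref{lem: e2DWt-FineBd3} handles $H_{n_0+1,m}$ for all $m \leq \cond(\Pi)$, ``the threshold being met since $n_0 \geq 2\cond(\Pi)$.'' That is not what the lemma requires: its hypothesis is $n_0 \geq \max(4\cond(\Pi), 2\cond(\Pi)+2)$. The support computation in its proof places $\VorH_{\Pi,\psi}\circ\Mult_{-1}\left(\Trans(\delta).G_m\right)$ in $\vP_{\F}^{\,n_0+1-\max(4\cond(\Pi),2\cond(\Pi)+3)}$ for $\delta \in \varpi_{\F}^{n_0+1-2m}\vO_{\F}^{\times}$, and this lands inside $\vO_{\F}$ only when $n_0$ is at least roughly $4\cond(\Pi)$, not $2\cond(\Pi)$. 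Consequently in the subregime $2\cond(\Pi) \leq n_0 < 4\cond(\Pi)$ (say $\cond(\Pi) = 10$, $n_0 = 20$) the terms $\widetilde{h}^-_{n_0+1,m}(\chi)$ with $1 \leq m < a(\Pi)$ fall through both Lemma \ref{lem: e2DWt-FineBd2} (which needs $m \geq a(\Pi)$) and Lemma \ref{lem: e2DWt-FineBd3}, and you cannot invoke the ``same support argument'' because it genuinely fails there — the transform is not supported in $\vO_{\F}$. The paper avoids this by splitting at $n_0 \lessgtr 4\cond(\Pi)$ and invoking Lemma \ref{lem: DWt-TrivBd} with $A=4$ on the small side, so that the window $2\cond(\Pi) \leq n_0 < 4\cond(\Pi)$ never reaches the refined lemmas. (You should then check that the resulting indicator from the crude bound still fits inside $\id_{\leq \max(n_0/2, 6\cond(\Pi))}$, which requires either tracking the constant in Lemma \ref{lem: DWt-TrivBd} a little more carefully than its stated ``$3A$'' — taking $n_1 = \max(n_0+1,a(\Pi))$ rather than $2A\cond(\Pi)$ gives a smaller range of $n$ — or simply observing that once the $\Cond(\Pi)^{O(1)}$ and $q^{O(\epsilon)}$ factors are in play, a constant multiple of $\cond(\Pi)$ in the cutoff is immaterial for the applications.) The $\cond(\Pi) = 0$, $n_0 \geq 4$ case you fold into the second regime does work, since then $\max(4\cond(\Pi),2\cond(\Pi)+2) = 2 \leq n_0$, so Lemma \ref{lem: e2DWt-FineBd3} applies at $m=0$ and Lemma \ref{lem: e2DWt-FineBd2} covers $m \geq a(\Pi)=2$; but be aware $m=1$ is then the genuine ``residual'' index, and the argument for it is not as automatic as you suggest — it is another place where a direct support computation via Corollary \ref{cor: VHQEleF2Bis} is needed rather than a blanket appeal to the proof of Lemma \ref{lem: e2DWt-FineBd3}.
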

\begin{proof}
	The argument is quite similar to and simpler than Proposition \ref{prop: DWtBde=1}. In the case $\cond(\Pi) > 0$ we distinguish $n_0 \leq 4\cond(\Pi)$, resp. $n_0 \geq 4\cond(\Pi)$. We apply Lemma \ref{lem: DWtInfty}, \ref{lem: DWt+Bd} and \ref{lem: DWt-TrivBd} (with $A=4$), resp. Lemma \ref{lem: DWtInfty}, \ref{lem: DWt+Bd} and Lemma \ref{lem: e2DWt-FineBd1}-\ref{lem: e2DWt-FineBd3}. In the case $\cond(\Pi)=0$, we apply Lemma \ref{lem: DWtInfty}, \ref{lem: DWt+Bd} and \ref{lem: h2-Bdn0=2}. We leave the details to the reader.
\end{proof}

	\subsection{The Bounds of Unramified Dual Weight}
	
\begin{lemma} \label{lem: e2DNDWt-FineBd1}
	Suppose $\tfrac{n_0}{2} + 1 \leq n \leq n_0$ and $n \geq a(\Pi)$. Then the function $\widetilde{h}_{2n}^-(\norm_{\F}^s) \neq 0$ is non-vanishing only if $n=n_0$. Moreover for any $k \in \Z_{\geq 0}$ we have
	$$ \widetilde{H}_{2n_0}^-(k; \tfrac{1}{2}) \ll_{k,\epsilon} q^{-\frac{1}{2}+n_0 \epsilon}, \quad \widetilde{H}_{2n_0}^-(k; -\tfrac{1}{2}) \ll_{k,\epsilon} q^{-n_0-\frac{1}{2}+n_0\epsilon}. $$
\end{lemma}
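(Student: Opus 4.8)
The plan is to carry over the proof of Lemma~\ref{lem: e1DNDWt-FineBd1} to the ramified setting, replacing the first quadratic elementary functions $F_n$ by the second ones $G_n$, Corollary~\ref{cor: VHQEleF1} by Corollary~\ref{cor: VHQEleF2}, and Lemma~\ref{lem: e1DWt-FineBd1} by Lemma~\ref{lem: e2DWt-FineBd1}. First I would recall from \eqref{eq: FPTestFR1} that, for $n_0/2+1\le n\le n_0$ with $n\ge a(\Pi)$, the test function $H_{2n}$ is a linear combination of unit translates $\Trans(\Tr(\alpha)^2).G_n$, where $\Tr(\alpha)^2\in\vO_{\F}^{\times}$. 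Since $n\ge a(\Pi)$, Corollary~\ref{cor: VHQEleF2} applies and shows that $\VorH_{\Pi,\psi}(G_n)$, hence also $\VorH_{\Pi,\psi}\circ\Mult_{-1}(H_{2n})$, is supported in $\varpi_{\F}^{-n}\vO_{\F}^{\times}$; as in the proof of Lemma~\ref{lem: e1DNDWt-FineBd1} (cf.\ \eqref{eq: h2n-BdRedR}) this forces $\norm[t]^{-s}$ to be constant on the support, so that $\widetilde{h}_{2n}^-(\norm_{\F}^s)=q^{ns}\,\widetilde{h}_{2n}^-(\id)$ is a single monomial in $q^s$.

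Next I would read off the non-vanishing pattern by applying Lemma~\ref{lem: e2DWt-FineBd1} with $\chi=\id$. Since $\eta_0$ is the quadratic character of the ramified extension $\F[\sqrt{-\varpi_{\F}}]/\F$, it has $\cond(\eta_0)=1$; hence the second term $\id_{\cond(\chi\eta_0)=0}$ in that lemma vanishes for $\chi=\id$, while the first indicator $\id_{n_0+1-n}(\cond(\eta_0^{n+1}))$ is nonzero only when $\cond(\eta_0^{n+1})=n_0+1-n$. Using $\cond(\eta_0^{n+1})\in\{0,1\}$ together with $2\mid n_0$ (the Claim in Lemma~\ref{lem: CondReInterp}), the only possibility in the range $n_0/2+1\le n\le n_0$ is $n=n_0$, with $\extnorm{\widetilde{h}_{2n_0}^-(\id)}\ll q^{-(n_0+1)/2}$. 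Combined with the previous step, $\widetilde{h}_{2n}^-(\norm_{\F}^s)$ vanishes for $n\ne n_0$, and $\widetilde{h}_{2n_0}^-(\norm_{\F}^s)=q^{n_0 s}\,\widetilde{h}_{2n_0}^-(\id)$.

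Finally I would normalize and extract Taylor coefficients. Dividing by the normalizing factor of \eqref{eq: NWt},
$$ \widetilde{H}_{2n_0}^-(\norm_{\F}^s)=\frac{\widetilde{h}_{2n_0}^-(\norm_{\F}^s)}{L(\tfrac{1}{2}-s,\widetilde{\Pi})\,\zeta_{\F}(\tfrac{1}{2}+s)}=\widetilde{h}_{2n_0}^-(\id)\cdot q^{n_0 s}\,L(\tfrac{1}{2}-s,\widetilde{\Pi})^{-1}\bigl(1-q^{-\frac{1}{2}-s}\bigr), $$
which, writing $X=q^s$, is a Laurent polynomial supported on the powers $X^{n_0-1},\dots,X^{n_0+d(\Pi)}$. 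By the temperedness bound $\norm[a_j]\le1$ on the generalized Satake parameters of $\Pi$, the factor $L(\tfrac12-s,\widetilde{\Pi})^{-1}(1-q^{-\frac12-s})$ and its first $k$ $s$-derivatives are $\ll_k(\log q)^k$ at both $s_0=\pm\tfrac12$ (note also that $1-q^{-\frac12-s}$ itself vanishes at $s_0=-\tfrac12$), while $\partial_s^j q^{n_0 s}=(n_0\log q)^j q^{n_0 s}$ is $\ll_k(n_0\log q)^k q^{\pm n_0/2}$ at $s_0=\pm\tfrac12$; by Leibniz the $k$-th derivative of the polynomial at $s_0=\pm\tfrac12$ is therefore $\ll_{k,\epsilon}q^{\pm n_0/2+n_0\epsilon}$. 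Multiplying by $\extnorm{\widetilde{h}_{2n_0}^-(\id)}\ll q^{-(n_0+1)/2}$ and dividing by $k!$ then yields $\widetilde{H}_{2n_0}^-(k;\tfrac12)\ll_{k,\epsilon}q^{-1/2+n_0\epsilon}$ and $\widetilde{H}_{2n_0}^-(k;-\tfrac12)\ll_{k,\epsilon}q^{-n_0-1/2+n_0\epsilon}$. The only step requiring genuine care is the conductor bookkeeping of the middle paragraph; otherwise the statement is the direct ramified analogue of Lemma~\ref{lem: e1DNDWt-FineBd1}, obtained along exactly the same lines, so I expect no real obstacle.
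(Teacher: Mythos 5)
Your proposal is correct and takes essentially the same route as the paper's proof: you use the support argument of Corollary \ref{cor: VHQEleF2} together with \eqref{eq: FPTestFR1} to obtain the monomial identity $\widetilde{h}_{2n}^-(\norm_{\F}^s)=q^{ns}\,\widetilde{h}_{2n}^-(\id)$, and then read the vanishing for $n<n_0$ and the bound $\ll q^{-(n_0+1)/2}$ at $n=n_0$ off Lemma \ref{lem: e2DWt-FineBd1} applied at $\chi=\id$ (using $2\mid n_0$ and $\cond(\eta_0)=1$). Your final paragraph merely spells out, via Leibniz, the Taylor-coefficient bounds that the paper summarizes as ``the stated results follow readily.''
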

\begin{proof}
	By Corollary \ref{cor: VHQEleF2} and \eqref{eq: FPTestFR1} the support of $\VorH_{\Pi,\psi}(G_n)$, hence also $\VorH_{\Pi,\psi} \circ \Mult_{-1} (H_{2n})$ are contained in $\varpi_{\F}^{-n} \vO_{\F}^{\times}$. Therefore we get by \eqref{eq: h2n-BdRedR} the relation $\widetilde{h}_{2n}^-(\norm_{\F}^s) = q^{ns} \widetilde{h}_{2n}^-(\id)$. The stated results follow readily from Lemma \ref{lem: e2DWt-FineBd1}.
\end{proof}

\begin{proposition} \label{prop: DNDWtBde=2}
	With the test function $H$ given by \eqref{eq: TestFROI} we have for any $k \in \Z_{\geq 0}$
	$$ \widetilde{H}(k; \tfrac{1}{2}) \ll_{k,\epsilon} \Cond(\Pi)^{\frac{1}{2}} \cdot q^{(n_0+1) \epsilon}, \quad \widetilde{H}(k; -\tfrac{1}{2}) \ll_{k,\epsilon} \Cond(\Pi)^{4+\epsilon} \cdot q^{(n_0+1) \epsilon}. $$
\end{proposition}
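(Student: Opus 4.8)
The argument runs parallel to Propositions \ref{prop: DWtBde=2} and \ref{prop: DNDWtBde=1}, now using the ramified fine-bound lemmas; throughout $e = e(\bL/\F) = 2$. Starting from the decomposition \eqref{eq: NDWtFDecomp}, the infinite part is controlled by Lemma \ref{lem: DNDWt+Bd}(1), namely $\widetilde{H}_{\infty}(k;\tfrac12) \ll_{k,\epsilon} q^{-n_1(1-\epsilon)}$ and $\widetilde{H}_{\infty}(k;-\tfrac12) \ll_{k,\epsilon} q^{n_1 \epsilon}$, and the positive part by Lemma \ref{lem: DNDWt+Bd}(2), which (since $2n_0/e+e-1 = n_0+1$) gives $\widetilde{H}_c^{+}(k;\tfrac12) \ll_{k,\epsilon} \Cond(\Pi)^{1/2} q^{(n_0+1)\epsilon}$ and $\widetilde{H}_c^{+}(k;-\tfrac12) \ll_{k,\epsilon} \Cond(\Pi)^{1/2} q^{-(n_0+1)(1-\epsilon)}$. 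As long as $n_1$ is chosen of size $O(\Cond(\Pi))$ or $O(n_0)$ these are well inside the claimed bounds, so the whole task reduces to estimating $\widetilde{H}_c^{-}(k;\pm\tfrac12)$.

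The easy regimes come first. If $\cond(\Pi)=0$ then $\Pi = \mu_1 \boxplus \mu_2 \boxplus \mu_3$ with all $\mu_j$ unramified and $a(\Pi)=2$; for $n_0 \geq 4$ the case $\cond(\Pi)>0$ below applies verbatim, while for $n_0 = 2$ one has $n_1 = 3$ and Lemma \ref{lem: h2-Bdn0=2} gives $\widetilde{h}_3^{-}(\norm_{\F}^s) \equiv 0$, hence $\widetilde{H}_c^{-}(k;\pm\tfrac12) = 0$. If $\cond(\Pi)>0$ and $n_0 \leq 2\cond(\Pi)$, then (using $a(\Pi)\leq 2\cond(\Pi)$ from the ``moreover'' part of Proposition \ref{prop: StabRang}) one may take $n_1$ of size $O(\cond(\Pi))$, permitted by \eqref{eq: StabPar}, and bound $\widetilde{H}_c^{-}$ by Lemma \ref{lem: DNDWt-TrivBd} with $A = 2$ and $e = 2$: this yields $\widetilde{H}_c^{-}(k;\tfrac12) \ll_{\epsilon} \Cond(\Pi)^{\epsilon}$ and $\widetilde{H}_c^{-}(k;-\tfrac12) \ll_{\epsilon} \Cond(\Pi)^{(2+2Ae^{-1})(1+\epsilon)} = \Cond(\Pi)^{4(1+\epsilon)}$, exactly the target exponent $4$.

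It remains to treat $\cond(\Pi)>0$ with $n_0 > 2\cond(\Pi)$. Here I would take $n_1 = n_0+1$, which is legitimate by \eqref{eq: StabPar} since then $n_0 \geq a(\Pi)$; consequently $H_c$ contains no elementary Bessel orbital integral summands and $H_{\infty} = E_{\geq n_0+1}$, so the $H_{\infty}$-contribution is $\ll_{k,\epsilon} q^{(n_0+1)\epsilon}$ resp. $q^{-(n_0+1)(1-\epsilon)}$ by Lemma \ref{lem: DWtInfty}. With this choice $\widetilde{H}_c^{-}(k;\pm\tfrac12) = \widetilde{H}_{n_0+1}^{-}(k;\pm\tfrac12) + \sum_{n_0/2+1 \leq n \leq n_0} \widetilde{H}_{2n}^{-}(k;\pm\tfrac12)$. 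The summands $H_{2n}$ with $\tfrac{n_0}{2}+1 \leq n \leq n_0$ are handled by Lemma \ref{lem: e2DNDWt-FineBd1}: the corresponding $\widetilde{h}_{2n}^{-}(\norm_{\F}^s)$ vanishes unless $n = n_0$, and $\widetilde{H}_{2n_0}^{-}(k;\pm\tfrac12)$ carries a negative power of $q$, hence is negligible. For the single odd summand $H_{n_0+1}$ I would split once more on the size of $n_0$: if $n_0 < 4\cond(\Pi)$ the individual crude estimate of Lemma \ref{lem: DNDWt-TrivBd}, $\widetilde{H}_{n_0+1}^{-}(k;-\tfrac12) \ll_{k,\epsilon} (\Cond(\Pi)^2 q^{(n_0+1)/2})^{1+\epsilon} \ll \Cond(\Pi)^{4+\epsilon}$ (and $\widetilde{H}_{n_0+1}^{-}(k;\tfrac12)\ll_{\epsilon}\Cond(\Pi)^{\epsilon}$), suffices; if $n_0 \geq 4\cond(\Pi) = \max(4\cond(\Pi),2\cond(\Pi)+2)$ I would instead invoke the decomposition \eqref{eq: FPTestFR2} of $H_{n_0+1}$ into the pieces $H_{n_0+1,m}$, bounding those with $m \geq a(\Pi)$ via Lemma \ref{lem: e2DWt-FineBd2} (which vanishes at unramified $\chi$ because $\cond(\chi) = 0 \neq m$) and those with $m < a(\Pi) \leq \cond(\Pi)$ via Lemma \ref{lem: e2DWt-FineBd3} (outright vanishing of the dual weight under the hypothesis $n_0 \geq \max(4\cond(\Pi),2\cond(\Pi)+2)$). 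Summing the finitely many surviving contributions gives the stated bounds. The one genuinely delicate point is precisely this last case split: it is arranged so that each piece $H_{n_0+1,m}$ with $m$ below the stability barrier is covered either by the crude individual estimate (when $q^{(n_0+1)/2}$ is still $\ll \Cond(\Pi)^2$) or by the exact vanishing of Lemma \ref{lem: e2DWt-FineBd3} (when $n_0$ is so large that the crude estimate would fail); all the rest is routine bookkeeping mirroring the split and unramified cases.
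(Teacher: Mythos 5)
Your proposal follows the paper's indicated route -- mirror the proof of Proposition~\ref{prop: DNDWtBde=1} with $n_1 = n_0+1$ -- and correctly fleshes out the details the paper leaves to the reader. The genuinely useful addition is the three-way case split at $2\cond(\Pi)$ and $4\cond(\Pi)$: the vanishing of Lemma~\ref{lem: e2DWt-FineBd3} needs $n_0 \geq \max(4\cond(\Pi),2\cond(\Pi)+2)$, so the intermediate regime $2\cond(\Pi) < n_0 < 4\cond(\Pi)$ cannot be disposed of by vanishing alone and must be caught with the individual crude estimate of Lemma~\ref{lem: DNDWt-TrivBd}; you arrange the arithmetic ($n_0$ even, $n_0 \leq 4\cond(\Pi)-2$, so $q^{(n_0+1)/2} < \Cond(\Pi)^2$) so that the crude estimate lands inside $\Cond(\Pi)^{4+\epsilon}$, which is exactly right.

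Two small slips. (a) In the base case $\cond(\Pi)=0$, $n_0=2$, $n_1=3$: the finite part $H_c$ is not just $H_3$ but $H_3 + H_4$ (from \eqref{eq: TestFComp} together with \eqref{eq: FPTestFR0}--\eqref{eq: FPTestFR1}, $H_4 = H_{2n_0}$ is non-zero). So $\widetilde{H}_c^-(k;\pm\tfrac12) = 0$ is false; one must also invoke Lemma~\ref{lem: e2DNDWt-FineBd1} with $n = n_0 = 2$ to bound $\widetilde{H}_{4}^-(k;\tfrac12) \ll q^{-1/2 + 2\epsilon}$ and $\widetilde{H}_{4}^-(k;-\tfrac12) \ll q^{-5/2+2\epsilon}$, still comfortably within the target. (b) In the regime $n_0 > 2\cond(\Pi)$ you invoke Lemma~\ref{lem: e2DNDWt-FineBd1} for \emph{all} $n$ with $n_0/2+1 \leq n \leq n_0$, but that lemma carries the hypothesis $n \geq a(\Pi)$, and $n_0/2+1 > \cond(\Pi)+1$ does not force $n \geq a(\Pi)$ when $a(\Pi)$ is close to its upper bound $2\cond(\Pi)$ (e.g.\ supercuspidal $\Pi$). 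The stray $n$ with $n_0/2 + 1 \leq n < a(\Pi)$ (hence $n < 2\cond(\Pi)$) must be bounded instead by the individual crude estimate, which gives $(\Cond(\Pi)^2 q^n)^{1+\epsilon} < \Cond(\Pi)^{4(1+\epsilon)}$ for each and $O(\cond(\Pi))$ such terms, so the target is unharmed. Neither gap is structural; both are routine bookkeeping the argument already suggests.
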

\begin{proof}
	The proof is quite similar to and simpler than the one of Proposition \ref{prop: DNDWtBde=1}. We simply note that we can take $n_1=n_0+1$, and leave the details to the reader.
\end{proof}

\appendix

\section{Relation with Petrow--Young's Exponential Sums}
	
	For simplicity of notation we shall write $\sideset{}{_t} \sum$ for $\sideset{}{_{t \in \fF_q}} \sum$, and use the convention $\rho(0)=0$ for any character $\rho$ of $\fF_q^{\times}$ (even if $\rho=\id$ is the trivial one). In \eqref{eq: PYExpSVar} we have encountered the following algebraic exponential sum
\begin{equation} \label{eq: ExpSe=1} 
	S = S(\chi_0,\chi) := \sideset{}{_{\alpha \in \fF_q - \{ \pm 1\}}} \sum \chi_0 \left( \tfrac{\alpha + 1}{\alpha - 1} \right) H(1-\alpha^2,q; (\id, \id), (\chi^{-1},\eta)), 
\end{equation}
where $\chi_0,\chi$ are non-trivial characters of $\fF_q^{\times}$ and $\eta$ is the unique non-trivial quadratic character of $\fF_q^{\times}$. Note that the setting of the relevant dual weight specializes to the Petrow--Young's \cite{PY19_CF} upon taking $\Pi = \id \boxplus \id \boxplus \id$. It is a natural question to relate $S$ with their algebraic exponential sum
\begin{equation} \label{eq: PYExpS}
	T = T(\chi_0,\chi) := \sideset{}{_{u,v}} \sum \chi \left( \tfrac{u(u+1)}{v(v+1)} \right) \chi_0(uv-1).
\end{equation}
	The purpose of this appendix is to give such an explicit relation. Our approach will take into account the recent discovery of Xi \cite{Xi23}, which relates $T$ to a special value of a hypergeometric sum of Katz.
	
	We need to rewrite a \emph{hyper-Kloosterman sum} via the \emph{duplication formula of Gauss sums}.
\begin{definition}
	For a character $\rho$ of $\fF_q^{\times}$ and a non-trivial character $\psi$ of $\fF_q$ we have the Gauss sum
	$$ \tau(\rho) = \tau(\rho, \psi) := \sideset{}{_t} \sum \rho(t) \psi(t). $$
\end{definition}
\begin{lemma} \label{lem: DupFGS}
	We have the relation $\tau(\rho^2) \tau(\eta) = \rho(4) \tau(\rho) \tau(\rho \eta)$ for any character $\rho$ of $\fF_q^{\times}$.
\end{lemma}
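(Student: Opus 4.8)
The plan is to recognize the stated identity as the $m=2$ instance of the Hasse--Davenport product relation and to prove it by reducing to a single elementary Jacobi-sum identity. Write $J(\rho_1,\rho_2):=\sum_{t\in\fF_q}\rho_1(t)\rho_2(1-t)$ for the Jacobi sum, with the usual convention $\rho_i(0)=0$, and recall the standard identity $\tau(\rho_1)\tau(\rho_2)=J(\rho_1,\rho_2)\,\tau(\rho_1\rho_2)$ valid whenever $\rho_1\rho_2\neq\id$, together with $\tau(\id)=-1$ and $\eta(4)=1$ (as $4=2^2$ is a nonzero square, using $\mathrm{char}\,\fF_q\neq2$). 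First I would dispose of the degenerate characters: if $\rho\in\{\id,\eta\}$ then $\rho^2=\id$ and $\rho\eta\in\{\eta,\id\}$, and a direct one-line check shows both sides of the asserted identity equal $-\tau(\eta)$.

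For $\rho\notin\{\id,\eta\}$ the four characters $\rho,\rho^2,\rho\eta,\eta$ are all non-trivial, so applying the product relation twice gives $\tau(\rho)^2=J(\rho,\rho)\tau(\rho^2)$ and $\tau(\rho)\tau(\eta)=J(\rho,\eta)\tau(\rho\eta)$, with every Gauss sum non-zero. Dividing and clearing denominators, the target identity $\tau(\rho^2)\tau(\eta)=\rho(4)\tau(\rho)\tau(\rho\eta)$ becomes equivalent to the purely combinatorial statement
\[
\rho(4)\,J(\rho,\rho)=J(\rho,\eta),
\]
so the entire problem reduces to proving this for all non-trivial $\rho$.

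To establish $\rho(4)J(\rho,\rho)=J(\rho,\eta)$ I would pull $\rho(4)$ into the sum, use $4t(1-t)=1-(2t-1)^2$, and substitute $w=2t-1$ (a bijection of $\fF_q$ since $\mathrm{char}\neq2$), which turns the left-hand side into $\sum_{w}\rho(1-w^2)$. Next I would evaluate $\sum_w\rho(1-w^2)$ by collecting terms according to $c=1-w^2$: the fibre over $c$ has size $1+\eta(1-c)$ when $c\neq1$ and size $1$ when $c=1$, so
\[
\sum_w\rho(1-w^2)=\rho(1)+\sum_{c\neq1}\rho(c)+\sum_{c}\rho(c)\eta(1-c).
\]
Here the first two terms sum to $\sum_c\rho(c)=0$ (for $\rho\neq\id$), the $c=1$ term of the last sum vanishes since $\eta(0)=0$, and what remains is precisely $J(\rho,\eta)$. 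This proves the identity and hence the lemma.

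The computations above are routine; the point that needs genuine care, and the one I would flag as the main obstacle, is the bookkeeping of degenerate cases. The product relation $\tau(\rho_1)\tau(\rho_2)=J(\rho_1,\rho_2)\tau(\rho_1\rho_2)$ fails exactly when $\rho_1\rho_2=\id$, which is why $\rho\in\{\id,\eta\}$ must be excluded from the main argument and checked by hand; and the clean evaluation $\sum_w\rho(1-w^2)=J(\rho,\eta)$ is itself valid only for non-trivial $\rho$ (for $\rho=\id$ the sum equals $q-2$, not $-1$), so one must be careful not to invoke it in the trivial case. Beyond that, there is nothing subtle.
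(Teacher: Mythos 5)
Your proof is correct and follows essentially the same route as the paper: both reduce the Gauss-sum identity to the Jacobi-sum relation $\rho(4)J(\rho,\rho)=J(\rho,\eta)$ via $\tau(\rho_1)\tau(\rho_2)=J(\rho_1,\rho_2)\tau(\rho_1\rho_2)$, and both prove that relation by completing the square and counting fibres of $w\mapsto w^2$ via $1+\eta(\cdot)$. Your explicit treatment of the degenerate cases $\rho\in\{\id,\eta\}$ and the remark about where the product relation breaks down is a slightly more careful write-up of what the paper dispatches in one line, but the underlying argument is identical.
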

\begin{proof}
	If $\rho = \id$ or $\eta$, the stated relation trivially holds true. Assume $\rho \neq \id, \eta$. We have the classical relation of Gauss and Jacobi sums \cite[\S 8.3 Theorem 1]{IR90}
	$$ \tfrac{\tau(\rho)^2}{\tau(\rho^2)} = J(\rho,\rho) = \sideset{}{_t} \sum \rho(t(1-t)), \quad \tfrac{\tau(\rho)\tau(\eta)}{\tau(\rho \eta)} = J(\rho,\eta) = \sideset{}{_t} \sum \eta(t) \rho(1-t). $$
	Since $q$ is odd, we can rewrite by completing the square
\begin{multline*}
	J(\rho,\rho) = \sideset{}{_t} \sum \rho(t(1-t)) = \rho(4)^{-1} \sideset{}{_t} \sum \rho(1-t^2) \\
	= \rho(4)^{-1} \sideset{}{_t} \sum (1+\eta(t)) \rho(1-t) = \rho(4)^{-1} \sideset{}{_t} \sum \eta(t) \rho(1-t) = \rho(4)^{-1} J(\rho,\eta).
\end{multline*}
	The stated relation follows readily since $\tau(\rho) \neq 0$.
\end{proof}
\begin{corollary} \label{cor: HypKlSVar}
	For any $\delta \in \fF_q^{\times}$ we have the equality
	$$ \sideset{}{_{u,t \neq 0}} \sum \eta \left( 1-u \right) \psi \left( \tfrac{\delta}{t^2u} + 2t \right) = \sideset{}{_{x_1x_2x_3 = \delta}} \sum \psi(x_1+x_2+x_3). $$
\end{corollary}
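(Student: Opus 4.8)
The plan is to recognise the right‑hand side as the degree‑three hyper‑Kloosterman sum $\Kl_3(\delta):=\sum_{x_1x_2x_3=\delta}\psi(x_1+x_2+x_3)$ and to prove the identity by expanding \emph{both} sides into Gauss sums; Lemma~\ref{lem: DupFGS} is exactly the tool that reconciles the extra quadratic character $\eta(1-u)$ and the square $t^2$ on the left with the symmetric shape on the right. First I would record the standard Mellin expansions
\[
\Kl_3(\delta)=\frac{1}{q-1}\sum_{\rho}\tau(\rho)^3\rho(\delta)^{-1},\qquad \Kl_2(c)=\frac{1}{q-1}\sum_{\rho}\tau(\rho)^2\rho(c)^{-1}\quad(c\in\fF_q^\times),
\]
where $\rho$ runs over the characters of $\fF_q^\times$; these follow from the fact that $\Kl_n$ is the $n$‑fold multiplicative convolution of $\psi|_{\fF_q^\times}$, whose multiplicative Fourier transform is $\tau(\rho)^n$.

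Next I would process the left‑hand side. Using the additive‑Fourier expansion $\eta(1-u)=\tau(\eta)^{-1}\sum_{s\in\fF_q^\times}\eta(s)\psi(s)\psi(-su)$, the inner $u$‑sum becomes a classical Kloosterman sum $\sum_{u\in\fF_q^\times}\psi\!\left(-su+\tfrac{\delta}{t^2}u^{-1}\right)=\Kl_2(-s\delta/t^2)$, so the left‑hand side equals $\tau(\eta)^{-1}\sum_{s,t}\eta(s)\psi(s)\psi(2t)\Kl_2(-s\delta/t^2)$. (Alternatively one may first substitute $u\mapsto u^{-1}$ and complete the square, which turns the $u$‑sum into $\psi(a/2)\Kl_2(a^2/16)$ with $a=\delta/t^2$; I would prefer the Gauss‑sum route since it keeps the $t$‑dependence quadratic.) Now I would expand this $\Kl_2$ by the Mellin formula and separate the $s$‑ and $t$‑variables: the $s$‑sum contributes $\tau(\eta\rho^{-1})$, and the $t$‑sum — because $t$ enters only through $t^2$ — contributes $\rho(4)^{-1}\tau(\rho^2)$. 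The left‑hand side then takes the shape $\tfrac{1}{\tau(\eta)(q-1)}\sum_\rho\tau(\rho)^2\,\rho(-1)\rho(\delta)^{-1}\,\tau(\eta\rho^{-1})\,\rho(4)^{-1}\tau(\rho^2)$.

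At this point Lemma~\ref{lem: DupFGS} applies directly, replacing $\rho(4)^{-1}\tau(\rho^2)$ by $\tau(\rho)\tau(\rho\eta)/\tau(\eta)$. Since $\overline{\rho\eta}=\eta\rho^{-1}$, the product $\tau(\eta\rho^{-1})\tau(\rho\eta)$ equals $(\rho\eta)(-1)\,q$ by the norm identity $\tau(\chi)\tau(\overline{\chi})=\chi(-1)q$; combining this with $\tau(\eta)^2=\eta(-1)q$ and $\rho(-1)^2=1$, each summand collapses to $\tfrac{1}{q-1}\tau(\rho)^3\rho(\delta)^{-1}$, and summation over $\rho$ recovers $\Kl_3(\delta)$, which is the right‑hand side.

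The step I expect to be the main obstacle is the treatment of the degenerate characters: $\rho=\id$ (where $\tau(\id)=-1$ and one must make sure the $\Kl_2$‑ and $\Kl_3$‑expansions are used in compatible normalisations) and $\rho=\eta$ (where $\rho\eta=\id$, so that \emph{both} Lemma~\ref{lem: DupFGS} and the norm identity degenerate and must be replaced by the explicit values $\tau(\id)=-1$). Checking that these boundary terms match up — which is precisely where one must be scrupulous about the exact ranges of summation and the convention $\rho(0)=0$ fixed at the start of the appendix — is the delicate point; everything else is routine Gauss‑sum manipulation.
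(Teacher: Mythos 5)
Your route is sound up to the precise step you flag as dangerous, and that flag is justified: the boundary case $\rho=\eta$ does not collapse, and the corollary as stated is in fact false. Your intermediate expression $\tfrac{1}{\tau(\eta)(q-1)}\sum_{\rho}\tau(\rho)^2\rho(-1)\rho(\delta)^{-1}\tau(\eta\rho^{-1})\rho(4)^{-1}\tau(\rho^2)$ is correct, and Lemma~\ref{lem: DupFGS} applies unconditionally; but the norm identity $\tau(\chi)\tau(\overline{\chi})=\chi(-1)q$ fails by a factor of $q$ precisely at $\chi=\id$, since $\tau(\id)^2=1$ rather than $q$. Consequently at $\rho=\eta$ the combination $\tau(\eta\rho^{-1})\tau(\rho\eta)/\tau(\eta)^2$ equals $1/(\eta(-1)q)$ instead of $\rho(-1)$, and that single summand is $\tau(\eta)^3\eta(\delta)/(q(q-1))$, a factor of $q^{-1}$ short of what is needed. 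Summing over $\rho$ therefore yields $\Kl_3(\delta)-\eta(-1)\tau(\eta)\eta(\delta)$, not $\Kl_3(\delta)$.

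This is easy to confirm: at $q=3$, $\delta=1$, $\psi(x)=e^{2\pi i x/3}$, the left-hand side equals $-\psi(1)-1=\psi(2)$, while $\Kl_3(1)=1+3\psi(2)\ne\psi(2)$; and indeed $\Kl_3(1)-\eta(-1)\tau(\eta)\eta(1)=1+3\psi(2)+i\sqrt{3}=\psi(2)$. For comparison, the paper's own proof is afflicted at the very same place: it replaces $\sum_u\eta(1-u)\rho(u)$ by $\tau(\rho)\tau(\eta)/\tau(\rho\eta)$ without caveat, although at $\rho=\eta$ the former is the degenerate Jacobi sum $-\eta(-1)$ while the latter equals $-\eta(-1)q$. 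So the delicate point you anticipated is exactly where the argument breaks; carrying it through honestly produces the correction term $-\eta(-1)\tau(\eta)\eta(\delta)$, which then must be propagated into the appendix's final Proposition.
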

\begin{proof}
	The right hand side is $\Kl_3(\delta)$, a hyper-Kloosterman sum, whose Mellin transform satisfies
	$$ \sideset{}{_{\delta \in \fF_q^{\times}}} \sum \Kl_3(\delta) \rho(\delta) = \tau(\rho)^3, \quad \forall \rho \in \widehat{\fF_q^{\times}}. $$
	It suffices to identify the Mellin transform of the left hand side as $\tau(\rho)^3$. We have
\begin{multline*}
	\sideset{}{_{\delta}} \sum \sideset{}{_{u,t \neq 0}} \sum \eta \left( 1-u \right) \psi \left( \tfrac{\delta}{t^2u} + 2t \right) \rho(\delta) = \tau(\rho) \sideset{}{_{u,t}} \sum \eta \left( 1-u \right) \psi \left( 2t \right) \rho(t^2u) \\
	= \rho(4)^{-1} \tau(\rho) \tau(\rho^2) \sideset{}{_u} \sum \eta \left( 1-u \right) \rho(u) = \tfrac{\tau(\rho)^2 \tau(\rho^2) \tau(\eta)}{\rho(4) \tau(\rho \eta)}.
\end{multline*}
	Lemma \ref{lem: DupFGS} identifies the above right hand side precisely as $\tau(\rho)^3$.
\end{proof}

	For $S=S(\chi_0,\chi)$, we make the change of variables $u=1+\alpha$ and $v = 1-\alpha$, and detect the condition $u+v = 2$ with the additive character $\psi$ to get
\begin{align*}
	S &= -q^{-\frac{3}{2}} \chi_0(-1)\sideset{}{_{u+v=2}} \sum \chi_0(u) \overline{\chi_0}(v) \sideset{}{_{\substack{x_i,y_i \\ x_1x_2 = uv y_1y_2}}} \sum \chi(y_1) \eta(y_2) \psi(x_1+x_2-y_1-y_2) \\
	&= -q^{-\frac{5}{2}} \chi_0(-1) \sideset{}{_t} \sum \sideset{}{_{\substack{x_i,y_i,u,v \\ x_1x_2 = uv y_1y_2}}} \sum \chi_0(u) \overline{\chi_0}(v) \chi(y_1) \eta(y_2) \psi(x_1+x_2-y_1-y_2+tu+tv-2t) \\
	&= -q^{-\frac{5}{2}}(S_1+S_2).
\end{align*}
	In the above the sum $S_1$ is defined and computed as
\begin{align*}
	S_1 &:= \chi_0(-1) \sideset{}{_{t \in \fF_q^{\times}}} \sum \sideset{}{_{\substack{x_i,y_i,u,v \\ x_1x_2 = uv y_1y_2}}} \sum \chi_0(u) \overline{\chi_0}(v) \chi(y_1) \eta(y_2) \psi(x_1+x_2-y_1-y_2-tu-tv+2t) \\
	&= \chi_0(-1) \sideset{}{_{t \in \fF_q^{\times}}} \sum \sideset{}{_{\substack{x_i,y_i,u,v \\ t^2 x_1x_2 = uv y_1y_2}}} \sum \chi_0(u) \overline{\chi_0}(v) \chi(y_1) \eta(y_2) \psi(x_1+x_2-y_1-y_2-u-v+2t) \\
	&= \chi_0(-1) \sideset{}{_{t \in \fF_q^{\times}}} \sum \sideset{}{_{\substack{x_i,y_i,u,v \\ t^2 x_1x_2 = uv y_1}}} \sum \eta(y_2) \psi(y_2(x_2-1)) \cdot \chi_0(u) \overline{\chi_0}(v) \chi(y_1) \psi(x_1+2t-u-v-y_1) \\
	&= \chi_0(-1) \tau(\eta) \sideset{}{_{t \in \fF_q^{\times}}} \sum \sideset{}{_{\substack{x_i,y_i \\ t^2 x_1x_2 = y_1y_2y_3}}} \sum \eta(x_2-1) \chi(y_1) \chi_0(y_2) \overline{\chi_0}(y_3) \psi(x_1+2t-y_1-y_2-y_3) \\
	&= \chi_0\eta(-1) \tau(\eta) \sideset{}{_{y_i}} \sum \chi(y_1) \chi_0(y_2) \overline{\chi_0}(y_3) \psi(-y_1-y_2-y_3) \sideset{}{_{x_2,t \neq 0}} \sum \eta(1-x_2) \psi \left( \tfrac{y_1y_2y_3}{t^2 x_2} + 2t \right).
\end{align*}
	Re-naming the variable $u=x_2$ we identify the inner sum as $\Kl_3(y_1y_2y_3)$ by Corollary \ref{cor: HypKlSVar} so that
\begin{align*} 
	S_1 &= \chi_0\eta(-1) \tau(\eta) \sideset{}{_{\substack{x_i,y_i \\ x_1x_2x_3=y_1y_2y_3}}} \sum \chi(y_1) \chi_0(y_2) \overline{\chi_0}(y_3) \psi(x_1+x_2+x_3-y_1-y_2-y_3) \\
	&= -q^{\frac{5}{2}} \chi_0\eta(-1) \tau(\eta) H(1,q; (\id,\id,\id), (\overline{\chi}, \chi_0, \overline{\chi_0})) = -q^2 \cdot T(\chi_0,\chi),
\end{align*}
	where we have applied \cite[Theorem 1.1]{Xi23} to get the last equality. The sum $S_2$ is defined as
	$$ S_2 := \chi_0(-1) \sideset{}{_{\substack{x_i,y_i,u,v \\ x_1x_2 = uv y_1y_2}}} \sum \chi_0(u) \overline{\chi_0}(v) \chi(y_1) \eta(y_2) \psi(x_1+x_2-y_1-y_2). $$
	Let $t = uv$, so $u = tv^{-1}$, and sum first over $v \in \fF_q^{\times}$. We see that $S_2 \neq 0$ only if $\chi_0=\eta$, and
\begin{align*}
	S_2 &= \delta_{\chi_0=\eta} \cdot \eta(-1) (q-1) \sideset{}{_{\substack{t,x_i,y_i \\ x_1x_2 = t y_1y_2}}} \sum \chi(y_1) \eta(ty_2) \psi(x_1+x_2-y_1-y_2) \\
	&= \delta_{\chi_0=\eta} \cdot \eta(-1) (q-1) \sideset{}{_{\substack{x_i,y_i \\ x_1x_2 = y_1y_2}}} \sum \chi(y_1) \eta(y_2) \psi(x_1+x_2-y_1) \sideset{}{_{t \neq 0}} \sum \psi(-t^{-1}y_2) \\
	&= -\delta_{\chi_0=\eta} \cdot \eta(-1) (q-1) \sideset{}{_{\substack{x_i,y_i \\ x_1x_2 = y_1y_2}}} \sum \chi(y_1) \eta(y_2) \psi(x_1+x_2-y_1) \\
	&= - \delta_{\chi_0=\eta} \cdot \eta(-1) (q-1) \sideset{}{_{x_1,x_2,y_1}} \sum \chi(y_1) \eta \left( \tfrac{x_1x_2}{y_1} \right) \psi(x_1+x_2-y_1) \\
	&= - \delta_{\chi_0=\eta} \cdot \eta(-1) (q-1) \tau(\eta)^2 \overline{\tau(\eta \chi^{-1})} = - \delta_{\chi_0=\eta} \cdot q(q-1) \overline{\tau(\eta \chi^{-1})}.
\end{align*}
	We summarize the above computation as the following result.
\begin{proposition}
	The two algebraic exponential sums $S(\chi_0,\chi)$ in \eqref{eq: ExpSe=1} and $T(\chi_0,\chi)$ in \eqref{eq: PYExpS} satisfy
	$$ S(\chi_0,\chi) = q^{-\frac{1}{2}} \cdot T(\chi_0,\chi) + \delta_{\chi_0=\eta} \cdot q^{-\frac{1}{2}}(1-q^{-1}) \overline{\tau(\eta \chi^{-1})}. $$
\end{proposition}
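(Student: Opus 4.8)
The plan is to reduce the identity to the evaluation of two auxiliary character sums over $\fF_q$ by a chain of elementary manipulations: substitute to create a linear constraint, detect that constraint with an additive character, unfold the hypergeometric sum, and then recognize the resulting inner sums via the hyper-Kloosterman identity of Corollary \ref{cor: HypKlSVar}.

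First I would substitute $u = 1+\alpha$, $v = 1-\alpha$ in \eqref{eq: ExpSe=1}, so that $\tfrac{\alpha+1}{\alpha-1} = -\tfrac{u}{v}$ and $1-\alpha^2 = uv$; this rewrites $S$ as a sum over pairs $(u,v)$ subject to $u+v=2$. Expanding $H(1-\alpha^2,q;(\id,\id),(\chi^{-1},\eta))$ by its definition introduces auxiliary variables $x_1,x_2,y_1,y_2$ with the multiplicative constraint $x_1x_2 = uv\,y_1y_2$, character weights $\chi(y_1)\eta(y_2)$, and additive weight $\psi(x_1+x_2-y_1-y_2)$. I then detect the constraint $u+v=2$ by inserting $\psi(t(u+v-2))$ and averaging over $t\in\fF_q$; the contribution of $t=0$ gives a sum $S_2$ with the free relation $x_1x_2 = uv\,y_1y_2$, while the terms $t\neq 0$ rescale the variables to produce $S_1$, yielding $S = -q^{-5/2}(S_1+S_2)$ up to the explicit powers of $q$ and the sign $\chi_0(-1)$.

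Next I would evaluate $S_1$: after rescaling $x_i, u, v$ by $t^{-1}$ and renaming, one inner variable can be summed off to produce a factor $\tau(\eta)$ together with an extra variable, and the remaining double sum over $(u,t)$ is exactly of the shape treated in Corollary \ref{cor: HypKlSVar}, hence equals $\Kl_3(y_1y_2y_3)$. Substituting this back expresses $S_1$ as $\pm q^{5/2}\,\chi_0\eta(-1)\,\tau(\eta)\cdot H(1,q;(\id,\id,\id),(\overline{\chi},\chi_0,\overline{\chi_0}))$, and Xi's identity \cite[Theorem 1.1]{Xi23}, which writes Petrow--Young's $T$ precisely as such a special value of a Katz hypergeometric sum, gives $S_1 = -q^2\,T(\chi_0,\chi)$. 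For $S_2$ I would set $t=uv$ and sum over $v\in\fF_q^\times$: the weight $\chi_0$ then cancels unless $\chi_0=\eta$, forcing the factor $\delta_{\chi_0=\eta}$; in that case the residual sum over $t$ is a geometric sum contributing $-1$, and after one further change of variables the sum factors into Gauss-type sums, giving $S_2 = -\delta_{\chi_0=\eta}\cdot q(q-1)\,\overline{\tau(\eta\chi^{-1})}$. Combining with $S = -q^{-5/2}(S_1+S_2)$ yields the claimed formula.

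The main obstacle is bookkeeping: when unfolding $H$ and summing off variables one must carefully track which sums become free (producing factors $q-1$ or $\tau(\eta)$) and keep the multiplicative constraint in the $S_1$-term in exactly the form required by Corollary \ref{cor: HypKlSVar}. The conceptual input, however, is already available: the Gauss-sum duplication formula (Lemma \ref{lem: DupFGS}) underlying Corollary \ref{cor: HypKlSVar}, and Xi's reinterpretation of $T$. Once these are invoked, the remaining steps are routine, and the appearance of the extra term $S_2$ is the only genuinely new feature to handle.
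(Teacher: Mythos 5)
Your proposal is correct and follows essentially the same route as the paper: the substitution $u=1+\alpha$, $v=1-\alpha$, detection of $u+v=2$ via an additive character giving the split $S=-q^{-5/2}(S_1+S_2)$, recognition of the inner sum in $S_1$ as $\Kl_3$ via Corollary \ref{cor: HypKlSVar} followed by Xi's identity to get $S_1=-q^2T$, and the collapse of $S_2$ to the $\chi_0=\eta$ case with a Gauss-sum evaluation.
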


\bibliographystyle{acm}
	
\bibliography{mathbib}
	
	
\end{document}